\newcommand{\upa}{u}
\newcommand{\uf}{U}
\newcommand{\ui}{\mathsf{U}}
\newcommand{\vp}{v}
\newcommand{\vf}{V}
\newcommand{\vi}{\mathsf{V}}
\newcommand{\R}{\mathbb{R}}
  \newcommand{\di}{\diamond}
\newtheorem{theorem}{Theorem}
\newtheorem{proposition}{Proposition}
\newtheorem{corollary}{Corollary}
\newtheorem{lemma}{Lemma}
\newtheorem{definition}{Definition}
\newtheorem{remark}{Remark}
  \newcommand\blfootnote[1]{%
  \begingroup
  \renewcommand\thefootnote{}\footnote{#1}%
  \addtocounter{footnote}{-1}%
  \endgroup
}
\title{The Initial Value Problem for Singular SPDEs via Rough Paths}
\author[1]{Claudia Raithel}
\author[2]{Jonas Sauer}
\affil[1]{ TU Dresden, Institute for Scientific Computing, Zellescher Weg 25, 01217 Dresden, Germany}
\affil[2]{ Friedrich-Schiller-Universit\"at Jena, Institut f\"ur Mathematik, Ernst-Abbe-Platz 2, 07737 Jena, Germany}
\begin{document}

\maketitle

\begin{abstract}
In this contribution we develop a solution theory for singular quasilinear stochastic partial differential equations subject to an initial condition. We obtain our solution theory as a perturbation of the rough path approach developed to handle the space-time periodic problem by Otto and Weber (2019). As in their work, we assume that the forcing is of class $C^{\alpha -2}$ for $\alpha \in (\frac{2}{3},1)$ and space-time periodic and, additionally, that the initial condition is of class $C^{\alpha}$ and periodic. We contribute to the analytic aspects of the theory. Indeed, we show that we can enforce the initial condition via correcting the previously obtained space-time periodic solution with an initial boundary layer which may be handled in a completely deterministic manner.
Uniqueness is obtained in the class of solutions which are corrected in this way by an initial boundary layer.
Moreover, stability of the solutions with respect to perturbations of the data is established.
\blfootnote{\textbf{Funding:} CR was partially supported by the Austrian Science Fund (FWF), grants P30000, W1245, and F65.}
\end{abstract}

\paragraph{Keywords:}
singular SPDEs, quasilinear equations, initial value problems, rough paths

\paragraph{AMS subject classifications:}
60H15, 35B65
\newpage

\tableofcontents

\section{Introduction}

In this paper we construct a stable solution operator for a parabolic quasilinear initial value problem in $1+1$ dimensions that is driven by a rough right-hand side. Let $\alpha \in (\frac{2}{3},1)$, we consider the initial value problem
\begin{align}
\label{IVP_intro}
\begin{split}
\partial_2 \uf - a(\uf) \partial_1^2 \uf + \uf & = f \quad \quad \quad \quad \quad \quad \, \textrm{in} \quad \mathbb{R}^2_+, \\
\uf & = \uf_{int} \quad \quad \quad \quad \quad  \textrm{on} \quad \partial \mathbb{R}^2_+,
\end{split}
\end{align}
for $f \in C^{\alpha -2}(\mathbb{R}^2)$, $\uf_{int} \in C^{\alpha}(\mathbb{R})$, and $a  \in C^{\alpha}(\mathbb{R})$ that is uniformly elliptic and bounded with at least three bounded derivatives. All of the data is also assumed to be periodic; in particular, $f$ is space-time periodic and $a$ and $U_{int}$ are periodic. We use the notation $\mathbb{R}^2_+ = \left\{ x \in \mathbb{R}^2 : x_2 >0 \right\}$ and $\partial \mathbb{R}_+^2 = \left\{ x \in \mathbb{R}^2_+ : x_2  = 0\right\}$, where $x \in \mathbb{R}^2$ is written in coordinates as $x = (x_1, x_2)$.

Due to the low regularity of the forcing on the right-hand side of \eqref{IVP_intro}, the equation \textit{a priori} makes no sense: the nonlinear term $a(U) \partial_1^2 U$ has no classical definition. In particular, a heuristic application of classical Schauder theory to \eqref{IVP_intro} suggests that the solution $U$ is of class $C^{\alpha}(\R^2_+)$ --meaning that $a(U)$ is also of class $C^{\alpha}(\R^2_+)$, whereas $\partial_1^2 \uf \in C^{\alpha -2}(\R^2)$. This is a problem: two distributions have a classical well-defined product only if the sum of their H\"{o}lder exponents is positive; here, however, we have that $ \alpha +(\alpha -2) = 2 \alpha -2 < 0$. To handle this situation it is standard to take $f$ to be random and define the nonlinear term of the equation in an ``offline way'' --via a probabilistic and subsequent deterministic step. In particular, one first defines the nonlinear term for certain ``special functions'' (solving, \textit{e.g.}, the frozen linear analogue of \eqref{IVP_intro}) probabilistically --possibly using a renormalization. Then, using that the solution $U$ of \eqref{IVP_intro} behaves similarly to the ``special functions'' on small scales, one transfers the definition to give meaning to $``a(U) \partial_1^2 U"$. 

The treatment of the initial value problem \eqref{IVP_intro} in this paper is meant to compliment the theory for the analogous space-time periodic problem in \cite{OW}. The rough path method developed in \cite{OW} was the first to handle quasilinear singular SPDEs: the theory of regularity structures (developed in \cite{H,BHZ,CH,BCCH}) and paracontrolled distributions \cite{GIP} had previously only handled semilinear equations. For comprehensive expositions of these methods, we point the interested reader to the reviews and books \cite{FH,H_notes_1,H_notes_2,BH,CW,G_review}. Contemporarily to \cite{OW}, in \cite{BDH}, Bailleul, Debussche, and Hofmanov\'{a} treated a generalized parabolic Anderson model with scalar diffusion coefficients in a paracontrolled framework --after a transformation they were able to use the semilinear theory of \cite{GIP}. Mainly inspired by \cite{OW}, Furlan and Gubinelli then gave a paracontrolled treatment of quasilinear evolution problems \cite{FG}. While \cite{OW,BDH,FG} dealt with mildly singular noise (\textit{i.e.}, $\alpha \in (\frac23,1)$) --just barely making the nonlinear terms impossible to define classically-- in \cite{GH_quasi} Gerencs\'er and Hairer gave a general theory for quasilinear singular SPDEs within the framework of regularity structures in the full subcritical regime. They, however, only obtained the renormalized equation for $ \alpha \in (\frac{1}{2},1)$ -- for singular SPDEs of the form \eqref{IVP_intro} (without the massive term) this was improved upon by Gerencs\'er in \cite{Ge}, who obtained the renormalized equation for $\alpha \in (\frac25,1)$ (thereby including the case of space-time white noise in 1+1 dimensions). Inspired by \cite{BDH}, the same regime is considered in \cite{BM}. More recently, Bailleul, Hoshino, and Kusuoka treated the quasilinear generalized (KPZ) equation --obtaining the renormalized equation in the full subcritical regime \cite{BHK}. The rough path method (by now called ``multi-indices'' or ``tree-free'' method) of Otto and Weber has also been generalized to the full subcrictical regime by the second author together with Otto, Smith, and Weber: in \cite{OSSW_1} for $\alpha \in( \frac12,1)$ and \cite{OSSW_2} for $\alpha \in (0,1)$. The analytic treatments in \cite{OSSW_1,OSSW_2} are backed by the stochastic estimates contained in \cite{LOTT} (see also \cite{LOT}), which are based on a spectral gap assumption and Malliavin calculus (see also the recent lecture notes \cite{OST}).


While the setting treated in this paper is covered by both the paracontrolled framework in \cite{FG} and the regularity structures approach in \cite{GH_quasi}, our aim is to show that the space-time periodic rough path approach of Otto and Weber can be generalized to the setting of the evolution problem \eqref{IVP_intro} in a purely deterministic way via a perturbative ansatz. In this sense, our results and methods are completely deterministic: Accordingly, our main results, Theorems \ref{linear_theorem} and \ref{theorem_nonlinear}, are written as to apply to deterministic $f$ --we import stochastic estimates from \cite{OW} (and \cite{LOTT}, see Remark \ref{drop_time_period}) only to illustrate that for certain random $f$ the assumptions contained within the theorems are verified almost-surely (see Proposition \ref{offline_products}).

\paragraph{\textbf{Our perturbative approach:}} To solve the quasilinear problem \eqref{IVP_intro}, we first treat the linearized problem, \textit{i.e.}
\begin{align}
\begin{split}
\label{IVP_intro_linear}
(\partial_2 - a \partial_1^2 +1) \uf & =  f \quad \quad \quad \quad \quad \quad \quad \, \textrm{ in} \quad \mathbb{R}^2_+, \\ 
\uf& = \uf_{int}  \quad \quad \quad \quad  \quad \quad \, \textrm{on} \quad \partial \mathbb{R}^2_+,
\end{split}
\end{align}
for $a \in C^{\alpha}(\mathbb{R}^2)$ periodic in space, and then perform a contraction mapping argument. This is in line with the original work \cite{OW}, whereas it has since been shown that one can work directly on the level of the nonlinear problem \cite{OSSW_2}. On the level of \eqref{IVP_intro_linear}, it is natural to take the ansatz $\uf =  \upa + \ui $ with $\upa \in C^{\alpha}(\mathbb{R}^2)$ and $\ui \in C^{\alpha}(\mathbb{R}^2_+)$ solving
\begin{align}
\label{periodic_problem_intro}
(\partial_2 - a \partial_1^2 +1) u & = f  \quad \quad \quad \quad \quad \quad \quad  \textrm{in} \quad \mathbb{R}^2
\end{align}
and
\begin{align}
\begin{split}
\label{ivp_linear_intro}
(\partial_2 - a\partial_1^2 +1) \ui & =  0  \quad \quad \quad \quad \quad \quad \quad \textrm{ in} \quad \mathbb{R}^2_+,\\ 
\ui& = \uf_{int} - u \quad \quad \quad \quad  \, \textrm{on} \quad \partial \mathbb{R}^2_+
\end{split}
\end{align}
respectively. The point is that the solution $\upa$ of \eqref{periodic_problem_intro} can be obtained using a minor variation of the arguments in \cite{OW} and $\ui$, which we call the  ``initial boundary layer'', can be obtained classically using bounds for the heat semigroup. We remark that the introduction of an ``initial boundary layer'' is reminiscent of the splitting used in the treatment of boundary-value problems within the framework of regularity structures in \cite{GH}. 


To motivate some concepts and ideas we give a quick overview of the rough path method of Otto and Weber (for $\alpha \in (\frac23,1)$):

\paragraph{\textbf{Rough path method of Otto and Weber (for $\alpha \in (\frac23,1)$):}} In \cite{OW}, Otto and Weber treat the space-time periodic problem:
\begin{align}
\label{OW_intro}
\partial_2 u - P(a(u)\partial_1^2 u - \sigma(u) f)  = 0\qquad \textrm{in} \quad \R^2,
\end{align}
where, on top of the assumptions that we have on our data, $\sigma$ is of class $C^{\alpha}$ and satisfies some additional conditions and $P$ is the projection onto mean-zero space-time periodic functions. In our setting of the initial value problem we lose periodicity of solutions in the time direction --making that massive term in \eqref{IVP_intro} essential for controlling the $L^{\infty}$-norm of solutions.

The rough path approach in \cite{OW} relies on the notion of \textit{modelledness}:

\begin{definition}[Modelledness]
\label{model}
Let $\alpha \in (\frac{1}{2}, 1)$ and $\Omega \subseteq \mathbb{R}^2$. For $I \in \mathbb{N}$ we have families of functions $(V_1(\cdot, a_0), ..., V_I(\cdot, a_0))$ indexed by $a_0 \in \mathbb{R}$ defined on $\Omega \times \mathbb{R}$. A function $U: \Omega \rightarrow \mathbb{R}$ is said to be modelled after $(V_1(\cdot, a_0), ..., V_I(\cdot, a_0))$  on $\Omega$ according to functions $(a_1,..., a_I)$ and $(\sigma_1,..., \sigma_I)$ in $C^{\alpha}(\Omega)$ if there exists a function $\nu$ such that 
\begin{align}
\label{defn_model_intro}
\begin{split}
& M :=\\
 & \sup_{x \neq y; x, y \in \Omega} \sum_{i=1}^I\frac{ |U(y) - U(x) - \sigma_i(x) (V_i(y, a_i(x)) - V_i(x, a_i(x))) - \nu(x) (y-x)_1| }{d^{2\alpha}(x,y)}
 \end{split}
\end{align}
is finite. As emphasized in the next section, here $d(\cdot, \cdot)$ represents the parabolic metric on $\R^2$ given by \eqref{distance_defn}. 

\end{definition}

\begin{remark}[Modelledness notational conventions] 
\label{remark_model}
We say that $U$ is ``trivially modelled'' on $\Omega$ if $U \in C^{2\alpha} (\Omega)$, since then we may take $\sigma_i = 0$ and $\nu = \partial_1 U$. When we do not explicitly specify the $\sigma_i$ in the modelling of a function, then $\sigma_i = 1$. Also, for brevity, instead of saying that a function ``is modelled after a family $\left\{ v(\cdot, a_0) \right\}_{a_0 \in [\lambda, 1]}$'', we simply say that it is ``modelled after $v$''. 
\end{remark}

While the interested reader may consult \cite{OW} for a complete accounting of the motivation for this definition, we remark that the concept of ``modelled after'' is essentially a higher dimensional version of ``controlled by'' in the work of Gubinelli \cite{G}. Correspondingly, the $\sigma_i$ in Definition \ref{model} correspond to the \textit{Gubinelli derivative}. ``Modelledness'' as defined above also finds root in the theory of regularity structures: It can be seen as a quasilinear adaptation of the notion of a ``modelled distribution'' \cite[Definition 3.1]{H}. We remark that the definition of ``modelledness'' in Definition \ref{model} is so simple because we are in the mildly singular setting of $\alpha \in (\frac{2}{3},1)$ --in comparison, in the full sub-critical regime considered in \cite{OSSW_1,OSSW_2} it is necessary to consider a notion based on multi-indices (in analogue to trees in regularity structures).

\begin{remark}[Modelling for additive vs. multiplicative noise] Since we only consider \eqref{IVP_intro} with additive noise, in our setting --in contrast to that in \cite{OW}-- the modelling of the solution $U$ will be with respect to $\sigma_i = 1$. We only use general $\sigma_i$ twice: in the application of \cite[Lemma 3.2]{OW} to obtain a modelling for $a(U)$ (in the fixed-point argument contained in the proof of Theorem \ref{theorem_nonlinear}) and in Step 6 of Proposition \ref{linear_forcing}. 
\end{remark}

As already indicated above, to define the nonlinear terms in \eqref{OW_intro}, Otto and Weber have a probabilistic and deterministic step:

\vspace{.4cm}

\noindent $\bullet$ \textit{Probabilistic Step:}  Letting $f$ be random and denoting by $v_{\textrm{OW}}(\cdot, a_0)$ the solution of \eqref{OW_intro} with coefficients frozen at $a_0$ --the subscript $``\textrm{OW}"$ indicating the lack of the massive term in \eqref{OW_intro}--, after a renormalization, they almost-surely obtain ``offline products'' $v_{\textrm{OW}}(\cdot, a_0) \diamond \partial_1^2 v_{\textrm{OW}}(\cdot, a_0^{\prime}) \in C^{\alpha -2}(\mathbb{R}^2)$ for $a_0, a_0^{\prime} \in [\lambda,1]$ with $\lambda>0$ such that 
\begin{align}
\label{commutator_condition}
\sup_{a_0, a_0^{\prime} \in [\lambda, 1]}   \sup_{0 \leq j,k \leq 2 }\sup_{T\leq1} (T^{\frac{1}{4}})^{2 - 2\alpha  } \| \partial^j_{a_0} \partial^k_{a^{\prime}_0} [\vp_{\textrm{OW}}(\cdot, a_0), (\cdot)_T] \diamond \partial_1^2 \vp_{\textrm{OW}}(\cdot, a_0^{\prime})  \|  \lesssim 1. \quad
\end{align} 
Here, as in the sequel, $(\cdot)_T$ denotes convolution with a certain kernel (specified in Section \ref{section:definitions}) at scale $T$. The commutator, in particular, is defined as
\begin{align}
\label{defn_commutator}
\begin{split}
&[\vp_{\rm{OW}}(\cdot, a_0), (\cdot)_T] \diamond \partial_1^2 \vp_{\rm{OW}}(\cdot, a_0^{\prime}) \\
 &:= \vp_{\rm{OW}}(\cdot, a_0) \partial_1^2 (\vp_{\rm{OW}}(\cdot, a_0^{\prime}))_T - (\vp_{\rm{OW}}(\cdot, a_0) \diamond \partial_1^2 \vp_{\rm{OW}}(\cdot, a_0^{\prime}))_T.
\end{split}
\end{align}
As discussed in \cite{OW}, the commutator condition \eqref{commutator_condition} is well-motivated by the previous literature on singular SDEs (see, \textit{e.g.}, \cite[Theorem]{G}). Indeed, \eqref{commutator_condition} should be thought of as a $C^{2\alpha -2}$-control of the commutator.

\vspace{.2cm}

\noindent $\bullet$ \textit{Deterministic Step:} Having access to a family $\left\{v_{\textrm{OW}}(\cdot, a_0) \diamond \partial_1^2 v_{\textrm{OW}}(\cdot, a_0^{\prime})\right\}_{a_0, a_0^{\prime} \in [\lambda,1]}$ satisfying \eqref{commutator_condition}, they then show that if $u, w \in C^{\alpha}(\R^2)$ are modelled after $v_{\textrm{OW}}$ (in the sense of Definition \ref{model} and Remark \ref{remark_model}), then it is possible to define $u \diamond \partial_1^2 w$ such that \eqref{commutator_condition} is preserved. In \cite{OW} this requires two lemmas, which, in analogue to Hairer's Reconstruction Theorem \cite[Theorem 3.10]{H}, are called ``reconstruction lemmas'' --their analogues here are Lemmas \ref{lemma:reconstruct_1} and \ref{lemma:reconstruct_2}.

\vspace{.4cm}

\noindent The solution space for \eqref{OW_intro} is then the space of space-time periodic functions in $C^{\alpha}(\mathbb{R}^2)$ that are modelled after $v_{\textrm{OW}}$. Taking the singular products described above as an input, Otto and Weber then construct a stable solution operator for \eqref{OW_intro} using a completely deterministic approach. 


\vspace{.5cm}

In this paper, injecting the framework of \cite{OW} with our perturbative ansatz, we search for solutions $\uf = \upa +\ui  \in C^{\alpha}(\R^2_+)$ of \eqref{IVP_intro} that are periodic in space and modelled after the family $\left\{ (\vp +\vi)(\cdot, a_0) \right\}_{a_0 \in [\lambda,1]}$, where $v(\cdot, a_0)$ denotes the space-time periodic solution of \eqref{periodic_problem_intro} with coefficients frozen at $a_0$ (see \eqref{periodic_mean_free}) and $\vi(\cdot, a_0)$ denotes the solution of \eqref{ivp_linear_intro} with frozen coefficients and initial condition $\uf_{int} - \vp(\cdot, a_0)$ (see \eqref{constant_coeff_ivp_body}). This means that we must define new ``offline products'' involving $\vi(\cdot,a_0)$ --However, as we will see, calling these new products ``offline'' is a bit misleading as all of the ingredients are actually classical. With respect to the definition of the nonlinear term in \eqref{ivp_linear_intro}, notice that the initial conditions of $\vi$ and $\ui$ do not match-up --this is a technical detail that we must handle when showing the modelling of $\ui$ after $\vi$.

 \subsection{Notation}

We remark that $\langle \cdot \rangle$ is used to denote taking an expectation --since this paper is purely deterministic, simply importing probabilistic results from \cite{OW} (or alternatively 
\cite{LOTT}), this notation is limited to Section \ref{OW_products}. When $f \in \mathcal{D}'(\R^2)$ is a regular distribution, we still use $\langle f , \varphi \rangle$ to denote the distribution applied to the test function --here $\langle \cdot, \cdot \rangle$ is, of course, the $L^2(\mathbb{R}^2)$ inner-product. Following, \textit{e.g.} \cite[Section 2.5, 2.]{Kl}, the convolution of $f$ with a Schwartz function $\psi$, is a smooth function defined as 
\begin{align}
\label{defn_convolution_w_dist}
 f \ast \psi (x) = f (\psi_T(x - \cdot)). 
\end{align}

When we say ``periodic'', the period will always be $1$ and is, therefore, not emphasized. For a distribution, periodicity of $f$ means that,  $\langle f, \varphi \rangle = \langle  f , \varphi(\cdot + p)\rangle$, where $p$ is the period. 

We use the notation $x_i := x \cdot e_i$ for a point $x \in \mathbb{R}^2$; in particular, $x = (x_1, x_2)$.  Furthermore, $\mathbb{R}^2_+ := \left\{ x \in \R^2 \, : \, x_2 >0 \right\}$ and  $\mathbb{R}^2_- := \left\{ x \in \R^2 \, : \, x_2 <0 \right\}$ --correspondingly, $\partial \mathbb{R}^2_+ = \left\{x \in \R^2 \, : \, x_2 = 0 \right\}$. Additionally, for $L>0$, we use $\mathbb{R}^2_L := \R \times (-\infty,-L]$. 

We write ``$\lesssim\,$'' to indicate ``$\leq C\,$'', where $C$ is a universal constant that usually may depend only on the ellipticity ratio $\lambda>0$. The notation $``\ll "$ means $``\leq c\,"$ for an arbitrarily small constant $c$. 

 Throughout this article, we will use the Einstein summation convention. We use $``\rightharpoonup"$ to denote weak convergence, the space is always clear given the context. For $r>0$, the parabolic ball of radius $r$ around $x$ is given by $B_{r}(x) := \left\{ y \in \R^2 \, | \, d(x,y) \leq r \right\}$, where $d(\cdot, \cdot)$ is defined in \eqref{distance_defn}.

In this paper functions/ distributions will either be defined on $\mathbb{R}^2$ or on $\mathbb{R}^2_+$. The domain is usually clear from the context and is, therefore, not mentioned. When the domain that a norm is taken over is slightly ambiguous we indicate it with a subscript. To given an example, we remark that $\| u \|_{\alpha; \R^2_+}$ is the $C^{\alpha}$-norm of $u$ on $\R^2_+$.

\section{Set-up and overview of our strategy}
\label{section:overview}

In the current section our goal is to formally state our results --this requires us to first introduce various notions. The build-up to our main results, which are contained in Section \ref{section:discussion} is regrettably slow --the reader familiar with \cite{OW} may skim through Sections \ref{section:definitions} - \ref{section:reconstruct} and mainly focus on Section \ref{section:discussion}.

As already remarked above, our basic strategy is to construct the solution $U$ of \eqref{IVP_intro} via a perturbation of the space-time periodic theory of \cite{OW}; in particular, we make the ansatz $U = u + \ui$, for $u$ and $\ui$ solving \eqref{periodic_problem_intro} and \eqref{ivp_linear_intro} respectively. In Section \ref{section:definitions} we discuss the expected modelling of $U$ --and also introduce definitions and notions that we will use throughout. In Section \ref{OW_products} we discuss the singular products $v(\cdot, a_0) \diamond \partial_1^2 v(\cdot, a_0')$ --in particular, the stochastic results of \cite{OW} which we import to our setting (see also remark \ref{drop_time_period}). In Section \ref{section:new_products} we introduce the new ``offline'' (actually classical) products --resulting from the introduction of the ``initial boundary layer''. In Section \ref{section:reconstruct}, we give the variants of the reconstruction lemmas from \cite{OW} that we use --the proofs for all these results are analogous to the arguments in \cite{OW}. Since there are minor differences, we provide proofs in abbreviated form in Section \ref{section:reconstruct_proof}. The core results and strategy of this paper are introduced in Section \ref{section:discussion} --the main novel contribution being Theorem \ref{linear_theorem}, which treats the linearized problem \eqref{IVP_intro_linear}. Within the treatment of \eqref{IVP_intro_linear} the interesting part is the ``initial boundary layer'' -- in Proposition \ref{ansatz_IVP} an ansatz for $\ui$ is analyzed and in Proposition \ref{linear_IVP} the ansatz is corrected to solve \eqref{ivp_linear_intro}.

\subsection{Definitions and tools}
\label{section:definitions}

\textit{Modelling and freezing of the non-linearity:} We have already introduced the concept of modelledness (Definition \ref{model}) and have explained that we expect the solution $\uf$ of \eqref{IVP_intro} to be modelled after $\vf(\cdot,a_0)$ solving 
\begin{align}
\begin{split}
\label{frozen_main}
(\partial_2 - a_0 \partial_1^2 + 1) \vf(\cdot,a_0) & = f  \hspace{3.2cm} \textrm{in} \quad \mathbb{R}^2_+, \\
\vf(\cdot,a_0)& = \uf_{int}  \hspace{2.75cm}   \textrm{on} \quad \partial \mathbb{R}^2_+,
\end{split}
\end{align}
where this function decomposes as 
\begin{align}
\label{decompose_modelling}
\vf(\cdot, a_0) = \vp(\cdot, a_0) + \vi(\cdot, a_0, \uf_{int} - \vp(a_0)).
\end{align}
Here, we use the following convention:

\begin{definition}[Parameterized constant coefficient solutions]
\label{constant_soln}
Let  $a_0 \in [\lambda, 1]$ for some $\lambda>0$ and $ \vi_{int}(\cdot, a_0) \in C^{\alpha}(\mathbb{R})$ be periodic in space. Furthermore, let $\vp(\cdot, a_0) \in C^{\alpha}(\mathbb{R}^2)$ denote the space-time periodic solution of 
\begin{align}
\label{periodic_mean_free}
\begin{split}
(\partial_2 - a_0 \partial_1^2 +1) \vp(\cdot, a_0)& = f \quad \quad \quad  \textrm{ in } \quad \mathbb{R}^2
\end{split}
\end{align}
and $\vi(\cdot, a_0, \vi_{int}(a_0)) \in C^{\alpha}(\mathbb{R}^2_+)$ denotes the solution of
\begin{align}
\begin{split}
\label{constant_coeff_ivp_body}
(\partial_2 - a_0 \partial_1^2 + 1 )\vi (\cdot, a_0,  \vi_{int}( a_0)) & = 0  \hspace{3cm}   \textrm{in} \quad \mathbb{R}^2_+,\\
 \vi(\cdot, a_0,  \vi_{int}( a_0)) & = \vi_{int}(\cdot, a_0) \hspace{1.7cm}  \textrm{on} \quad  \partial \mathbb{R}^2_+,
\end{split}
\end{align}
which is periodic in space. 

For two right-hand sides $f_i$ or two initial conditions $U_{int,i}$ with $i=0,1$, the corresponding solutions of \eqref{periodic_mean_free} and \eqref{constant_coeff_ivp_body} are denoted by $\vp_i(\cdot, a_0)$ and $\vi_i(\cdot, a_0)$ respectively. 
\end{definition}

\noindent In the sequel, for brevity, when the forcing $f \in C^{\alpha-2}(\mathbb{R}^2)$ and initial condition $U_{int} \in C^{\alpha}(\mathbb{R})$ are fixed, we use the notation
\begin{align}
\label{vi_short}
\vi(\cdot, a_0) :=\vi(\cdot,a_0, \uf_{int} - \vp(a_0)),
\end{align}
which allows us to rewrite \eqref{decompose_modelling} as 
\begin{align}
\begin{split}
\label{decompose_modelling_short}
\vf(\cdot, a_0) = (\vp + \vi)(\cdot, a_0).\\
\end{split}
\end{align}

\noindent \textit{Norms and Seminorms:} 
We are interested in regularity in terms of \textit{parabolic} H\"{o}lder spaces. In particular, when we write $C^{\alpha}(\mathbb{R}^2)$ or $C^{\alpha}(\mathbb{R}^2_+)$  for $\alpha \in (0,1)$ we are referring to the H\"{o}lder space that is defined in terms of the Carnot-Carath\'{e}odory metric induced by the parabolic operator $\partial_2-a_0\partial_1^2$ on $\mathbb{R}^2$, given by
\begin{align}
\label{distance_defn}
d(x,y) := |x_1 - y_1 | + |x_2 - y_2|^{\frac{1}{2}}
\end{align}
for $x,y \in \mathbb{R}^2$. Of course, $C^{\alpha}(\mathbb{R})$ refers to the standard H\"{o}lder space. 

We will use the typical convention that for $\beta \in (1,2)$ one defines
\begin{align}
\label{definition_beta_12}
[u]_{\beta} := [\partial_1 u ]_{\beta -1}
\end{align}
and analogously for $\beta \in (2,3)$ we have 
\begin{align}
\label{definition_beta_12.2}
[u]_{\beta} := [\partial_1^2 u]_{\beta -2} + [\partial_2 u]_{\beta -2}.
\end{align}

Throughout this paper we use the notation
\begin{align*}
\|u \| : = \sup_{x \in \R^2} | u(x)|.
\end{align*}
If we have a family of functions $\left\{u(\cdot, a_0, a_0^{\prime})\right\}_{a_0, a_0^{\prime} \in [\lambda, 1]}$, then we use the convention 
\begin{align*}
\|u\| : = \displaystyle\sup_{a_0, a_0^{\prime} \in [\lambda, 1]} \|u(\cdot, a_0, a_0^{\prime})\| .
\end{align*}

We also define a negative H\"older norm:

\begin{definition}[Negative H\"{o}lder norm]
\label{negative_norm}
Let $\alpha \in (0,1) \cup (1,2)$. We define the $C^{\alpha - 2}$-norm of a distribution $u$ as 
\begin{align}
\label{definition_neg_beta}
[u]_{\alpha - 2} := \inf_{(u^1, u^2, u^3)}\Big( [u^1]_{\alpha} +  [u^2]_{\alpha} + [u^3]_{\alpha} + \|u^3\|\Big),
\end{align}
where the infimum is taken over triplets of functions $(u^1, u^2, u^3)$ such that $ u = \partial_1^2 u^1 + \partial_2 u^2 + u^3$.
\end{definition}
\noindent  Notice that even though we choose to use a seminorm notation on the left-hand side of  \eqref{definition_neg_beta}, thanks to the $\| u^3\|$-term on the right-hand side, this is actually a norm.

At one point in our arguments it is necessary to use a local version of the $C^{\alpha}$-seminorm. Here is the definition:

\begin{definition}[Local H\"{o}lder seminorm]
\label{local_seminorm} Let $\alpha \in (0,1)$. We define the local $C^{\alpha}$-seminorm of a function $u$ as 
\begin{align}
\label{definition_beta_01_loc}
[u]_{\alpha}^{loc} : = \sup_{d(x,y) \leq 1} \frac{|u(x) - u(y)|}{d^{\alpha}(x,y)}.
\end{align}
\end{definition}

For a family of functions $\left\{u(\cdot, a_0, a_0^{\prime})\right\}_{a_0, a_0^{\prime} \in [\lambda, 1]}$, we use the notation 
\begin{align}
\label{parameter_derivs_1}
\| u  \|_{j,k}  : = \sup_{m \leq j} \sup_{n \leq k} \| \partial_{a_0}^m \partial_{a_0^{\prime}}^n u \| \quad \textrm{and} \quad \|u  \|_j  : =  \sup_{m \leq j} \| \partial_{a_0}^m  u \|.
\end{align}
We use the same convention for the $C^{\alpha}$-norm and seminorm; \textit{i.e.}, we write
\begin{align}
\begin{split}
\label{parameter_derivs_2}
\| u   \|_{\alpha, j,k} & : = \sup_{m \leq j} \sup_{n \leq k}  \| \partial_{a_0}^m \partial_{a_0^{\prime}}^n u \|_{\alpha},\\
\|u  \|_{\alpha,j} & : =  \sup_{m \leq j}  \| \partial_{a_0}^m  u \|_{\alpha},\\
[ u  ]_{\alpha, j,k} & : =  \sup_{m \leq j} \sup_{n \leq k}  [ \partial_{a_0}^m \partial_{a_0^{\prime}}^n u]_{\alpha}, \\
\textrm{and} \quad [u  ]_{\alpha,j} & : =  \sup_{m \leq j} [ \partial_{a_0}^m u ]_{\alpha}.
\end{split}
\end{align}
Similar notation can be introduced for the local H\"{o}lder seminorm from Definition \ref{local_seminorm}.\\

\noindent \textit{Convolution kernel:} Throughout many of our arguments we rely on regularization via convolution with a specific kernel. The convolution kernel that we use is the same as that in \cite{OW} and is most easily defined (up to a normalizing multiplicative constant $C \in \R$) in terms of its Fourier transform: 
\begin{align}
\label{definition_psi_T}
\hat{\psi}_T (k):=  C \exp(-T(k_1^4 + k_2^2)).
\end{align}
This definition implies that $\psi_T$ is a positive Schwartz function. This kernel is chosen because it is the semigroup associated to the operator $\partial_1^4 - \partial_2^2$, which is positive and has the same relative scaling as $\partial_2 - \partial_1^2$. Usually, we will use the convention 
\begin{align*}
 (\cdot)_T = \cdot \ast \psi_T;
 \end{align*} 
occasionally, we even drop the parentheses and simply use the subscript $T$.

We now list and prove some useful properties of $\psi_T$. We will use the change of coordinates 
\begin{align}
\label{standard_rescaling}
\hat{x} = (\hat{x}_1, \hat{x}_2) = \Big( \frac{x_1}{T^{\frac{1}{4}}}, \frac{x_2}{T^{\frac{1}{2}}} \Big).
\end{align}
Fix $T>0$, here is the list of properties of $\psi_T$: 

\begin{itemize}
\item Using \eqref{definition_psi_T} and \eqref{standard_rescaling} we find that 
\begin{align}
\label{scaling}
\psi_T(x_1, x_2) =(T^{\frac{1}{4}})^{-3} \psi_1\left( \hat{x}_1, \hat{x}_2 \right).
\end{align}
Therefore, assuming that $C = \| \psi_1 \|_{L^1}^{-1}$ in \eqref{definition_psi_T}, we obtain $\| \psi_T  \|_{L^1} = \| \psi_1\|_{L^1} = 1$.

\item \textit{(Bound on the moments of $\psi_T$)} For any $i,j \geq 0$, $\alpha \geq 0$, and $y \in \mathbb{R}^2$ we have that 
\begin{align}
\label{moment_bound}
 \int_{\mathbb{R}^2} d^{\alpha}(x,y)  |\partial^i_1 \partial_2^j \psi_T(x - y)|\,\textrm{d}x \lesssim  (T^{\frac{1}{4}})^{\alpha - i - 2j }.
\end{align}

To see this we may assume that $y = 0$, after which rescaling with \eqref{standard_rescaling} gives
\begin{align*}
 \int_{\mathbb{R}^2} d^{\alpha}(x,0)   |\partial^i_1 \partial_2^j \psi_T(x )|\,\textrm{d}x & = (T^{\frac{1}{4}})^{\alpha - i - 2j } \int_{\mathbb{R}^2}   d^{\alpha}(\hat{x},0)   |\partial^i_1 \partial_2^j \psi_1(\hat{x})|\,\textrm{d} \hat{x}.
\end{align*}
The fact that $\psi_1$ is a Schwartz function yields \eqref{moment_bound}.

\item \textit{(Semigroup property of $\psi_T$)} For a distribution $u$ and two scales $t, T>0$, we have that
$ (u \ast \psi_t ) \ast \psi_T = u \ast (\psi_{t} \ast \psi_T)$ and, by \eqref{definition_psi_T}, that $\psi_t \ast \psi_T = \psi_{t+ T}$. Combining these two yields
\begin{align}
\label{semigroup}
(u_{t})_T = u_{t+T}.
\end{align}

\item For any $i,j \geq 0$ such that $i+j \geq 1$ and $u \in C^{\alpha}(\mathbb{R}^2)$, by \eqref{moment_bound} we have that 
\begin{align}
\label{alpha_kernel_bound}
\begin{split}
\left| \int_{\mathbb{R}^2} \partial_1^i \partial_2^j u(y) \psi_T(x - y) \, \textrm{d}y \right|& \leq \left| \int_{\mathbb{R}^2} (u(y) - u(x)) \partial_1^i \partial_2^j  \psi_T(x - y) \, \textrm{d}y \right| \\
& \leq [u]_{\alpha} \int_{\mathbb{R}^2} d^{\alpha}(x,y) |\partial_1^i \partial_2^j  \psi_T(x - y)| \, \textrm{d}y\\
& \lesssim [u]_{\alpha} (T^{\frac{1}{4}})^{\alpha -i - 2j}.
\end{split}
\end{align}
Notice that we have again used that $\psi_T$ is a Schwartz function (in the first line).

\item  \textit{(Monotonicity of the $L^{\infty}$-norm in terms of the convolution scale)} For a distribution $u$ and $T\geq t >0$ it holds that
\begin{align}
\label{monotonicity}
\begin{split}
\| u \ast \psi_{T}\| \lesssim \| u \ast \psi_t \| \| \psi_{T-t}\|_{L^1} =\| u \ast \psi_t \|,
\end{split}
\end{align}
where we have used \eqref{semigroup}, Young's inequality for convolutions, and \eqref{scaling}.
\end{itemize}

While Definition \ref{negative_norm} gives the standard notion that we use for the $C^{\alpha-2}$-norm, we often also need an equivalent formulation, which is developed in Lemma \ref{equivnorm} below and relies on convolution with $\psi_T$ at scales $T \leq 1$. 
\begin{lemma}[Equivalent $C^{\alpha -2}$-norm]
\label{equivnorm}
Let  $\alpha \in (0,1)$, then a distribution $f$ on $\mathbb{R}^2$ satisfies  
\begin{align}
\label{equiv_negative_1}
 [ f ]_{\alpha-2} \sim \displaystyle\sup_{T \leq 1} (T^{\frac{1}{4}})^{2 - \alpha} \| f_T \|.
\end{align}
\end{lemma}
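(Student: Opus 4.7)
The plan is to prove the two inequalities hidden in the $\sim$ of \eqref{equiv_negative_1}. For the direction $\sup_{T \leq 1}(T^{1/4})^{2-\alpha}\|f_T\| \lesssim [f]_{\alpha-2}$, I would fix any decomposition $f = \partial_1^2 u^1 + \partial_2 u^2 + u^3$ almost realizing the infimum in \eqref{definition_neg_beta}, convolve with $\psi_T$, and apply \eqref{alpha_kernel_bound} with $(i,j) = (2,0)$ and $(i,j) = (0,1)$ to bound $\|\partial_1^2 u^1_T\| + \|\partial_2 u^2_T\| \lesssim ([u^1]_\alpha + [u^2]_\alpha)(T^{1/4})^{\alpha - 2}$. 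Young's inequality together with $\|\psi_T\|_{L^1} = 1$ gives $\|u^3_T\| \leq \|u^3\|$, and since $(T^{1/4})^{2-\alpha} \leq 1$ for $T \leq 1$, summing the three bounds, taking the supremum over $T \leq 1$ and then the infimum over decompositions delivers the desired estimate.

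For the reverse direction, set $M := \sup_{T \leq 1}(T^{1/4})^{2-\alpha}\|f_T\|$. Differentiating \eqref{definition_psi_T} in $T$ gives the semigroup identity $\partial_T \psi_T = -\partial_1^4 \psi_T + \partial_2^2 \psi_T$, hence $\partial_T f_T = -\partial_1^4 f_T + \partial_2^2 f_T$. Since $\int \psi_T = 1$ and $\psi_T$ is an approximation of the identity (thanks to the parabolic scaling \eqref{scaling}), $f_T \to f$ in $\mathcal{S}'$ as $T \to 0^+$, so integrating the above identity on $[0,1]$ produces the decomposition
\begin{align}
f = \partial_1^2 u^1 + \partial_2 u^2 + u^3, \qquad u^1 := \int_0^1 \partial_1^2 f_T\,dT,\ \ u^2 := -\int_0^1 \partial_2 f_T\,dT,\ \ u^3 := f_1.
\end{align}
Writing $\partial_1^i \partial_2^j f_T = f_{T/2} \ast \partial_1^i \partial_2^j \psi_{T/2}$ and combining \eqref{moment_bound} (with $\alpha = 0$) with the monotonicity \eqref{monotonicity} yields the uniform derivative estimate $\|\partial_1^i \partial_2^j f_T\| \lesssim M(T^{1/4})^{\alpha - 2 - i - 2j}$. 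Specializing to $T = 1$ gives $\|f_1\|, \|\partial_1 f_1\|, \|\partial_2 f_1\| \lesssim M$, whence $[f_1]_\alpha \lesssim M$ by the mean-value theorem on pairs with $d(x,y) \leq 1$ and the trivial bound $2\|f_1\|$ on pairs with $d(x,y) \geq 1$. The integrals defining $u^1$ and $u^2$ converge in $L^\infty$ precisely because $\alpha > 0$.

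The main obstacle is the H\"older estimate $[u^1]_\alpha \lesssim M$, with $u^2$ analogous. The pointwise bound on $\partial_1^2 f_T$ blows up at $T = 0$, so one must carefully balance the crude and the derivative estimates at the natural parabolic scale $T^{1/4} \sim d(x,y)$. Concretely, for $d := d(x,y) \leq 1$ I would split $u^1(x) - u^1(y) = \int_0^{d^4} + \int_{d^4}^1$; on the first range the crude bound $|\partial_1^2 f_T(x) - \partial_1^2 f_T(y)| \lesssim M(T^{1/4})^{\alpha - 4}$ suffices, while on the second range the mean-value-type bound
\begin{align}
|\partial_1^2 f_T(x) - \partial_1^2 f_T(y)| \lesssim M\bigl(|x_1 - y_1|(T^{1/4})^{\alpha - 5} + |x_2 - y_2|(T^{1/4})^{\alpha - 6}\bigr)
\end{align}
combined with $|x_1 - y_1| \leq d$ and $|x_2 - y_2| \leq d^2$ does the job. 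After the substitution $s = T^{1/4}$ each piece reduces to an explicit power integral that evaluates to $\lesssim M d^\alpha$ thanks to $\alpha \in (0,1)$; the case $d \geq 1$ follows from the trivial bound $|u^1(x) - u^1(y)| \leq 2\|u^1\| \lesssim M$. This completes $[u^1]_\alpha \lesssim M$ and closes the argument.
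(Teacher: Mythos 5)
Your proof is correct, and for the hard direction it takes a genuinely different route from the paper. Both arguments start from the semigroup identity $\partial_T f_T=-\partial_1^4f_T+\partial_2^2f_T$, but the paper integrates over $T\in(0,\infty)$ using the \emph{massive} kernel $e^{-T}\psi_T$: this produces a global solution $u=\int_0^\infty e^{-T}f_T\,\mathrm{d}T$ of $(\partial_1^4-\partial_2^2+1)u=f$, whence the decomposition $f=\partial_1^2(\partial_1^2u)+\partial_2(-\partial_2u)+u$, and the H\"older bounds for the three pieces are then read off from a separately established Besov-type characterization $[g]_\alpha\lesssim\sup_{T}(T^{1/4})^{-\alpha}\|T\mathcal{A}g_T\|$. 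You instead truncate the flow at $T=1$, which removes any convergence issue at $T\to\infty$ and any need for the massive term: the tail is absorbed into the zeroth-order piece $u^3=f_1$, and you prove the H\"older bounds for $u^1,u^2$ by hand, splitting the $T$-integral at the parabolic scale $T=d^4(x,y)$ and balancing the crude $L^\infty$ bound against the one-higher-derivative bound. The scaling arithmetic checks out ($\int_0^{d^4}T^{(\alpha-4)/4}\,\mathrm{d}T\sim d^\alpha$ needs $\alpha>0$; $\int_{d^4}^1T^{(\alpha-5)/4}\,\mathrm{d}T\sim d^{\alpha-1}$ needs $\alpha<1$), as does the convergence of $u^1,u^2$ in $L^\infty$. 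The net effect is a more elementary and self-contained argument; what the paper's version buys is a reusable intermediate lemma that handles $[\partial_1^2u]_\alpha$, $[\partial_2u]_\alpha$ and $[u]_\alpha$ in one stroke. The only point you should make explicit is the justification of the limiting identity $f=\partial_1^2u^1+\partial_2u^2+f_1$: integrate over $[\epsilon,1]$, where everything is classical, and let $\epsilon\to0$ using $f_\epsilon\to f$ in $\mathcal{S}'$ and the $L^\infty$-convergence of the truncated integrals; this is routine but worth a sentence since $\partial_1^4f_T$ itself is not integrable in $T$ near $0$.
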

\noindent This lemma is an analogue of \cite[Lemma A.1]{OW}, the proof of which is not immediately adaptable to our setting due to the loss of periodicity in the $x_2$-direction. Instead, one can adapt an argument from a work by Ignat and Otto \cite{IO} --the proof of Lemma \ref{equivnorm} is contained in Section \ref{section:equivnorm}.

We use the notation 
\begin{align}
\label{convolution_notation_neg}
\|f\|_{-\beta}:= \displaystyle\sup_{T \leq 1} (T^{\frac{1}{4}})^{\beta} \| f_T \| 
\end{align}
for $\beta>0$.

The alternate formulation of the $C^{\alpha-2}$-norm is useful when working with the singular products: The family $\left\{ v(\cdot, a_0) \diamond \partial_1^2 v(\cdot, a_0^{\prime}) \right\}_{a_0, a_0^{\prime} \in [\lambda,1 ]}$ should satisfy an estimate of the form \eqref{commutator_condition}. We abbreviate \eqref{commutator_condition} as
\begin{align}
\label{commutator_condition_short}
  \sup_{T \leq 1} (T^{\frac14})^{2-2\alpha}  \| [\vp, (\cdot)_T ]  \diamond \partial_1^2 \vp\|_{2,2}  =:  \| [\vp, (\cdot) ]  \diamond \partial_1^2 \vp\|_{2 \alpha -2, 2,2} \lesssim 1,
\end{align}
where we additionally use the convention \eqref{parameter_derivs_1}. Following Lemma \ref{equivnorm}, we interpret \eqref{commutator_condition_short} as a $C^{2\alpha-2}$-control for the commutator.  As expounded on in Section \ref{OW_products}, the family of offline products satisfying \eqref{commutator_condition_short} exists almost-surely for a certain class of random $f$. The equivalence in Lemma \ref{equivnorm} is, in particular, used to prove the reconstruction lemmas --which are given in Section \ref{section:reconstruct} and are proven in Section \ref{section:reconstruct_proof}. Here, one passes to the limit (up to subsequences) in sequences of distributions that are uniformly controlled in the sense of the right-hand side of \eqref{equiv_negative_1}. For this, one must rely on compactness in the H\"{o}lder space on the left-hand side of \eqref{equiv_negative_1} --which follows from Definition \ref{negative_norm}.\\

\noindent \textit{A hierarchy of norms:} There is a natural hierarchy of norms. We measure:
\begin{itemize}
\item functions (\textit{e.g.}, the solution $\uf$ or the initial condition $\uf_{int}$ of \eqref{IVP_intro}) in $C^{\alpha}$,
\item distributions (\textit{e.g.}, the forcing $f$ or the singular product $a(\uf) \diamond \partial_1^2 \uf$ in \eqref{IVP_intro}) in $C^{\alpha -2}$,
\item and commutators (\textit{e.g.}, $[a(\uf), (\cdot)] \diamond \partial_1^2 \uf$) in $C^{ 2 \alpha -2}$.\\
\end{itemize}

\noindent \textit{Extensions to negative times:} In order for our arguments to make sense, it will often be necessary to extend various functions defined only for positive times to negative times. We will do this in two ways:
\begin{definition}[Extensions to negative times]
\label{extensions}
For a function $f$ defined on $\mathbb{R}^2_+ \cup \partial \mathbb{R}^2_+$,  we use $\tilde{f}$ to denote the even-reflection across the axis $\left\{ x_2 = 0 \right\}$ and $f^E$ to denote the trivial extension by $0$. So, in particular, we have that 
\begin{align*}
\tilde{f}(x) : = f(\tilde{x}),
\end{align*}
where we use the convention $ \tilde{x} = (x_1, |x_2|)$ for $x = (x_1, x_2)$, and 
\begin{align*}
f^E(x) : = \begin{cases}
f(x) \quad \quad \quad  &\textrm{if} \quad x \in \mathbb{R}^2_+\\
0 \quad \quad \quad & \textrm{if} \quad x \in \mathbb{R}^2_-.
\end{cases}
\end{align*}
\end{definition}
\noindent Notice that, for $\alpha \in (0,1)$, if $f \in C^{\alpha}(\R^2_+)$, then $\tilde{f} \in C^{\alpha}(\R^2)$. 

\begin{remark}[Usage of the extensions] We use the even-reflection defined above, \textit{e.g.}, in the construction of the new offline products --see Corollaries \ref{new_family_1} and \ref{new_family_2} in Section \ref{section:new_products} below. The reason is we want an analogue of \eqref{commutator_condition} (equivalently \eqref{commutator_condition_short}) to be satisfied --and a function being convolved with $\psi_T$ should be defined on the whole-space. 
The trivial extension is used in the construction of $w$ in the proof of Proposition \ref{linear_IVP} --due to the application of Lemma \ref{KrylovSafonov}, which again has a whole-space character. We use the trivial extension in the construction of $w$ to ensure that $w$ has $0$ initial condition. 
\end{remark}

\subsection{Usage of the periodic offline products }

\label{OW_products}
The point of this section is to demonstrate the applicability of our main theorems --Theorems \ref{linear_theorem} and \ref{theorem_nonlinear}, which are stated in Section \ref{section:discussion}.

As in \cite{OW}, we now assume that $f$ is random --sampled from a class of stationary, space-time periodic, and centered Gaussian distributions. The regularity conditions on the stationary $f$ are expressed in terms the discrete Fourier transform $\hat C$ of its covariance function.
Namely, we postulate that there are $\lambda_1,\lambda_2\in \mathbb{R}$ and $\alpha'\in (\frac14,1)$ such that
\begin{align}\label{A1}
 \hat{C}(k)  &\leq \frac{1}{(1+|k_1|)^{\lambda_1} (\sqrt{1+|k_2|})^{\lambda_2}},  \qquad k =(k_1,k_2) \in (2\pi \mathbb{Z})^2,\\
\notag
\lambda_1 +  \lambda_2 &= -1 + 2 \alpha'  \qquad  \lambda_1, \frac{\lambda_2}{2} < 1.
\end{align}
We refer to \cite[Section 3]{OW} for a discussion of admissible $f$, but note that this class includes, \textit{e.g.\,}, the case that 
$f$ is ``white'' in the time-like variable $x_2$ and has covariance operator $(1+ |\partial_{1}|)^{-\lambda_1}$ for $\lambda_1 >\frac13$ in the $x_1$ variable.

For such $f$, the construction of the $\left\{v(\cdot, a_0)  \diamond \partial_1^2 v(\cdot, a_0^{\prime})\right\}_{a_0, a_0^{\prime} \in [\lambda,1]}$, where we use Definition \ref{constant_soln}, necessitates a renormalization procedure. More precisely, let $\psi'$ be an arbitrary positive, $L^1$-normalized Schwartz function and set $\psi^{\prime}_\varepsilon(x_1,x_2)=\frac{1}{\varepsilon^\frac{3}{4}}\psi_1^{\prime}(\frac{x_1}{\varepsilon^\frac{1}{4}},\frac{x_2}{\varepsilon^\frac{1}{2}})$. Then, for $f_\varepsilon = f \ast \psi'_\varepsilon$ and $a_0 \in [\lambda, 1]$, we let $v_\varepsilon(\cdot, a_0)$ solve
$(\partial_2 -a_0\partial_1^2 + 1)v_\varepsilon(\cdot, a_0)=f_\varepsilon$ and construct $v(\cdot, a_0)  \diamond \partial_1^2 v(\cdot, a_0^{\prime})$  as
\begin{align}
\begin{split}\label{renormalized_products}
v( \cdot, a_0) \diamond \partial_1^2 v(\cdot, a_0') 
:=& 
\lim_{\varepsilon \to 0} 
	\big( 	
		v_\varepsilon( \cdot, a_0) \partial_1^2 v_\varepsilon(\cdot, a_0')  - 
		\big\langle 
			v_\varepsilon( \cdot, a_0) \partial_1^2 v_\varepsilon(\cdot, a_0') 
		\big\rangle 
	\big),
\end{split}
\end{align} 
the existence of this limit being part of the assertion of the proposition below (recall that the notation $\langle \cdot \rangle$ denotes taking the expectation).
In general, the expectation $\langle v_\varepsilon( \cdot, a_0) \partial_1^2 v_\varepsilon(\cdot, a_0')\rangle$ diverges as $\varepsilon \rightarrow 0$, but we mention that no renormalization procedure is needed if $f$ is ``white'' in $x_1$ and ``trace-class'' in $x_2$.

\medskip

The results of \cite{OW}, adapted to our setting, can be summarized as:
\begin{proposition}
\label{offline_products}
Let $\alpha'\in(\frac14,1)$ and let $f$ be a centered, space-time periodic, stationary Gaussian random distribution satisfying the regularity assumption \eqref{A1}. Let $f_\varepsilon = f \ast \psi'_\varepsilon$ be as described above. We use the notation from Definition \ref{constant_soln}. Furthermore, suppose that $p<\infty$, $n,m\ge 0$ and $\alpha < \alpha'$.

Then the renormalized product \eqref{renormalized_products} converges almost surely and in every stochastic $L^p$ space uniformly in $a_0$, $a_0'$ with respect to the $C^{\alpha-2}$-norm. Furthermore, we find that 
\begin{align}
\label{FinalStoch1AAA}
 \Big\langle \Big(  \| f \|_{\alpha-2} \Big)^p \Big\rangle^{\frac{1}{p}} \lesssim 1 \quad \textrm{and} \quad 
    \Big\langle \Big( \| [ v, ( \cdot)] \diamond \partial_1^2 v   \|_{2\alpha-2,n,m}\Big)^p \Big\rangle^{\frac{1}{p}} \lesssim 1,
\end{align}
where the universal constants depend only on $\lambda_1$, $\lambda_2$, $p$, $n$, $m$, $\alpha$, the ellipticity contrast $\lambda$ and the choice of the regularizing kernel $\psi'$.
\end{proposition}
Observe that, in contrast to \cite{OW}, we do not impose the condition $\hat{C}(0) = 0$ corresponding to a mean-free condition on $f$. On the same token, the functions $v$ and $v_\varepsilon$ are solutions with respect to the operator $\partial_2 - a_0 \partial_1^2 + 1$ instead of $\partial_2 - a_0 \partial_1^2$, which would incur the additional subscript ``$\textrm{OW}$'' as indicated in the introduction. An inspection of the argument for \cite[Lemma 4.1]{OW} yields that these two modifications actually compensate for each other --this is because the massive term gives an additional factor of $e^{-x_2}$ in the Green's function. We find, in particular, that the relevant results carry over to our setting --and do not give an explicit proof of Proposition \ref{offline_products} here.

\begin{remark}[Dropping the time periodicity of $f$]
\label{drop_time_period} 
The only place where $f$ being periodic in time is essential is in Proposition \ref{offline_products} --although it is convenient also in some of our deterministic arguments. In particular, it is used in the current proof of Lemma \ref{small_v} --where, however, it is nonessential (one could drop the assumption of time periodicity and use an argument similar as that for Lemma \ref{equivnorm}). We remark that since the original appearance of this manuscript the contents of Proposition \ref{offline_products} (and much more) have been proven for a rather general class of noises satisfying a spectral gap assumption --see \cite{LOT}. Since, via minor modifications of our arguments, we very much expect Theorems \ref{linear_theorem} and \ref{theorem_nonlinear} to hold under the dropping of periodicity in the time direction, they will also be applicable almost-surely for random $f$ as considered in \cite{LOT}. 
\end{remark}


\subsection{New ``offline'' products} 
\label{section:new_products}

For $a_0, a_0^{\prime} \in [\lambda, 1]$, we construct two new types of generalized products:
\begin{align} 
\vp(\cdot, a_0) \diamond \partial_1^2\tilde{\vi}(\cdot, a_0^{\prime})  \qquad \textrm{and} \qquad \tilde{\vi}(\cdot, a_0) \diamond \partial_1^2 \vp(\cdot, a_0^{\prime}),
\end{align}
where $\tilde{\vi}(\cdot, a_0)$ is the even-reflection of the function defined in \eqref{vi_short} and $\vp(\cdot, a_0)$ solves \eqref{periodic_mean_free}. Each of these families should satisfy a $C^{2\alpha -2}$- commutator estimate similar to \eqref{commutator_condition_short}.  These new ``offline'' products along with those from Section \ref{OW_products} and the two reconstruction lemmas (see Section \ref{section:reconstruct}) make it possible to give meaning to the nonlinear term in \eqref{IVP_intro}.

As already mentioned, the new ``offline'' products are, in fact, constructed classically --not requiring any probabilistic tools, but instead relying on the following estimates for the constant coefficient solutions from Definition \ref{constant_soln}. We start by compiling bounds for $\vi(\cdot, a_0, \vi_{int}( a_0))$. Here, we rely on the heat kernel formulation of $\vi(\cdot, a_0, \vi_{int}(a_0))$, \textit{i.\,e.\,}, using the notation 
\begin{align}
\label{heat_kernel}
G(x_1, x_2, a_0) :=  \frac{1}{(4 \pi a_0 x_2)^{\frac{1}{2}}}  e^{\frac{-x_1^2}{4 x_2 a_0} - x_2},
\end{align}
for any $x \in \mathbb{R}^2_+$, we write
\begin{align}
\label{heat_kernel_representation}
\vi(x, a_0,\vi_{int}(a_0)) = \int_{\mathbb{R}} \vi_{int}(y, a_0) G( x_1 - y, x_2, a_0 )\textrm{d}y.
\end{align}
We, in particular, obtain the following estimates:

\begin{lemma}[Bounds for the heat semigroup]
\label{semigroup_bounds}
Let $\alpha \in (0,1)$, $a_0 \in [\lambda, 1]$ for $\lambda>0$, and $\vi(\cdot, a_0, \vi_{int}(a_0))$ solve \eqref{constant_coeff_ivp_body}. Then the following observations hold:
\begin{itemize}[leftmargin=.2in]
\item[i)] For $1 \leq k \leq 2$ and $ 0 \leq j$,  $\partial_{a_0}^j \partial_1^k  \vi (\cdot, a_0, \vi_{int}(a_0))$ satisfies
\begin{align}
\label{heat_kernel_bound_1}
\begin{split}
 |\partial_{a_0}^j \partial_1^k  \vi (x, a_0, \vi_{int}(a_0))| \lesssim [ \vi_{int}  ]_{\alpha, j} x_2^{\frac{\alpha -k}{2}} 
\end{split}
\end{align}
 for $x \in \mathbb{R}^2_+$. In particular, if  $\partial_{a_0}^m \vi_{int} (\cdot, a_0) \in C^{\alpha}(\mathbb{R})$ for $m \leq j$, then $\partial_{a_0}^j \partial_1^k  \vi (\cdot, a_0, \vi_{int}(a_0))$ is a well-defined distribution. 
 
\quad If the initial condition $\vi_{int}$ does not depend on $a_0$, then the relation \eqref{heat_kernel_bound_1} also holds in the case that $k=0$ and $j>0$.
 
\item[ii)] For $j \geq 0$ and $x \in \mathbb{R}^2_+$, we have the $L^{\infty}$-estimate 
\begin{align}
\label{heat_kernel_bound_2}
 \| \vi (\cdot, x_2, a_0, \vi_{int}(a_0)) \|_{j}  \lesssim  \| \vi_{int} \|_j e^{-x_2}.
\end{align}

\item[iii)] For $0 \leq j \leq 3$, we have the relation 
\begin{align}
\label{heat_kernel_bound_3.5}
[\vi (\cdot, a_0,  \vi_{int}(a_0))]_{\alpha, j} \lesssim \| \vi_{int} \|_{\alpha, j}.
\end{align}

\item[iv)] For $0 \leq j \leq 1$ and $x,y \in \mathbb{R}^2_+$, we have that 
\begin{align}
\label{heat_kernel_bound_4}
  | \partial_{a_0}^j \vi(x, a_0, \vi_{int}(a_0) ) - \partial_{a_0}^j\vi(y, a_0, \vi_{int}(a_0) )|
\lesssim  \|\vi_{int} \|_{\alpha,j}(  x_2^{-\frac{\alpha}{2}}  +   y_2^{-\frac{\alpha}{2}})  d^{2\alpha} (x,y).
 \end{align}
 
 \item[v)] If $\vi(\cdot, a_0, \vi_{int}(a_0))$ solves \eqref{constant_coeff_ivp_body} without the massive term, then the estimates \eqref{heat_kernel_bound_1}, \eqref{heat_kernel_bound_3.5}, and \eqref{heat_kernel_bound_4} still hold. The estimate \eqref{heat_kernel_bound_2} still holds in a modified form; in particular, there is no factor of $e^{-x_2}$ on the right-hand side. 
 
\end{itemize}

\end{lemma}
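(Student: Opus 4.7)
The plan is to exploit the explicit heat kernel representation \eqref{heat_kernel_representation}. Writing $G(x_1,x_2,a_0) = K(x_1,a_0 x_2)\,e^{-x_2}$, where $K(z,\tau)=\frac{1}{\sqrt{4\pi\tau}}e^{-z^2/(4\tau)}$ is the one-dimensional heat kernel, the identity $\partial_\tau K = \partial_1^2 K$ yields the key commutation
\begin{equation*}
\partial_{a_0} G = x_2\,\partial_1^2 G, \qquad \text{hence} \qquad \partial_{a_0}^j G = x_2^{j}\,\partial_1^{2j} G,
\end{equation*}
so $a_0$-derivatives of the kernel are converted into $\partial_1$-derivatives at the cost of extra factors $x_2^{j}$---precisely the bookkeeping that produces the exponents in (i)--(iv). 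Combined with $\int G(x_1-y,x_2,a_0)\,dy = e^{-x_2}$, which is independent of both $a_0$ and $x_1$, this shows that the kernel $\partial_{a_0}^j\partial_1^k G$ has vanishing $y$-integral as soon as $k+j\ge 1$.

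For (i) with $k\ge 1$, the vanishing-mass property lets me subtract $\partial_{a_0}^m \vi_{int}(x_1,a_0)$ from $\partial_{a_0}^m \vi_{int}(y,a_0)$ before integrating; Leibniz in $a_0$ and H\"older regularity in the spatial argument yield
\begin{equation*}
|\partial_{a_0}^j \partial_1^k \vi(x)| \lesssim \sum_{m\le j}[\partial_{a_0}^m \vi_{int}]_{\alpha}\int |x_1-y|^\alpha\,|\partial_1^k \partial_{a_0}^{j-m}G(x_1-y,x_2,a_0)|\,dy,
\end{equation*}
and the parabolic rescaling $z=(y-x_1)/\sqrt{a_0 x_2}$ bounds each integral by a constant times $x_2^{(\alpha-k)/2}$. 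For $k=0$ with $\vi_{int}$ independent of $a_0$, the commutation reduces the claim to the just-proved case: $|\partial_{a_0}^j\vi|=x_2^j|\partial_1^{2j}\vi|\lesssim [\vi_{int}]_\alpha\,x_2^{\alpha/2}$. Item (ii) for $j=0$ is immediate since $G\ge 0$ and $\int G\,dy=e^{-x_2}$, giving $|\vi(\cdot,x_2,a_0)|\le \|\vi_{int}\|_\infty e^{-x_2}$; higher $a_0$-derivatives follow by Leibniz together with integration by parts onto $\vi_{int}$ using $\partial_{a_0}^j G = x_2^j\partial_1^{2j}G$.

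For (iii), the $x_1$-direction is immediate, as convolution with the positive Gaussian $G(\cdot,x_2,a_0)$ preserves $C^\alpha$-seminorms uniformly in $x_2$. The $x_2$-direction is handled via the equation $\partial_2\vi=a_0\partial_1^2\vi-\vi$: for $x_2<x_2'$,
\begin{equation*}
|\vi(x_1,x_2')-\vi(x_1,x_2)| \le \int_{x_2}^{x_2'}\big(a_0|\partial_1^2\vi|+|\vi|\big)(x_1,s)\,ds \lesssim [\vi_{int}]_\alpha\int_{x_2}^{x_2'} s^{(\alpha-2)/2}\,ds + \|\vi_{int}\|_\infty|x_2-x_2'|,
\end{equation*}
and the first integral is $\lesssim |x_2'-x_2|^{\alpha/2}=d(x,x')^\alpha$, while the second is absorbed by splitting the case $|x_2-x_2'|\lesssim 1$ from the decay regime $|x_2-x_2'|\gtrsim 1$ (where one uses the pointwise bound (ii) on $|\vi|$ instead of the crude estimate). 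Higher $a_0$-derivatives $(j\le 3)$ are then obtained by inductively differentiating the equation in $a_0$ and re-running the same estimates on the lower-order terms produced.

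For (iv), a standard dichotomy on $\rho:=d(x,y)$ suffices. With $m:=\min(x_2,y_2)$, if $\rho^2\le \tfrac14 m$ I connect $x$ and $y$ along two parabolic segments along which $x_2\sim m$, so the pointwise bounds from (i) for $k=1,2$ combined with the equation yield
\begin{equation*}
|\partial_{a_0}^j\vi(x)-\partial_{a_0}^j\vi(y)| \lesssim \|\vi_{int}\|_{\alpha,j}\Big(|x_1-y_1|\,m^{(\alpha-1)/2} + |x_2-y_2|\,m^{(\alpha-2)/2}\Big);
\end{equation*}
elementary arithmetic using $\rho\le m^{1/2}$ then converts the right-hand side into $\|\vi_{int}\|_{\alpha,j}\,m^{-\alpha/2}\,\rho^{2\alpha}$. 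In the opposite regime $\rho^2>\tfrac14 m$, the factor already satisfies $m^{-\alpha/2}\rho^{2\alpha}\gtrsim \rho^\alpha$, so (iii) directly supplies the bound. Item (v) is immediate: the factor $e^{-x_2}\le 1$ is never essential for (i), (iii), (iv), and only the decay statement in (ii) is affected by its removal. The main technical obstacle is managing the boundary-layer singularity $x_2^{(\alpha-k)/2}$ uniformly in $a_0$-derivatives, where one must simultaneously track Leibniz acting on $\vi_{int}(\cdot,a_0)$ and the commutation-induced powers of $x_2$ coming from $G$.
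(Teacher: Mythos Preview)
Your treatment of parts (i), (ii), (iii), and (v) is essentially correct and close in spirit to the paper's argument. The explicit commutation identity $\partial_{a_0}G = x_2\,\partial_1^2 G$ is a clean way to organize the bookkeeping that the paper does via the change of variables $z=(x_1-y)/\sqrt{4a_0x_2}$ and the polynomials $P_k(z,a_0^{-1/2})$. One minor point: in (ii) the phrase ``integration by parts onto $\vi_{int}$'' is not what you want, since $\vi_{int}$ is only $C^\alpha$; but your commutation already gives $\int|\partial_{a_0}^{j-m}G|\,dy = x_2^{j-m}\int|\partial_1^{2(j-m)}G|\,dy \lesssim a_0^{-(j-m)}e^{-x_2}$, so the $L^\infty$ bound follows directly without any IBP. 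For (iii) you argue directly by integrating the equation in $x_2$, whereas the paper differentiates the PDE in $a_0$ to obtain equations for $\partial_{a_0}^j\vi$ and then invokes a Schauder estimate; both routes work, and yours is arguably more elementary.

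Part (iv), however, has a genuine gap. In the near-field regime $\rho^2\le\tfrac14 m$ you obtain from the gradient bound
\[
|\partial_{a_0}^j\vi(x)-\partial_{a_0}^j\vi(y)|\lesssim \|\vi_{int}\|_{\alpha,j}\big(\rho\,m^{(\alpha-1)/2}+\rho^2\,m^{(\alpha-2)/2}\big),
\]
and then assert that ``elementary arithmetic using $\rho\le m^{1/2}$'' converts this into $m^{-\alpha/2}\rho^{2\alpha}$. But for the first term this would require $\rho\,m^{(\alpha-1)/2}\lesssim m^{-\alpha/2}\rho^{2\alpha}$, i.e.\ $(m^{1/2})^{2\alpha-1}\lesssim\rho^{2\alpha-1}$, which for $\alpha>\tfrac12$ (the regime of interest, since the paper works with $\alpha\in(\tfrac23,1)$) goes the \emph{wrong} way when $\rho\ll m^{1/2}$. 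A single derivative can only produce $\rho^1$ decay, never $\rho^{2\alpha}$ with $2\alpha>1$. The paper does not use the mean-value/dichotomy route here; instead it writes $A=A^{(2-2\alpha)/(2-\alpha)}A^{\alpha/(2-\alpha)}$ and interpolates, bounding the first factor by the $C^\alpha$ estimate (iii) and the second via the $\partial_1^2$ bound from (i), so that the exponent on $|x_1-y_1|$ comes out as $2\alpha$ and the exponent on $y_2$ as $-\alpha/2$. Your near-field argument needs to be replaced by an interpolation of this type (or equivalently a second-order expansion) rather than a first-order gradient estimate.
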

\noindent These estimates are all elementary and surely they already exist somewhere --for completeness we have included a proof of Lemma \ref{semigroup_bounds} in Appendix \ref{semigroup_bounds_appendix}.

On the level of the space-time periodic constant coefficient solutions $\vp(\cdot, a_0)$, we often use the following estimate:

\begin{lemma}
\label{small_v} 
Let $a_0 \in [\lambda, 1]$ for $\lambda>0$ and $\vp(\cdot, a_0) \in C^{\alpha}(\mathbb{R}^2)$ solve \eqref{periodic_mean_free}. Then the bound 
\begin{align}
\label{result_appendix_lemma}
\|\vp\|_{\alpha, 2} \lesssim [f]_{\alpha  -2}
\end{align}
holds.
\end{lemma}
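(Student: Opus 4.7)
My plan is to reduce the lemma to a single base Schauder-type estimate
\begin{align*}
\|w\|_{\alpha}\lesssim [g]_{\alpha-2}
\end{align*}
valid uniformly in $a_0\in[\lambda,1]$ for space-time periodic solutions of $(\partial_2-a_0\partial_1^2+1)w = g$, and then iterate in the $a_0$-parameter. Differentiating \eqref{periodic_mean_free} once and twice in $a_0$ yields the parameterized periodic equations
\begin{align*}
(\partial_2-a_0\partial_1^2+1)\partial_{a_0}\vp = \partial_1^2 \vp,\qquad
(\partial_2-a_0\partial_1^2+1)\partial_{a_0}^2\vp = 2\partial_1^2\partial_{a_0}\vp,
\end{align*}
which have the same structure as \eqref{periodic_mean_free}. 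The admissible choice $(u^1,u^2,u^3)=(\vp,0,0)$ in Definition \ref{negative_norm} immediately gives $[\partial_1^2\vp]_{\alpha-2}\leq \|\vp\|_\alpha$, and analogously with $\vp$ replaced by $\partial_{a_0}\vp$. Chaining the base estimate three times then delivers $\|\partial_{a_0}^k\vp\|_\alpha\lesssim[f]_{\alpha-2}$ for $k=0,1,2$, which is the claim.

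For the base estimate I would use the explicit Green's function of $L_{a_0}:=\partial_2-a_0\partial_1^2+1$ on $\R^2$, namely
\begin{align*}
K(x_1,x_2,a_0) = \mathbf{1}_{x_2>0}(4\pi a_0 x_2)^{-1/2}\exp\!\Bigl(-\tfrac{x_1^2}{4 a_0 x_2}-x_2\Bigr),
\end{align*}
whose massive-term decay in $x_2$ ensures $\|K(\cdot,a_0)\|_{L^1(\R^2)}\lesssim 1$ uniformly in $a_0\in[\lambda,1]$. Since $\psi_T$ commutes with $L_{a_0}$ on space-time periodic functions, one has $\vp_T = K(\cdot,a_0)\ast f_T$, and Young's inequality combined with Lemma \ref{equivnorm} gives
\begin{align*}
\|\vp_T\| \lesssim \|f_T\| \lesssim (T^{1/4})^{\alpha-2}\,[f]_{\alpha-2},\qquad T\leq 1.
\end{align*}
To upgrade this scale-wise $L^\infty$-control to a parabolic $C^\alpha$-bound, I would telescope via $\vp-\vp_T = -\int_0^T(\partial_2^2-\partial_1^4)\vp_t\,dt$, estimating the integrand by Bernstein-type bounds analogous to \eqref{alpha_kernel_bound} applied to $\vp_t = K(\cdot,a_0)\ast f_t$. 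The complementary smooth-side increment $|\vp_T(x)-\vp_T(y)|$ for $d(x,y)\sim T^{1/4}$ is controlled through $T^{1/4}\|\partial_1\vp_T\|+T^{1/2}\|\partial_2\vp_T\|$, and optimizing $T=d(x,y)^4$ yields $[\vp]_\alpha\lesssim[f]_{\alpha-2}$. The $L^\infty$-part is recovered via $\|\vp\|\leq\|\vp_1\|+\|\vp-\vp_1\|\lesssim[f]_{\alpha-2}$ using the same estimates with $T=1$.

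The main obstacle I expect is the passage from scale-wise $L^\infty$-bounds on $\vp_T$ to a genuine $C^\alpha$-bound in the parabolic metric, which requires handling the anisotropy of $\psi_T$ and balancing $\partial_1$- and $\partial_2$-derivatives carefully. Everything else --- the $a_0$-differentiation, the trivial inequality $[\partial_1^2\vp]_{\alpha-2}\leq\|\vp\|_\alpha$, and the uniformity in $a_0\in[\lambda,1]$ (which is built in since all constants depend only on $\lambda$ via the ellipticity of $L_{a_0}$ and the $L^1$-mass of $K$) --- is algebraic bookkeeping.
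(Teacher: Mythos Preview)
Your $a_0$-iteration is correct and is exactly what the paper does: differentiate \eqref{periodic_mean_free} in $a_0$, observe that the right-hand side $\partial_1^2 \vp$ satisfies $[\partial_1^2 \vp]_{\alpha-2}\le\|\vp\|_\alpha$ by Definition~\ref{def:Negative H\"{o}lder norm}, and chain the base estimate.

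The base estimate is where you diverge from the paper, and your sketch has a genuine gap. Young's inequality with $\|K\|_{L^1}\lesssim 1$ only yields $\|\vp_T\|\lesssim(T^{1/4})^{\alpha-2}[f]_{\alpha-2}$, which by Lemma~\ref{equivnorm} is nothing more than $[\vp]_{\alpha-2}\lesssim[f]_{\alpha-2}$ --- no regularity gain. Your telescoping step $\vp-\vp_T=-\int_0^T(\partial_2^2-\partial_1^4)\vp_t\,dt$ then fails to converge: with only the Young bound on $\vp_t$ (plus Bernstein via $\psi_{t/2}$) one gets $\|(\partial_2^2-\partial_1^4)\vp_t\|\lesssim(t^{1/4})^{\alpha-6}$; even after using the equation to trade $\partial_2$ for $a_0\partial_1^2$ and exploiting $\partial_1 K\in L^1$, the best reachable exponent is $(t^{1/4})^{\alpha-5}$. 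Neither is integrable at $t=0$ for $\alpha\in(0,1)$. The two-order smoothing of $L_{a_0}^{-1}$ is not captured by $\|K\|_{L^1}$ alone --- it sits in the Calder\'on--Zygmund cancellation of $\partial_1^2 K$, which is precisely the content of parabolic Schauder theory, so your argument is circular.

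The paper sidesteps this by using the decomposition that is \emph{built into} Definition~\ref{def:Negative H\"{o}lder norm}: write $f=\partial_1^2 f^1+\partial_2 f^2+f^3$ near-optimally with each $f^i\in C^\alpha$, solve $(\partial_2-a_0\partial_1^2+1)v^i=f^i$, and invoke the \emph{classical} Schauder estimate $\|v^i\|_{\alpha+2}\lesssim\|f^i\|_\alpha$ (textbook, for $C^\alpha$ right-hand side). By linearity and uniqueness $\vp=\partial_1^2 v^1+\partial_2 v^2+v^3$, from which $\|\vp\|_\alpha\lesssim\sum_i\|f^i\|_\alpha\lesssim[f]_{\alpha-2}$. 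This reduces the negative-regularity estimate to the standard positive-regularity one rather than reproving Schauder from the kernel.
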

\noindent This lemma is essentially a corollary of the classical Schauder estimate and Definition \ref{negative_norm} --for completeness we have included a proof in Appendix \ref{semigroup_bounds_appendix}.

To construct the first type of new reference product we use the following lemma:

\begin{lemma}
\label{lemma:new_reference_products_1} Let $\alpha \in (0,1)$. Assume that $F \in C^{\alpha}(\mathbb{R}^2)$ and for $G$, a function defined on $\mathbb{R}^2$, there exists a constant $C(G) \in \mathbb{R}$ satisfying  
\begin{align}
\label{assumption_G_1}
|\partial_1^2 G(x)| \lesssim C(G)( |x_2|^{\frac{\alpha-2}{2}} + |x_2|^{\frac{2\alpha-2}{2}}),
\end{align}
for any $x \in \mathbb{R}^2$. Then $F  \partial_1^2 G$  is a well-defined regular distribution on $\mathbb{R}^2$ and 
\begin{align}
\label{assumption_reconstruction_2}
\|  [F , (\cdot) ]  \partial_1^2 G  \|_{2\alpha -2}
  \lesssim  C(G) [F]_{\alpha}.
\end{align}
\end{lemma}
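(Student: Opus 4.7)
The plan is to verify well-definedness and then reduce the norm bound to two scalar integrals that I can evaluate by parabolic rescaling. First, I note that, under \eqref{assumption_G_1}, the function $\partial_1^2 G$ is locally integrable on $\R^2$, since both exponents $(\alpha-2)/2$ and $(2\alpha-2)/2$ exceed $-1$ for $\alpha \in (0,1)$. Since $F \in C^\alpha(\R^2)$ is in particular locally bounded, the pointwise product $F\, \partial_1^2 G$ is locally integrable, and hence a well-defined distribution. For the commutator estimate, I appeal to the equivalent formulation of the $C^{2\alpha-2}$-seminorm via convolution (Lemma \ref{equivnorm}, applied with $2\alpha$ in place of $\alpha$), so that it suffices to prove
\begin{align}
\sup_{T \le 1} (T^{\tfrac{1}{4}})^{2-2\alpha} \left\| [F, (\cdot)_T] \partial_1^2 G \right\| \lesssim C(G) [F]_\alpha.
\end{align}

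At any point $x \in \R^2$, unfolding the definitions gives the pointwise identity
\begin{align}
[F, (\cdot)_T] \partial_1^2 G (x) = \int_{\R^2} (F(y) - F(x))\, \partial_1^2 G(y)\, \psi_T(x-y)\,\textrm{d}y.
\end{align}
Using $|F(y) - F(x)| \le [F]_\alpha d^\alpha(x,y)$, the assumption \eqref{assumption_G_1}, and the positivity of $\psi_T$, I will then reduce the task to estimating the two integrals
\begin{align}
I_j(x) := \int_{\R^2} d^\alpha(x,y)\, |y_2|^{\gamma_j}\, \psi_T(x-y)\,\textrm{d}y, \qquad \gamma_1 = \tfrac{\alpha-2}{2},\ \gamma_2 = \tfrac{2\alpha-2}{2},
\end{align}
and showing $I_1(x) \lesssim (T^{\tfrac{1}{4}})^{2\alpha-2}$ and $I_2(x) \lesssim (T^{\tfrac{1}{4}})^{3\alpha-2}$ uniformly in $x$.

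To carry this out, I would apply the parabolic rescaling \eqref{standard_rescaling} $\hat y = (y_1/T^{\tfrac{1}{4}}, y_2/T^{\tfrac{1}{2}})$ (and analogously for $x$). Recalling \eqref{scaling}, this substitution absorbs the Jacobian into $\psi_1$, so that $\psi_T(x-y)\,\textrm{d}y = \psi_1(\hat x-\hat y)\,\textrm{d}\hat y$, while $d(x,y) = T^{\tfrac{1}{4}} d(\hat x,\hat y)$ and $|y_2|^{\gamma_j} = T^{\gamma_j/2}|\hat y_2|^{\gamma_j}$. This converts $I_j(x)$ into $T^{(\alpha + 2\gamma_j)/4} \tilde I_j(\hat x)$, where $\tilde I_j(\hat x) := \int d^\alpha(\hat x,\hat y) |\hat y_2|^{\gamma_j} \psi_1(\hat x-\hat y) \textrm{d}\hat y$, giving the desired scaling once I verify uniform boundedness in $\hat x$ of the $\tilde I_j$.

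The main obstacle is precisely this uniform-in-$\hat x$ bound for $\tilde I_j(\hat x)$: near $\hat y_2 = 0$ the weight $|\hat y_2|^{\gamma_j}$ blows up, while for large $|\hat x|$ the factor $d^\alpha(\hat x,\hat y)$ grows polynomially. Both effects are controlled: the integrability of $|\hat y_2|^{\gamma_j}$ at zero follows from $\gamma_j > -1$, and the Schwartz decay of $\psi_1$ dominates any polynomial growth of $d^\alpha(\hat x,\hat y)$ for $\hat y$ far from $\hat x$. Assembling the estimates, $(T^{\tfrac{1}{4}})^{2-2\alpha} I_1 \lesssim 1$ outright, while $(T^{\tfrac{1}{4}})^{2-2\alpha} I_2 \lesssim (T^{\tfrac{1}{4}})^\alpha \le 1$ for $T \le 1$. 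Taking the sup over $T \le 1$ delivers \eqref{assumption_reconstruction_2}.
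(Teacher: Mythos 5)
Your proposal is correct and follows essentially the same route as the paper: the same observation that local integrability of $\partial_1^2 G$ (from the exponents exceeding $-1$) makes $F\,\partial_1^2 G$ classically well-defined, the same commutator identity $\int (F(y)-F(x))\,\partial_1^2 G(y)\,\psi_T(x-y)\,\textrm{d}y$, and the same parabolic rescaling reducing matters to a uniform-in-$\hat x$ bound on the rescaled integrals, handled via $\gamma_j>-1$ and the Schwartz decay of $\psi_1$. (A minor remark: you do not actually need Lemma \ref{equivnorm} here, since the $\|\cdot\|_{2\alpha-2}$ commutator norm in the conclusion is defined directly by the convolution formulation \eqref{convolution_notation_neg}, which is also why the restriction of that lemma to exponents in $(0,1)$ causes no issue.)
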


\noindent Notice that in \eqref{assumption_reconstruction_2} we have used the notational convention introduced in \eqref{commutator_condition_short}. We use this lemma in conjunction with Lemmas \ref{semigroup_bounds} and \ref{small_v} to obtain the first type of new ``offline'' (actually classical) product: 

\begin{corollary}
\label{new_family_1}
Let $\alpha \in (0,1)$ and $a_0 , a_0^{\prime} \in [\lambda, 1]$ for $\lambda>0$. Furthermore, $\vp(\cdot, a_0) \in C^{\alpha}(\mathbb{R}^2)$ solves \eqref{periodic_mean_free} and $\vi(\cdot, a_0)$ is defined in \eqref{vi_short}; we use the notation from Definition \ref{extensions}. We then obtain
\begin{itemize}[leftmargin=.2in]
\item[i)] For any $F \in C^{\alpha}(\mathbb{R}^2)$, the products $F \partial_1^2 \tilde{\vi}(\cdot, a_0)$ are well-defined as distributions and this family satisfies 
\begin{align}
\label{assumption_reconstruction_2_new_2}
 \|  [F, (\cdot) ] \partial_1^2\tilde{\vi} \|_{2\alpha -2, 2}
  \lesssim  (\|U_{int}\|_{\alpha} + [f]_{\alpha -2}) [F]_{\alpha}.
\end{align}

\item[ii)] For $0 \leq j, k \leq 2$, the products $\partial_{a_0}^j \vp(\cdot, a_0) \partial_1^2 \partial_{a^{\prime}_0}^k\tilde{\vi}(\cdot, a^{\prime}_0)$ are well-defined as distributions and this family satisfies 
\begin{align}
\label{assumption_reconstruction_2_new_2.5}
 \|  [ \vp, (\cdot) ]  \partial_1^2\tilde{\vi}\|_{2\alpha -2, 2,2}
  \lesssim  (\|U_{int}\|_{\alpha} + [f]_{\alpha -2})[f]_{\alpha-2}.
\end{align}
\end{itemize}
\end{corollary}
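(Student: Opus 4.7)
Both claims will be extracted by feeding appropriate choices of $F$ and $G$ into Lemma \ref{lemma:new_reference_products_1}. The bulk of the plan is to verify, for each $k \in \{0,1,2\}$, that $G = \partial_{a_0'}^k \tilde{\vi}(\cdot, a_0')$ satisfies hypothesis \eqref{assumption_G_1} with a constant
\[
C(G) \lesssim \|U_{int}\|_{\alpha} + [f]_{\alpha - 2},
\]
uniformly in $a_0' \in [\lambda, 1]$.

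To do this, I will apply Lemma \ref{semigroup_bounds}(i) with differentiation order $2$ in $x_1$ and order $k$ in $a_0'$, obtaining, for $x \in \mathbb{R}^2_+$,
\[
|\partial_1^2 \partial_{a_0'}^k \vi(x, a_0', \vi_{int}(a_0'))| \lesssim [\vi_{int}]_{\alpha, k}\, x_2^{(\alpha - 2)/2}.
\]
The even reflection $\tilde{\vi}$ extends this pointwise estimate to all of $\mathbb{R}^2$ with $x_2$ replaced by $|x_2|$. Since $(\alpha-2)/2<0$, this single-term bound is trivially dominated by $[\vi_{int}]_{\alpha, k}(|x_2|^{(\alpha - 2)/2} + |x_2|^{(2\alpha-2)/2})$, so the hypothesis \eqref{assumption_G_1} is verified with $C(G) = [\vi_{int}]_{\alpha, k}$.

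To control $[\vi_{int}]_{\alpha, k}$, I will use $\vi_{int}(\cdot, a_0') = U_{int} - \vp(\cdot, 0, a_0')$, so that the parameter derivatives $\partial_{a_0'}^k$ for $k \geq 1$ act only on $\vp$. Restriction to the hyperplane $\{x_2 = 0\}$ only decreases the parabolic $C^\alpha$-seminorm, so Lemma \ref{small_v} gives $[\vp(\cdot, 0, \cdot)]_{\alpha, k} \lesssim \|\vp\|_{\alpha, 2} \lesssim [f]_{\alpha-2}$ for $k \leq 2$. Combining with the trivial bound $[U_{int}]_\alpha \leq \|U_{int}\|_\alpha$ yields $[\vi_{int}]_{\alpha, k} \lesssim \|U_{int}\|_{\alpha} + [f]_{\alpha - 2}$, as required.

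With this in hand, part (i) is immediate: apply Lemma \ref{lemma:new_reference_products_1} with $F$ as given and $G = \partial_{a_0'}^k \tilde{\vi}$ for each $k \leq 2$, then take the supremum over $k$. For part (ii), I will repeat the same step with the additional choice $F = \partial_{a_0}^j \vp(\cdot, a_0)$, using Lemma \ref{small_v} once more to bound $[\partial_{a_0}^j \vp]_\alpha \lesssim [f]_{\alpha-2}$ for $j \leq 2$, and conclude by taking the double supremum over $j, k \leq 2$. There is no real obstacle beyond the bookkeeping; the essential content is simply that the $C^\alpha$-regularity of the composite initial datum $\vi_{int}$, together with the parabolic heat-kernel decay of $\partial_1^2 \vi$, matches the pointwise hypothesis of Lemma \ref{lemma:new_reference_products_1} exactly.
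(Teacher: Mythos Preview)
Your proposal is correct and follows essentially the same route as the paper: choose $G=\partial_{a_0'}^{k}\tilde{\vi}$, invoke Lemma~\ref{semigroup_bounds}(i) (together with the even reflection) to verify hypothesis \eqref{assumption_G_1} with $C(G)\lesssim[\vi_{int}]_{\alpha,k}\lesssim \|U_{int}\|_{\alpha}+[f]_{\alpha-2}$ via Lemma~\ref{small_v}, then apply Lemma~\ref{lemma:new_reference_products_1}; for part~(ii) additionally take $F=\partial_{a_0}^{j}\vp$ and bound $[F]_{\alpha}$ by Lemma~\ref{small_v}. The paper's own proof is terser but uses exactly these ingredients in the same order.
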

\noindent Since these new ``offline'' products are actually completely standard classical products, we do not use the $``\diamond"$ notation. The proofs of Lemma \ref{lemma:new_reference_products_1} and Corollary \ref{new_family_1} are contained in Section \ref{products_proof_1}.

To construct the second type of new reference products we use the following lemma:

\begin{lemma} 
\label{lemma:new_reference_products_2} 
Let $\alpha \in (0,1)$. For $k = 1,2$, let $G\in C^{\alpha}(\mathbb{R}^2)$ satisfy the pointwise estimates
\begin{align}
\label{assumption_G_2}
|\partial_1^k G(x)| \lesssim C(G) |x_2|^{\frac{\alpha-k}{2}}
\end{align}
for some $C(G) \in \mathbb{R}$ and for any point $x \in \mathbb{R}^2$. Then, for $ F \in C^{\alpha}(\mathbb{R}^2)$, there exists a $C^{\alpha -2}$-distribution $G \diamond \partial_1^2 F$ satisfying 
 \begin{align}
\label{reconstruction_assumption_2_2}
 \| [ G , (\cdot)] \diamond \partial_1^2 F \|_{2\alpha -2}  \lesssim (  C(G)  + [G]_{\alpha}) [ F ]_{\alpha} .
\end{align}

The operation $\diamond$ is bilinear.


\end{lemma}

\noindent In this lemma we have used the notation \eqref{commutator_condition_short} and the direct analogue of the definition of the commutator given in \eqref{defn_commutator}. To apply Lemma \ref{lemma:new_reference_products_2}  we again use Lemmas \ref{semigroup_bounds} and \ref{small_v}:

\begin{corollary}
\label{new_family_2}
Let $\alpha \in (0,1)$,  $a_0 , a_0^{\prime} \in [\lambda, 1]$ for $\lambda>0$, and $i,j=0,1$. We use the notation from Definitions \ref{constant_soln} and \ref{extensions}. Assume that each $f_i \in C^{\alpha -2}(\mathbb{R}^2)$ is periodic and they satisfy the condition \hyperlink{A}{\textbf{(A)}}:

\begin{itemize}
\item[\hypertarget{A}{\textbf{(A)}}] For each pair $(f_i, f_j)$ there exists $\left\{ \vp_i(\cdot, a_0)\di \partial_1^2 \vp_j(\cdot, a_0^{\prime}) \right\}_{a_0, a_0^{\prime}}$, a family of $C^{\alpha -2}$-distributions, satisfying 
\begin{align}
\| [\vp_i, (\cdot)]  \di \partial_1^2 \vp_j \|_{2\alpha -2, 2,2} & \lesssim [f_i]_{\alpha -2} [f_j]_{\alpha -2} \label{assumption_offline_products_appendix_1}\\
\| [\vp_1, (\cdot)]  \di \partial_1^2 \vp_j - [\vp_0, (\cdot)]  \di \partial_1^2 \vp_j\|_{2\alpha -2, 1,1} & \lesssim [f_j]_{\alpha -2}  [f_1 - f_0]_{\alpha -2},\label{assumption_offline_products_appendix_2}\\
\textrm{and }  \| [\vp_i , (\cdot)]  \di \partial_1^2 \vp_1  - [\vp_i, (\cdot)]  \di \partial_1^2 \vp_0\|_{2\alpha -2, 1,1}  & \lesssim [f_i]_{\alpha -2} [f_1 - f_0]_{\alpha -2}.\label{assumption_offline_products_appendix_3}
\end{align}
\end{itemize}

\noindent Also, assume that each $\uf_{int,i} \in C^{\alpha}(\mathbb{R})$ is periodic.\\

Under these assumptions, for every $i,j = 0,1$, the following hold:
\begin{itemize}[leftmargin=.2in]
\item[i)] There exists a family of distributions $\{ \tilde{\vi}_i(\cdot, a_0)\diamond \partial_1^2 \vp_j(\cdot, a^{\prime}_0) \}_{a_0, a_0^{\prime}}$ such that 
\begin{align}
\label{assumption_reconstruction_2_new_1}
\|  [\tilde{\vi}_i , (\cdot) ] \diamond \partial_1^2 \vp_j\|_{2\alpha -2, 2,2}
  \lesssim  ( [f_i]_{\alpha-2} + [U_{int,i}]_{\alpha})  [f_j]_{\alpha-2} .
\end{align}

\item[ii)] Defining the family of distributions  
\begin{align}
\label{reference_now_in_lemma}
\begin{split}
  (\tilde{\vi}_i+ \vp_i) (\cdot, a_0) \diamond \partial_1^2 \vp_j(\cdot, a_0^{\prime})
  : =  \tilde{ \vi}_i (\cdot, a_0) \diamond \partial_1^2 \vp_j(\cdot, a_0^{\prime}) + \vp_i(\cdot, a_0) \di \partial_1^2 \vp_j(\cdot, a_0^{\prime}), \quad \,
\end{split}
\end{align}
we find that
\begin{align}
\| [\tilde{\vi}_i + \vp_i, (\cdot) ] \diamond \partial_1^2 \vp_j \|_{2\alpha -2, 2,2} \lesssim ([U_{int,i}]_{\alpha} + [f_i]_{\alpha -2}) [f_j]_{\alpha -2} . \label{assumption_reconstruction_2_new}
\end{align}

\item[iii)]The distributions constructed in part $ii)$ satisfy
\begin{align}
\begin{split}
\label{reference_cross_lemma_new}
& \| [\tilde{\vi}_0 + \vp_0, (\cdot)] \diamond  \partial_1^2 \vp_j - [\tilde{\vi}_1 +\vp_1, ( \cdot) ] \diamond  \partial_1^2 \vp_j\|_{2\alpha -2, 1,1}\\
 &
\lesssim ([U_{int,1} - U_{int,0}]_{\alpha} + [f_1 - f_0]_{\alpha -2})[f_j]_{\alpha -2} \, \, \, \, \, \,
\end{split}
\end{align}
and
\begin{align}
\label{reference_cross_lemma_2_new}
\begin{split}
& \| [\tilde{\vi}_i + \vp_i, ( \cdot ) ] \diamond  \partial_1^2 \vp_0 -  [\tilde{\vi}_i  + \vp_i,  (\cdot) ] \diamond  \partial_1^2 \vp_1 \|_{2\alpha -2, 1,1}\\
& \lesssim ([U_{int,i}]_{\alpha} + [f_i]_{\alpha -2})[f_1 - f_0]_{\alpha -2}.
 \end{split}
 \end{align}
 
\item[iv)] Letting
\begin{align}
\label{full_product_definition}
\begin{split}
  & (\tilde{\vi}_i+ \vp_i)(\cdot, a_0) \diamond \partial_1^2 (\tilde{\vi}_j + \vp_j) (\cdot, a_0^{\prime})\\
  & : =   (\tilde{\vi}_i+ \vp_i)(\cdot, a_0) \diamond \partial_1^2 \vp_j (\cdot, a_0^{\prime})  +  (\tilde{\vi}_i+ \vp_i)(\cdot, a_0) \partial_1^2\tilde{\vi}_j(\cdot, a_0^{\prime}),
\end{split}
\end{align}
where we use the distributions defined in $ii)$ and Corollary \ref{new_family_1}, we obtain
\begin{align}
\label{commutator_full}
\|  [ (\tilde{\vi}_i + \vp_i), (\cdot)] \diamond \partial_1^2 (\tilde{\vi}_j + \vp_j) \|_{2\alpha -2, 2,2}
  \lesssim  ([U_{int,1}]_{\alpha} + [f_1]_{\alpha -2})([U_{int,0}]_{\alpha} + [f_0]_{\alpha -2}).
\end{align}
\end{itemize}
\end{corollary}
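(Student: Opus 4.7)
The plan is to derive each of the four parts of the corollary from the already established ingredients, namely Lemmas \ref{semigroup_bounds}, \ref{small_v}, \ref{lemma:new_reference_products_1}, \ref{lemma:new_reference_products_2}, Corollary \ref{new_family_1}, and the hypothesis \hyperlink{A}{\textbf{(A)}}. The four parts are layered in such a way that all the real substance lies in part $i)$: part $ii)$ adds the new products of $i)$ to the periodic--periodic products from \hyperlink{A}{\textbf{(A)}}, part $iii)$ repeats these additions at the level of differences, and part $iv)$ combines the product of $ii)$ with the classical products of Corollary \ref{new_family_1}.

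For part $i)$, I would apply Lemma \ref{lemma:new_reference_products_2}$(i)$ with $G(x)=\tilde{\vi}_i(x,a_0)$ and $F(x)=\vp_j(x,a_0^{\prime})$. The pointwise estimate \eqref{assumption_G_2} for $G$ on $\R^2$ is, by the definition of the even-reflection in Definition \ref{extensions}, equivalent to the same estimate on $\R^2_+$ with $x_2$ in place of $|x_2|$, and the latter follows from Lemma \ref{semigroup_bounds}$(i)$ applied to $\vi_i(\cdot,a_0,\uf_{int,i}-\vp_i(\cdot,a_0))$. This yields $C(G)\lesssim [\uf_{int,i}]_\alpha+[\vp_i]_\alpha\lesssim [\uf_{int,i}]_\alpha+[f_i]_{\alpha-2}$ via Lemma \ref{small_v}. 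The bound $[\tilde{\vi}_i(\cdot,a_0)]_\alpha\lesssim[\uf_{int,i}]_\alpha+[f_i]_{\alpha-2}$ comes similarly from \eqref{heat_kernel_bound_3.5} together with the fact that the even-reflection preserves the parabolic $C^\alpha$-seminorm; only Hölder seminorms enter on the right thanks to the periodicity of the data and the absorption afforded by the massive term in \eqref{constant_coeff_ivp_body} through \eqref{heat_kernel_bound_2}. Combining these with $[\vp_j]_\alpha\lesssim [f_j]_{\alpha-2}$ gives \eqref{assumption_reconstruction_2_new_1} for $j=k=0$. To promote this to $j,k\le 2$, I would differentiate under the heat-kernel representation \eqref{heat_kernel_representation}: since $\partial_{a_0}^m\uf_{int,i}=0$ for $m\ge 1$, each $\partial_{a_0}^m\vi_i$ splits into a term of the same form as $\vi_i$ but with initial datum $-\partial_{a_0}^m\vp_i(\cdot,a_0)$, plus further terms in which the $a_0$-derivatives fall on the Green's function and which are controlled by the second assertion of Lemma \ref{semigroup_bounds}$(i)$. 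Using Lemma \ref{small_v} to bound $[\partial_{a_0}^m\vp_i]_\alpha$ by $[f_i]_{\alpha-2}$ for $m\le 2$, the pointwise and $C^\alpha$-controls extend to all $(j,k)$-derivatives with $j,k\le 2$, and a further application of Lemma \ref{lemma:new_reference_products_2}$(i)$ to each differentiated family produces \eqref{assumption_reconstruction_2_new_1} in full.

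For part $ii)$, I would simply add \eqref{assumption_reconstruction_2_new_1} to \eqref{assumption_offline_products_appendix_1}, using the additivity inherent in the definition \eqref{reference_now_in_lemma}. For part $iii)$, I split the difference along \eqref{reference_now_in_lemma}: the $\vp$-contribution is handled directly by \eqref{assumption_offline_products_appendix_2}--\eqref{assumption_offline_products_appendix_3}, while the $\tilde{\vi}$-contribution is controlled by Lemma \ref{lemma:new_reference_products_2}$(ii)$--$(iii)$ applied to $G_0-G_1=\tilde{\vi}_0-\tilde{\vi}_1$ and, in the second estimate, to the corresponding difference in the $F$-slot; the required bounds on $\tilde{\vi}_0-\tilde{\vi}_1$ follow from the same heat-kernel analysis as in part $i)$, now with initial datum $(\uf_{int,0}-\uf_{int,1})-(\vp_0-\vp_1)(\cdot,a_0)$. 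Finally, part $iv)$ is the direct sum of Corollary \ref{new_family_1}$(ii)$ and part $ii)$ of the present corollary, as encoded by \eqref{full_product_definition}. The main obstacle is the verification in part $i)$, specifically the propagation of the $a_0$-derivatives through the heat-kernel representation in such a way that the right-hand side depends only on $[f_i]_{\alpha-2}$ and $[\uf_{int,i}]_\alpha$ rather than on full $C^\alpha$-norms; this is exactly where the massive term in \eqref{constant_coeff_ivp_body} and the exponential decay in \eqref{heat_kernel_bound_2} pay off.
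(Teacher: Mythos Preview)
Your approach is correct and essentially the same as the paper's. Two minor remarks: First, your treatment of the $a_0$-derivatives in part $i)$ is more elaborate than necessary. The paper simply sets $G_l=\partial_{a_0}^l\tilde{\vi}_i(\cdot,a_0)$ and $F=\partial_{a_0'}^k v_j(\cdot,a_0')$ for $l,k\le 2$, applies Lemma \ref{semigroup_bounds}$(i)$ directly (the initial datum $\uf_{int,i}-v_i(\cdot,a_0)$ has $a_0$-derivatives controlled by $[v_i]_{\alpha,2}\lesssim[f_i]_{\alpha-2}$ via Lemma \ref{small_v}), and then uses that the definition \eqref{definition_Step_1} commutes with parameter derivatives. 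No splitting into ``Green's-function'' versus ``initial-datum'' contributions is needed, and the massive term and exponential decay of \eqref{heat_kernel_bound_2} play no role here---only the pointwise bound \eqref{heat_kernel_bound_1} and the H\"older bound \eqref{heat_kernel_bound_3.5} enter. Second, in part $iv)$ you should cite Corollary \ref{new_family_1}$(i)$ with $F=\partial_{a_0}^l(\tilde{\vi}_i+v_i)(\cdot,a_0)$, not part $(ii)$; the latter only covers $v_i\,\partial_1^2\tilde{\vi}_j$, whereas you also need $\tilde{\vi}_i\,\partial_1^2\tilde{\vi}_j$.
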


\noindent The construction of the second new type of ``offline'' product is not as straightforward as the first type, but still entirely classical --it proceeds via the classical Leibniz' rule. Lemma \ref{lemma:new_reference_products_2} and Corollary \ref{new_family_2} are proven in Section \ref{products_proof_2}.
 
\subsection{Reconstruction lemmas}
\label{section:reconstruct}

In this section we introduce the two ``Reconstruction Lemmas'' --all proofs are given in Section \ref{section:reconstruct_proof}.  The first of the reconstruction lemmas gives a map
\begin{align}
\label{map_reconstruct_1}
\{ (\vp +\tilde{\vi})(\cdot, a_0) \diamond \partial_1^2 (\vp +\tilde{\vi})(\cdot, a_0^{\prime})  \} \mapsto \{ \uf \di \partial_1^2   (\vp +\tilde{\vi})(\cdot, a_0^{\prime}) \}
\end{align}
whenever $\uf$ is modelled after $\tilde{\vi} + \vp$. The intended application requires part $i)$ of the following result, which shows that the modelling of $\uf$ is preserved under smooth enough pointwise nonlinear transformations.\\

\newpage

\noindent \textbf{Lemma 3.2 of \cite{OW}.} 
\begin{itemize}[leftmargin=.2in]
\item[i)] \textit{Let $\uf \in C^{\alpha}(\mathbb{R}^2)$ be modelled after $V$ according to $a$ and $\sigma$, both of class $C^{\alpha} (\mathbb{R}^2)$ with modelling constant $M$; and the function $b$ be twice differentiable. Then, $b(\uf)$ is modelled after $V$ according to $a$ and $\mu = b^{\prime}(\uf) \sigma$ with modelling constant $\tilde{M}$ and $\|b(u) \|_{\alpha}$ satisfying
\begin{align}
\tilde{M} & \lesssim \| b^{\prime}\| M + \|b^{\prime \prime} \| [\uf]_{\alpha}^2 \quad \text{and }\quad  \|b(u) \|_{\alpha} \lesssim \| b^{\prime}\| [\uf]_{\alpha} + \|b\|.   \label{new_modelling}
\end{align}}
\item[ii)] \textit{For $i = 0,1$, let $\uf_i$ be modelled after $V_i(\cdot, a_0)$ according to $a_i$ and $\sigma_i$ with modelling constant $M_i$ as in part $i)$. Assume, furthermore, that $\uf_1 - \uf_0$ is modelled after $\left(V_1,V_0  \right)$ according to $(a_1, a_0)$ and $(\sigma_1, - \sigma_0)$ with modelling constant $\delta M$; and that $b$ is three times differentiable. Then, $b(\uf_1) - b(\uf_0)$ is modelled after $\left(V_1,V_0 \right)$ according to $(a_1, a_0)$ and $ (\mu_1, - \mu_0 ) = (b^{\prime}(\uf_1) \sigma_1,  -b^{\prime}(\uf_0) \sigma_0)$ with modelling constant $\delta \tilde{M}$ and $\| b(\uf_1) - b(\uf_0)\|_{\alpha}$ satisfying
\begin{align}
 \label{new_modelling_2}
 \begin{split}
\delta \tilde{M}   \lesssim &  \| \uf_1 - \uf_0\|_{\alpha} \Big( \| b^{\prime \prime } \|  \displaystyle\max_{i=0,1} [\uf_i]_{\alpha}+  \frac{1}{2}  \| b^{\prime \prime \prime} \| \displaystyle\max_{i=0,1} [\uf_i]_{\alpha}^2 + \| b^{\prime \prime} \| \displaystyle\max_{i=0,1} M_i \Big) + \| b^{\prime}\| \delta M 
\end{split}
\end{align}
and
\begin{align}
\label{Calpha_norm_new}
\| b(\uf_1) - b(\uf_0)\|_{\alpha} \lesssim \| \uf_1 - \uf_0\|_{\alpha}  \Big(\| b^{\prime}\|  + \| b^{\prime \prime} \| \displaystyle\max_{i=0,1} [\uf_i]_{\alpha} \Big).
\end{align}}
\end{itemize}
We omit the proof of this lemma --it amounts to an application of Taylor's formula (see, \textit{e.g.},  \cite[Proposition 6]{G}).

Next comes the statement of the first reconstruction lemma --to avoid confusion, let us emphasize that we use Einstein's summation convention:

\begin{lemma}[Modified Lemma 3.3 of \cite{OW}] 
\label{lemma:reconstruct_1}
Let $\alpha \in \left(\frac{2}{3}, 1\right)$ and all functions and distributions be $x_1$-periodic. Let $h$ be a distribution and $\left\{w(\cdot, x) \right\}_x$ a family of functions and $\left\{w(\cdot, x) \di h \right\}_x$ a family of distributions, both indexed by $x \in \mathbb{R}^2$, satisfying
\begin{align}
[ w(\cdot, x) ]_{\alpha} & \leq N,  \label{Lemma2_0} \\
[ w(\cdot, x) - w(\cdot, x^{\prime})]_{\alpha} & \leq N d^{\alpha}(x, x^{\prime}),  \label{Lemma2_1} \\
 \| h \|_{\alpha-2} & \leq N_0,  \label{Lemma2_2}\\
\| [ w(\cdot, x), (\cdot) ] \diamond h  \|_{2\alpha -2} & \leq N N_0, \label{Lemma2_2.5}\\
 \textrm{ and } \| [ w(\cdot, x), (\cdot) ] \di h - [w(\cdot, x^{\prime}), (\cdot) ] \di h \|_{2\alpha -2} & \leq N N_0 d^{\alpha}(x, x^{\prime}) \label{Lemma2_3} 
\end{align}
for any points $x, x^{\prime} \in \mathbb{R}^2$ and some constants $N, N_0 \in \mathbb{R}$.

Assume that for $\uf \in C^{\alpha}(\mathbb{R}^2)$ there is a function $\nu$ and $M \in \mathbb{R}$ such that 
\begin{align}
\label{Lemma2_3.5} 
|\uf(y) - \uf(x) - (w(y,x) - w(x,x)) - \nu(x)(y-x)_1| \leq M d^{2\alpha}(x,y)
\end{align}
for any points $x,y \in \mathbb{R}^2$. Then, letting $E_{diag}$ denote evaluation of a function of $(x,y)$ at $(x,x)$, there exists a unique distribution $U \di h \in C^{\alpha -2}(\mathbb{R}^2)$ satisfying
\begin{align}
 \label{Lemma2_4}
\lim_{T \rightarrow 0} \| \left[ \uf, (\cdot)_T \right] \di h - E_{diag} \left[w, (\cdot)_T \right] \di h - \nu \left[x_1, (\cdot)_T \right] h \| = 0.
\end{align}
The distribution $\uf \diamond h$ satisfies 
\begin{align}
\|[\uf, (\cdot)] \diamond h \|_{2\alpha -2} \lesssim (M + N)N_0. \label{Dec_2019_1}
\end{align}
\end{lemma}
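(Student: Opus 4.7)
For each scale $T\in(0,1]$, introduce the classical function
$$G_T(x) := U(x)\,h_T(x) + E_{diag}[w,(\cdot)_T] \diamond h(x) + \nu(x)\,[x_1,(\cdot)_T] h(x),$$
which is well-defined using the given data $\{w(\cdot,x)\diamond h\}_x$. The plan is to produce $U\diamond h\in C^{2\alpha-2}(\R^2)$ so that $(U\diamond h)_T=G_T+\varepsilon_T$ with $\|\varepsilon_T\|_\infty=o((T^{1/4})^{2\alpha-2})$ as $T\to 0^+$; the limit \eqref{Lemma2_4} then follows, since by the definition $[U,(\cdot)_T]\diamond h := (U\diamond h)_T - U\,h_T$ we get $[U,(\cdot)_T]\diamond h = G_T - U h_T + \varepsilon_T$. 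An initial uniform estimate $\|G_T - Uh_T\|_\infty\lesssim (M+N)N_0\,(T^{1/4})^{2\alpha-2}$ follows from \eqref{Lemma2_2.5} and Lemma~\ref{equivnorm} for the diagonal-commutator piece, and from the moment bound \eqref{moment_bound} applied to $[x_1,(\cdot)_T]h$, after first deducing $\|\nu\|_\infty \lesssim [U]_\alpha + M + N$ from \eqref{Lemma2_3.5} (by choosing $y=x+e_1$). In particular \eqref{Dec_2019_1} will follow.

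\textbf{Key semigroup compatibility.} The heart of the argument is the bound
$$\|(G_T)_t - G_{T+t}\|_\infty \lesssim (M+N)N_0\,\bigl[\,(T^{1/4})^{\alpha-2}(t^{1/4})^{2\alpha} + (T^{1/4})^{2\alpha-2}(t^{1/4})^{\alpha} + (T^{1/4})^{\alpha-1}(t^{1/4})^{2\alpha-1}\,\bigr]$$
for $0<t,T\le 1$. Decompose the difference according to the three summands of $G_T$. For the $Uh_T$ piece, write $(Uh_T)_t(x)-U(x)h_{T+t}(x)=\int(U(y)-U(x))h_T(y)\psi_t(x-y)\,dy$ and substitute the modelling \eqref{Lemma2_3.5}: this produces the classical commutators $[w(\cdot,x),(\cdot)_t]h_T(x)+\nu(x)[x_1,(\cdot)_t]h_T(x)$ plus a remainder bounded using the $Md^{2\alpha}$-error and \eqref{moment_bound}. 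For the $E_{diag}$-piece, the semigroup property \eqref{semigroup} applied to the smooth $(w(\cdot,x)\diamond h)_T$ yields the algebraic identity $([w(\cdot,x),(\cdot)_T]\diamond h)_t(x) = [w(\cdot,x),(\cdot)_{T+t}]\diamond h(x) - [w(\cdot,x),(\cdot)_t]h_T(x)$, while the discrepancy between the diagonal evaluation and the fixed-parameter one is absorbed via \eqref{Lemma2_3}, Lemma~\ref{equivnorm}, and \eqref{moment_bound}. The $\nu\,[x_1,(\cdot)_T]h$ piece admits a parallel decomposition, in which the off-diagonal remainder is controlled by the H\"older regularity of $\nu$; the required bound $[\nu]_{2\alpha-1}\lesssim M+N$ is deduced by swapping $x$ and $y$ in \eqref{Lemma2_3.5} and invoking \eqref{Lemma2_1}. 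The three $[w(\cdot,x),(\cdot)_t]h_T$ and $\nu(x)[x_1,(\cdot)_t]h_T$ outputs of the modelling expansion cancel pairwise with their counterparts from the two semigroup identities, leaving exactly the three listed remainders, each carrying a strictly positive power of $t$.

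\textbf{Construction, uniqueness, and main obstacle.} Setting $t=T$ in the above gives $\|(G_T)_T-G_{2T}\|_\infty\lesssim(M+N)N_0\,(T^{1/4})^{3\alpha-2}$, so the dyadic telescoping sum $\sum_n\|(G_{2^{-n}})_{2^{-n}}-G_{2^{-n+1}}\|_\infty$ converges. A standard tempered-distribution argument then identifies the family $\{G_T\}_T$ as the convolutions, modulo $o((T^{1/4})^{2\alpha-2})$-errors, of a unique distribution, which we name $U\diamond h$. The bound \eqref{Dec_2019_1} is immediate from the uniform estimate on $G_T-Uh_T$ together with Lemma~\ref{equivnorm}, and uniqueness of $U\diamond h$ follows because two candidates would differ by a distribution whose convolutions at every scale $T$ tend uniformly to zero. \emph{The main obstacle} is the semigroup compatibility of the second step: one must verify that the modelling expansion of $U$ produces \emph{exactly} the commutators $[w(\cdot,x),(\cdot)_t]h_T$ and $\nu(x)[x_1,(\cdot)_t]h_T$ that the two semigroup identities will reabsorb, and that each of the three remaining residuals has an exponent strictly greater than $2\alpha-2$ in $T$. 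The latter is tight in the $\nu$-remainder, where only the exponent $2\alpha-1$ is available, which is where the hypothesis $\alpha>\tfrac{2}{3}$ is genuinely used.
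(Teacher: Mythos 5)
Your proposal is correct and follows essentially the same route as the paper: the family $G_T$ is (up to a sign convention for the commutator) the paper's approximant $\mathcal{F}^T = Uh_T - E_{diag}[w,(\cdot)_T]\diamond h - \nu[x_1,(\cdot)_T]h$, your semigroup-compatibility bound specialized to $t=T$ is exactly the paper's dyadic decomposition with its three residuals of order $(t^{\frac14})^{3\alpha-2}$ (modelling remainder, off-diagonal commutator continuity via \eqref{Lemma2_3}, and the $\nu$-term controlled by $[\nu]_{2\alpha-1}\lesssim M+N$ together with $\|[x_1,(\cdot)_t]h\|_{\alpha-2}\lesssim\|h\|_{\alpha-2}$), and the telescoping plus limit extraction and uniqueness are identical in substance. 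The only cosmetic caveat is that for \eqref{Dec_2019_1} to close with constant $(M+N)N_0$ you should use $x_1$-periodicity in the choice $y=x+e_1$ so that the $U$- and $w$-increments vanish and $\|\nu\|\leq M$, rather than the weaker $\|\nu\|\lesssim[U]_\alpha+M+N$.
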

\noindent The difference between Lemma \ref{lemma:reconstruct_1} and \cite[Lemma 3.3]{OW} is that we  have lost periodicity in the $x_2$-direction. 

With Lemma \ref{lemma:reconstruct_1} in-hand, we can define the mapping \eqref{map_reconstruct_1} and show that this map satisfies some continuity properties:

\begin{corollary}[Modified Corollary 3.4 of \cite{OW}]  
\label{post_process_reconstruct_1}
Let $\alpha \in (\frac{2}{3},1)$ and all functions and distributions be $x_1$-periodic. We adopt the assumptions and notation of Corollary \ref{new_family_2}. 

For $i, j = 0,1$, we find that the following observations hold:

\begin{itemize}[leftmargin=.2in]
\item[i)] Let $\uf \in C^{\alpha}(\mathbb{R}^2)$  be modelled after $\vp_i+\tilde{\vi}_i$ according to $a_i$ and $\sigma_i$ on $\mathbb{R}^2$ with modelling constant $M$ and, furthermore, assume that $\|a_i\|_{\alpha}, \|\sigma_i\|_{\alpha}  \leq 1$. Then for every $a_0 \in [\lambda, 1]$ there exists a unique $\uf \di \partial_1^2 \vp_j(\cdot, a_0) \in C^{\alpha -2}(\mathbb{R}^2)$ such that 
\begin{align}
\label{inhomo_product_1}
\begin{split}
&\displaystyle\lim_{T \rightarrow 0} \|[\uf, (\cdot)_T  ] \di \partial_1^2 \vp_j(\cdot, a_0) \\
& \hspace{1cm}- \sigma_i E_i [(\vp_i + \tilde{\vi}_i)(\cdot, a_0^{\prime}), (\cdot)_T ] \di \partial_1^2 \vp_j(\cdot, a_0)
  - \nu _i [x_1, (\cdot)_T] \partial_1^2 \vp_j(\cdot, a_0)  \| = 0 ,
\end{split}
\end{align}
where $E_i$ is the evaluation of a function depending on $(x, a^{\prime}_0, a_0)$ at $(x, a_i(x), a_0)$. The distributions $\uf \diamond \partial_1^2  \vp_j(\cdot, a_0)$ satisfy
\begin{align}
\label{conclusion_lemma_2_1}
\begin{split}
\| [\uf , (\cdot)  ]  \di  \partial_1^2  \vp_j \|_{2\alpha -2,2}
  \lesssim   ([U_{int,i}]_{\alpha} + [f_i]_{\alpha -2} + M) [f_j]_{\alpha -2}
 \end{split}
\end{align}
and
\begin{align}
\label{Lemma_3_3_1_1}
\begin{split}
  \|  [\uf, (\cdot)] \diamond \partial_1^2 \vp_1 
-  [\uf, (\cdot)] \diamond \partial_1^2  \vp_0  \|_{2\alpha -2,1}
  \lesssim   ([U_{int,i}]_{\alpha} + [f_i]_{\alpha -2} + M) [f_1 - f_0]_{\alpha -2}.
 \end{split}
\end{align}

\item[ii)] Let $\uf_i \in C^{\alpha}(\mathbb{R}^2)$ be modelled after $\vp_i+\tilde{\vi}_i$ according to $a_i$ and $\sigma_i$ as in part $i)$. Furthermore, assume that $\uf_1 - \uf_0$ is modelled after $(  \vp_1+ \tilde{\vi}_1 , \vp_0+\tilde{\vi}_0)$ according to $(a_1, a_0)$ and $(\sigma_1, - \sigma_0)$ with modelling constant $\delta M \in \mathbb{R}$. For the $ \uf_i \diamond \partial_1^2 \vp_j (\cdot, a_0)$ from part $i)$, we have that 
\begin{align}
\label{conclusion_lemma_2_ii}
\begin{split}
& \| [ \uf_1 , (\cdot)_T ]  \di  \partial_1^2 v_j- [ \uf_0 , (\cdot)_T  ]  \di  \partial_1^2 v_j \|_{2\alpha -2,1}\\
&  \lesssim [f_j]_{\alpha -2} \Big(\delta M  + \displaystyle\max_{i=0,1} ([U_{int,i}]_{\alpha} + [f_i]_{\alpha -2}) ( \| a_1 - a_0\|_{\alpha} + \| \sigma_1 - \sigma_0\|_{\alpha} ) \\
& \hspace{5cm} + [f_1 - f_0]_{\alpha -2} + [U_{int,1} - U_{int,0}]_{\alpha}\Big)   .
 \end{split}
\end{align}
\end{itemize}
\end{corollary}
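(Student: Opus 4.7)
The plan is to apply the Reconstruction Lemma (Lemma \ref{lemma:reconstruct_1}) with the choices
\[
h := \partial_1^2 \vp_j(\cdot, a_0), \qquad w(y,x) := \sigma_i(x) (\vp_i + \tilde{\vi}_i)(y, a_i(x)), \qquad \nu(x) := \nu_i(x),
\]
where $\nu_i$ is the function from the modelling of $\uf$. With this choice, the pointwise estimate \eqref{Lemma2_3.5} is exactly the hypothesis that $\uf$ is modelled after $\vp_i + \tilde{\vi}_i$ according to $a_i, \sigma_i$ with constant $M$, while $E_{\textrm{diag}}[w, (\cdot)_T] \diamond h$ reproduces the middle term in \eqref{inhomo_product_1}. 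The remaining hypotheses \eqref{Lemma2_0}--\eqref{Lemma2_3} are to be checked with $N \sim M + [U_{int,i}]_\alpha + [f_i]_{\alpha-2}$ and $N_0 \sim [f_j]_{\alpha-2}$; the bound \eqref{conclusion_lemma_2_1} is then read off from \eqref{Dec_2019_1}.

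For the individual bounds: \eqref{Lemma2_0} follows from $\|\vp_i\|_{\alpha} \lesssim [f_i]_{\alpha-2}$ (Lemma \ref{small_v}) together with $[\tilde{\vi}_i(\cdot, a_0)]_\alpha \lesssim [U_{int,i}]_\alpha + [f_i]_{\alpha-2}$ (Lemma \ref{semigroup_bounds}(iii) with initial datum $U_{int,i}-\vp_i(a_0)$). For \eqref{Lemma2_1} I would telescope
\[
w(\cdot,x)-w(\cdot,x') = (\sigma_i(x)-\sigma_i(x'))(\vp_i+\tilde{\vi}_i)(\cdot,a_i(x)) + \sigma_i(x)\bigl[(\vp_i+\tilde{\vi}_i)(\cdot,a_i(x)) - (\vp_i+\tilde{\vi}_i)(\cdot,a_i(x'))\bigr],
\]
controlling the first piece via $\|\sigma_i\|_\alpha \leq 1$ and the second via the $\partial_{a_0}$ versions of Lemmas \ref{small_v} and \ref{semigroup_bounds} combined with $|a_i(x)-a_i(x')| \leq d^\alpha(x,x')$. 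Bound \eqref{Lemma2_2} is Lemma \ref{small_v} via Definition \ref{negative_norm}; \eqref{Lemma2_2.5} is exactly the content of \eqref{assumption_reconstruction_2_new} in Corollary \ref{new_family_2}. Estimate \eqref{Lemma2_3} is handled by the same telescoping as for \eqref{Lemma2_1}, with the commutator-continuity controlled by the $\partial_{a_0^\prime}$-derivative included in the $2,2$-subscript of \eqref{assumption_reconstruction_2_new}. For the $j$-continuity \eqref{Lemma_3_3_1_1}, I would run the same reconstruction with $h$ replaced by $\partial_1^2(\vp_1 - \vp_0)$, using Lemma \ref{small_v} for the $C^{\alpha-2}$-size of the difference and \eqref{reference_cross_lemma_2_new} for the commutator; the uniqueness in Lemma \ref{lemma:reconstruct_1} identifies the resulting distribution with $\uf\diamond\partial_1^2\vp_1 - \uf\diamond\partial_1^2\vp_0$ by linearity of $h \mapsto \uf\diamond h$.

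For part (ii), I would extend Lemma \ref{lemma:reconstruct_1} to a two-family model by setting
\[
w(y,x) := \sigma_1(x)(\vp_1+\tilde{\vi}_1)(y,a_1(x)) - \sigma_0(x)(\vp_0+\tilde{\vi}_0)(y,a_0(x)), \qquad h := \partial_1^2 \vp_j, \qquad \nu := \nu_1 - \nu_0,
\]
so that \eqref{Lemma2_3.5} becomes precisely the joint modelling of $\uf_1-\uf_0$ with constant $\delta M$. The key is to split $w$ into pieces in which only one of $\sigma_i$, $a_i$, or $\vp_i+\tilde{\vi}_i$ differs between the indices $0,1$: pieces of the first two types produce the factor $(\|a_1-a_0\|_\alpha + \|\sigma_1-\sigma_0\|_\alpha)\max_i([U_{int,i}]_\alpha + [f_i]_{\alpha-2})$, whereas the piece where $\vp_i+\tilde{\vi}_i$ varies produces the factor $[f_1-f_0]_{\alpha-2} + [U_{int,1}-U_{int,0}]_\alpha$, using Lemma \ref{small_v} and \ref{semigroup_bounds} applied to $\vp_1-\vp_0$ and $\tilde{\vi}_1-\tilde{\vi}_0$. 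The commutator hypotheses are then covered by the continuity estimates \eqref{reference_cross_lemma_new} and \eqref{reference_cross_lemma_2_new} of Corollary \ref{new_family_2}.

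The main obstacle is the bookkeeping in \eqref{Lemma2_3} and its two-family counterpart: the commutator $[w(\cdot,x),(\cdot)]\diamond h$ depends on $x$ both through the multiplicative prefactor $\sigma_i(x)$ and through the parameter evaluation $a_i(x)$ inside the $\diamond$ product, so the correct $d^\alpha(x,x')$ factor emerges only after cleanly separating these two sources of $x$-dependence and exploiting the $\partial_{a_0}$-weighted subscripted norms in Corollary \ref{new_family_2} at the right moment. In part (ii) the same issue recurs asymmetrically across the two indices, and care is needed to produce the combination of norms appearing in \eqref{conclusion_lemma_2_ii} without generating spurious cross-terms of the form $[f_i]_{\alpha-2}[f_j]_{\alpha-2}\delta M$.
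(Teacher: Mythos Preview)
Your proposal is correct and follows essentially the same route as the paper: apply Lemma \ref{lemma:reconstruct_1} with the choices $w(\cdot,x)=\sigma_i(x)(\vp_i+\tilde{\vi}_i)(\cdot,a_i(x))$, $h=\partial_1^2\vp_j(\cdot,a_0)$ for part~i), and $w(\cdot,x)=\sigma_1(x)(\vp_1+\tilde{\vi}_1)(\cdot,a_1(x))-\sigma_0(x)(\vp_0+\tilde{\vi}_0)(\cdot,a_0(x))$ for part~ii), checking \eqref{Lemma2_0}--\eqref{Lemma2_3} via Lemmas \ref{semigroup_bounds}, \ref{small_v} and Corollary \ref{new_family_2}. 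The only point the paper makes more explicit is how to obtain the parameter-derivative subscripts in $\|\cdot\|_{2\alpha-2,2}$ and $\|\cdot\|_{2\alpha-2,1}$: rather than differentiating directly, it reruns Lemma \ref{lemma:reconstruct_1} with $h$ replaced by finite differences $\partial_1^2\vp_j(\cdot,a_0^+)-\partial_1^2\vp_j(\cdot,a_0^-)$ (and second differences for two derivatives), obtaining $N_0\sim[f_j]_{\alpha-2}|a_0^+-a_0^-|$, and then invokes uniqueness to identify the output with the difference of the already-constructed products.
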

The proof of Corollary \ref{post_process_reconstruct_1} is essentially the same as that for \cite[Corollary 3.4]{OW}, but relies on modelling information in terms of $\tilde{\vi} + v$ as opposed to $v_{\textrm{OW}}$. This, however, does not change the character of the calculations. 

We then move on to the second reconstruction lemma: Assuming that for $F \in C^{\alpha}(\mathbb{R}^2)$ there is a family of distributions $\{ F \diamond \partial_1^2 (v + \tilde{\vi})(\cdot, a_0)\}_{a_0 \in [\lambda, 1]}$, satisfying a $C^{2\alpha -2}$-commutator condition, this lemma gives a map 
\begin{align}
\{F \diamond \partial_1^2 (v + \tilde{\vi})(\cdot, a_0) \} \mapsto F \diamond \partial_1^2 U,
\end{align}
whenever $\uf$ is modelled after $v + \tilde{\vi}$. Here is the statement --again using Einstein's summation convention:

\begin{lemma}[Modified Lemma 3.5 of \cite{OW}]
\label{lemma:reconstruct_2}
Let $\alpha \in (\frac{2}{3},1)$, $I \in \mathbb{N}$, $\lambda >0$, and all functions and distributions be $x_1$-periodic. Assume that for $F \in C^{\alpha}(\mathbb{R}^2)$ and $( V_1(\cdot, a_0), ..., V_I(\cdot, a_0) )$, families of $C^{\alpha}$-functions indexed by $a_0 \in[\lambda,1]$, there exist $( F \diamond \partial_1^2 V_1(\cdot, a_0), ....,F \diamond \partial_1^2 V_I(\cdot, a_0)  )$, families of $C^{\alpha -2}$-distributions indexed by $a_0 \in[\lambda,1]$, such that the bounds
\begin{align}
 [ V_i ]_{\alpha,1} \leq N_i \quad \textrm{and} \quad  \| [F, (\cdot) ] \di \partial_1^2 V_i \|_{2\alpha-2,1} \leq N N_i \label{Lemma4_2}
\end{align}
hold for some constants $N,N_i \in \mathbb{R}$. Then, for a function $\uf \in C^{\alpha} (\mathbb{R}^2)$ that is modelled after $(V_1, ..., V_I)$ according to the $C^{\alpha}$-functions $a$ and $(\sigma_1,..., \sigma_I)$, there exists a unique distribution $F \diamond \partial_1^2 U \in C^{\alpha -2}(\mathbb{R}^2)$ such that
\begin{align}
\label{Lemma4_3}
\displaystyle\lim_{T \rightarrow 0}  \| [ F, (\cdot)_T ] \diamond \partial_1^2 \uf - \sigma_i E[ F, (\cdot)_T ] \diamond \partial_1^2 V_i\| =0,
\end{align}
where $E$ denotes the evaluation of a function of $(x, a_0)$ at $(x, a(x))$. Under the further assumption that $\|a\|_{\alpha} \leq 1$, we obtain the bound 
\begin{align}
\label{Lemma4_4} 
\| [ F, (\cdot) ] \diamond \partial_1^2 \uf \|_{2\alpha -2} \lesssim [ F  ]_{\alpha} M  + \|\sigma_i \|_{\alpha}  N N_i .
\end{align}
\end{lemma}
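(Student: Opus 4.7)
The plan is to mimic the strategy used to prove Lemma 3.5 in \cite{OW}, adapted to the slightly different geometry here where we lose periodicity in $x_2$. The guiding principle is a sewing-type argument: since $F \diamond \partial_1^2 U$ is not a priori defined, we approximate it by a family of smooth objects $h_T$ indexed by the scale $T>0$ and extract a limit as $T \to 0$ using the threshold condition $\alpha > \tfrac{2}{3}$, which makes $3\alpha - 2 > 0$ and hence makes the errors summable along dyadic scales. The equivalent characterization of the $C^{\alpha-2}$-norm in Lemma \ref{equivnorm} together with the moment bound \eqref{moment_bound} and the semigroup property \eqref{semigroup} of $\psi_T$ will be the essential tools.

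First, I would introduce the approximating family
\begin{align*}
h_T(x) := F(x) \, (\partial_1^2 U)_T(x) - \sigma_i(x) \Bigl( \bigl[F,(\cdot)_T\bigr] \diamond \partial_1^2 V_i \Bigr)\!\bigl(x, a(x)\bigr),
\end{align*}
each of which is a genuine continuous function by hypothesis, and set $F \diamond \partial_1^2 U := \lim_{T \to 0} h_T$ provided the limit exists. The quantity we need to control is $h_T - (h_t)_{T-t}$ for $0 < t \le T \le 1$: on the first summand the semigroup identity rewrites the difference as the commutator $[F,(\cdot)_{T-t}] \,(\partial_1^2 U)_t$, while on the second summand the definition $[F,(\cdot)_T] \diamond \partial_1^2 V_i = F(\partial_1^2 V_i)_T - (F \diamond \partial_1^2 V_i)_T$ together with \eqref{semigroup} turns it into a commutator of the same type against $\partial_1^2 V_i$ at parameter $a_0=a(x)$, plus an outer factor $\sigma_i(x)$ which must be propagated through $(\cdot)_{T-t}$ and thereby generates additional $[\sigma_i,(\cdot)_{T-t}]$ and $[E,(\cdot)_{T-t}]$ contributions.

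Second, inserting the modelledness of $U$ pointwise, i.e.\ replacing $U(y)-U(x)$ by $\sigma_i(x)(V_i(y,a(x)) - V_i(x,a(x))) + \nu(x)(y-x)_1$ up to an error of size $M d^{2\alpha}(x,y)$, the $\nu$-term integrates to $0$ against $\partial_1^2 \psi_{T-t}$ (it is odd in $y-x$ after one derivative), the modelling error contributes $[F]_\alpha M (T^{\frac14})^{2\alpha-2}$ via \eqref{moment_bound}, and the remaining pieces reduce to evaluating the given commutator $[F,(\cdot)] \diamond \partial_1^2 V_i$ at nearby parameters $a_0 = a(x)$ and $a_0 = a(y)$. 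The variation in the parameter is then handled by the bound $\|[F,(\cdot)] \diamond \partial_1^2 V_i\|_{2\alpha-2,1} \le N N_i$ combined with $\|a\|_\alpha \le 1$ and the triangle inequality, producing the estimate
\begin{align*}
\| h_T - (h_t)_{T-t} \| \lesssim (T^{\tfrac14})^{2\alpha-2}\bigl( [F]_\alpha M + \|\sigma_i\|_\alpha N N_i \bigr).
\end{align*}

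Third, applied along the dyadic sequence $T_k = 2^{-k}$, this inequality is a geometric series in $2^{-k(2\alpha-2)/4}$ which, coupled with the monotonicity \eqref{monotonicity}, shows that $\{h_T\}_{T \le 1}$ is Cauchy in the sup-norm; its limit defines $F \diamond \partial_1^2 U$, and the same Cauchy telescope yields $(T^{\tfrac14})^{2-2\alpha}\|(F \diamond \partial_1^2 U)_T - h_T\| \lesssim [F]_\alpha M + \|\sigma_i\|_\alpha N N_i$ uniformly in $T \le 1$. Via Lemma \ref{equivnorm} this is exactly the claimed $C^{2\alpha-2}$-commutator bound \eqref{Lemma4_4}, and the limit relation \eqref{Lemma4_3} is built into the definition of $h_T$. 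Uniqueness follows from Lemma \ref{equivnorm}: any two candidates satisfying \eqref{Lemma4_3} would differ by a distribution whose convolution with $\psi_T$ tends to zero in $L^\infty$ as $T \to 0$, hence vanishes.

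The main obstacle will be the careful bookkeeping in step two: separating the error into the modelling part (absorbed by $M$), the parameter-freezing part (absorbed by the $j=1$ component of $\|[F,(\cdot)] \diamond \partial_1^2 V_i\|_{2\alpha-2,1}$), and the commutation of $\sigma_i$ and $E$ past the convolution. The threshold $\alpha > \tfrac{2}{3}$ is exactly what makes the three separate pieces individually summable along dyadic scales, and matching the powers of $T^{\tfrac14}$ across these contributions is the delicate step that requires the assumption $\|a\|_\alpha \le 1$ in order to close the bound with the universal constant in \eqref{Lemma4_4}.
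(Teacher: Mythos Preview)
Your overall architecture matches the paper's: you introduce $h_T = \mathcal{F}^T := F\,\partial_1^2 U_T - \sigma_i E[F,(\cdot)_T]\diamond\partial_1^2 V_i$, telescope along dyadic scales, and use the modelling of $U$ to control the individual increments. However, there are two genuine gaps in the execution.

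\textbf{Wrong exponent.} The increment estimate you write,
\[
\|h_T - (h_t)_{T-t}\| \lesssim (T^{\frac14})^{2\alpha-2}\bigl([F]_\alpha M + \|\sigma_i\|_\alpha N N_i\bigr),
\]
has the wrong power: it must be $(T^{\frac14})^{3\alpha-2}$, as the paper obtains in \eqref{combine_step3}. Since $2\alpha-2<0$ for $\alpha<1$, your stated bound blows up as $T\to 0$ and the dyadic sum in $2^{-k(2\alpha-2)/4}$ diverges. The missing factor $(T^{\frac14})^\alpha$ comes from the commutator $[F,(\cdot)_t]$ itself: in the term \eqref{lemma4_2_1} one has an integrand $(F(x)-F(y))$ multiplied by $\partial_1^2$ of the $C^{2\alpha}$-remainder in the modelling, giving $[F]_\alpha (t^{\frac14})^\alpha \cdot M(t^{\frac14})^{2\alpha-2} = [F]_\alpha M(t^{\frac14})^{3\alpha-2}$. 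You acknowledge the threshold $3\alpha-2>0$ in your opening paragraph, so this may be a slip, but as written the telescope does not close.

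\textbf{No sup-norm Cauchy limit.} Even with the corrected exponent, the claim that $\{h_T\}_{T\le 1}$ is Cauchy in the sup-norm is false: the term $F\,\partial_1^2 U_T$ blows up like $(T^{\frac14})^{\alpha-2}$ as $T\to 0$, since $U$ is merely $C^\alpha$. The telescoping estimate only controls $\|h_T-(h_\tau)_{T-\tau}\|$, not $\|h_T-h_\tau\|$. The paper closes this as follows (Step~3): combine the crude bound $\|\mathcal{F}^T\| \lesssim (T^{\frac14})^{\alpha-2}$ (using $\|\partial_1^2 U_T\|\lesssim [U]_\alpha^{loc}(T^{\frac14})^{\alpha-2}$ as in \eqref{Lemma5_14.1}) with the telescope \eqref{combine_step3} to get a uniform bound $\|\mathcal{F}^\tau\|_{\alpha-2}\lesssim C$ via \eqref{monotonicity}; then invoke Lemma~\ref{equivnorm} and Arzel\`a--Ascoli to extract a weak limit $F\diamond\partial_1^2 U$ in $C^{\alpha-2}$, and finally pass to the limit $\tau\to 0$ in \eqref{combine_step3} using lower semicontinuity to obtain \eqref{Lemma4_4}. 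Your uniqueness argument via \eqref{Lemma4_3} is fine, and the bookkeeping you describe for the $[\sigma_i,(\cdot)_t]$ and $[E,(\cdot)_t]$ contributions is exactly what yields \eqref{lemma4_2_2}--\eqref{lemma4_2_3_new}.
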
 
\noindent The difference to \cite[Lemma 3.5]{OW} is again the loss of periodicity in the $x_2$-direction. 

\subsection{Discussion and statement of our results } 
\label{section:discussion}

In this section we state our main results and further expound upon the details of our perturbative ansatz.


As already emphasized in the introduction, our main strategy is to first treat a linearized version of \eqref{IVP_intro} and, on this level, enforce the right-hand side and initial condition separately --by introducing an ``initial boundary layer''. This strategy is summarized in the following theorem, which should be seen as the main result of this paper:

\begin{theorem}[Analysis of the linear problem]
\label{linear_theorem}
Let $\alpha \in \left(\frac{2}{3},1\right)$ and $\lambda>0$.

\begin{itemize}[leftmargin=.2in]

\item[i)] (Construction of Solution Operator)  Assume that we have:

\begin{itemize}[leftmargin=.5in]
\item[\hypertarget{B1}{\textbf{(B1)}}] a space-time periodic distribution $f$ and $N_0 \in \mathbb{R}$ such that 
 \begin{align}
 \label{assumption_on_forcing}
  \| f \|_{\alpha -2} \leq  N_0,
 \end{align}
 
\item[\hypertarget{B2}{\textbf{(B2)}}] $a \in C^{\alpha}(\mathbb{R}^2)$ that is periodic in the $x_1$-direction and satisfies $a \in \left[\lambda, 1 \right]$ and $[a]_{\alpha} \ll 1$,\\

\item[\hypertarget{B3}{\textbf{(B3)}}] a periodic function $\uf_{int} \in C^{\alpha}(\mathbb{R})$ and $N_0^{int} \in \mathbb{R}$ such that
\begin{align}
\label{assumption_initial_condition}
\| \uf_{int}\|_{\alpha} \leq N_0^{int},
\end{align}

\item[\hypertarget{B4}{\textbf{(B4)}}] a family of $C^{\alpha -2}$-distributions $\left\{a \di \partial_1^2 \vp (\cdot, a_0) \right\}_{a_0 \in \left[ \lambda , 1 \right]}$ and $N \in \mathbb{R}$ such that $[a]_{\alpha} \leq N \leq 1$ and 
\begin{align}
\label{prop1_3}
 \| \left[a, (\cdot) \right] \di \partial_1^2 \vp \|_{2\alpha -2, 2} \lesssim N N_0.
\end{align}
\end{itemize}

\quad Under these assumptions, there exists a solution $\uf \in C^{\alpha}(\mathbb{R}^2_+)$ of 
\begin{align}
\begin{split}
\label{theorem_1_main_ivp}
(\partial_2 - a \diamond  \partial_1^2 + 1) \uf & = f \hspace{3cm} \textrm{ in } \quad \mathbb{R}^2_+,\\
\uf & = \uf_{int}   \hspace{2.55cm} \textrm{ on } \quad \partial \mathbb{R}^2_+
\end{split}
\end{align}
that may be decomposed as $\uf = \upa+ \ui$, where $\upa \in C^{\alpha}(\mathbb{R}^2)$ solves
\begin{align}
(\partial_2 - a \di \partial_1^2 +1) \upa & = f &&  \textrm{in} \quad  \mathbb{R}^2\label{forcing_theorem_1}
\end{align}
and is modelled after $\vp$, solving \eqref{periodic_mean_free}, according to $a$ on $\mathbb{R}^2$ and $\ui \in C^{\alpha}(\mathbb{R}^2_+)$ solves 
\begin{align}
\begin{split}
\label{IVP_theorem_1}
(\partial_2 - a \diamond \partial_1^2 +1) \ui & = 0 \hspace{3cm} \textrm{in} \quad \mathbb{R}^2_+ ,\\
\ui & = \uf_{int} - \upa  \hspace{1.95cm}  \textrm{on} \quad  \partial \mathbb{R}^2_+.
\end{split}
\end{align}
The function $\ui$ may be further decomposed as $\ui = \tilde{q} + w$, where $\tilde{q}$ is the even-reflection of the function defined in Definition \ref{q} (below) and $w \in C^{2\alpha}(\mathbb{R}^2)$ such that $w \equiv 0$ on $\mathbb{R}^2_-$. The function $\tilde{q}$ is modelled after $\tilde{\vi}$, the even-reflection of the function defined in \eqref{vi_short}, according to $a$ on $\R^2$. We find that the solution $U = u + q + w$ is unique in the class of functions admitting such a splitting.

\quad We, furthermore, obtain the bounds
\begin{align}
\label{theorem_1_i_holder}
 \| \tilde{q} \|_{\alpha} + \| w \|_{\alpha} + \| u \|_{\alpha} & \lesssim N_0  + N_0^{int}
\end{align}
and 
\begin{align}
\label{theorem_1_i_modelling}
M \leq M_q + M_u + [w]_{2\alpha} \lesssim N_0 + N_0^{int},
\end{align}
where $M_q$ corresponds to the modelling of $\tilde{q}$ after $\tilde{\vi}$ and $M_u$ corresponds to the modelling of $u$ after $\vp$, both according to $a$. The constant $M$ is associated to the modelling of $ u + \tilde{q}  +w$ after  $\tilde{\vi} + v$ according to $a$.
\vspace{.2cm}

\item[ii)] (Stability) Let $i, j = 0,1$. Assume that we have:

\begin{itemize}[leftmargin=.5in]
\item[\hypertarget{C1}{\textbf{(C1)}}] $f_i \in C^{\alpha -2}(\mathbb{R}^2)$ satisfying \hyperlink{B1}{\textbf{(B1)}} and $\delta N_0 \in \mathbb{R}$ such that 
 \begin{align}
 \label{assumption_on_forcing_difference}
  \|f_1 - f_0\|_{\alpha -2} \leq  \delta N_0,
 \end{align}
 
\item[\hypertarget{C2}{\textbf{(C2)}}] $a_i\in C^{\alpha}(\mathbb{R}^2)$ satisfying \hyperlink{B2}{\textbf{(B2)}} and $[a_i]_{\alpha} \leq N$ with $N \in \R$ from \hyperlink{B4}{\textbf{(B4)}},\\

\item[\hypertarget{C3}{\textbf{(C3)}}]  $\uf_{int,i} \in C^{\alpha}(\mathbb{R})$ satisfying \hyperlink{B3}{\textbf{(B3)}} and  $\delta N_0^{int}\in \mathbb{R}$ such that 
 \begin{align}
\label{initial_condition_assumption_thm_1}
\| \uf_{int,1} - \uf_{int,0}\|_{\alpha} \leq \delta N_0^{int},
\end{align}

\item[\hypertarget{C4}{\textbf{(C4)}}]  $\left\{ a_i \diamond \partial_1^2 v_j(\cdot, a_0) \right\}_{a_0 \in [\lambda, 1]}$ satisfying \hyperlink{B4}{\textbf{(B4)}} and $\delta N \in \mathbb{R}$ such that
\begin{align}
\|a_1 - a_0 \|_{\alpha} & \leq \delta N,\\
  \left\| [a_i, (\cdot)] \diamond \partial_1^2 v_0  - [a_i, (\cdot)] \diamond \partial_1^2 v_1\right\|_{2\alpha -2,1} & \leq N \delta N_0 \label{stability_prop_1_4},\\
\textrm{ and } \quad  \left\| [a_0, (\cdot)] \diamond \partial_1^2 v_i  - [a_1, (\cdot)] \diamond \partial_1^2 v_i \right\|_{2\alpha -2,1} & \leq \delta N N_0. \label{stability_prop_1_5}
\end{align}
\end{itemize}

\quad We denote the solution of \eqref{theorem_1_main_ivp} provided by \textit{i)} that corresponds to $f_i$, $\uf_{int,i}$, and $a_i$ as $\uf_i$, which is decomposed as $\uf_i = \upa_i + \ui_i = \upa_i + \tilde{q}_i + w_i$. Under the above assumptions, we obtain
\begin{align}
\label{theorem_1_ii_holder}
\begin{split}
 \| u_1 - u_0 \|_{\alpha} + \| \tilde{q}_1 - \tilde{q}_0 \|_{\alpha} + \| w_1 - w_0 \|_{\alpha}
 \lesssim \delta N  (N_0 + N_0^{int})+ \delta N_0 + \delta N_0^{int}. \quad
\end{split}
\end{align}
Furthermore, $\upa_1 + \tilde{q}_1 + w_1 - (\upa_0 + \tilde{q}_0 + w_0)$ is modelled after $(v_1+ \tilde{\vi}_1, -(v_0+ \tilde{\vi}_0))$ according to $(a_1, a_0)$ with modelling constant $\delta M$ satisfying
\begin{align}
\label{theorem_1_ii_modelling}
\delta M \lesssim \delta M_q + \delta M_u + [w_1 - w_0]_{2 \alpha} \lesssim \delta N (N_0 + N_0^{int}) + \delta N_0 + \delta N_0^{int},
\end{align}
where $\delta M_q$ corresponds to the modelling of $\tilde{q}_1 - \tilde{q}_0$ after $(\tilde{\vi}_1, -\tilde{\vi}_0)$ according to $(a_1, a_0)$ and $\delta M_u$ to the modelling of $u_1 - u_0$ after $(v_1, -v_0)$ according to $(a_1, a_0)$.
\end{itemize}
\end{theorem}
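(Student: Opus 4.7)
The strategy is to construct $U$ by separating the periodic bulk from the initial boundary layer, in accordance with the perturbative ansatz $\uf = u + \tilde q + w$ announced in the theorem. The periodic piece $u$ absorbs the forcing $f$ and carries the modelling after $v$; the rough layer $\tilde q$ absorbs the ``rough'' portion of the mismatch $U_{int}-u|_{\partial\h}$ and is modelled after $\tilde\vi$; and the remainder $w$ is then forced to solve a problem with right-hand side of strictly better regularity, so that classical parabolic Schauder theory on $\R^2$ produces $w\in C^{2\alpha}$.

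First, I would obtain $u\in C^{\alpha}(\R^2)$ solving \eqref{forcing_theorem_1} and modelled after $\vp(\cdot,a_0)$ according to $a$. This is exactly the space--time periodic problem treated in \cite{OW}, adapted to the massive operator $\partial_2-a\partial_1^2+1$ as discussed in Section~\ref{OW_products}. The offline products in hypothesis \hyperlink{B4}{\textbf{(B4)}} together with Lemma~\ref{lemma:reconstruct_2} make sense of $a\di\partial_1^2 u$, and a Banach fixed point in the $C^\alpha$-modelled space (whose contractivity is guaranteed by $[a]_\alpha\ll 1$ in \hyperlink{B2}{\textbf{(B2)}}) delivers $u$ with the bounds $\|u\|_\alpha\lesssim N_0$ and $M_u\lesssim N_0$ that contribute to \eqref{theorem_1_i_holder}, \eqref{theorem_1_i_modelling}.

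Next, with $\uf_{int}-u|_{\partial\h}\in C^\alpha(\R)$ in hand, I would define $q$ as in Definition~\ref{q} (the frozen-coefficient initial boundary layer: the constant-coefficient solution of \eqref{constant_coeff_ivp_body} with $a_0=a(x_1,0)$ and boundary data $\uf_{int}-u$), so that $\tilde q\in C^\alpha(\R^2)$ and the modelling of $\tilde q$ after $\tilde\vi$ with $\sigma=1$ follows directly from the heat-semigroup estimates \eqref{heat_kernel_bound_1}, \eqref{heat_kernel_bound_3.5}, \eqref{heat_kernel_bound_4}. The remainder $w$ is then sought as the solution of
\begin{equation*}
(\partial_2 - a\di \partial_1^2 + 1)\,w = -(\partial_2 - a\di \partial_1^2 + 1)\,\tilde q^{E} \quad \textrm{in } \h, \qquad w=0 \quad \textrm{on } \bhs,
\end{equation*}
with the right-hand side to be interpreted using Corollary~\ref{new_family_2} and Lemma~\ref{lemma:reconstruct_2}. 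The crucial point, which will be the main obstacle, is that the product $(a(x)-a(x_1,0))\di\partial_1^2 q$ gains a H\"older factor that vanishes like $x_2^{\alpha/2}$ at the boundary and therefore beats the $x_2^{(\alpha-2)/2}$ blow-up of $\partial_1^2 q$ coming from \eqref{heat_kernel_bound_1}, leaving an integrable forcing of class $C^{2\alpha-2}$ with the \emph{correct} boundary vanishing. Moreover, the mismatch between $\vi$'s initial data $\uf_{int}-\vp(\cdot,a_0)$ and $q$'s initial data $\uf_{int}-u$ is exactly $u-\vp(\cdot,a_0)$ which, thanks to the modelling of $u$ after $v$, is $C^{2\alpha}$-controlled to leading order and thus does not spoil the modelling of $\tilde q$ after $\tilde\vi$. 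These two observations together allow classical Schauder/heat-semigroup estimates on $\R^2$ (applied to the zero extension across $\bhs$) to produce $w\in C^{2\alpha}$ with $[w]_{2\alpha}\lesssim N_0+N_0^{int}$, yielding the existence part and the bounds \eqref{theorem_1_i_holder}--\eqref{theorem_1_i_modelling}; uniqueness in the stated class follows from linearity and the same contraction estimate.

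For the stability assertion \textit{ii)}, I would simply re-run the construction for differences. The second-part statements in Lemmas~\ref{lemma:new_reference_products_2}, Corollaries~\ref{new_family_1}--\ref{new_family_2} and Corollary~\ref{post_process_reconstruct_1} are tailored to produce estimates linear in $\delta N$, $\delta N_0$, $\delta N_0^{int}$, so that the differences $u_1-u_0$, $\tilde q_1-\tilde q_0$, $w_1-w_0$ satisfy the analogous periodic problem, frozen-coefficient boundary layer, and remainder equation with right-hand sides controlled by the hypotheses \hyperlink{C1}{\textbf{(C1)}}--\hyperlink{C4}{\textbf{(C4)}}. Collecting the resulting estimates term by term gives \eqref{theorem_1_ii_holder} and \eqref{theorem_1_ii_modelling}.
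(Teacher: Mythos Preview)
Your decomposition $U=u+\tilde q+w$ and the overall flow (periodic bulk $\to$ ansatz for the boundary layer $\to$ $C^{2\alpha}$ correction) match the paper, which packages these three steps as Propositions~\ref{linear_forcing}, \ref{ansatz_IVP}, \ref{linear_IVP} and then just assembles them. However, two points in your sketch are genuinely off.

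First, the paper does \emph{not} obtain $u$ by a Banach fixed point. For the linear problem this would be circular: to define $a\diamond\partial_1^2 u$ via Lemma~\ref{lemma:reconstruct_2} you already need $u$ to be modelled after $v$. Instead the paper regularizes ($f\mapsto f_\tau$), solves the resulting classical equation \eqref{prop1_12.01}, and uses the Safonov-type Lemma~\ref{KrylovSafonov} to get uniform modelling/$C^\alpha$ bounds in $\tau$; the limit $\tau\to 0$ then furnishes $u$ and its modelling. The same Safonov lemma (with trivial modelling) is used again to build $w$ on all of $\R^2$, after which a separate Schauder estimate on $\R^2_L$ shows $w\equiv 0$ for negative times.

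Second, your parenthetical description of $q$ as ``the constant-coefficient solution with $a_0=a(x_1,0)$'' is not what Definition~\ref{q} says, and the distinction is the crux of Proposition~\ref{linear_IVP}. The paper takes $q=\vi'(\cdot,\bar a(\cdot))$ where $\bar a$ is the \emph{heat-flow} extension of $a|_{\{x_2=0\}}$, precisely so that $\partial_1\bar a$, $\partial_1^2\bar a$, $\partial_2\bar a$ exist and obey the semigroup decay of Lemma~\ref{semigroup_bounds}. The Leibniz computation \eqref{defn_g} that produces the pointwise bound \eqref{Step_2_conclusion} for the forcing of $w$ would fail for the raw trace $a_{tr}(x)=a(x_1,0)$, since $a\in C^\alpha$ has no classical spatial derivatives. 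Relatedly, the right-hand side for $w$ is the \emph{classical} expression $-(\partial_2 q-a\,\partial_1^2 q+q)^E$ on $\R^2$; the identification $a\diamond\partial_1^2\tilde q=a\,\partial_1^2\tilde q$ on $\R^2_+$ is a separate step (Step~4 of the proof of Proposition~\ref{linear_IVP}), which in turn relies on Lemma~\ref{semigroup_bound_modelling} and the modelling of $u$ after $v$. For part~\textit{ii)} the paper does not simply ``re-run for differences'' but linearly interpolates the data, differentiates in the interpolation parameter, and applies Lemma~\ref{KrylovSafonov} to $\partial_s u_s^\tau$ and $\partial_s w_s^\tau$; this is how the $\delta$-estimates are obtained uniformly before integrating back up.
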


\noindent Since $(\vp +\tilde{\vi})(\cdot, a_0)|_{\mathbb{R}^2_+} = V(\cdot, a_0)$, where $V(\cdot, a_0)$ solves \eqref{frozen_main}, we have recovered the predicted modelling for $\uf$ solving  \eqref{IVP_intro_linear}.

\begin{remark}[Domains of $U$ and $\ui$] We remark that both $U$ and $\ui$ are the restrictions of $C^{\alpha}$-functions that are actually defined on all of $\mathbb{R}^2$. Throughout this paper, we use, \textit{e.g.}, $U$ and $U|_{\mathbb{R}^2_+}$ interchangeably -- the domain that is meant being clear from the context. The same goes for $\ui$.  
\end{remark}

\begin{remark}[Singular products in Theorem \ref{linear_theorem}]
All of the $\diamond$-products in Theorem \ref{linear_theorem} are obtained using Lemma \ref{lemma:reconstruct_2}:
\begin{itemize}[leftmargin=.5cm]
\item $a \diamond \partial_1^2 \upa := a \diamond \partial_1^2 \upa|_{\mathbb{R}^2_+}$ via the modelling of $\upa$ after $v$ according to $a$,
\item $ a\diamond \partial_1^2 \ui := a \diamond \partial_1^2 (\tilde{q} + w) |_{\mathbb{R}^2_+}$ via the modelling of $\tilde{q} + w$ after $\tilde{\vi}$ according to $a$, 
\item and $a \diamond \partial_1^2 \uf := a \diamond \partial_1^2 (u + \tilde{q} + w)|_{\mathbb{R}^2_+}$ via the modelling of  $u+\tilde{q} + w$ after $v+ \tilde{\vi}$ according to $a$. 
\end{itemize}
\end{remark}

The proof of Theorem \ref{linear_theorem}, which is given in Section \ref{section:proof_theorem_1}, is a combination of the following three propositions -- the first of which handles \eqref{forcing_theorem_1} and the second and third of which contain the treatment of the ``initial boundary layer'', \textit{i.e.} $\ui$ solving \eqref{IVP_theorem_1}. Here is the first of these propositions:

\begin{proposition}[Modified Proposition 3.8 of \cite{OW}] 
\label{linear_forcing}
We adopt the assumptions from Theorem \ref{linear_theorem}, under which we obtain:

\begin{itemize}
\item[i)](Construction of Solution Operator) There exists a unique $\upa \in C^{\alpha}(\mathbb{R}^2)$ that is modelled after $\vp$ according to $a$ such that 
\begin{align}
\label{prop1_5}
(\partial_2 - a \di \partial_1^2 +1) \upa & = f  && \textrm{in} \quad  \mathbb{R}^2.
\end{align}
The modelling constant $M$ and $C^{\alpha}$-norm of $\upa$ are bounded as
\begin{align}
M +  \|\upa\|_{\alpha} & \lesssim  N_0 \label{edit_edit_3}.
\end{align}

\item[ii)] (Stability) Let $i = 0,1$. Denoting the solutions given by part $i)$ corresponding to $a_i$ and $f_i$ as $\upa_i$, we find that $\upa_1 - \upa_0$ is modelled after $(\vp_1, -\vp_0)$ according to $(a_1, a_0)$.  The modelling constant $\delta M$ and $\|\upa_1 - \upa_0 \|_{\alpha}$ satisfy 
\begin{align}
\delta M + \|\upa_0 - \upa_1\|_{\alpha} & \lesssim    N_0 \delta N  +   \delta N_0  \label{modelling_difference_prop_1}.
\end{align}
\end{itemize}
\end{proposition}

\noindent This is a variant of \cite[Proposition 3.8]{OW} --use of periodicity in the $x_2$-direction is replaced by exploitation of the massive term. The proof is contained in Section \ref{section:treat_linear_2}.

Propositions \ref{ansatz_IVP} and \ref{linear_IVP} handle the ``initial boundary layer''. The point is that, thanks to the bounds in Lemmas \ref{semigroup_bounds} and \ref{small_v}, \eqref{IVP_theorem_1} may be treated in an entirely classical manner --the strategy for solving \eqref{IVP_theorem_1} is to postulate an ansatz, which we then correct. For this we introduce the notation
\begin{align}
\label{notation_inter}
\vi^{\prime}(\cdot, a_0):= \vi(\cdot, a_0, \uf_{int} - \upa),
\end{align}
where $\upa$ is the solution of \eqref{prop1_5}, and notice that the most naive ansatz for the solution of \eqref{IVP_theorem_1} is $\vi^{\prime}(\cdot, a)$. However, in order for our arguments to work we need more smoothness for $a$, which leads us to the following definition:

\begin{definition}[Ansatz for $\ui$]
\label{q}
The ansatz for $\ui$ solving \eqref{IVP_theorem_1} is 
\begin{align}
\label{defn_q}
q:=\vi^{\prime}(\cdot, \bar{a}),
\end{align}
where $\bar{a}$ solves 
\begin{align}
\begin{split}
\label{a_equn}
(\partial_2 - \partial_1^2) \bar{a}& = 0 \hspace{3cm} \textrm{in} \quad \mathbb{R}^2_+,\\
\bar{a} & = a  \hspace{3cm}  \textrm{on} \quad \partial \mathbb{R}^2_+.
\end{split}
\end{align} 
\end{definition}

\noindent Notice that the definition of $q$ only depends on $a|_{\left\{x_2 =0 \right\}}$ and that, thanks to the lack of a massive term in \eqref{a_equn}, $\bar{a} \geq \lambda$ whenever $a|_{\left\{x_2 =0 \right\}}\geq \lambda$. Also,  ``$\, ^\prime \, $'' in \eqref{notation_inter} does not indicate a derivative, but is only meant to distinguish \eqref{notation_inter} from \eqref{vi_short}. 

In Proposition \ref{ansatz_IVP}, we investigate the modelling of the ansatz from Definition \ref{q}, but with a slightly more general initial condition. In particular, we find that:

\begin{proposition}[Analysis of the ansatz for the ``initial boundary layer'']
\label{ansatz_IVP}
Let $\alpha \in (\frac{1}{2},1)$ and $\lambda>0$. We use the notation from Definition \ref{constant_soln} and the convention  \eqref{vi_short}. The constants $N_0^{int}$, $N_0$, $\delta N_0$, and $\delta N_0^{int}$ are taken from Theorem \ref{linear_theorem}. We obtain that:

\begin{itemize}[leftmargin=.2in] 
\item[i)]  Assume that $f \in C^{\alpha-2}(\mathbb{R}^2)$ satisfies the condition \hyperlink{B1}{(B1)}, $a \in C^{\alpha}(\mathbb{R}^2)$ satisfies $\|a\|_{\alpha}\leq1$ and $a\in [\lambda,1]$, $U_{int} \in C^{\alpha}(\mathbb{R})$ satisfies \hyperlink{B3}{(B3)}, and $u \in C^{\alpha}(\mathbb{R})$ is modelled after $v$ (solving \eqref{periodic_mean_free}) according to $a$ on $\left\{x_2 = 0 \right\}$ with modelling constant $M_{\partial}$ and with respect to $\nu_{\partial}$. In analogue to \eqref{notation_inter} and \eqref{defn_q}, we use the convention 
\begin{align}
\label{ansatz_defn_new}
\vi^{\prime}_{u}(\cdot, a_0): = \vi(\cdot, a_0, U_{int} - u) \quad \textrm{ and } \quad q_{u} := \vi^{\prime}_{u}(\cdot, \bar{a}),
\end{align}
where $\bar{a}$ solves \eqref{a_equn} with initial condition $a$. We remark that these new conventions only differentiate themselves from \eqref{notation_inter} and \eqref{defn_q} in that now $u$ must not be the solution of \eqref{prop1_5} from Proposition \ref{linear_forcing}.

\quad Under these assumptions, the function $q_{u}$ is modelled after $\vi$ (defined in \eqref{vi_short}) according to $a$ on $\mathbb{R}^2_+$ with modelling constant $M$ and $C^{\alpha}$-norm  satisfying 
\begin{align}
M & \lesssim M_{\partial} + \| \nu_{\partial}\|_{2\alpha -1} +\|u \|_{\alpha} + N_0 + N_0^{int} \label{modelling_q_constant}\\
\textrm{and} \quad \|q_{u}\|_{\alpha}& \lesssim \|u \|_{\alpha} + N_0^{int}. \label{holder_q_constant}
\end{align}
The even-reflection $\tilde{q}_{u}$ is modelled after $\tilde{\vi}$ according to $a$ on $\mathbb{R}^2$ and the modelling constant still satisfies \eqref{modelling_q_constant}. 
 
\item[ii)] Let $i = 0,1$. Assume that the $f_i \in C^{\alpha -2}(\mathbb{R}^2)$ satisfy the condition \hyperlink{C1}{(C1)}, the $a_i \in C^{\alpha}(\mathbb{R}^2)$ satisfy the conditions of part $i)$, the $U_{int,i} \in C^{\alpha}(\mathbb{R})$ satisfy \hyperlink{C3}{(C3)}, and the $u_i$ are of the class $C^{\alpha}(\mathbb{R})$. Additionally, we assume that $u_1 - u_0$ is modelled after $(v_1, -v_0)$ according to $(a_1, a_0)$ on $\{x_2 = 0 \}$ with modelling constant $\delta M_{\partial}$ and the associated $\delta\nu_{\partial}$.

\quad Under these assumptions, $q_{u_1} - q_{u_0}$ is modelled after $(\vi_1, -\vi_0)$ according to $(a_1, a_0)$ on $\mathbb{R}^2_+$ with modelling constant $\delta M$ and $ \| q_{u_1} - q_{u_0}\|_{\alpha}$  bounded by 
\begin{align}
 \label{modelling_q_constant_difference}
\begin{split}
\delta M  \lesssim & \delta M_{\partial} + \|\delta \nu_{\partial}\|_{2\alpha -1} +\|a_1 - a_0\|_{\alpha} ( \displaystyle\max_{i = 0,1} \|u_i\|_{\alpha} + N_0^{int} ) \\
& + \|u_1- u_0\|_{\alpha} + \delta N_0^{int} + \delta N_0
\end{split}
\end{align}
and
\begin{align}
 \| q_{u_1} - q_{u_0}\|_{\alpha}  \lesssim\|a_1 - a_0\|_{\alpha}  ( \displaystyle\max_{i = 0,1} \|u_i\|_{\alpha} + N_0^{int} ) + \|u_1- u_0\|_{\alpha} + \delta N_0^{int} \label{holder_q_constant_difference}
\end{align}
The even-reflection $\tilde{q}_{u_1} - \tilde{q}_{u_0}$ is modelled after $(\tilde{\vi}_1, -\tilde{\vi}_0)$ according to $(a_1, a_0)$ on $\mathbb{R}^2$ and the modelling constant still satisfies \eqref{modelling_q_constant_difference}. 
\end{itemize}
\end{proposition}
\noindent The proof of Proposition \ref{ansatz_IVP} is contained in Section \ref{section:modelling_q}. In our proof of Theorem \ref{linear_theorem} we apply Proposition \ref{ansatz_IVP} with $u$ taken as the solution of \eqref{prop1_5} obtained in Proposition \ref{linear_forcing}. We write Proposition \ref{ansatz_IVP} in its slightly more general form due to its application in Step 1 of the proof of Theorem \ref{theorem_nonlinear} (see Section \ref{section:treat_nonlinear}).

To finish up the ingredients needed for our proof of Theorem \ref{linear_theorem}, we correct the ansatz defined in Definition \ref{q} in order to solve \eqref{IVP_theorem_1}. In particular, we prove the following:

\begin{proposition}[Analysis of the linear problem with a trivial forcing]
\label{linear_IVP}
We adopt the assumptions and notations from Theorem \ref{linear_theorem}. We, furthermore, use the notation from Definition \ref{q}.
\begin{itemize}[leftmargin=.2in]
\item[i)] (Construction of Solution Operator) There exists a unique $w \in C^{2\alpha}( \mathbb{R}^2)$ with $w \equiv 0$ on $\mathbb{R}^2_-$ such that $\ui = \tilde{q} + w$ solves
\begin{align}
\begin{split}
\label{IC_2}
(\partial_2 - a \diamond \partial_1^2 +1) \ui& = 0 \hspace{3cm} \textrm{in} \quad \mathbb{R}^2_+,\\
\ui & = \uf_{int} - \upa \hspace{1.95cm} \textrm{on}\quad  \partial \mathbb{R}^2_+.
\end{split}
\end{align}
The $C^{\alpha}$-norm of $w$ and the $C^{2\alpha}$-seminorm, which corresponds to its trivial modelling, satisfy
\begin{align}
\label{modelling_estimate_ivp_w}
\|w\|_{\alpha} + [w]_{2\alpha} \lesssim N(N_0 + N_0^{int}).
\end{align}

\item[ii)] (Stability) Let $i = 0,1$. We denote the solutions corresponding to $\uf_{int,i}$, $a_i$, and $f_i$ from part $i)$ as $w_i$. Then, the $C^{\alpha}$-norm of $w_1 - w_0$ and the $C^{2\alpha}$-norm, corresponding to its trivial modelling, satisfy
\begin{align}
\label{modelling_prop_2_ii}
[w_1 - w_0]_{2\alpha} + \| w_1 - w_0\|_{\alpha}  \lesssim   \delta N_0^{int} + \delta N_0  +  \delta N (N_0 + N_0^{int}).
\end{align}
\end{itemize}

\end{proposition}

\noindent The proof of Proposition \ref{linear_IVP} is given in Section \ref{section:linear_IVP}.

The main analytic tool that we use in our arguments for Propositions \ref{linear_forcing} and \ref{linear_IVP} is the following adaption of Safonov's approach to Schauder theory:

\begin{lemma}[Modified Lemma 3.6 of \cite{OW}]
\label{KrylovSafonov}
Let $\alpha \in \left( \frac{1}{2}, 1\right)$,  $I \in \mathbb{N}$, and $\lambda >0$. Assume we have $I$ families of periodic distributions $\left\{f_1(\cdot, a_0), ..., f_I(\cdot, a_0) \right\}$ indexed by $a_0 \in \left[ \lambda,1 \right]$,  $I$ constants $N_i \in \mathbb{R}$ such that 
\begin{align}
\label{lemma5_assumption_1}
 \left\|  f_{i}(\cdot, a_0) \right\|_{\alpha -2,1} \leq N_i,
\end{align}
and $a: \mathbb{R}^2 \rightarrow \left[\lambda, 1\right]$ satisfying  $[ a ]_{\alpha} \ll 1$. Let $u \in C^{\alpha}(\mathbb{R}^2)$ be $x_1$-periodic and modelled after $(v_1, ... , v_I)$ defined in terms of \eqref{periodic_mean_free} according to $a$ and $(\sigma_1, ..., \sigma_I)$, with modelling constant $M$, and satisfy
\begin{align}
\label{Lemma5_3}
\displaystyle\sup_{T\leq1} (T^{\frac{1}{4}})^{2 -2\alpha} \| (\partial_2  - a \partial_1^2 +1)u_T -  \sigma_i E  f_{iT}(\cdot, a_0) \| \leq K
\end{align}
for some $ K \in \mathbb{R}$, where $E$ denotes evaluation of a function of $(x, a_0)$ at $(x, a(x))$. Then we find that 
\begin{align}
\label{lemma_5_result_model_holder}
M + \| u\|_{\alpha}  & \lesssim  K + \| \sigma_i\|_{\alpha} N_i.
\end{align}
 \end{lemma}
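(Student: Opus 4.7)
The approach is a parametric Safonov--Schauder argument, following closely the proof of Lemma 3.6 in \cite{OW}, with the exponential decay from the massive term $+1$ playing the role that $x_2$-periodicity plays there. The first step is to reformulate both $\|u\|_\alpha$ and the modelling constant $M$ as Campanato-type conditions in the spirit of Lemma \ref{equivnorm}. Concretely, both quantities can be controlled provided one has, uniformly in the base point $x_0 \in \mathbb{R}^2$ and the scale $T \le 1$, a bound of order $(T^{\frac14})^{2\alpha}$ on the scale-$T$ modelling residual
$$\bigl| u_T(x_0) - u(x_0) - \sigma_i(x_0)\bigl( v_i(\cdot, a(x_0))_T(x_0) - v_i(x_0, a(x_0)) \bigr) \bigr|,$$
where the linear correction $\nu(x_0)\bigl[(\cdot)_1,(\cdot)_T\bigr](x_0)$ vanishes by symmetry of $\psi_T$. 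The plan is then to establish this scale-$T$ residual bound.

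The second step is to freeze the coefficient at $x_0$ and rewrite the hypothesis \eqref{Lemma5_3} as
$$(\partial_2 - a(x_0)\partial_1^2 + 1)\bigl( u_T - \sigma_i(x_0) v_i(\cdot, a(x_0))_T \bigr) = R_T,$$
exploiting that $v_i(\cdot, a(x_0))$ itself solves the frozen equation \eqref{periodic_mean_free} with right-hand side $f_i(\cdot, a(x_0))$. The residual $R_T$ decomposes into three contributions: (i) the variable-coefficient error $(a(x) - a(x_0))\partial_1^2 u_T$, which by the modelledness of $u$ has size $[a]_\alpha (M + \|u\|_\alpha)(T^{\frac14})^{2\alpha-2}$; (ii) the localization error produced by replacing $\sigma_i E f_{iT}(\cdot, a_0)$ by $\sigma_i(x_0) f_{iT}(\cdot, a(x_0))$, controlled via \eqref{lemma5_assumption_1} together with the H\"older continuity of $a$ and $\sigma_i$ and the smallness $[a]_\alpha \ll 1$; and (iii) the assumed error of size $K(T^{\frac14})^{2\alpha-2}$.

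The third step inverts the frozen operator $\partial_2 - a(x_0)\partial_1^2 + 1$ against $R_T$ using the Green function $G(\cdot,\cdot,a(x_0))$ from \eqref{heat_kernel}. The massive term is essential here: it contributes the factor $e^{-x_2}$ to $G$, which supplies the integrability needed for the Duhamel-type integrals to converge uniformly in $x_0$, replacing the compact-torus structure used in \cite{OW}. Combining the resulting pointwise control of the scale-$T$ residual with the Campanato characterization from the first step yields an estimate of the form $M + \|u\|_\alpha \lesssim K + \|\sigma_i\|_\alpha N_i + [a]_\alpha (M + \|u\|_\alpha)$, and the smallness hypothesis $[a]_\alpha \ll 1$ allows the absorption that produces \eqref{lemma_5_result_model_holder}.

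The main obstacle is propagating the exponential decay from the massive term consistently through each convolution and semigroup estimate so that all bounds remain uniform in the $x_2$-variable despite the loss of periodicity there; beyond this bookkeeping, the argument is a fairly direct transcription of the one in \cite{OW}.
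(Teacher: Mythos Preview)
Your proposal captures the right architecture (freeze coefficients at $x_0$, identify the error terms, buckle via $[a]_\alpha \ll 1$), but it omits the step that actually produces the gain: the excess-decay (Safonov--Campanato) iteration. Inverting the frozen operator $\partial_2 - a(x_0)\partial_1^2 + 1$ against $R_T$ via the heat kernel gives only an $L^\infty$ bound of the form $\|u_T - \sigma_i(x_0)\, v_{iT}(\cdot,a(x_0))\|_{B_L(x_0)} \lesssim \tilde N\bigl((T^{1/4})^{2\alpha-2} + L^\alpha (T^{1/4})^{\alpha-2}\bigr)$, i.e.\ the \emph{wrong} power of $T$; it does not yield the $(T^{1/4})^{2\alpha}$ scale-$T$ residual bound you claim in your first step. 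The only direct route to a $(T^{1/4})^{2\alpha}$ bound on $u_T(x_0)-u(x_0)-\sigma_i(x_0)\bigl(v_{iT}-v_i\bigr)(x_0,a(x_0))$ is to invoke the modelledness of $u$ itself (this is the paper's Step~5), which places $M$ on the right-hand side and by itself yields nothing.

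What closes the argument in the paper is a \emph{spatial} Campanato quantity $M' := \sup_{x_0,R} R^{-2\alpha}\inf_{l}\|u - \sigma_i(x_0) v_i(\cdot,a(x_0)) - l\|_{B_R(x_0)}$, shown to satisfy $M'\sim M$, together with an excess-decay estimate obtained by splitting $u_T - \sigma_i(x_0) v_{iT}(\cdot,a(x_0))$ on $B_L(x_0)$ into a near-field part (compactly supported right-hand side) and a far-field part solving the homogeneous equation $(\partial_2 - a(x_0)\partial_1^2)w_> = 0$ \emph{without} the massive term, so that affine $l$ can be subtracted and interior regularity supplies the factor $(R/L)^{2(1-\alpha)}$. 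Choosing $L=\epsilon^{-1}R$, $T^{1/4}=\epsilon R$ then buckles $M'$. The massive term is therefore not a uniform source of exponential decay to be ``propagated through each estimate'' as you suggest, but is used specifically (a) for a global $\|u_T\|$ bound via the maximum principle on $\mathbb{R}^2$ (this is what replaces $x_2$-periodicity) and (b) for the integrability of the heat-kernel Duhamel integral; in the near/far-field decomposition it is deliberately pushed to the right-hand side so as not to obstruct the subtraction of affine functions.
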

\noindent In Proposition \ref{linear_forcing}, the purpose of this lemma is to pass to the limit in a family of regularized solutions. In Proposition \ref{linear_IVP}, we obtain the correction $w$ by using a trivial version of Lemma \ref{KrylovSafonov} (with $\sigma_i = 0$). Since the argument for this analytic workhorse of our paper actually sees quite substantial modification from the analogous result in \cite{OW}, we give the full proof in Section \ref{section:KS}.

We now come to the nonlinear result of this paper, Theorem \ref{theorem_nonlinear}, which treats the quasilinear initial value problem. The proof of this result relies on a fixed-point argument that takes Theorem \ref{linear_theorem} as input. Here is the statement of our result:

\begin{theorem}[Analysis of the quasilinear problem]
\label{theorem_nonlinear}
 Let $\alpha \in \left(\frac{2}{3},1\right)$. 
 \begin{itemize}[leftmargin=.2in]
\item[i)](Construction of Solution Operator)  Assume that $f \in C^{\alpha -2}(\mathbb{R}^2)$ satisfies \hyperlink{B1}{\textbf{(B1)}}  and the pair $(f,f)$ satisfies condition \hyperlink{A}{\textbf{(A)}}, $\uf_{int} \in C^{\alpha}(\mathbb{R})$ satisfies \hyperlink{B3}{\textbf{(B3)}}, and $a: \mathbb{R} \rightarrow [\lambda,1]$ for $\lambda>0$ satisfies $\| a^{\prime} \|, \| a^{\prime \prime} \|, \| a^{\prime \prime \prime} \| \leq 1$. We use the notation from Definitions \ref{constant_soln} and \ref{extensions}. Let $N_0, N_0^{int} \ll 1$.

\quad Then there exist $u \in C^{\alpha}(\mathbb{R}^2)$ and $w \in C^{2\alpha}(\mathbb{R}^2)$ such that $w\equiv 0$ on $\mathbb{R}^2_-$ and $\ui:= w + q$ solves \eqref{theorem_1_main_ivp} with $a:= a(u + w + \tilde{q})$. Here, $q$ is defined in terms of Definition \ref{q} with $a|_{\left\{ x_2 = 0 \right\}}:= a(U_{int}-u|_{\left\{ x_2 = 0 \right\}})$. The function $u$ solves \eqref{forcing_theorem_1} with $a:= a(u + w + \tilde{q})$ and is modelled after $v$ according to $a(u + w + \tilde{q})$. Lastly, the function $\uf := u + w + q$ solves 
\begin{align}
\label{IVP_theorem}
\begin{split}
\partial_2 \uf - a(\uf) \diamond \partial_1^2 \uf + \uf & = f \quad \quad \quad \quad \quad \quad \, \textrm{in} \quad \mathbb{R}^2_+, \\
\uf & = \uf_{int} \quad \quad \quad \quad \quad  \textrm{on} \quad \partial \mathbb{R}^2_+,
\end{split}
\end{align}
and $u+w +\tilde{q}$ is modelled after $\tilde{\vi} + v$ according to $a(u + w + \tilde{q})$.

\quad Under the additional smallness condition 
\begin{align}
\label{smallness_assumption}
 \| u\|_{\alpha} + \|  w \|_{\alpha} \ll 1,
\end{align}
the solution $U = u +w + q$ is unique within the class of solutions admitting such a splitting. We, furthermore, have the a priori estimates
\begin{align}
\label{thm_2_conclusion}
\begin{split}
\|\uf  \|_{\alpha}& \leq \|  u \|_{\alpha} +  \|q \|_{\alpha} + \| w \|_{\alpha}  \lesssim N_0 + N_0^{int}\\
 \textrm{and} \quad  M & \leq M_u + M_q + [w]_{2\alpha} \lesssim N_0 + N_0^{int},
\end{split}
\end{align}
where $M$ is associated to the modelling of $u + \tilde{q} +w$ after $\tilde{\vi} + v$ and $M_u$ to the modelling of $u$ after $v$, both according to $a(u + \tilde{q} +w)$.\\

\item[ii)] (Stability) Let $i,j =0,1$. Assume that the $f_i \in C^{\alpha -2}(\mathbb{R}^2)$ satisfy \hyperlink{C1}{\textbf{(C1)}}, every pair $(f_i, f_j)$ satisfies the condition \hyperlink{A}{\textbf{(A)}}, and the $\uf_{int,i} \in C^{\alpha}(\mathbb{R})$ satisfy \hyperlink{C3}{\textbf{(C3)}}. Let $U_i$ denote the solutions constructed in part $i)$ that decompose as $U_i = u_i + q_i + w_i$.

\quad We find that  $u_1 + \tilde{q}_1 + w_1 - (u_0 + \tilde{q}_0 + w_0)$ is modelled after $(\tilde{\vi}_1+ v_1, \tilde{\vi}_0+ v_0)$ according to $(a(u_1 + \tilde{q}_1 + w_1  ), a(u_0 + \tilde{q}_0 + w_0))$ and $(1,-1)$ with modelling constant $\delta M$ and $C^{\alpha}$-norm satisfying 
\begin{align}
\label{thm_2_conclusion_2}
\| \uf_1 - \uf_0 \|_{\alpha}  \leq  \| u_1  - u_0 \|_{\alpha} + \| q_1 - q_0 \|_{\alpha} + \| w_1 - w_0\|_{\alpha} & \lesssim \delta N_0 + \delta N_0^{int}\\
\textrm{and} \quad  \delta M \leq  \delta M_u + \delta M_q + [w_1 - w_0]_{2\alpha} & \lesssim \delta N_0 + \delta N_0^{int}.
\end{align}
Here, $\delta M_u$ corresponds to the modelling of $u_1 - u_0$ after $(v_1, -v_0)$ and $\delta M_q$ to the modelling of $\tilde{q}_1 - \tilde{q}_0$ after $(\tilde{\vi}_1, -\tilde{\vi}_0)$, both  according to $(a(u_1 + \tilde{q}_1 + w_1  ), a(u_0 + \tilde{q}_0 + w_0))$.
\end{itemize}
\end{theorem}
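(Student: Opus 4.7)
The plan is to obtain both parts of Theorem \ref{theorem_nonlinear} via a Banach fixed-point argument on pairs $(u,w)$, with Theorem \ref{linear_theorem} supplying the linear solution operator and its stability, and with the smallness of $N_0, N_0^{int}$ ensuring that the iteration maps a small ball into itself and is a contraction. Given an input $(u,w)$ with $u \in C^{\alpha}(\mathbb{R}^2)$ modelled after $v$ and $w \in C^{2\alpha}(\mathbb{R}^2)$ vanishing on $\mathbb{R}^2_-$, both living in a small ball, I first derive $q = q_u$ from $u$ via Definition \ref{q} (using boundary data $U_{int} - u|_{\{x_2=0\}}$, so that $q$ depends on $u$ only through its boundary trace), form the composite coefficient $\hat{a} := a(u + w + \tilde{q})$, and feed $\hat{a}$ into Theorem \ref{linear_theorem} with data $(f, U_{int})$ to produce $(u^{\text{new}}, w^{\text{new}}, q^{\text{new}})$. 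The iteration is $\Phi(u,w) := (u^{\text{new}}, w^{\text{new}})$.

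To legitimise this step I must verify hypotheses \hyperlink{B2}{\textbf{(B2)}}--\hyperlink{B4}{\textbf{(B4)}} for $\hat{a}$. The chain rule bound $[\hat{a}]_{\alpha} \leq \|a'\| \cdot \|u + w + \tilde{q}\|_{\alpha} \ll 1$ gives \hyperlink{B2}{\textbf{(B2)}} thanks to $\|a'\| \leq 1$ and the smallness hypothesis. For \hyperlink{B4}{\textbf{(B4)}} I need the family of offline products $\{\hat{a} \diamond \partial_1^2 v(\cdot, a_0)\}$, which I construct by applying the second reconstruction lemma (Lemma \ref{lemma:reconstruct_2}) with $F = \hat{a}$, once I have shown that $\hat{a}$ is modelled after $v + \tilde{\vi}$. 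This modelling is obtained by first assembling the modelling of $u + w + \tilde{q}$ after $v + \tilde{\vi}$ (combining the $v$-modelling of $u$, the $\tilde{\vi}$-modelling of $\tilde{q}_u$ from Proposition \ref{ansatz_IVP}(i), and the trivial modelling of $w \in C^{2\alpha}$) and then applying Lemma 3.2(i) of \cite{OW} to the outer nonlinearity $a$; the pre-built offline products $\{(v + \tilde{\vi}) \diamond \partial_1^2 v\}$ required by the reconstruction lemma are supplied by Corollary \ref{new_family_2}(ii).

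Using \eqref{theorem_1_i_holder}--\eqref{theorem_1_i_modelling}, the output satisfies $\|u^{\text{new}}\|_{\alpha} + \|w^{\text{new}}\|_{\alpha} + [w^{\text{new}}]_{2\alpha} + M^{\text{new}} \lesssim N_0 + N_0^{int}$, together with an analogous bound for $\|\tilde{q}^{\text{new}}\|_{\alpha}$ via Proposition \ref{ansatz_IVP}(i), so $\Phi$ sends the small ball into itself. For the contraction I apply Theorem \ref{linear_theorem}(ii) to two inputs $(u_i, w_i)$ with common data $(f, U_{int})$: here $\delta N_0 = \delta N_0^{int} = 0$, while $\delta N$, which bounds both $\|\hat{a}_1 - \hat{a}_0\|_{\alpha}$ and the relevant offline-product differences, satisfies $\delta N \lesssim \|u_1 - u_0\|_{\alpha} + \|w_1 - w_0\|_{\alpha} + \|\tilde{q}_1 - \tilde{q}_0\|_{\alpha}$ by Lemma 3.2(ii) of \cite{OW} combined with the stability statement Proposition \ref{ansatz_IVP}(ii) for the $q$-term; the needed stability of the offline products, i.e.\ the bounds \eqref{stability_prop_1_4}--\eqref{stability_prop_1_5}, is supplied by Corollary \ref{new_family_2}(iii)--(iv). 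The resulting estimate $\|u_1^{\text{new}} - u_0^{\text{new}}\|_{\alpha} + \|w_1^{\text{new}} - w_0^{\text{new}}\|_{\alpha} \lesssim \delta N \cdot (N_0 + N_0^{int})$ yields a contraction factor strictly less than one. The fixed point $(u,w)$ together with $q = q_u$ gives the desired $U = u + w + q$; uniqueness under \eqref{smallness_assumption} follows directly from the contraction bound. Part (ii) is proved by running the same computation while retaining $\delta N_0$ and $\delta N_0^{int}$ and absorbing the cross-term $\delta N (N_0 + N_0^{int})$ into the left-hand side by smallness, which yields \eqref{thm_2_conclusion_2}.

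The main technical obstacle is exactly the construction and the matching stability of the offline products $\{\hat{a} \diamond \partial_1^2 v(\cdot, a_0)\}$ for the iterated coefficient $\hat{a} = a(u + w + \tilde{q})$: every estimate used in the fixed-point argument must come with a counterpart linear in the differences $(u_1 - u_0, w_1 - w_0, \tilde{q}_1 - \tilde{q}_0)$. Threading together the second reconstruction lemma, the Leibniz-type composite-modelling result Lemma 3.2 of \cite{OW}, and the $\tilde{\vi}$-product constructions of Corollaries \ref{new_family_1}--\ref{new_family_2}, in a manner compatible with the structural hypotheses \hyperlink{B1}{\textbf{(B1)}}--\hyperlink{B4}{\textbf{(B4)}} and \hyperlink{C1}{\textbf{(C1)}}--\hyperlink{C4}{\textbf{(C4)}} of Theorem \ref{linear_theorem}, is where the bulk of the bookkeeping lies.
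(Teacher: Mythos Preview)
Your overall strategy matches the paper's: set up a contraction on a small ball, build the nonlinear coefficient $\hat a = a(u+w+\tilde q)$ from an input tuple, manufacture the offline products $\{\hat a\diamond\partial_1^2 v(\cdot,a_0)\}$ needed for \hyperlink{B4}{\textbf{(B4)}}, feed everything into Theorem~\ref{linear_theorem}, and close with smallness. The identification of Lemma~3.2 of \cite{OW} for composite modelling and of Corollaries~\ref{new_family_1}--\ref{new_family_2} for the pre-built $(\tilde\vi+v)\diamond\partial_1^2 v$ products is correct.

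There is, however, a genuine mis-identification at the key technical step. You propose to obtain $\hat a\diamond\partial_1^2 v(\cdot,a_0)$ by applying the \emph{second} reconstruction lemma (Lemma~\ref{lemma:reconstruct_2}) with $F=\hat a$. That lemma produces $F\diamond\partial_1^2 U$ from a \emph{modelled} second factor $U$ and from \emph{input} products $F\diamond\partial_1^2 V_i$; with $F=\hat a$ this would require offline products $\hat a\diamond\partial_1^2 V_i$ as input, which is exactly what you are trying to construct. Your own list of inputs---the modelling of $\hat a$ after $v+\tilde\vi$, together with the offline products $(v+\tilde\vi)(\cdot,a_0)\diamond\partial_1^2 v(\cdot,a_0')$---is the input for the \emph{first} reconstruction lemma (Lemma~\ref{lemma:reconstruct_1}), where the modelled function sits in the \emph{first} slot. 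The paper accordingly uses Corollary~\ref{post_process_reconstruct_1} (the packaging of Lemma~\ref{lemma:reconstruct_1}) in Step~2 of the proof, and the stability bounds \eqref{stability_prop_1_4}--\eqref{stability_prop_1_5} come from \eqref{Lemma_3_3_1_1} and \eqref{conclusion_lemma_2_ii} there, not directly from Corollary~\ref{new_family_2}.

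One further point: you iterate on pairs $(u,w)$, but ``$u$ modelled after $v$'' is only meaningful once you fix the function $a$ \emph{according to which} the modelling holds. The paper handles this by iterating on triples $(u^*,w^*,a^*)$ with the constraint $a^*|_{\{x_2=0\}}=a(U_{int})$, and by including a weighted term $(N_0+N_0^{int})\|a_1-a_0\|_\alpha$ in the contraction metric. Without carrying $a^*$ explicitly, your map is not well-defined on the stated domain, since the modelling constant $M_{u^*}$ used in the bounds depends on the choice of $a^*$.
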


\noindent The proof of Theorem \ref{theorem_nonlinear} is given in Section \ref{section:treat_nonlinear}. 

\begin{remark}[Short-time existence and uniqueness]
We remark that a similar proof as that for Theorem \ref{theorem_nonlinear} would yield short-time existence and uniqueness without the smallness assumption $N_0, N_0^{int} \ll 1$ on the data.
This may be more convenient in cases when the smallness assumption is difficult to verify.
\end{remark}


\begin{remark}[Higher dimensions]
The choice of $1+1$ dimensions is due to notational convenience --Theorems \ref{linear_theorem} and \ref{theorem_nonlinear} also hold in $d+1$ dimensions for $d>1$.
\end{remark}

\section{Proof of Lemma \ref{KrylovSafonov} (\textit{Safonov Lemma}): Main PDE ingredient} 
\label{section:KS}

\begin{proof} We adapt the argument for \cite[Lemma 3.6]{OW} --substantial changes may be found in the first five steps of the argument.\\
 
\noindent\textbf{Step 1:} \textit{(Bound for $\nu$)} \quad One begins by obtaining the bound:
\begin{align}
\| \nu \|_{2\alpha -1}  & \lesssim M + \| \sigma_i\| N_i \label{nu_bound_1}.
\end{align}
Using the same strategy as for \cite[(5.108)]{OW}, yields the necessary bound on the seminorm of $\nu$; here, one uses the assumption \eqref{lemma5_assumption_1} and Lemma \ref{small_v}.

The modelling assumption also yields the $L^{\infty}$-bound for $\nu$. In particular, using the triangle inequality we obtain
\begin{align*}
|\nu(x) (y-x)_1 | \leq M d^{2\alpha}(x,y) + |u(y) - u(x) - \sigma_i(v_i(y, a_i(x)) - v_i(x, a_i(x)))|,
\end{align*}
for any $x,y \in \R^2$. Exploiting the periodicity of $u$ and $v_i(\cdot , a_i(x))$ in the $x_1$-direction, gives that $\| \nu \| \leq M$.\\

\noindent \textbf{Step 2:} \textit{($u$ is Lipschitz on large scales and bound for $[u]_{\alpha}^{loc}$)} \quad We claim that
\begin{align}\label{local_u_norm_bound}
 [u]_{\alpha}^{loc}\lesssim M + \|\sigma_i \| N_i.
\end{align}
Indeed, let $x,y  \in \mathbb{R}^2$ such that $d(x,y) \leq 1$ and notice that
\begin{align*}
\frac{|u(x) - u(y)|}{d^{\alpha}(x,y)} & \leq M d^{\alpha}(x,y) + \| \sigma_i \| \frac{|v_i(x,a(y)) - v_i(y,a(y))|}{d^{\alpha}(x,y)} + \frac{|\nu(y) (x-y)_1| }{d^{\alpha}(x,y)}\\
& \lesssim M + \|\sigma_i \| N_i,
\end{align*}
where, in addition to the modelling of $u$, we used the $L^{\infty}$-bound for $\nu$ from Step 1.

Moreover, it is a consequence of the triangle inequality for $|\cdot |$ and \eqref{definition_beta_01_loc} that
\begin{align}
\label{large_scale_lip}
|u(x) - u(y)| \leq \lceil d(x,y) \rceil [u]^{loc}_{\alpha} \lesssim (M + \|\sigma_i \| N_i) d(x,y)
\end{align}
for any points $x,y \in \mathbb{R}^2$ such that $d(x,y) \geq 1$. In particular, we consider a sequence of points along the line connecting $x$ and $y$: Starting at $x$ we move a distance $1$ along the line for the next point. We choose points like this $\lfloor d(x,y) \rfloor$ times --and then take the final point as $y$. Letting $z_1, z_2$ be two subsequent points we use the bounds of the form $|u(z_1)  - u(z_2)| \leq [u]^{loc}_{\alpha} d^{\alpha}(z_1, z_2)$ --which, whenever $d(z_1, z_2) = 1$, simply reduces to $|u(z_1)  - u(z_2)| \leq [u]^{loc}_{\alpha}$.\\

\noindent \textbf{Step 3:} \textit{(Equations satisfied by $u_T$)} \quad  In this step, we show that, for any point $x_0 \in \mathbb{R}^2$ and $T \in (0,1]$, the function $u_T$ solves 
 \begin{align}
 \label{Lemma5_9}
 (\partial_2 -  a(x_0)\partial_1^2  +1) (u_T - \sigma_i(x_0) v_{iT}(\cdot, a(x_0))) = g_{x_0}^T \quad \quad \textrm{ in } \quad \mathbb{R}^2,
 \end{align}
where, for any $x \in \R^2$, $g_{x_0}^T(x)$ satisfies the pointwise estimate
 \begin{align}
 \label{Lemma5_10}
  |g_{x_0}^T(x)| \lesssim  \tilde{N} ((T^{\frac{1}{4}})^{2\alpha -2}   +  d^{\alpha}(x, x_0) (T^{\frac{1}{4}})^{\alpha -2})
 \end{align}
 with $\tilde{N} = K + [a]_{\alpha} M + \|\sigma_i\|_{\alpha}N_i $.
 
Additionally, we find that, for $T \in (0,1]$, $u_T$ solves 
\begin{align}
\label{Lemma5_12}
 (\partial_2 -  a \partial_1^2 +1)u_T = h^T \quad \quad \quad  \textrm{in} \quad \mathbb{R}^2,
\end{align}
where
\begin{align}
\label{h_bound_L_infty}
\begin{split}
\| h^T \| \lesssim K (T^{\frac{1}{4}})^{2\alpha -2} + \|\sigma_i\| N_i(T^{\frac{1}{4}})^{\alpha -2}.\\
\end{split}
\end{align}

We begin by showing the first result. Simple manipulations show the tautological observation that $u_T$ solves 
\begin{align}
\label{Lemma5_12.5}
 (\partial_2 -  a(x_0)\partial_1^2 +1)u_T =  \sigma_i(x_0)   f_{iT}(\cdot, a(x_0) ) +  g_{x_0}^T \quad \quad \textrm{in} \quad \mathbb{R}^2,
\end{align}
where 
\begin{align}
\label{Lemma5_13}
\begin{split}
g_{x_0}^T := & (\partial_2 -  a \partial_1^2+1) u_T - \sigma_i E  f_{iT} (\cdot, a_0 ) \\
& + (a - a(x_0)) \partial_1^2 u_T  + (\sigma_i - \sigma_i(x_0)) E f_{iT}(\cdot, a_0 )\\
& + \sigma_i(x_0) ( E   f_{iT}(\cdot,a_0 ) -  f_{iT}(\cdot, a(x_0) )).
\end{split}
\end{align}
For every $x \in \mathbb{R}^2$, we then bound:
\begin{align}
\label{Lemma5_14} 
\begin{split}
   & | g_{x_0}^T(x)|\\
&  \lesssim  \| (\partial_2 -  a \partial_1^2 +1) u_T - \sigma_i E f_{iT}(\cdot, a_0) \| + |a(x) - a(x_0)| | \partial_1^2 u_T(x) | \\
& + | ( \sigma_i(x) - \sigma_i (x_0)) E f_{iT} (\cdot, a_0)|  +  \| \sigma_i(x_0) ( E   f_{iT}(\cdot, a_0 ) -  f_{iT}(\cdot, a(x_0) )) \| \\ 
 & \leq  K (T^{\frac{1}{4}})^{2\alpha -2}   + d^{\alpha}(x, x_0) \Big(  [a]_{\alpha}  | \partial_1^2 u_T (x)| + [\sigma_i]_{\alpha}   \|  f_{iT}(\cdot, a_0)\| +  \| \sigma_i \|  [a]_{\alpha} \|  f_{iT}\|_1 \Big)\\
 & \leq  K (T^{\frac{1}{4}})^{2\alpha -2}   + d^{\alpha}(x, x_0)  \Big( [a]_{\alpha} | \partial_1^2 u_T (x)|  +\| \sigma_i\|_{\alpha}  N_i (T^{\frac{1}{4}})^{\alpha -2}\Big),
\end{split}
\end{align}
where we have used assumptions \eqref{lemma5_assumption_1},  \eqref{Lemma5_3}, and $[a]_{\alpha} \leq 1$.

By \eqref{local_u_norm_bound}, in order to obtain \eqref{Lemma5_10}, it suffices to show the bound 
\begin{align}
\label{Lemma5_14.1}
| \partial_1^2 u_T (x)| \lesssim \left[ u \right]^{loc}_{\alpha} (T^{\frac{1}{4}})^{\alpha-2}
\end{align}
for any point $x \in \mathbb{R}^2$. For \eqref{Lemma5_14.1} we use the moment bound \eqref{moment_bound} and the triangle inequality to write
 \begin{align*}
   &| \partial_1^2 u_T (x)|\\
   & =    \Big| \int_{\mathbb{R}^2} (u(y) - u(x)) \partial_1^2 \psi_T(x-y) \, \textrm{d}y \Big|\\
& \lesssim \left[ u\right]_{\alpha}^{loc} \Big(\int_{B_1(x)} | \partial_1^2 \psi_T(x-y) |d^{\alpha}(x,y) \, \textrm{d}y  +  \int_{B_1^c(x)} | \partial_1^2 \psi_T(x-y) | d(x,y) \, \textrm{d}y\Big)\\
& \lesssim \left[ u\right]_{\alpha}^{loc} \left(  (T^{\frac{1}{4}})^{\alpha -2} +   (T^{\frac{1}{4}})^{-1} \right) \lesssim \left[ u\right]_{\alpha}^{loc} (T^{\frac{1}{4}})^{\alpha -2}.
\end{align*}
Plugging \eqref{Lemma5_14.1} into \eqref{Lemma5_14} yields the desired \eqref{Lemma5_10}. 

A different set of manipulations yields that $u_T$ solves \eqref{Lemma5_12} with
\begin{align*}
h^T =   (\partial_2 -  a \partial_1^2 +1)u_T -  \sigma_i E f_{iT}(\cdot, a_0)  +  \sigma_i E f_{iT}(\cdot, a_0).
\end{align*} 
Using the assumptions  \eqref{lemma5_assumption_1} and \eqref{Lemma5_3}, we obtain \eqref{h_bound_L_infty}.\\

\noindent \textbf{Step 4:} \textit{($L^{\infty}$-estimates on $u$ and $u_T$)} \quad  In this step we prove two $L^{\infty}$-bounds.
%
%
The first estimate is given by
\begin{align}
\label{new_linfty_bound}
\| u\|   \lesssim K + \|\sigma_i \| N_i  + M .
\end{align}
To see this, observe that an application of  \cite[Theorem 8.1.7]{Kr} to \eqref{Lemma5_12} yields together with \eqref{h_bound_L_infty} for all $T \in (0,1]$
\begin{align*}
\| u_T \| \lesssim  K (T^{\frac{1}{4}})^{2\alpha -2} + \|\sigma_i \| N_i(T^{\frac{1}{4}})^{\alpha -2}.
\end{align*}
We use this estimate with $T =1$ and combine it with \eqref{large_scale_lip} to the effect of
\begin{align*}
\begin{split}
\|u \| & = \|u_1 \| + \|u_1 - u \| 
   \lesssim K + \|\sigma_i \| N_i +  \int_{\mathbb{R}^2} \|u(\cdot - y) - u \| | \psi_1(y)| \textrm{d}y \\
&   \lesssim K + \|\sigma_i \| N_i +  [u]_{\alpha}^{loc} \int_{\mathbb{R}^2} (|y| + |y|^{\alpha}) |\psi_1(y)| \textrm{d}y
 \lesssim K + \|\sigma_i \| N_i +  [u]_{\alpha}^{loc}.
\end{split}
\end{align*} 
This yields \eqref{new_linfty_bound} via an application of \eqref{local_u_norm_bound}.

\medskip

The second estimate we prove is
\begin{align}
\label{uT_linfty_1_final}
 \| u_T - \sigma_i(x_0) v_{iT}(\cdot, a(x_0)) \|_{B_L(x_0)}
 \lesssim   \tilde{N} L^{\alpha}  (T^{\frac{1}{4}})^{\alpha -2} ,
\end{align}
which holds for $T \in (0,1]$ and $L \ge 1$. To obtain \eqref{uT_linfty_1_final} we use the equation \eqref{Lemma5_9} and, letting $G(a(x_0), x_1, x_2)$ be as in \eqref{heat_kernel}, we write $u_T - \sigma_i(x_0) v_{iT}(\cdot, a(x_0))$ as 
\begin{align}
\label{heat_kernel_rep_Lemma_5}
\begin{split}
 u_T(x) - \sigma_i(x_0) v_{iT}(x, a(x_0)) =  \int_{0}^{\infty} \int_{\mathbb{R}} g_{x_0}^T(x_1- y, x_2-s) G(a(x_0), y, s) \, \textrm{d} y \, \textrm{d} s. \qquad
\end{split}
\end{align}
Combining \eqref{heat_kernel_rep_Lemma_5} with the bound \eqref{Lemma5_10} and using the notation $x_0 = (x_{01}, x_{02})$, for $x \in B_L(x_0)$, we obtain that
\begin{align*}
\begin{split}
& |u_T(x) - \sigma_i(x_0) v_{iT}(x, a(x_0))| \\
\lesssim & \tilde{N} \int_{0}^{\infty} \int_{\mathbb{R}} \Big( (T^{\frac{1}{4}})^{2\alpha -2}  +  (T^{\frac{1}{4}})^{\alpha -2} (|x_1 - y - x_{01}|^{\alpha} +  |x_2 -s  - x_{02}|^{\frac{\alpha}{2}}  ) \Big) |G(a(x_0),  y, s)| \, \textrm{d} y \, \textrm{d} s\\
\lesssim &  \tilde{N}\Big( \vphantom{\int_{\mathbb{R^2}}} (T^{\frac{1}{4}})^{2\alpha -2}  +   L^{\alpha} (T^{\frac{1}{4}})^{\alpha -2} +  \int_{0}^{\infty} \int_{\mathbb{R}} (T^{\frac{1}{4}})^{\alpha -2}(|y|^{\alpha} + s^{\frac{\alpha}{2}} ) |G(a(x_0), y, s)|  \, \textrm{d} y \, \textrm{d} s \Big)\\
\lesssim & \tilde{N}\Big(  (T^{\frac{1}{4}})^{2\alpha -2}  +   L^{\alpha} (T^{\frac{1}{4}})^{\alpha -2} + (T^{\frac{1}{4}})^{\alpha -2}\Big) \lesssim \tilde{N} L^{\alpha}  (T^{\frac{1}{4}})^{\alpha -2} ,
\end{split}
\end{align*}
where we have used that $B_L(x_0)$ refers to the ball in terms of the parabolic metric and $T\le 1\le L$.\\

\noindent \textbf{Step 5:} \textit{(An excess decay)} \quad Let $0<R \ll L $, $T \in (0,1]$, and $x_0 \in \mathbb{R}^2$. Then, in this step we find that 
 \begin{align}
 \label{Lemma5_15}
 \begin{split}
 &\frac{1}{R^{2\alpha}} \displaystyle\inf_{l \in \textrm{Span}\{1, x_1 \}} \| u_T - \sigma_i(x_0) v_{iT}(\cdot, a(x_0)) - l \|_{B_R(x_0)}\\
  \lesssim& \Big(\frac{R}{L} \Big)^{2 (1 -\alpha)} \frac{1}{L^{2\alpha}} \displaystyle\inf_{l \in \textrm{Span}\{1, x_1 \}} \| u_T - \sigma_i(x_0) v_{iT}(\cdot, a(x_0)) - l  \|_{B_L(x_0)}\\
 &+ \tilde{N} \Big(\frac{L^2}{R^{2\alpha} (T^{\frac{1}{4}})^{2-2\alpha} } + \frac{ L^{2+\alpha}}{R^{2\alpha} (T^{\frac{1}{4}})^{2-\alpha}}  \Big).
 \end{split}
 \end{align}
%
%
First, assume $L\ge 1$.
On $B_L(x_0)$ we decompose the function  $u_T -  \sigma_i(x_0) v_{iT}(\cdot, a(x_0))$ into a ``near-field'' and ``far-field'' contribution. Letting $w_<$ be the decaying solution of
 \begin{align}
  \label{Lemma5_16.1}
  \begin{split}
(\partial_2 - a(x_0)\partial_1^2 ) w_< = & \chi_{B_L} (g_{x_0}^T -(  u_T - \sigma_i(x_0)v_{iT}(\cdot, a(x_0)) ) ) \quad \quad \quad \textrm{in} \quad \mathbb{R}^2
 \end{split}
 \end{align}
and defining $w_> := u_T - \sigma_i(x_0)v_{iT}(\cdot, a(x_0)) -w_<$, we find that $w_>$ satisfies
 \begin{align}
  \label{Lemma5_16.2}
  (\partial_2 - a(x_0) \partial_1^2 ) w_> = 0 \quad \quad \quad \textrm{ in } \quad B_L(x_0).
 \end{align}
We may then write 
 \begin{align}
 \label{Lemma5_17}
  \| w_< \|  & \lesssim   L^2 ( \| g_{x_0}^T\|_{B_L(x_0)}  +  \| u_T -  \sigma_i (x_0) v_{iT}(\cdot, a(x_0)) \|_{B_L(x_0)})
  \end{align}
  and
  \begin{align}
 \label{Lemma5_18}
  \| \{  \partial_1^2 , \partial_2 \} w_>\|_{B_{L/2}(x_0)} & \lesssim L^{-2} \| w_> -l \|_{B_L(x_0)}
 \end{align}
for any $l \in \textrm{Span}\{1, x_1 \}$ --the notation $\left\{ \partial_1^2, \partial_2 \right\}$ on the left-hand side of \eqref{Lemma5_18} indicates that the estimate holds for both $\partial_1^2 w_>$ and $\partial_2 w_>$. The estimate \eqref{Lemma5_17} follows immediately from the heat kernel representation of $w_<$ and the triangle inequality in $L^{\infty}$. The relation \eqref{Lemma5_18} is proven via Bernstein's argument in \cite[Theorem 8.4.4]{Kr} for $l = 0$. One can reduce to the case that $ l = 0$ since $w_> - l$ still solves \eqref{Lemma5_16.2} when $l \in \textrm{Span}\left\{1, x_1 \right\}$.

To maneuver ourselves into a position to apply \eqref{Lemma5_17} and \eqref{Lemma5_18} we write 
\begin{align*}
u_T - \sigma_i(x_0) v_{iT}(\cdot, a(x_0)) =w_< + w_>
\end{align*}
and use the triangle inequality and Taylor's theorem to the effect of
 \begin{align*}
  \| u_T - \sigma_i(x_0) v_{iT}(\cdot, a(x_0))  - l_R \|_{B_R(x_0)} \lesssim R^2 \| \{ \partial_1^2, \partial_2 \} w_> \|_{B_R(x_0)} + \| w_< \|_{B_R(x_0)}
 \end{align*}
 for $l_R = w_>(x_0) + \frac{\partial w_>}{\partial x_1}(x_0) (x-x_0)_1$. Again by the triangle inequality, now along with \eqref{Lemma5_17} and \eqref{Lemma5_18}, we then have that 
 \begin{align*}
 \begin{split}
 & \| u_T - \sigma_i(x_0) v_{iT}(\cdot, a(x_0))  - l_R \|_{B_R(x_0)}\\
& \lesssim   \left( \frac{R}{L}\right)^2 \| w_> -l \|_{B_L(x_0)} +\| w_< \|_{B_R(x_0)}\\
& \lesssim \left( \frac{R}{L}\right)^2 \|  u_T - \sigma_i(x_0) v_{iT}(\cdot, a(x_0)) -l \|_{B_L(x_0)} + 2 \| w_< \|\\
& \lesssim \left( \frac{R}{L}\right)^2 \|  u_T - \sigma_i(x_0) v_{iT}(\cdot, a(x_0)) -l \|_{B_L(x_0)} \\
& \qquad + 2 L^2 \big( \| g^T_{x_0} \|_{B_L(x_0)}  + \|  u_T - \sigma_i(x_0) v_{iT}(\cdot, a(x_0)) \|_{B_L(x_0)}  \big),
\end{split}
\end{align*}
for any $l \in \textrm{Span}\{1, x_1 \}$. We remark that to apply \eqref{Lemma5_18} we use that $R \ll L$, which we have assumed in this step. To finish, we use \eqref{Lemma5_10} and \eqref{uT_linfty_1_final}, the latter of which requires $L\geq 1$.

If $L< 1$, we instead decompose $u_T -  \sigma_i(x_0) v_{iT}(\cdot, a(x_0))$ by setting $w_<$ to be the solution to
\begin{align*}
 (\partial_2 - a(x_0)\partial_1^2 + 1) w_< = & \chi_{B_L} g_{x_0}^T \quad \quad \quad \textrm{in} \quad \mathbb{R}^2,
\end{align*}
and $w_>:=u_T -  \sigma_i(x_0) v_{iT}(\cdot, a(x_0)) - w_<$. Then $w_<$ satisfies the analogue of \eqref{Lemma5_17}, namely that
\begin{align*}
 \| w_< \|  & \lesssim   L^2 \| g_{x_0}^T\|_{B_L(x_0)}.
\end{align*}
Moreover, we find that now
\begin{align}\label{jc02}
  (\partial_2 - a(x_0) \partial_1^2 +1) w_> = 0 \quad \quad \quad \textrm{ in } \quad B_L(x_0).
 \end{align}
To finish, we want to show that
\begin{align}\label{jc03}
 \| \{  \partial_1^2 , \partial_2 \} w_>\|_{B_{L/2}(x_0)} & \lesssim L^{-2} \|w_> - l\|_{B_L(x_0)}
\end{align} 
for any $l \in \textrm{Span}\{1, x_1 \}$.

We first remark that \eqref{jc02} implies for any $l(x)=c_0+c_1(x-x_0)_1$ with $c_0,c_1\in\R$ the estimates
\begin{align}\label{jc01}
|c_0|\lesssim L^{-2}\|w_> - l\|_{B_L(x_0)}, \quad  |c_1|\lesssim L^{-3}\|w_> - l\|_{B_L(x_0)}.
\end{align}
Indeed, we have $(\partial_2 - a(x_0) \partial_1^2 +1) (w_> -l) = -l$ on $B_L(x_0)$, so that by testing this equation with a suitable smooth cut-off function on scale $L$ which annihilates $(x-x_0)_1$ we obtain $|c_0|\lesssim (L^{-2}+1)\|w_> - l\|_{B_L(x_0)}$. In particular, letting $\eta$ be an $L^1$-normalized, smooth, and radial cut-off function for $B_{\frac12}(x_0)$ in $B_1(x_0)$, we rescale it and let $\eta_L(x_1, x_2):= L^{-3} \eta(\frac{x_1}{L}, \frac{x_2}{L^2})$ --we then test the equation with $\eta_{L}(x - x_0)$:
\begin{align*}
& (L^{-2}+1)\|w_> - l\|_{B_L(x_0)} \\
& \gtrsim \int_{B_L(x_0)} (- \partial_2 \eta_L(x-x_0)  + a(x_0) \partial_1^2 \eta_L(x-x_0) ) (w_{>}(x) - l(x)) + (w_{>} (x)- l(x)) \, \textrm{d} x\\
& = \int_{B_L(x_0)} \eta_{L}(x - x_0) ( c_1 (x-x_0) + c_0)_1 = c_0,
\end{align*}
where we have used the radial symmetry of $\eta$ and that it is $L^1$-normalized. Similarly, taking another $\partial_1$-derivative of the equation, yields $|c_1|\lesssim (L^{-3}+L^{-1})\|w_> - l\|_{B_L(x_0)}$. Together, this implies \eqref{jc01} in virtue of $L\le 1$. 

Furthermore, by setting $\phi(x_2):=e^{(x-x_0)_2}$, we can write \eqref{jc02} as
\begin{align*}
 (\partial_2 - a(x_0) \partial_1^2 ) \phi w_> = 0 \quad \quad \quad \textrm{ in } \quad B_L(x_0).
\end{align*}
Using $L\le 1$ and \eqref{jc01}, we have for any $l(x)=c_0+c_1(x-x_0)_1$
\begin{align*}
 \| \phi w_>\|_{B_{L/2}(x_0)} & \lesssim \| w_>\|_{B_{L/2}(x_0)} \\
 &\lesssim \| w_> - l\|_{B_{L/2}(x_0)} + |c_0| + L |c_1| \lesssim L^{-2}\| w_> - l\|_{B_{L}(x_0)},
\end{align*}
so that by virtue of $\partial_2 (\phi w_>) = \phi (\partial_2 +1)w_>$ we obtain
\begin{align*}
 \| \{  \partial_1^2 , \partial_2 \} w_>\|_{B_{L/2}(x_0)} & \lesssim \| \phi \{  \partial_1^2 , \partial_2 \}  w_>\|_{B_{L/2}(x_0)} \\
 &\lesssim \| \{  \partial_1^2 , \partial_2 \}  \phi w_>\|_{B_{L/2}(x_0)} + \| \phi w_>\|_{B_{L/2}(x_0)} \\
 &\lesssim \| \{  \partial_1^2 , \partial_2 \}  \phi w_>\|_{B_{L/2}(x_0)} + L^{-2}\| w_> - l\|_{B_{L}(x_0)}.
\end{align*}
Estimating $\| \{  \partial_1^2 , \partial_2 \} w_>\|_{B_{L/2}(x_0)}$ by the right-hand side in \eqref{jc03} is hence reduced to estimating $\| \{  \partial_1^2 , \partial_2 \} \phi w_>\|_{B_{L/2}(x_0)}$.
Since $\phi$ is Lipschitz in $[x_{02}-1,x_{02}+1]$, we have $\|\phi-1\|_{B_L(x_0)}=\|\phi-\phi(x_{02})\|_{B_L(x_0)}\lesssim \sup_{x\in B_L(x_0)} |x_2-x_{02}|= L^2$. Hence, we obtain from Bernstein's argument as above for any $l(x)=c_0+c_1(x-x_0)_1$ 
\begin{align}
\begin{split} 
 \| \{  \partial_1^2 , \partial_2 \} \phi w_>\|_{B_{L/2}(x_0)} & \lesssim L^{-2} \| \phi w_> - l\|_{B_L(x_0)} \\
 & \lesssim L^{-2} \|\phi (w_> - l)\|_{B_L(x_0)} + L^{-2}\|(\phi-1)l\|_{B_L(x_0)} \\
 &\lesssim L^{-2} \|w_> - l\|_{B_L(x_0)} + |c_0| + L|c_1| \lesssim L^{-2} \|w_> - l\|_{B_L(x_0)},
\end{split}
\end{align}
where we have used again \eqref{jc01} in the last step. This proves \eqref{jc03}.

With $l_R = w_>(x_0) + \frac{\partial w_>}{\partial x_1}(x_0) (x-x_0)_1$, using the same arguments as in the case $L\ge 1$ yields
\begin{align*}
\begin{split}
 &\| u_T - \sigma_i(x_0) v_{iT}(\cdot, a(x_0))  - l_R \|_{B_R(x_0)} \\
 \lesssim & \left( \frac{R}{L}\right)^2 \|  u_T - \sigma_i(x_0) v_{iT}(\cdot, a(x_0)) - l \|_{B_L(x_0)} + 2 L^2  \| g^T_{x_0} \|_{B_L(x_0)},
\end{split}
\end{align*}
so that it suffices to use \eqref{Lemma5_10}.\\

\noindent \textbf{Step 6:} \textit{(An equivalent definition of the modelling constant)} \quad  In this step we observe that $M \sim M^{\prime}$, where $M^{\prime}$ is defined as
 \begin{align}
 \label{Lemma5_19}
 M^{\prime} := \displaystyle\sup_{x_0 \in \mathbb{R}^2}\displaystyle\sup_{R >0} R^{-2 \alpha} \displaystyle\inf_{l \in \textrm{Span}\{1, x_1 \}} \| u -  \sigma_i(x_0) v_i(\cdot, a(x_0)) - l  \|_{B_R(x_0)}.
 \end{align}
Since the argument for this observation is an easy modification of that in \cite[Step 4 of Lemma 3.6]{OW}, we do not repeat it here.\\

\noindent \textbf{Step 7:} \textit{(Use of the modelling)} \quad In this step we remark that for $T \in (0, 1] $, $L>0$, and $x_0 \in \mathbb{R}^2$ the estimate 
\begin{align}
\label{Lemma5_22}
  \frac{1}{(T^{\frac{1}{4}})^{2\alpha}}  \| u_T - u - \sigma_i(x_0) (v_{iT} - v_i)(\cdot, a(x_0)) \|_{B_L(x_0)} \lesssim M + \tilde{N}\Big( \frac{L}{T^{\frac{1}{4}}}\Big)^{\alpha}
\end{align}
holds. Since we have access to Lemma \ref{small_v} and the moment bound \eqref{moment_bound}, the argument does not change from \cite[Step 5 of Lemma 3.6]{OW} and we, therefore, do not give it here.\\

\noindent \textbf{Step 8:} \textit{(Conclusion)} \quad We now show that $M \lesssim \tilde{N}$.
Combining \eqref{Lemma5_15} with \eqref{Lemma5_22} we find that 
\begin{align}
 \label{Lemma5_23}
\begin{split}
 &\frac{1}{R^{2\alpha}} \displaystyle\inf_{l \in \textrm{Span}\{1, x_1 \}} \| u - \sigma_i(x_0) v_{i}(\cdot, a(x_0)) - l \|_{B_R(x_0)} \\
    \lesssim & \Big( \frac{R}{L}\Big)^{2(1 - \alpha)}  \frac{1}{L^{2\alpha}} \displaystyle\inf_{l \in \textrm{Span}\{1, x_1 \}} \| u - \sigma_i(x_0) v_{i}(\cdot, a(x_0)) - l  \|_{B_L(x_0)}\\
 & \qquad +  \tilde{N}\Big(\frac{L^2}{R^{2\alpha} (T^{\frac{1}{4}})^{2-2\alpha} } + \frac{L^{2+\alpha}}{R^{2\alpha} (T^{\frac{1}{4}})^{2-\alpha}}  \Big) + \Big(\frac{T^{\frac{1}{4}}}{R}\Big)^{2 \alpha} \Big( M + \tilde{N} \Big( \frac{L}{T^{\frac{1}{4}}}\Big)^{\alpha} \Big). 
\end{split}
\end{align}

Let $\varepsilon \ll1$. For the case $R \leq \varepsilon^{-1}$ we make use of \eqref{Lemma5_23} and let $L = \varepsilon^{-1} R$ and $T^{\frac{1}{4}} = \varepsilon R$; the restriction on $R$ guarantees that $T \leq 1$. Making these identifications and using the definition of $M$, we obtain
\begin{align}
\label{R_less_than_one}
\begin{split}
& \frac{1}{R^{2\alpha}}  \displaystyle\inf_{l \in \textrm{Span}\{1, x_1 \}} \| u - \sigma_i(x_0) v_{i}(\cdot, a(x_0)) - l \|_{B_R(x_0)}\\
   \lesssim & (\varepsilon^{2 - 2\alpha} + \varepsilon^{2 \alpha} ) M  + (\varepsilon^{-(4-2\alpha)}  + \varepsilon^{-4}+1)\tilde{N}.
\end{split}
\end{align} 
Observe that for $R>\varepsilon^{-1}$ we have by \eqref{new_linfty_bound} that
\begin{align}
\label{R_bigger_than_one}
\begin{split}
&  \frac{1}{R^{2\alpha}}  \displaystyle\inf_{l \in \textrm{Span}\{1, x_1 \}} \| u - \sigma_i(x_0) v_{i}(\cdot, a(x_0)) - l \|_{B_R(x_0)} \\
 \le & \varepsilon^{2\alpha} (\|u\| + \|\sigma_i\| \|v_i\|) \lesssim \varepsilon^{2\alpha} (K + \|\sigma_i\| N_i + M)
\end{split}
\end{align}
where we have also used Lemma \ref{small_v}. 
Combining \eqref{R_less_than_one} and \eqref{R_bigger_than_one} we find that 
\begin{align}
\label{all_R}
\begin{split}
& \sup_{R >0} \frac{1}{R^{2\alpha}} \displaystyle\inf_{l \in \textrm{Span}\{1, x_1 \}} \| u - \sigma_i(x_0) v_{i}(\cdot, a(x_0)) - l \|_{B_R(x_0)}\\
    \leq & (\varepsilon^{2 - 2\alpha} + \varepsilon^{2 \alpha} ) M  + (  \varepsilon^{ 2\alpha} + \varepsilon^{-(4-2\alpha)}  + \varepsilon^{-4} + 1) \tilde{N} . 
\end{split}
\end{align} 
Using $M \sim M^{\prime}$ and choosing $\varepsilon$ small enough yields $M  \lesssim \tilde{N}$.

After plugging in $\tilde{N}$ from \eqref{Lemma5_10} of Step 2 this gives
\begin{align*}
M \lesssim K + [a]_{\alpha}  M + \|\sigma_i\|_{\alpha} N_i.
\end{align*}
Using \eqref{local_u_norm_bound} and $[a]_{\alpha} \ll 1$, we then find that 
\begin{align*}
M  \lesssim K + \| \sigma_i\|_{\alpha} N_i .
\end{align*}
Finally, the bound on $\| u\|_{\alpha}$ follows now from \eqref{local_u_norm_bound} and \eqref{new_linfty_bound}.
\end{proof}

\section{Argument for Theorem 
\ref{linear_theorem}: Treatment of the linear problem}
\label{section:treat_linear}
In this section we prove Propositions \ref{linear_forcing}, \ref{ansatz_IVP}, and \ref{linear_IVP} and Theorem \ref{linear_theorem}.
 
\subsection{Proof of Proposition \ref{linear_forcing}: Treatment of linear space-time periodic problem with irregular forcing}
\label{section:treat_linear_2}

The main difference between the proof we present below and the proof of \cite[Proposition 3.8]{OW} is our use of the modified Lemma \ref{KrylovSafonov}. In this proof it is essential that \eqref{prop1_5} has a massive term since $a$ is not periodic in time --if  \eqref{prop1_5} were posed without a massive term, there may not be solutions even when $f$ is of class $C^{\alpha}$.

\begin{proof} The argument has eight steps, of which the first four correspond to $i)$ and the further four to $ii)$.\\

\noindent  $i)$ \textbf{Step 1:} \textit{(Regularized reference products)} \quad Throughout this step we adopt the conditions and notations of Lemma \ref{lemma:reconstruct_2}. For any $\tau>0$, we use the convention that $V_{i \tau}(\cdot, a_0) = V_i(\cdot, a_0) \ast \psi_{\tau}$  and \textit{define}
\begin{align}
\label{prop1_8}
F \di \partial_1^2 V_{i \tau}(\cdot, a_0) := (F \di \partial_1^2 V_i (\cdot, a_0))_{\tau}.
\end{align}
These new offline products are taken as input for Lemma \ref{lemma:reconstruct_2} to obtain, for $u \in C^{\alpha}(\mathbb{R}^2)$ modelled after $V_{i\tau}$ according to $a_i$ and $\sigma_i$, the singular product $F \diamond \partial_1^2 u \in C^{\alpha -2} (\mathbb{R}^2)$. 

To apply Lemma \ref{lemma:reconstruct_2} we must check that the relations listed in $\eqref{Lemma4_2}$ hold. For this, we remark that the $L^1$-normalization of $\psi_T$ gives that $\left[V_{i\tau}\right]_{\alpha,1} \lesssim N_i$ and \eqref{monotonicity} yields
 \begin{align*}
\begin{split}
&\| [F, (\cdot)] \di \partial_1^2 V_{i\tau}\|_{2\alpha -2,1}\\
& =    \displaystyle\sup_{T \leq 1} (T^{\frac{1}{4}})^{2- 2 \alpha} \| [F, (\cdot)_{T+\tau} ] \di \partial_1^2 V_i \|_1\\
& =  \displaystyle\sup_{T \leq 1} (T^{\frac{1}{4}})^{2- 2 \alpha} ( \|(F(\partial_1^2 V_i)_T - (F \diamond \partial_1^2 V_i)_T)_{\tau} \|_1 + \| F \partial_1^2V_{i T+\tau} - (F \partial_1^2 V_{i T})_{ \tau} \|_1 )\\
& \lesssim   N N_i  + \displaystyle\sup_{T \leq 1} (T^{\frac{1}{4}})^{2- 2 \alpha} \| F \partial_1^2V_{i T+\tau} - (F \partial_1^2 V_{i T})_{ \tau} \|_1 . 
\end{split}
\end{align*}
To treat the second term, we assume that $\tau \leq T$ (the general case follows from switching the roles of $\tau$ and $T$) and use \eqref{alpha_kernel_bound}, to write
\begin{align}
\label{prop1_9.5.2}
\begin{split}
  \|  F \partial_1^2V_{i T+\tau} - (F \partial_1^2 V_{i T} )_{ \tau}  \|_1
\lesssim &  \left[F \right]_{\alpha}  \| \partial_1^2 V_{iT}  \|_1 \Big\| \int_{\mathbb{R}^2} |\psi_{ \tau} (\cdot-y)| d^{\alpha}(\cdot,y) \, \textrm{d}y \Big\| \\
\lesssim & \left[F \right]_{\alpha}  \left[ V_i \right]_{\alpha,1} (T^{\frac{1}{4}})^{\alpha -2} (\tau^{\frac{1}{4}})^{\alpha}\\
\lesssim & \left[F \right]_{\alpha}  N_i (T^{\frac{1}{4}})^{2\alpha -2}.
\end{split}
\end{align}
Combining these estimates, we find that 
\begin{align}
\label{prop_1_step_1_result}
\| [F, (\cdot)_T] \di \partial_1^2 V_{i \tau} \|_{2\alpha -2,1} \lesssim( [F]_{\alpha} +N) N_i.
 \end{align}

Having verified the assumptions of Lemma \ref{lemma:reconstruct_2}, we then characterize the distribution $F \diamond \partial_1^2 u$ under the assumption that $ \partial_1^2u \in C^{\alpha}(\mathbb{R}^2)$ --recall from under \eqref{prop1_8} that $u$ is modelled after $V_{i\tau}$ according to $a_i$ and $\sigma_i$. Notice that, as already used above, by \eqref{semigroup} and \eqref{prop1_8}, we have that  
 \begin{align*}
 \left[ F, (\cdot)_T\right] \di \partial_1^2 V_{i \tau}(\cdot, a_0) = \left[ F, (\cdot)_{T+\tau}\right] \di \partial_1^2 V_{i}(\cdot, a_0).
 \end{align*}
This means that as $T \rightarrow 0$, $\left[ F, (\cdot)_T\right] \di \partial_1^2 V_{i \tau}(\cdot, a_0)  \rightarrow  \left[ F, (\cdot)_{\tau}\right] \di \partial_1^2 V_{i }(\cdot, a_0)$ uniformly in $x$ for all $a_0 \in [\lambda, 1]$; whereby \eqref{Lemma4_2} implies that this convergence is uniform in $(x, a_0)$. By \eqref{Lemma4_3}, we then find that the condition $\partial_1^2 u \in C^{\alpha}(\mathbb{R}^2)$ gives that 
 \begin{align}
 \label{edit_2019_3}
\displaystyle\lim_{T \rightarrow 0} \| F \partial_1^2 u - ( F \di \partial_1^2 u)_T  - \sigma_i E\left[ F, (\cdot)_{\tau}\right] \di \partial_1^2 V_{i}(\cdot, a_0)\| =0,
\end{align}
where we have used the notation from Lemma \ref{lemma:reconstruct_2} that $E$ denotes evaluation of a function of $(x, a_0)$ at $(x, a(x))$. By the uniqueness in Lemma \ref{lemma:reconstruct_2}, we obtain
\begin{align}
\label{prop1_10}
F \di \partial_1^2 u = F \partial_1^2 u  -  \sigma_i E \left[ F, (\cdot)_{\tau} \right] \di \partial_1^2 V_{i}(\cdot, a_0).
\end{align}

\vspace{.2cm}

\noindent \textbf{Step 2:} \textit{(Analysis of the regularized problem)} \quad  Let $\tau \in (0,1)$. We show that there exists $\upa^{\tau}\in C^{\alpha +2}(\mathbb{R}^2)$, modelled after $\vp_{\tau}(\cdot, a_0)$ according to $a$, that solves
\begin{align}
\label{prop1_12}
(\partial_2-a \diamond \partial_1^2 +1) \upa^{\tau} & =  f_{\tau} && \textrm{ in } \quad \mathbb{R}^2
\end{align}
distributionally. 

Notice that by the previous step applied with $F =  a$, $I =1$, $V_{1}(\cdot, a_0) = v(\cdot, a_0)$, and $\sigma_1 = 1$, the formulation \eqref{prop1_12} is equivalent to 
\begin{align}
\label{prop1_12.01}
(\partial_2-a \partial_1^2 +1) u^{\tau}& =  f_{\tau} -E\left[a, (\cdot)_{\tau} \right] \di \partial_1^2 v  && \textrm{ in } \quad \mathbb{R}^2.
\end{align}
If $f_{\tau} -E\left[a, (\cdot)_{\tau} \right] \di \partial_1^2 v \in C^{\alpha}(\mathbb{R}^2)$, then the existence of $u^{\tau} \in C^{\alpha +2}(\mathbb{R}^2)$ solving \eqref{prop1_12.01} follows from \cite[Theorem 8.7.3]{Kr}. The desired modelling follows trivially due to the high regularity of $u^{\tau}$.

To see that $f_{\tau}-  E[a, (\cdot)_{\tau}] \diamond \partial_1^2 v \in C^{\alpha}(\mathbb{R}^2)$, we first notice that $g_{\tau} \in C^{\alpha}(\R^2)$ whenever $g \in L^{\infty}(\R^2)$. In particular, in that case, for any points $x,z \in \R^2$ such that $d(x,z) \leq1 $, we can write
\begin{align}
\begin{split}
|g_{\tau}(x) - g_{\tau}(z)| &= \int_{\R^2} |g(y)| \, | \psi_{\tau}(x-y) - \psi_{\tau}(z-y)  | \textrm{d} y\\
& \leq \| g \| \int_{\R^2}   |\partial_1 \psi_{\tau}(x-y) (x_1-z_1) -\partial_2 \psi_{\tau}(x-y) (x_2-z_2)| \textrm{d} y\\
& \lesssim \|g\| (\tau^{\frac{1}{4}})^{-1} d(x,z) \leq \|g\|  (\tau^{\frac{1}{4}})^{-1} d^{\alpha}(x,z).
\end{split}
\end{align}
Then, notice that \hyperlink{B1}{(B1)} implies $f_{\tau/2} \in L^{\infty}(\R^2)$. To treat the term $E\left[a, (\cdot)_{\tau} \right] \di \partial_1^2 v $, we remark that
\begin{align}
\| E ( a \diamond  \partial_1^2 v )_{\tau} \|_{\alpha} \leq \| (a \diamond  \partial_1^2 v)_{\tau} \|_{\alpha} + \|  (a \diamond  \partial_1^2 v)_{\tau}  \|_{1}  \| a \|_{\alpha} < \infty.
\end{align}
For the second inequality $``< \infty"$ above we have used the previous argument for $g=(a \diamond  \partial_1^2 v)_{\tau/2}$, which is admissible by \hyperlink{B4}{(B4)}, and \hyperlink{B2}{(B2)}. The observation that $\| E a \partial_1^2 v_{\tau} \|_{\alpha} < \infty$ is a trivial consequence of Lemma \ref{small_v} and \hyperlink{B2}{(B2)}.\\

\noindent \textbf{Step 3:} \textit{(Passing to the limit in the regularization)} \quad  Let $\tau \in (0,1)$. We apply Lemma \ref{KrylovSafonov} to $u^{\tau}$ from the previous step with $I =1$, $f_1 = f_{\tau}$, $a = a$, and $\sigma_1 = 1$. We first check that \eqref{Lemma5_3} holds by convolving \eqref{prop1_12} with $\psi_T$:
\begin{align*}
 (\partial_2  - a  \partial_1^2 +1) u^{\tau}_T -    f_{\tau + T}
 =[a , (\cdot)_T] \diamond \partial_1^2 u^{\tau}  \quad \quad \quad \textrm{in} \quad \mathbb{R}^2.
\end{align*}
By \eqref{monotonicity}, \eqref{Lemma4_4}, \eqref{prop1_3}, and \eqref{prop_1_step_1_result} we have that 
\begin{align*}
\| [a , (\cdot)] \diamond \partial_1^2 u^{\tau} \|_{2\alpha -2} \lesssim [a]_{\alpha} M_{\tau} + N_0 ( [a]_{\alpha} + N),
\end{align*}
where $M_{\tau}$ refers to the modelling of $u^{\tau}$ after $v_{\tau}(\cdot, a_0)$.  Applying Lemma \ref{KrylovSafonov} and using that $N\leq1$ we find that
\begin{align}
\label{prop_1_unifrom_bound_1}
M_{\tau} + \|u^{\tau} \|_{\alpha}  & \lesssim N_0,
\end{align}
where we have also used that $[a] \ll 1$.

By \eqref{prop_1_unifrom_bound_1} we know that, up to a subsequence, $u^{\tau} \rightarrow u$ uniformly as $\tau \rightarrow 0$, where we define the desired solution $u$ of \eqref{prop1_5} as this limit.  We must still pass to the limit in \eqref{prop1_12} and show that we recover \eqref{prop1_5}. The limits $f_{\tau} \rightharpoonup f$ and $\partial_2 u^{\tau} \rightharpoonup \partial_2 u$ are clear. It remains to check that $a \diamond \partial_1^2 u^{\tau} \rightharpoonup a \diamond \partial_1^2 u$, where the limiting modelling is a result of Definition \ref{model} in tandem with the uniform in $(x, a_0)$ convergence $v_{\tau}(\cdot, a_0) \rightarrow v(\cdot, a_0)$ and $u^{\tau} \rightarrow u$. This convergence can be deduced using the condition \eqref{Lemma4_3} from Lemma \ref{lemma:reconstruct_2}; the full argument, which sees no alteration in the passage to our setting, can be found in \cite[Proof of Proposition 3.8, Steps 9 and 10]{OW}.\\

\noindent \textbf{Step 4:} \textit{(Uniqueness)} \quad  Assume that there are two solutions $u$ and $u^{\prime}$ satisfying \eqref{prop1_5} with the desired modelling. Subtracting the two solutions we find that the difference $u - u^{\prime}$ is now trivially modelled. Using  \eqref{Lemma4_3} from Lemma \ref{lemma:reconstruct_2}, the triangle inequality yields
\begin{align}
\lim_{T \rightarrow 0} \| (a \diamond \partial_1^2 u)_T - (a \diamond \partial_1^2 u^{\prime})_T  - (a \diamond \partial_1^2 (u - u^{\prime}))_T \| =0,
\end{align}
which implies that
\begin{align}
\label{linear_product}
a \diamond \partial_1^2 u - a \diamond \partial_1^2 u^{\prime} = a \diamond \partial_1^2 (u - u^{\prime}).
\end{align}
So, the difference $u - u^{\prime}$ solves 
\begin{align}
\label{equation_for_difference}
(\partial_2 - a \diamond \partial_1^2 +1) (u - u^{\prime})_T = [a, (\cdot)_T] \diamond \partial_1^2 (u - u^{\prime}) &&  \textrm{in} \quad \mathbb{R}^2.
\end{align}
Moreover, by \eqref{Lemma4_4} of Lemma \ref{lemma:reconstruct_2}  we have that 
\begin{align*}
\| [a, (\cdot)] \diamond \partial_1^2 (u - u^{\prime})\|_{2\alpha -2} \lesssim [a]_{\alpha} [u - u^{\prime}]_{2\alpha}.
\end{align*}
Therefore, Lemma \ref{KrylovSafonov} applied with $I=1$, $f_1(\cdot, a_0) = 0$, $\sigma_1 = 0$, and $a = a$ gives:
\begin{align}
[u - u^{\prime}]_{2\alpha} + \| u - u^{\prime}\|_{\alpha} \lesssim [a]_{\alpha}[u - u^{\prime}]_{2\alpha},
\end{align}
which, since $[a]_{\alpha} \ll 1$, means that $[u - u^{\prime}]_{2\alpha} +  \| u - u^{\prime}\|_{\alpha} = 0$.\\

\noindent ii) \textbf{Step 5:} \textit{(Interpolation of the data)} We linearly interpolate the $a_i$ and $f_i$. In particular,  for $s \in [0,1]$ we define
\begin{align}
\label{interpolate_defn}
a_s :=  (1-s) a_0 + s a_1  \quad \textrm{and} \quad  f_s := (1-s) f_0  + s f_1.
\end{align} 
Of course, then $v_s(\cdot, a_0)$ defined as 
\begin{align*}
v_{s}(\cdot, a_0) := (1-s) v_0(\cdot, a_0)  +  s v_1(\cdot, a_0) 
\end{align*}
solves \eqref{periodic_mean_free} with right-hand side $f_s$. In order to keep notation lean, in this section we occasionally suppress the dependence of $v_s$ on the parameter $a_0$. To make sure that Leibniz' rule 
\begin{align}
\label{leibniz_prop_1}
\partial_s ( a_s \diamond \partial_1^2 v_s) = \partial_s  a_s \diamond \partial_1^2 v_s +  a_s \diamond  \partial_1^2 \partial_s v_s,
\end{align}
holds, the offline products are interpolated bilinearly as
\begin{align*}
  a_s \diamond \partial_1^2 v_ s
 := (s-1)^2  a_0 \diamond \partial_1^2 v_0 + s(1-s) (a_0 \diamond \partial_1^2 v_1+ a_1 \diamond \partial_1^2 v_0)  + s^2 a_1 \diamond \partial_1^2 v_1 .
\end{align*}
We, furthermore, define 
\begin{align}
\label{products_for_interpolation}
\begin{split}
  a_s \diamond \partial_1^2  \partial_s  v_s &  :=   (1-s) a_0 \diamond \partial_1^2  v_1+ s a_1 \diamond \partial_1^2  v_1 - (1-s) a_0  \diamond \partial_1^2 v_0  - s a_1  \diamond \partial_1^2 v_0 ,\\
 \partial_s a_s \diamond \partial_1^2  v_s &  :=  (1-s)  a_1 \diamond \partial_1^2  v_0(\cdot, a_0)  +  s   a_1 \diamond \partial_1^2 v_1(\cdot, a_0) \\
 & \quad \quad  -(1-s)  a_0 \diamond \partial_1^2  v_0(\cdot, a_0)  -  s   a_0 \diamond \partial_1^2 v_1(\cdot, a_0) ,\\
 \textrm{and} \, \,a_s \diamond \partial_1^2 \partial_{a_0} v_s &  := \partial_{a_0} (a_s \diamond \partial_1^2 v_s),
\end{split}
\end{align}
where we have used \eqref{prop1_3} to ensure that the right-hand side of the last definition is well-defined.

We remark that the assumptions \hyperlink{B4}{(B4)} and \hyperlink{C4}{(C4)} guarantee that
\begin{align}
 \| [a_s, (\cdot)] \diamond \partial_1^2 \partial_s  v_s  \|_{2\alpha -2,1} & \lesssim N \delta N_0,  \label{prop_1_part_2_0}\\
\| [\partial_s a_s, (\cdot)_T] \diamond \partial_1^2  v_s  \|_{2\alpha-2,1} & \lesssim \delta N N_0.  \label{prop_1_part_2_0.75}\\
 \textrm{and} \quad \| [a_s, (\cdot)_T] \diamond \partial_1^2 \partial_{a_0}  v_s  \|_{2\alpha -2,1} & \lesssim N N_0. \label{prop_1_part_2_0.5}
\end{align}
We can regularize all of the new offline products as in Step 1;  \textit{e.\,g.} we set
\begin{align*}
a_s \diamond \partial_1^2 v_{s \tau} := (a_s \diamond \partial_1^2  v_s )_{\tau}.
\end{align*}

\vspace{.2cm}

\noindent \textbf{Step 6:} \textit{(A continuous curve of solutions $u_s^{\tau}$ and an equation for $\partial_s u_s^{\tau}$)} \quad By the previous steps, for every $\tau \in (0,1]$, there exists a curve of $C^{\alpha +2}$ solutions $u^{\tau}_{s}$ of
\begin{align}
\label{prop_1_part_2_1}
(\partial_2 - a_s \partial_1^2 +1) u^{\tau}_s & =f_{s \tau} - E_s[a_s, (\cdot)_{\tau}] \diamond \partial_1^2 v_s && \textrm{in} \quad \mathbb{R}^2,
\end{align}
where $E_s$ denotes evaluation of a function of $(x, a_0)$ at $(x, a_s(x))$ and $f_{s\tau} = f_s \ast \psi_{\tau}$. The solution $u^{\tau}_{s}$ is modelled after $v_{s \tau}$, where $v_{s \tau} = v_s \ast \psi_{\tau}$, according to $a_s$, which by Step 1 gives
\begin{align}
a_s \diamond \partial_1^2  u^{\tau}_s = a_s \partial_1^2 u^{\tau}_s - E_s[a_s, (\cdot)_{\tau}] \diamond \partial_1^2 v_s.
\end{align}
This allows us to rewrite \eqref{prop_1_part_2_1} as
\begin{align}
\label{prop_1_part_2_2}
(\partial_2 - a_s \diamond \partial_1^2 +1) u^{\tau}_s & =f_{s \tau} && \textrm{in} \quad \mathbb{R}^2. 
\end{align}

To obtain an equation for $\partial_s u^{\tau}_s$ we differentiate \eqref{prop_1_part_2_1}. For this we use \eqref{leibniz_prop_1}, which gives the relation
\begin{align}
\label{prop_1_part_2_3}
\begin{split}
 & \partial_s (E_s [a_s, (\cdot)_{\tau}] \diamond \partial_1^2 v_s )\\
  = & E_s [\partial_s a_s, (\cdot)_{\tau}] \diamond \partial_1^2 v_s + E_s [a_s, (\cdot)_{\tau}] \diamond \partial_1^2  \partial_s v_s
  +   \partial_s a_s E_s  [a_s, (\cdot)_{\tau}] \diamond \partial_1^2  \partial_{a_0}  v_s.
\end{split}
\end{align}
So, $\partial_s u_s^{\tau}$ solves
\begin{align*}
\begin{split}
& (\partial_2 - a_s \partial_1^2 +1 ) \partial_s u_s^{\tau}   - \Big(  \partial_s f_{s \tau} + \partial_s a_s \partial_1^2  u_s^{\tau} - E_s [\partial_s a_s, (\cdot)_{\tau}] \diamond \partial_1^2 v_s    \\
& \qquad - E_s [a_s, (\cdot)_{\tau}] \diamond \partial_1^2  \partial_s v_s -   \partial_s a_s E_s  [a_s, (\cdot)_{\tau}] \diamond \partial_1^2  \partial_{a_0}  v_s   \Big) = 0 \quad \quad \quad \textrm{in} \quad \mathbb{R}^2.
\end{split}
\end{align*}
Since the term in parentheses is in $C^{\alpha}(\mathbb{R}^2)$, which can be checked using the same tools as in Step 2, we find that $\partial_s u^{\tau}_s \in C^{\alpha +2}(\mathbb{R}^2)$. Due to the high regularity of $\partial_s u^{\tau}_s$, we know that it is modelled after $( \partial_s v_{s\tau} , \partial_{a_0} v_{s \tau})$ according to $a_s$ and $(1, \partial_s a_s )$. Using the identities
\begin{align}
\label{prop_1_part_2_6}
\partial_s a_s \partial_1^2  u_s^{\tau} - E_s [\partial_s a_s, (\cdot)_{\tau}] \diamond \partial_1^2 v_s = \partial_s a_s \diamond \partial_1^2  u_s^{\tau}
\end{align}
and 
\begin{align}
\label{prop_1_part_2_6.5}
\begin{split}
 a_s \partial_1^2 \partial_s  u_s^{\tau} - E_s [ a_s, (\cdot)_{\tau}] \diamond \partial_1^2 \partial_s v_s  - \partial_s a_s E_s [ a_s, (\cdot)_{\tau}] \diamond \partial_1^2 \partial_{a_0} v_s
  = a_s \diamond \partial_1^2  \partial_s  u_s^{\tau},
\end{split}
\end{align}
which both follow from Step 1, we can rewrite the equation for $\partial_s u_s^{\tau}$ as
\begin{align}
\label{prop_1_part_2_4.5}
(\partial_2 - a_s \diamond \partial_1^2  +1) \partial_s u_s^{\tau} & = \partial_s f_{s \tau}  + \partial_s a_s \diamond \partial_1^2 u_s^{\tau}  && \textrm{in} \quad  \mathbb{R}^2.
\end{align}

\vspace{.2cm}

\noindent \textbf{Step 7:} \textit{(Estimates for $\partial_s u^{\tau}_s$)} \quad We now apply Lemma \ref{KrylovSafonov} to $\partial_s u^{\tau}_s$ with $I = 2$ and $f_1(\cdot, a_0) = \partial_s f_{s \tau}$, $\sigma_1 = 1$,  $f_2(\cdot, a_0) = \partial_1^2 v_{s\tau}(\cdot, a_0)$, $\sigma_2 = \partial_s a_s$, and $a = a_s$.  Notice that by \eqref{monotonicity} and assumption \hyperlink{C1}{(C1)} we have that
\begin{align}
\label{prop_1_part_2_5}
\| \partial_s f_{s\tau} \|_{\alpha -2} \lesssim\|f_{0}-f_{1 }\|_{\alpha -2} \lesssim \delta N_0;
\end{align}
using additionally \eqref{alpha_kernel_bound} and  Lemma \ref{small_v}, we obtain
 \begin{align}
 \label{prop_1_part_2_7}
 \|  \partial_1^2  v_{s\tau} \|_{\alpha-2,1}
  \lesssim  [f_{s}]_{\alpha -2} 
  \lesssim N_0.
 \end{align}
The relations \eqref{prop_1_part_2_5} and \eqref{prop_1_part_2_7} verify the assumption \eqref{lemma5_assumption_1}. 

We then check that $\partial_s u^{\tau}_s$ is an approximate solution in the sense of \eqref{Lemma5_3}. For this, we convolve \eqref{prop_1_part_2_4.5} with $ \psi_T$, which gives that 
\begin{align}
\begin{split}
\label{final_edit_1}
& \sup_{T\leq 1} (T^{\frac{1}{4}})^{2 - 2\alpha} \| (\partial_2 - a_s \diamond \partial_1^2  +1) \partial_s u_{sT}^{\tau} - \partial_s( f_{s \tau})_T - \partial_s a_s E_s  \partial_1^2 (v_{s \tau}(\cdot, a_0))_T \|\\
 \lesssim & \| [a_s, (\cdot)] \diamond \partial_1^2 \partial_s u_s^{\tau}   \|_{2\alpha -2} +\sup_{T\leq 1}  (T^{\frac{1}{4}})^{2 - 2\alpha} \|  ( \partial_s a_s \diamond \partial_1^2 u_s^{\tau})_T- \partial_s a_s E_s  \partial_1^2( v_{s \tau}(\cdot, a_0))_T \|.
\end{split}
\end{align}
By \eqref{Lemma4_4} of Lemma \ref{lemma:reconstruct_2} in conjunction with \eqref{prop_1_part_2_0} and \eqref{prop_1_part_2_0.5} the first term is bounded as
\begin{align}
\label{prop_1_part_2_11}
 \| [a_s, (\cdot)] \diamond \partial_1^2 \partial_s u_s^{\tau}   \|_{2\alpha -2}  \lesssim  [a_s]_{\alpha} \delta M_s^{\tau} +  N\delta N_0 + N_0 \delta N,
\end{align}
where $\delta M_s^{\tau}$ belongs to the modelling of $\partial_s u_s^{\tau}$ after $(\partial_s v_{s\tau}, \partial_{a_0} v_{s\tau})$ according to $a_s$ and $(1, \partial_s a_s)$. The second term of \eqref{final_edit_1} requires another application of the triangle inequality to write
\begin{align}
\begin{split}
\label{final_edit_2} 
&\sup_{T\leq 1}  (T^{\frac{1}{4}})^{2 - 2\alpha} \|  ( \partial_s a_s \diamond \partial_1^2 u_s^{\tau})_T- \partial_s a_s E_s  \partial_1^2 (v_{s \tau}(\cdot, a_0) )_T\|\\
 \leq  & \| [\partial_s a_s, (\cdot) ] \diamond \partial_1^2 u_s^{\tau}\|_{2\alpha -2}+  \sup_{T\leq 1}  (T^{\frac{1}{4}})^{2 - 2\alpha} \| \partial_s a_s \partial_1^2u_{sT}^{\tau} - \partial_s a_s E_s \partial_1^2 (v_{s \tau} (\cdot, a_0))_T\|.
\end{split}
\end{align}
The first term is bounded by $\delta N M^{\tau}_s + N N_0$  by \eqref{Lemma4_4} of Lemma \ref{lemma:reconstruct_2} and \eqref{prop_1_part_2_0.75}, where $M_s^{\tau}$ belongs to the modelling of $u_s^{\tau}$ after $v_{s\tau}$ according to $a_s$.  We complete our argument by using this modelling in conjunction with  $\psi_1$ being even in $x_1$ and a Schwartz function to obtain
\begin{align}
\label{final_edit_3} 
\begin{split}
|\partial_s a_s(x) \partial_1^2 u_{sT}^{\tau}(x) - \partial_s a_s (x) E_{s} \partial_1^2 v_{s \tau+T} (x, a_0)|
\leq \delta N M_s^{\tau},
\end{split}
\end{align}
where we have also used $\| a_1 - a_0\|_{\alpha} \leq \delta N$. Combining  \eqref{final_edit_1}, \eqref{prop_1_part_2_11}, \eqref{final_edit_2}, \eqref{final_edit_3}, the bound $M_s^{\tau} \lesssim N_0$ from part \textit{i)}, and that $N\leq1$, we find that for large enough $c \in \mathbb{R}$ we can set $K$ in \eqref{Lemma5_3} as
\begin{align}
K = c( [a_s]_{\alpha} \delta M_s^{\tau} + N_0 \delta N  +  \delta N_0).
\end{align}

Together with \eqref{prop_1_part_2_5} and  \eqref{prop_1_part_2_7}, an application of Lemma \ref{KrylovSafonov} to the $\partial_s u_s^{\tau}$ gives:
\begin{align}
\label{edit_edit_2}
\begin{split}
 \delta M_s^{\tau}  + \|\partial_s u^{\tau}_s\|_{\alpha}  \lesssim N_0 \delta N+  \delta N_0.
\end{split}
\end{align}

\vspace{.2cm}

\noindent \textbf{Step 8:} \textit{(Integration and passing to the limit)} \quad Since we have \eqref{edit_edit_2} for all $s \in [0,1]$, we may integrate it up to obtain
\begin{align}
\label{prop_1_part_2_13}
\| u^{\tau}_1 - u^{\tau}_0 \|_{\alpha} \lesssim  \int_0^1 \| \partial_s u^{\tau}_s \, \textrm{d} s \|_{\alpha} \lesssim N_0 \delta N  +  \delta N_0.
\end{align}
To obtain a bound for $\delta M^{\tau}$ we notice that 
\begin{align}
\label{prop_1_part_2_14}
\begin{split}
& \partial_s (u_s^{\tau}(y) - v_{s\tau} (y, a_s(x)))  = \partial_s u_s^{\tau}(y)  -  \partial_s v_{s\tau}(y, a_s(x)) - \partial_s a_s(x) \partial_{a_0} v_s(y, a_s(x)),
\end{split}
\end{align}
which allows us to integrate up our bound on $\delta M^{\tau}_s$ to obtain that $u_1^{\tau} - u_0^{\tau}$ is modelled after $(v_{1 \tau}, -v_{0\tau})$ according to $(a_1, a_0)$ with $\delta \nu^{\tau} = \int_0^1 \delta \nu^{\tau}_s \textrm{d} s$. Here, $\delta \nu^{\tau}_s$ is associated to the modelling of $\partial_s u_{s}^{\tau}$ already used in the previous step. We find that 
\begin{align}
\label{prop_1_part_2_15}
\delta M^{\tau} \lesssim \delta N N_0 +  \delta N_0.
\end{align}
Since we know from part \textit{i)} that $u^{\tau}_i \rightarrow u_i$ uniformly we can pass to the limit in \eqref{prop_1_part_2_13}.  In order to pass to the limit in the modelling we, furthermore, use that $v_{i\tau}(\cdot, a_i(\cdot)) \rightarrow v_i(\cdot, a_i(\cdot))$ and, by Step 1 of the proof of Lemma \ref{lemma:reconstruct_1}, also $\delta \nu^{\tau}  \rightarrow \delta \nu$ for some $\delta \nu$; both convergences are uniform in $x$.
\end{proof}

\subsection{Argument for Proposition \ref{ansatz_IVP}: Analysis of the ansatz for the ``initial boundary layer''}

\label{section:modelling_q}

In this section we investigate the modelling of $q$, which is defined in Definition \ref{q} --it culminates in the proof of Proposition \ref{ansatz_IVP}.

We start with two technical lemmas, which allow us to post-process the natural modelling of the ansatz $q$ after $\vi(\cdot, a_0, U_{int} - u)$ according to $\bar{a}$ --both of which will be proved in Section \ref{section:modelling_q_aux}. The first is used to exchange $\bar{a}$ in the natural modelling of $q$ for $a$ --using that $\bar{a} =a$ on $\partial \mathbb{R}^2_+$. Here is the statement:

\begin{lemma}[Post-processing of the Modelling]
\label{post_process}
Let $\alpha \in (0,1)$. We use the notation from Definition \ref{constant_soln}.

\begin{itemize}[leftmargin=.2in]
\item[i)] Assume that $a, a^{\prime} \in C^{\alpha}(\mathbb{R}^2)$ with $\| a \|_{\alpha}, \|a^{\prime}\|_{\alpha} \leq 1$ and $a^{\prime} = a$ on $\left\{ x_2 =0 \right\}$. Let $V_{int}(\cdot, a_0) \in C^{\alpha}(\mathbb{R})$ for $a_0 \in [\lambda,1]$ with $\lambda>0$.  For any points $x,y \in \mathbb{R}^2_+$ we then have the relation
\begin{align}
\label{Lemma_6_result_1}
\begin{split}
&\left| \vi(x, a(y), \vi_{int}(a(y))) - \vi(y, a(y), \vi_{int}(a(y)))  \right.\\
& \quad \quad \quad \quad \quad \quad \left. - \left(\vi(x, a^{\prime}(y), \vi_{int}(a^{\prime}(y))) - \vi(y, a^{\prime}(y), \vi_{int}(a^{\prime}(y)) ) \right) \right|\\
&\lesssim \| \vi_{int}\|_{\alpha,1} ([a]_{\alpha} + [a^{\prime}]_{\alpha}) d^{2\alpha}(x,y).
\end{split}
\end{align}
It follows that if  $\ui \in C^{\alpha}(\mathbb{R}^2_+)$ is modelled after $\vi(\cdot, a_0, \vi_{int}(a_0))$ according to $a$ with modelling constant $M$, then $\ui$ is modelled in the same way according to $a^{\prime}$. The new modelling constant $M^{\prime}$ satisfies 
\begin{align}
\label{new_M_2}
M^{\prime} \lesssim M + \| \vi_{int}\|_{\alpha,1}.
\end{align}

\item [ii)]Let $i =0,1$.  Assume that $a_i, a_i^{\prime} \in C^{\alpha}(\mathbb{R}^2)$ with $\| a_i \|_{\alpha}, \|a_i^{\prime}\|_{\alpha} \leq 1$ and $a_i^{\prime} = a_i$ on $\left\{ x_2 =0 \right\}$ and $V_{int,i}(\cdot, a_0) \in C^{\alpha}(\mathbb{R})$ for $a_0 \in [\lambda,1]$ with $\lambda>0$. We find that if $\ui \in C^{\alpha}(\mathbb{R}^2_+)$ is modelled after $(\vi(\cdot, a_0, \vi_{int,1}(a_0)), -\vi(\cdot, a_0, \vi_{int,0}(a_0)))$ according to $(a_1, a_0)$ with modelling constant $\delta M$, then $\ui$ is modelled in the same way according to $(a^{\prime}_1, a^{\prime}_0)$. The new modelling constant $\delta M^{\prime}$ satisfies
\begin{align}
\label{new_M}
\delta M^{\prime} \lesssim \delta M +   \|\vi_{int,1} - \vi_{int,0} \|_{\alpha,1} +\displaystyle\max_{i=0,1} \| \vi_{int,i}\|_{\alpha,2}( \| a_1 - a_0\|_{\alpha} + \| a_1^{\prime} - a_0^{\prime}\|_{\alpha}).
\end{align}
\end{itemize}
\end{lemma}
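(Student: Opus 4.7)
\textbf{Part i).} The plan is to recognize that \eqref{Lemma_6_result_1} is the increment, in the parameter $a_0$, of the function
\begin{align}
F(a_0) := \vi(x, a_0, \vi_{int}(a_0)) - \vi(y, a_0, \vi_{int}(a_0))
\end{align}
between the two values $a(y)$ and $a'(y)$, and to estimate it via the fundamental theorem of calculus as $|F(a(y)) - F(a'(y))| \leq |a(y) - a'(y)| \sup_{a_0 \in [\lambda,1]} |F'(a_0)|$. Since $a$ and $a'$ agree on $\left\{ x_2 = 0 \right\}$, I get $|a(y) - a'(y)| \leq ([a]_{\alpha} + [a']_{\alpha}) y_2^{\alpha/2}$, and then I split into two regimes for $(x,y)$. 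In the near-boundary regime $\min(x_2, y_2) \lesssim d^2(x,y)$, the relation $|x_2 - y_2| \leq d^2(x,y)$ forces $y_2 \lesssim d^2(x,y)$, hence $y_2^{\alpha/2} \lesssim d^{\alpha}(x,y)$; combined with the plain H\"older bound $|F'(a_0)| \lesssim \| \vi_{int}\|_{\alpha, 1} d^{\alpha}(x,y)$ from Lemma \ref{semigroup_bounds}(iii) with $j=1$, this yields the desired $d^{2\alpha}(x,y)$ scaling. In the interior regime $\min(x_2, y_2) \gtrsim d^2(x,y)$, one has $x_2 \sim y_2$, and I invoke the enhanced estimate of Lemma \ref{semigroup_bounds}(iv) with $j=1$, namely $|F'(a_0)| \lesssim \| \vi_{int}\|_{\alpha, 1}(x_2^{-\alpha/2} + y_2^{-\alpha/2}) d^{2\alpha}(x,y)$; the prefactor $y_2^{\alpha/2}(x_2^{-\alpha/2} + y_2^{-\alpha/2})$ is bounded and the $d^{2\alpha}(x,y)$ scaling again follows. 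The modelling statement \eqref{new_M_2} is then immediate from the triangle inequality applied to the modelling condition for $\ui$ according to $a$, with $\sigma = 1$ by convention.

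\textbf{Part ii).} The quantity to control is the fourfold mixed difference
\begin{align}
D := \bigl[\vi_1(x, a_1) - \vi_1(y, a_1) - \vi_1(x, a_1') + \vi_1(y, a_1')\bigr] - \bigl[\vi_0(x, a_0) - \vi_0(y, a_0) - \vi_0(x, a_0') + \vi_0(y, a_0')\bigr],
\end{align}
where I abbreviate $\vi_i(z, a) := \vi(z, a, \vi_{int,i}(a))$ and $a_i = a_i(y)$, $a_i' = a_i'(y)$. My plan is to telescope through the intermediate quantity $\tilde{D}_0$ obtained by replacing $\vi_{int,0}$ with $\vi_{int,1}$ in the second bracket, so that $D = (D_1 - \tilde{D}_0) + (\tilde{D}_0 - D_0)$. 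The piece $\tilde{D}_0 - D_0$, by linearity of $\vi$ in the initial datum, is of the form treated in Part i) with $\vi_{int}$ replaced by $\vi_{int,1} - \vi_{int,0}$ and pair $(a_0, a_0')$, producing the term $\|\vi_{int,1} - \vi_{int,0}\|_{\alpha, 1} d^{2\alpha}(x,y)$. The piece $D_1 - \tilde{D}_0$ uses the fixed initial datum $\vi_{int,1}$ but the two distinct pairs of coefficient functions, and is thus a pure second-order mixed difference in $a_0$ of the single function $G(a_0) := \vi_1(x, a_0) - \vi_1(y, a_0)$. I plan to interpolate $a_t = (1-t)a_0 + ta_1$ and $a_t' = (1-t)a_0' + ta_1'$, express $D_1 - \tilde{D}_0$ via a double FTC in $t$ and $a_0$, and split the resulting integrand into a piece carrying the boundary-vanishing combination $(a_1 - a_0 - a_1' + a_0')(y)$ (which is $O(y_2^{\alpha/2})$ in magnitude, with constant $\|a_1 - a_0\|_{\alpha} + \|a_1' - a_0'\|_{\alpha}$) multiplied by $G'$ (controlled by Lemma \ref{semigroup_bounds}(iv)), and a piece carrying $(a_1' - a_0')(y)$ multiplied by a first-order $a_0$-increment of $G'$ (controlled via $\partial_{a_0}^2 \vi_1$ by Lemma \ref{semigroup_bounds}(iii) with $j = 2$). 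Re-running the two-regime case distinction of Part i) on each piece yields the term $\max_i \|\vi_{int,i}\|_{\alpha, 2}(\|a_1 - a_0\|_{\alpha} + \|a_1' - a_0'\|_{\alpha}) d^{2\alpha}(x,y)$, and \eqref{new_M} then follows from the triangle inequality.

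\textbf{Main obstacle.} The hard part is organizing the double FTC inside $D_1 - \tilde{D}_0$ so that every contribution to the integrand either carries an explicit factor vanishing on $\left\{ x_2 = 0 \right\}$ (which can be traded for $y_2^{\alpha/2}$) or is controlled by the enhanced increment estimate of Lemma \ref{semigroup_bounds}(iv). This pairing is what allows the $y_2^{\alpha/2}$-factor to combine cleanly with $(x_2^{-\alpha/2} + y_2^{-\alpha/2})$ in the interior regime and with $y_2^{\alpha/2} \lesssim d^{\alpha}(x,y)$ in the near-boundary regime, so that both regimes produce the sharp $d^{2\alpha}(x,y)$ scaling required by the modelling condition.
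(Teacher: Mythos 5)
Your proposal is correct and follows essentially the same route as the paper: part i) is the paper's argument verbatim (fundamental theorem of calculus in $a_0$, the boundary-matching bound $|a(y)-a^{\prime}(y)|\lesssim([a]_{\alpha}+[a^{\prime}]_{\alpha})y_2^{\alpha/2}$, and the two-regime case distinction pairing parts iii) and iv) of Lemma \ref{semigroup_bounds}). In part ii) the paper organizes the fourfold difference as a double interpolation $\int_0^1\int_0^1\partial_s\partial_t\left(\vi_s(x,a_s^t(y))-\vi_s(y,a_s^t(y))\right)\,\textrm{d}s\,\textrm{d}t$ whose Leibniz expansion produces exactly your three contributions --the $\vi_{int,1}-\vi_{int,0}$ term, the $\partial_{a_0}^2\vi$ term carrying $|a_t-a_t^{\prime}|\lesssim y_2^{\alpha/2}$, and the term carrying the boundary-vanishing combination $(a_1-a_0)-(a_1^{\prime}-a_0^{\prime})$-- so your telescoping is only a cosmetic rearrangement of the same estimate.
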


In our second technical lemma we show that the modelling of a function is preserved under the application of the heat semigroup. This, in particular, allows us to exchange $\vi(\cdot, a_0, U_{int} - u)$ for $\vi(\cdot, a_0, U_{int} - v(\cdot, a_0))$ in the natural modelling of $q$. Here is the statement:

\begin{lemma}[Propagation of the Modelling]
\label{propogation}
Let $\alpha \in (0,1)$. We use the notation from Definition \ref{constant_soln}.
\begin{itemize}[leftmargin=.2in]
\item[i)] Assume that $\ui\in C^{\alpha}(\mathbb{R}^2_+)$ is modelled after $\vi(\cdot, a_0, U - u)$ for $U,u \in C^{\alpha}(\mathbb{R})$ according to $a\in C^{\alpha}(\mathbb{R}^2)$ with $\|a\|_{\alpha}\leq 1$ and $a \in[\lambda, 1]$ for $\lambda>0$ with modelling constant $M$. If, furthermore, $u$ is modelled after $v$ according to $a$ on $\left\{ x_2 = 0\right\}$ with modelling constant $M_{\partial}$ and $\nu_{\partial} \in C^{2\alpha -1}(\mathbb{R})$,  then $\ui$ is modelled after $\vi(\cdot, a_0, U-v(a_0))$ according to $a$ with modelling constant $M^{\prime}$ bounded as 
\begin{align}
\label{new_modelling_constant}
M^{\prime} \lesssim M + M_{\partial} + \|\nu_{\partial}\|_{2\alpha-1} + \|U\|_{\alpha} + \|u \|_{\alpha} +  \| v\|_{\alpha,1}.
\end{align}

\item[ii)] Let $i = 0,1$. We will assume that $\ui \in C^{\alpha}(\mathbb{R}^2_+)$ is modelled after $(\vi(\cdot, a_0, U_0 - u_0),-\vi(\cdot, a_0, U_1 - u_1))$ according to $(a_0, a_1)$ for $U_i, u_i \in C^{\alpha}(\mathbb{R})$ and $a_i \in C^{\alpha}(\mathbb{R}^2)$ such that $\|a_i \|_{\alpha}\leq 1$ and $a_i \in [\lambda,1]$ for $\lambda>0$ with modelling constant $\delta M$. If, furthermore, $u_1 - u_0$ is modelled after $(v_1 , -v_0)$ according to $(a_1, a_0)$ on $\left\{ x_2 = 0\right\}$ with modelling constant $\delta M_{\partial}$ and $\delta  \nu_{\partial}\in C^{2\alpha -1}(\mathbb{R})$, then $\ui$ is modelled after $(\vi(\cdot, a_0, U_0 -v_0(a_0)),-\vi(\cdot, a_0, U_1 - v_1(a_0)))$ according to $(a_0, a_1)$ with modelling constant $\delta M^{\prime}$ bounded as 
\begin{align}
\begin{split}
\label{new_modelling_constant_2}
\delta M^{\prime} \lesssim & \delta M + \delta M_{\partial} + \|\delta \nu_{\partial}\|_{2\alpha -1}  + \| U_1 - U_0\|_{\alpha} + \| u_1 - u_0\|_{\alpha}\\
 & + \max_{i = 0,1} (\| U_i \|_{\alpha} + \| u_i \|_{\alpha} ) \|a_1 - a_0 \|_{\alpha} +  \|v_1 - v_0\|_{\alpha}.
\end{split}
 \end{align}
 \end{itemize}
\end{lemma}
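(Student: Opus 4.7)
The proof reduces to a semigroup estimate on the difference of the two reference families. Setting
\[
w(\cdot,a_0) := \vi(\cdot,a_0,u - v(\cdot,0,a_0)),
\]
we have $\vi(\cdot,a_0,U-u) - \vi(\cdot,a_0,U-v(a_0)) = w(\cdot,a_0)$, and the assumed modelling of $\mathsf{U}$ after the first family immediately gives, for all $x,y \in \R^2_+$,
\begin{multline*}
\mathsf{U}(y) - \mathsf{U}(x) - \bigl[\vi(y,a(x),U-v(a(x))) - \vi(x,a(x),U-v(a(x)))\bigr] - \nu(x)(y-x)_1 \\
= w(y,a(x)) - w(x,a(x)) + O(M\,d^{2\alpha}(x,y)).
\end{multline*}
The problem is thus reduced to finding a scalar $\bar\nu(x)$ with $|w(y,a(x)) - w(x,a(x)) - \bar\nu(x)(y-x)_1| \lesssim (\text{RHS of }\eqref{new_modelling_constant})\,d^{2\alpha}(x,y)$, after which one takes $\nu^{\mathrm{new}} := \nu - \bar\nu$.

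I take $\bar\nu(x) := \nu_\partial(x_1)\,e^{-x_2}$, motivated by the observation that an affine datum $z_1 \mapsto c + \nu_\partial(x_1)(z_1-x_1)$ evolves under $\partial_2 - a_0\partial_1^2 + 1$ into $[c + \nu_\partial(x_1)(\xi_1-x_1)]\,e^{-\xi_2}$, using $\int_{\mathbb{R}} G(\,\cdot\,,\xi_2,a_0)\,dz_1 = e^{-\xi_2}$ and the evenness of $G$ in its first slot. Expanding $\tilde h(z_1,a(x)) := u(z_1) - v(z_1,0,a(x))$ about $z_1 = x_1$ via the boundary modelling of $u$ after $v$, after correcting for the shift from reference coefficient $a(x_1,0)$ to $a(x)$ (of size $|a(x_1,0)-a(x)| \leq [a]_\alpha\,x_2^{\alpha/2}$), yields
\[
\tilde h(z_1,a(x)) = \tilde h(x_1,a(x)) + \nu_\partial(x_1)(z_1-x_1) + R(z_1,x_1;a(x))
\]
with $|R(z_1,x_1;a(x))| \lesssim M_\partial|z_1-x_1|^{2\alpha} + \|v\|_{\alpha,1}\,x_2^{\alpha/2}|z_1-x_1|^\alpha$ for $|z_1-x_1| \leq 1$. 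Plugging into the heat-kernel representation \eqref{heat_kernel_representation} and integrating the constant and linear parts exactly gives
\begin{multline*}
w(y,a(x)) - w(x,a(x)) - \bar\nu(x)(y-x)_1 \\
= \tilde h(x_1,a(x))(e^{-y_2}-e^{-x_2}) + \nu_\partial(x_1)(y-x)_1(e^{-y_2}-e^{-x_2}) + \mathrm{Rem}(x,y),
\end{multline*}
where $\mathrm{Rem}(x,y) := \int_{\mathbb{R}}[G(y_1-z_1,y_2,a(x)) - G(x_1-z_1,x_2,a(x))]\,R(z_1,x_1;a(x))\,dz_1$. The first two explicit terms are absorbed into $(\|U\|_\alpha + \|u\|_\alpha + \|v\|_{\alpha,1} + \|\nu_\partial\|)\,d^{2\alpha}(x,y)$ by splitting the cases $d(x,y) \leq 1$ (using $|e^{-y_2}-e^{-x_2}| \leq |y_2-x_2| \leq d^2 \leq d^{2\alpha}$) and $d(x,y) \geq 1$ (using $|e^{-y_2}-e^{-x_2}| \leq 2$ together with $d \leq d^{2\alpha}$, the latter requiring $\alpha > \tfrac12$ from Definition \ref{model}).

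The main obstacle is to estimate $\mathrm{Rem}(x,y) = r(y) - r(x)$ by $O(d^{2\alpha}(x,y))$, where $r$ solves $(\partial_2 - a(x)\partial_1^2 + 1)r = 0$ in $\R^2_+$ with initial datum $R(\cdot,x_1;a(x))$. I distinguish two regimes according to $h := \min(x_2,y_2)$ versus $d^2(x,y)$. When $h \leq d^2(x,y)$ (both points within $d(x,y)$ of the boundary), I bound $|r(x)|$ and $|r(y)|$ separately using the Gaussian moment estimate $\int G(\xi_1-z_1,\xi_2,a_0)|z_1-x_1|^\beta\,dz_1 \lesssim |\xi_1-x_1|^\beta + \xi_2^{\beta/2}$ together with the pointwise bound on $R$, yielding $|r(\xi)| \lesssim (M_\partial + \|v\|_{\alpha,1})\,d^{2\alpha}(x,y)$. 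When $h \geq d^2(x,y)$, I apply Lemma \ref{semigroup_bounds} to the local part of $R$ (obtained by cutting off at scale $h^{1/2}$, where $R$ is of size at most $M_\partial h^\alpha + \|v\|_{\alpha,1}\,x_2^{\alpha/2}h^{\alpha/2}$, which absorbs the $h^{-\alpha/2}$ factor from the lemma), and handle the far part via standard Gaussian tail decay; the norm $\|\nu_\partial\|_{2\alpha-1}$ enters at this step when bounding the contribution of $\nu_\partial(x_1)(z_1-x_1)$ cut off outside the ball at scale $h^{1/2}$. Collecting all contributions yields \eqref{new_modelling_constant}. Part (ii) is proved by running the same argument with differences throughout, using the difference-modelling of $u_1 - u_0$ after $(v_1,v_0)$; the extra factor $\max_i(\|U_i\|_\alpha + \|u_i\|_\alpha)\|a_1-a_0\|_\alpha$ in \eqref{new_modelling_constant_2} arises from the $a_0 \leftrightarrow a_1$ interchange in the heat semigroups acting on the shared $x$-dependent initial data, and $\|v_1-v_0\|_\alpha$ from comparing $v_i(\cdot,0,a_0)$ across $i$ inside the initial-data differences.
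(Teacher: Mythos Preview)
There is a genuine gap, and it traces back to the choice $\bar\nu(x)=\nu_\partial(x_1)\,e^{-x_2}$. This captures only the evolution of the \emph{constant}-slope affine datum, whereas the semigroup at level $x_2$ samples $\nu_\partial$ over a window of width $\sim x_2^{1/2}$ around $x_1$. A concrete obstruction: take $v\equiv 0$, $a$ constant, and $u$ equal (near the origin, extended periodically) to $z_1\mapsto(z_1)_+^{2\alpha}$; then $u$ is modelled after $0$ with $\nu_\partial(w_1)=2\alpha(w_1)_+^{2\alpha-1}$ and bounded $M_\partial$. At the base point $x=(0,h)$ one has $\bar\nu(x)=\nu_\partial(0)e^{-h}=0$, so your $\mathrm{Rem}((0,h),(d,h))$ equals the full increment of the evolved datum. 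A direct computation gives $\partial_1 r(0,h)=\tfrac{1}{2a_0 h}\int_0^\infty z_1^{2\alpha+1}G(z_1,h,a_0)\,dz_1\sim e^{-h}h^{\alpha-1/2}$, hence $\mathrm{Rem}\sim e^{-h}h^{\alpha-1/2}d$, which for $h\sim 1$ and small $d$ is of order $d$ rather than $d^{2\alpha}$. Your proposed fix via Lemma~\ref{semigroup_bounds}\,iv) cannot close this: that estimate is in terms of $\|R\chi\|_\alpha$, and the seminorm $[R\chi]_\alpha$ is of order $[u]_\alpha+[v]_\alpha+\|\nu_\partial\|$ with no compensating factor of $h^{\alpha/2}$, so the $h^{-\alpha/2}$ survives and yields at best a $d^\alpha$ bound.

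The paper's argument differs in two linked respects. It first passes from $a$ to $a_{tr}$ via Lemma~\ref{post_process} (undone at the end), so that a single coefficient $a_{tr}(y)$ sits in both heat-kernel representations. Then, after the Gaussian change of variables in each integral, the difference becomes a single integral $\int(\ldots)e^{-z^2}\,dz$ whose integrand is precisely the boundary-modelling expression for $u$ at the \emph{running} points $X_z=x_1-z(4x_2 a_{tr}(y))^{1/2}$ and $Y_z=y_1-z(4y_2 a_{tr}(y))^{1/2}$, with base point $Y_z$ rather than the fixed $x_1$. Since $|X_z-Y_z|\lesssim(1+|z|)\,d(x,y)$, one reads off $M_\partial\,d^{2\alpha}$ after integration. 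The accompanying $\nu$ is $\nu^{\int}$, the heat-semigroup evolution of the \emph{function} $\nu_\partial$; in the example above an integration by parts shows $\nu^{\int}(0,h)=\partial_1 r(0,h)$, so this choice exactly absorbs the term your $\bar\nu$ misses. The norm $\|\nu_\partial\|_{2\alpha-1}$ then enters not through a far-field cutoff but through the residual $\big((4x_2 a_{tr}(y))^{1/2}-(4y_2 a_{tr}(y))^{1/2}\big)\int\nu_\partial(Y_z)\,z\,e^{-z^2}\,dz$, which the paper handles (after subtracting $\nu_\partial(y_1)$ by oddness of $z e^{-z^2}$) via a four-case split on the relative sizes of $x_2$, $y_2$ and $d(x,y)$.
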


We now complete this section by giving the proof of Proposition \ref{ansatz_IVP}:

\begin{proof} In this proof we drop the subscript $u$ on $V^{\prime}_{u}(\cdot, a_0)$ and $q_{u}$ and for $i =0,1$ write $V^{\prime}_i(\cdot, a_0)$ for $V^{\prime}_{u_i}(\cdot, a_0)$ and $q_i$ for $q_{u_i}$. Notice that this is a slight abuse of notation since we have defined the objects $V^{\prime}(\cdot, a_0)$ and $q$ already in \eqref{notation_inter} and Definition \ref{q} respectively. As already mentioned, the notation that we use in the current proposition only differs in the sense that it allows for more general $u$, whereas in the rest of this paper $u$ is always taken to be the solution of \eqref{prop1_5} in Proposition \ref{linear_forcing}. 

There are total of six steps, of which the first three correspond to $i)$ and the last three to $ii)$. Here is the argument:\\

\noindent $i)$ \textbf{Step 1:} \textit{(An intermediate modelling)} \quad We first show that $q$ is modelled after $\vi^{\prime}(\cdot, a_0)$ according to $a$ on $\mathbb{R}^2_+$ with modelling constant $M_{intermediate}$ bounded as 
\begin{align}
\label{intermediate_modelling_2.11}
M_{intermediate}  \lesssim  ( N_0^{int} + \| \upa\|_{\alpha}) [a]_{\alpha}.
\end{align}
To obtain this, take two points $x,y \in \mathbb{R}^2_+$ and write
\begin{align}
\label{intermediate_modelling_2.2}
\begin{split}
& |\vi^{\prime}(x, \bar{a}(x)) - \vi^{\prime}(y, \bar{a}(y)) - (\vi^{\prime}(x, a(y)) - \vi^{\prime}(y, a(y)) )|\\
 \leq & | \vi^{\prime}(x, \bar{a}(x)) - \vi^{\prime}(x, \bar{a}(y))| + |\vi^{\prime}(x, \bar{a}(y)) - \vi^{\prime}(y, \bar{a}(y))- (\vi^{\prime}(x, a(y)) - \vi^{\prime}(y, a(y)) )|.
\end{split}
\end{align}
By $v)$ of Lemma \ref{semigroup_bounds} applied to $\bar{a}$ and part $i)$ of Lemma \ref{post_process}, we bound the second term of \eqref{intermediate_modelling_2.2} by $( \| \uf_{int} \|_{\alpha} + \| \upa\|_{\alpha}) [a]_{\alpha}d^{2\alpha}(x,y)$. 
For the first term of \eqref{intermediate_modelling_2.2}, using Lemma \ref{semigroup_bounds} and \eqref{a_bound_match_up}, we find that
\begin{align}
\label{modelling_q_701}
\begin{split}
 | \vi^{\prime}(x, \bar{a}(x)) - \vi^{\prime}(x, \bar{a}(y))|
& \lesssim | \partial_{a_0} \vi^{\prime} (x , a_0)| \, | \bar{a}(y) - \bar{a}(x)|\\
 &  \lesssim  (\| \uf_{int}\|_{\alpha} + \|\upa\|_{\alpha}) [a]_{\alpha} x_2^{\frac{\alpha}{2}} \times  \begin{cases}
 (x_2^{-\frac{\alpha}{2}} +y_2^{-\frac{\alpha}{2}}  ) d^{2\alpha}(x,y),\\
 d^{\alpha}(x,y),\\
  \end{cases}
  \end{split}
\end{align}
where the notation on the right-hand side indicates that both bounds hold. We then post-process this as in part $i)$ of Lemma \ref{post_process}  and use  \hyperlink{B3}{(B3)}  to find that
\begin{align}
\label{modelling_2_new}
\begin{split}
 | \vi^{\prime}(x, \bar{a}(x)) - \vi^{\prime}(x, \bar{a}(y))| & \lesssim  (\|\uf_{int}\|_{\alpha} + \|\upa\|_{\alpha}) [a]_{\alpha}  d^{2\alpha}(x,y) \\
 & \leq  (N_0^{int}+ \|\upa\|_{\alpha}) [a]_{\alpha}  d^{2\alpha}(x,y) .
 \end{split}
\end{align}

\noindent \textbf{Step 2:} \textit{(Application of Lemma \ref{propogation})} \quad We have assumes that $\upa$ is modelled after $\vp$ according to $a$ on $\partial \mathbb{R}^2_+$ with modelling constant $M_{\partial}$ and with respect to $\nu_{\partial}$. By \eqref{new_modelling_constant} of Lemma \ref{propogation}, Lemma \ref{small_v} with \hyperlink{B1}{(B1)} , and \eqref{intermediate_modelling_2.11}, we obtain that $q$ has the claimed modelling with modelling constant bounded as 
\begin{align}
\label{modelling_for_q_1}
\begin{split}
M & \lesssim M_{intermediate} + M_{\partial} + \|\nu_{\partial}\|_{2\alpha-1} + \|u\|_{\alpha} + N_0 + N_0^{int}\\
&\lesssim  M_{\partial} + \|\nu_{\partial}\|_{2\alpha-1} + \|u\|_{\alpha} + N_0 + N_0^{int}.
\end{split}
\end{align}

For the modelling of $\tilde{q}$, let $\tilde{x} = (x_1, |x_2|)$ for $x \in \mathbb{R}^2$. Then, notice that for any $x,y \in \mathbb{R}^2$ we have that $d(x,y) \geq d(\tilde{x}, \tilde{y})$, which implies that $\tilde{q}$ is modelled after $\tilde{\vi}$ according to $\tilde{a}$. We then apply part $i)$ of Lemma \ref{post_process} to see that, since $\tilde{a} = a$ on $\left\{ x_2 = 0 \right\}$,  $\tilde{q}$ is also modelled according to $a$ and that the bound \eqref{modelling_for_q_1} still holds.\\

\noindent \textbf{Step 3:} \textit{(Bound for the $C^{\alpha}$-norm)} \quad For our proof of \eqref{holder_q_constant}, we let $x,y \in \mathbb{R}^2_+$ and write
\begin{align}
\label{q_intermediate_norm}
\begin{split}
|q(x) - q(y)| & \lesssim ([\vi^{\prime}]_{\alpha} + \| \vi^{\prime} \|_1 [a]_{\alpha}) d^{\alpha}(x,y) \lesssim (N_0^{int} + \|u\|_{\alpha}) d^{\alpha}(x,y),
\end{split}
\end{align}
where we have used Lemma \ref{semigroup_bounds} with $\bar{a}$ and $\vi^{\prime}(\cdot, a_0)$ and that $[a]_{\alpha}\leq 1$. Part $ii)$ of Lemma \ref{semigroup_bounds} gives that $\| q \|  \lesssim N_0^{int} + \|u \|$.
We remark that we have also used \hyperlink{B3}{(B3)}.\\

\noindent  $ii)$ \textbf{Step 4:} \textit{(An intermediate modelling)}  \quad We begin by showing that $q_{1} - q_{0}$ is modelled after $(\vi^{\prime}_1 , -\vi^{\prime}_0)$ according to $(a_{1}, a_{0})$ on $\mathbb{R}^2_+$ with modelling constant bounded by
\begin{align}
\label{intermediate_modelling_2}
\delta M_{intermediate} \lesssim \| a_1 - a_0 \|_{\alpha}(N_0^{int} + \|u\|_{\alpha})+\delta N_0^{int} +  \| \upa_{1} - \upa_{0}\|_{\alpha}.
\end{align} 
To see this, for any two points $x,y \in \mathbb{R}^2_+$, we apply the triangle inequality  and the definition of $q_i$ to write
\begin{align}
\begin{split}
&\big| q_{1}(x) - q_{0} (x)  - ( q_{1}(y) - q_{0} (y))   
 - (\vi^{\prime}_1(x, a_{1}(y)) -\vi^{\prime}_1(y, a_{1}(y)))  + (\vi^{\prime}_0(x, a_{0}(y)) - \vi^{\prime}_0(y, a_{0}(y)))  \big|\\
& \lesssim   \big|  \vi^{\prime}_1(x, \bar{a}_1(x)) -  \vi^{\prime}_0(x, \bar{a}_0(x)) - (\vi^{\prime}_1(x, \bar{a}_1(y)) - \vi^{\prime}_0(x,\bar{a}_0(y))) \big|\\
& \quad   +\big |\vi^{\prime}_1(x, \bar{a}_1(y))  - \vi^{\prime}_1(x, a_{1}(y)) - (\vi^{\prime}_0(x,\bar{a}_0(y))  - \vi^{\prime}_0(x, a_{0}(y)))  \\
& \quad \qquad   - (\vi^{\prime}_1(y, \bar{a}_1(y))  -\vi^{\prime}_1(y, a_{1}(y)) - (\vi^{\prime}_0(y, \bar{a}_0(y))  -\vi^{\prime}_0(y, a_{0}(y))) )\big|.
\end{split}
\end{align}
We treat the first term essentially as \eqref{modelling_q_701} and, in particular, bound it by
\begin{align*}
\begin{split}
&  | (\vi^{\prime}_1(x, \bar{a}_1(x)) -   \vi^{\prime}_0(x, \bar{a}_0(x))) -(\vi^{\prime}_1(x, \bar{a}_1(y)) -   \vi^{\prime}_0(x, \bar{a}_0(y)))|\\
& \lesssim  \|\vi^{\prime}_1 - \vi^{\prime}_0\|_1 |\bar{a}_s(x) - \bar{a}_s(y)| \\
 & \lesssim (\| \uf_{int,1} - \uf_{int,0}\|_{\alpha}  + \| \upa_{1} - \upa_{0}\|_{\alpha} ) [ a_s ]_{\alpha} d^{2\alpha}(x,y) \lesssim (\delta N_0^{int} +  \| \upa_{1} - \upa_{0}\|_{\alpha})d^{2\alpha}(x,y),
 \end{split}
\end{align*}
where $a_s$ is defined in \eqref{interpolate_defn}.  The second term is more involved, but was already treated in part $ii)$ of Lemma \ref{post_process}. Up to a multiplicative constant, it is bounded by 
\begin{align*}
( \| \uf_{int,1} - \uf_{int,0}\|_{\alpha}  + \| \upa_{1} - \upa_{0}\|_{\alpha}  + \max_i (\| \uf_{int,i}\|_{\alpha} + \| \upa_i \|_{\alpha} ) \|a_1 - a_0\|_{\alpha}) d^{2\alpha}(x,y).
\end{align*}

\noindent \textbf{Step 5:} \textit{(Application of Lemma \ref{propogation})} \quad Recall that we assume that $u_1 - u_0$ is modelled after $(v_1, -v_0)$ according to $(a_1, a_0)$ with modelling constant $\delta M_{\partial}$ and associated $\delta \nu_{\partial}$. The argument for the modelling of $\tilde{q}_1 - \tilde{q}_0$ is completed as in part $i)$, but instead using the second parts of Lemmas \ref{post_process} and \ref{propogation}.\\

\noindent \textbf{Step 6:} \textit{(Bound for the $C^{\alpha}$-norm)} \quad We first use Lemma \ref{semigroup_bounds} and \hyperlink{C3}{(C3)} to write
\begin{align}
\begin{split}
\label{difference_qs_infty}
 \|q_1- q_0\|  & \lesssim \| (\vi^{\prime}_1 -\vi^{\prime}_0) (\cdot , a_0) \| + \| \partial_{a_0} \vi^{\prime}_0 (\cdot, a_0) \| \|\overline{a_1 - a_0}\| )\\
 &  \lesssim\|a_1 - a_0 \|_{\alpha} (N_0^{int} + \|u_0\|)+ \| u_1 - u_0 \| + \delta N_0^{int},
 \end{split}
\end{align}
where $\overline{a_1 - a_0}$ solves \eqref{a_equn} with initial condition $a_1 - a_0$ and $(\vi^{\prime}_1 -\vi^{\prime}_0) (\cdot , a_0)$ solves \eqref{constant_coeff_ivp_body} with initial condition $U_{int,1} - u_1 - (U_{int,0} - u_0)$. Then, for two points $x,y \in \mathbb{R}^2_+$, we have that
\begin{align}
\label{stability_holder_norm_difference_1}
\begin{split}
& | (q_1 - q_0)(x) - (q_1 - q_0)(y)|\\
 & \leq |\vi^{\prime}_1(x,\bar{a}_1(x)) - \vi^{\prime}_0(x, \bar{a}_0(x))  - (\vi^{\prime}_1(x, \bar{a}_1(y)) - \vi^{\prime}_0(x, \bar{a}_0(y))) |\\
 & \quad + |\vi^{\prime}_1(x,\bar{a}_1(y)) - \vi^{\prime}_0(x, \bar{a}_0(y))  - (\vi^{\prime}_1(y, \bar{a}_1(y)) - \vi^{\prime}_0(y, \bar{a}_0(y))) |.
\end{split}
\end{align}
For the first term we let $\bar{a}_s$ be defined as $a_s$ in \eqref{interpolate_defn} and denote 
\begin{align}
V_s^{\prime} := s V_1^{\prime}(\cdot, a_0) + (1 - s) V_0^{\prime}(\cdot, a_0).
\end{align}
We then notice that
\begin{align}
\label{stability_holder_norm_difference_2}
\begin{split}
& \vi^{\prime}_1(x,\bar{a}_1(x)) -\vi^{\prime}_0(x, \bar{a}_0(x))  - (\vi^{\prime}_1(x, \bar{a}_1(y)) - \vi^{\prime}_0(x, \bar{a}_0(y)))\\
&=  \int_0^1 \partial_s ( \vi^{\prime}_s(x, \bar{a}_s(x)) - \vi^{\prime}_s(x, \bar{a}_s(y))) \, \textrm{d}s\\
& =    \int_0^1  \Big( \vi^{\prime}_1(x, \bar{a}_s(x)) -   \vi^{\prime}_0(x, \bar{a}_s(x)) -(\vi^{\prime}_1(x, \bar{a}_s(y)) -   \vi^{\prime}_0(x, \bar{a}_s(y)) )\\
& \, \quad \quad \quad \quad  \quad \quad+ ( \partial_{a_0} \vi^{\prime}_s (x, \bar{a}_s(x)) -  \partial_{a_0} \vi^{\prime}_s(x, \bar{a}_s(y))) \overline{a_1 - a_0} (x) \\
& \, \quad \quad \quad \quad \quad \quad \quad  \quad \quad+ \partial_{a_0} \vi^{\prime}_s(x, \bar{a}_s(y)) (\overline{a_1 - a_0} (x) - \overline{a_1 - a_0} (y)) \vphantom{\sum}  \Big)\, \textrm{d} s.
\end{split}
\end{align}
Using the bounds from Lemma \ref{semigroup_bounds} and \hyperlink{C3}{(C3)} we obtain
\begin{align}
& | \vi^{\prime}_1(x, \bar{a}_s(x)) -   \vi^{\prime}_0(x, \bar{a}_s(x))-(\vi^{\prime}_1(x, \bar{a}_s(y)) -   \vi^{\prime}_0(x, \bar{a}_s(y)))| \\
& \quad \lesssim ( \|u_1 - u_0\|_{\alpha}  + \delta N_0^{int}) d^{\alpha}(x,y),\\
& |( \partial_{a_0} \vi^{\prime}_s (x, \bar{a}_s(x)) -  \partial_{a_0} \vi^{\prime}_s(x, \bar{a}_s(y))) \overline{a_1 - a_0}(x)|  \lesssim  \|a_1 - a_0 \|(\|u_s \| + N_0^{int}) d^{\alpha}(x,y), \\
\textrm{and} \quad & |\partial_{a_0} \vi^{\prime}_s(x, \bar{a}_s(y)) (\overline{a_1 - a_0} (x) -\overline{a_1 - a_0} (y))| \lesssim [a_1 - a_0]_{\alpha} (\|u_s\| + N_0^{int}) d^{\alpha}(x,y).
\end{align}
Combining these estimates gives
\begin{align}
\label{stability_holder_norm_difference_3}
\begin{split}
& | \vi^{\prime}_1(x,\bar{a}_1(x)) -  \vi^{\prime}_0(x, \bar{a}_0(x))  - ( \vi^{\prime}_1(x, \bar{a}_1(y)) -  \vi^{\prime}_0(x, \bar{a}_0(y)))|\\
 & \lesssim  (\|u_1 - u_0\|_{\alpha} + \delta N_0^{int} + \|a_1 - a_0\|_{\alpha}  (\|u_s\|_{\alpha} + N_0^{int})) d^{\alpha}(x,y). 
\end{split}
\end{align}
A similar strategy can be used to bound the second term on the right-hand side of \eqref{stability_holder_norm_difference_1}. In particular, we write
\begin{align}
\label{stability_holder_norm_difference_4}
\begin{split}
& | \vi^{\prime}_1(x,\bar{a}_1(y)) -  \vi^{\prime}_0(x, \bar{a}_0(y))  - ( \vi^{\prime}_1(y, \bar{a}_1(y)) - \vi^{\prime}_0(y, \bar{a}_0(y)))|\\
&= \Big| \int_0^1 \partial_s (  \vi^{\prime}_s(x, \bar{a}_s(y)) -  \vi^{\prime}_s(y, \bar{a}_s(y))) \, \textrm{d}s \Big|\\
& \leq \int_0^1  ( | (\vi^{\prime}_1 -    \vi^{\prime}_0)(x, \bar{a}_s(y)) -( \vi^{\prime}_1-    \vi^{\prime}_0)(y, \bar{a}_s(y)) |  \\
& \quad  \quad \quad \quad \quad + | \partial_{a_0} \vi^{\prime}_s (x, \bar{a}_s(y)) -  \partial_{a_0} \vi^{\prime}_s(y, \bar{a}_s(y))| \, | \overline{a_1 - a_0} (y)| )\, \textrm{d} s\\
& \lesssim (\delta N_0^{int} +\|u_1 - u_0\|_{\alpha}  + \|a_1 - a_0 \|_{\alpha} (\|u_s\| + N_0^{int}) )d^{\alpha}(x,y).
\end{split}
\end{align}
Together  \eqref{stability_holder_norm_difference_1},  \eqref{stability_holder_norm_difference_3}, and \eqref{stability_holder_norm_difference_4} show that 
\begin{align}
\label{difference_holder_norm_prop_2}
[q_1 - q_0]_{\alpha} \lesssim \|a_1 - a_0 \|_{\alpha} (\max_{i=0,1} \|u_i\|_{\alpha} + N_0^{int}) + \|u_1 - u_0\|_{\alpha} + \delta N_0^{int}. 
\end{align}

\end{proof}

\subsection{Proofs of auxiliary lemmas for Proposition \ref{ansatz_IVP}} 
\label{section:modelling_q_aux}

We begin with the argument for Lemma \ref{post_process}:

\begin{proof}
To keep notation lean, in this proof we set $\vi_i(\cdot, a_0) :=  \vi(\cdot, a_0, \vi_{int,i}(a_0))$ and in part $i)$ drop the index $i$. This is technically in conflict with --more general than-- \eqref{vi_short}, which we use in the rest of this paper.\\


\noindent $i)$ First, we write
\begin{align}
\label{lemma6_1}
\begin{split}
&\left|\vi(x, a(y)) -\vi(y, a(y)) - (\vi(x, a^{\prime}(y)) -\vi(y, a^{\prime}(y)) ) \right|\\
& \lesssim \sup_{a_0 \in [\lambda,1]} \left| \partial_{a_0} (\vi(x, a_0) -\vi(y, a_0)) \right| \left|a(y) - a^{\prime}(y)\right|.
\end{split}
\end{align}
Notice that, since $a  = a^{\prime}$ on $\{x_2 = 0\}$, we have that 
\begin{align}
\label{a_bound_match_up}
|a(y) - a^{\prime}(y)| \lesssim ( [a]_{\alpha} + [a^{\prime}]_{\alpha}) y_2^{\frac{\alpha}{2}}.
\end{align}
Using \eqref{a_bound_match_up}, we then bound the right-hand side of \eqref{lemma6_1} in two ways:
\begin{align}
\label{lemma6_2}
\begin{split}
 &\sup_{a_0 \in [\lambda,1]} | \partial_{a_0} (\vi(x, a_0) -\vi(y, a_0)) | \, |a(y) - a^{\prime}(y)|\\
&  \lesssim
\| \vi_{int}\|_{\alpha, 1}( [a]_{\alpha} + [a^{\prime}]_{\alpha}) \times
  \begin{cases}
  y_2^{\frac{\alpha}{2}} (x_2^{-\frac{\alpha}{2}} + y_2^{-\frac{\alpha}{2}}  ) d^{2\alpha}(x,y)\\
 y_2^{\frac{\alpha}{2}} d^{\alpha}(x,y),\\
  \end{cases}
  \end{split}
\end{align}
where we use either \eqref{heat_kernel_bound_3.5} or \eqref{heat_kernel_bound_4} applied to $\vi(\cdot, a_0)$. We now consider two cases: $y_2 \leq 2 x_2$ and  $2x_2 \leq y_2$. For the first case we use the top estimate of \eqref{lemma6_2}, which can then easily be bounded by $\| \vi_{int}\|_{\alpha,1}( [a]_{\alpha} + [a^{\prime}]_{\alpha})d^{2\alpha}(x,y)$. In the second case, we have that $\frac{y_2}{2} \leq y_2 - x_2$, which allows to bound the bottom term of \eqref{lemma6_2} in the same way. (Both of these bounds are up to a multiplicative constant.)

Our modelling claim then follows from \eqref{defn_model_intro} and the triangle inequality. In particular, for $x,y \in \mathbb{R}^2_+$, the relation \eqref{Lemma_6_result_1} gives that
\begin{align*}
\begin{split}
& |\ui(x) - \ui(y) - (\vi(x, a^{\prime}(y)) -\vi(y, a^{\prime}(y))) - \nu(y)(x-y)_1|\\
 &\lesssim M d^{2\alpha}(x,y) + |\vi(x, a(y)) -\vi(y, a(y)) - (\vi(x, a^{\prime}(y)) -\vi(y, a^{\prime}(y)))|\\
& \lesssim (M + \|\vi_{int}\|_{\alpha,1})d^{2\alpha}(x,y).
\end{split}
\end{align*}

\vspace{.2cm}

\noindent $ii)$ The triangle inequality yields:
\begin{align}
\label{postprocess_modelling}
\begin{split}
& | \ui(x) - \ui(y) - (-1)^{i} (\vi_i(x, a^{\prime}_i(y)) -\vi_i(y, a^{\prime}_i(y))) - \nu(y)(x-y)_1|\\
& \lesssim  \delta M d^{2\alpha}(x,y)\\
& \quad \quad + |\vi_0(x, a_0(y)) -\vi_0(y, a_0(y)) - (\vi_0(x, a^{\prime}_0(y)) -\vi_0(y, a^{\prime}_0(y)))\\
&\quad \quad \quad \quad  - (\vi_1(x, a_1(y)) -\vi_1(y, a_1(y)) - (\vi_1(x, a^{\prime}_1(y)) -\vi_1(y, a^{\prime}_1(y))))|
\end{split}
\end{align}
for any points $x,y \in \mathbb{R}^2_+$. Letting 
\begin{align}
a_i^t = t a_i + (1-t) a_{i}^{\prime} \,  \textrm{ for }\, i = 0,1 \qquad  \textrm{ and }  \qquad a_s^t = s a_1^t + (1-s) a_0^t,
\end{align}
we then notice that
\begin{align}
\label{derivs_inter_coeff}
\begin{split}
 \partial_s a_s^t & = a_1^t - a_0^t,\\
\partial_t a_s^t & = s(a_1 - a_{1}^{\prime}) +(1-s) (a_0 - a_{0}^{\prime}),\\ 
\textrm{and} \quad  \partial_t \partial_s a_s^t  & = a_1 - a_0 - (a_{1}^{\prime} - a_{0}^{\prime}).
\end{split}
\end{align}
This new notation allows us to write
\begin{align}
\label{prop_2_stab_modelling_3}
\begin{split}
& |\vi_1(x, a_1(y))  -\vi_1(x, a_1^{\prime}(y)) - (\vi_0(x, a_0(y))  -\vi_0(x, a_{0}^{\prime}(y))) \\
& \quad  - \left(\vi_1(y, a_1(y))  -\vi_1(y, a^{\prime}_{1}(y)) - (\vi_0(y, a_0(y))  -\vi_0(y, a^{\prime}_{0}(y))) \right)|\\
& = \Big| \int_0^1 \int_0^1  \partial_s \partial_t(\vi_s(x, a_s^t(y))  -\vi_s(y, a_s^t(y))) \, \textrm{d} s \, \textrm{d} t \Big|\\
& =   \Big| \int_0^1 \int_0^1  \partial_s ( (\partial_{a_0}\vi_s(x, a_s^t(y)) - \partial_{a_0}\vi_s(y, a_s^t(y))) \partial_t a_s^t(y) )  \textrm{d}s  \, \textrm{d} t\Big|\\
& \leq   \int_0^1 \int_0^1\Big(  | \partial_{a_0} (\vi_1 -\vi_0)(x, a_s^t(y)) - \partial_{a_0} (\vi_1 -\vi_0)(y, a_s^t(y))  | \, | \partial_t a_s^t(y)| \\
& \quad  \quad \quad \quad \quad \quad + |\partial_{a_0}^2\vi_s (x, a_s^t(y)) - \partial_{a_0}^2\vi_s(y, a_s^t(y)) | \, |\partial_t a_s^t(y)| \, |  \partial_s a_s^t(y)|\\
&   \quad \quad \quad \quad \quad \quad \quad \quad \quad  \quad \quad  + |\partial_{a_0}\vi_s(x, a_s^t(y)) - \partial_{a_0}\vi_s(y, a_s^t(y))| \,  | \partial_t \partial_s a_s^t(y)|  \Big) \, \textrm{d} s  \, \textrm{d} t .
\end{split}
\end{align}
To finish we bound the three terms on the right-hand side. Using the relations \eqref{derivs_inter_coeff}, these terms are treated in the same manner as \eqref{lemma6_1} above. In particular, the first term can be bounded as
\begin{align*}
\begin{split}
& | \partial_{a_0} (\vi_1 -\vi_0)(x, a_s^t(y)) - \partial_{a_0} (\vi_1 -\vi_0)(y, a_s^t(y))  | \, | \partial_t a_s^t(y)|
\lesssim  \| \vi_{int, 1} - \vi_{int,0}\|_{\alpha,1}  d^{2\alpha}(x,y),
\end{split}
\end{align*}
where we have used \eqref{heat_kernel_bound_3.5}, \eqref{heat_kernel_bound_4}, and \eqref{a_bound_match_up}  applied to $a_i$ and $a_i^{\prime}$. For the second term we use the same strategy and, additionally, that $| \partial_s a_s^t(y)| \lesssim \|a_1 - a_0 \|  + \|a^{\prime}_1 - a^{\prime}_0 \|$. We obtain that
\begin{align*}
\begin{split}
& | \partial^2_{a_0}\vi_s(x, a_s^t(y)) - \partial^2_{a_0}\vi_s(y, a_s^t(y))|\, | \partial_t a_s^t(y)| \, |\partial_s a_s^t (y)|\\
& \lesssim   \| \vi_{int,s}\|_{\alpha,2}  (\|a_1 - a_0 \|  + \|a^{\prime}_1 - a^{\prime}_0 \|) d^{2\alpha}(x,y).
\end{split}
\end{align*}
For the last term we use that
 \begin{align*}
 |(a_1 - a_0 )(y)- (a_{1}^{\prime} - a_{0}^{\prime})(y)| \lesssim([a_1 - a_0]_{\alpha} + [a^{\prime}_1 - a^{\prime}_0]_{\alpha}  ) y_2^{\frac{\alpha}{2}}
 \end{align*}
and either \eqref{heat_kernel_bound_3.5} or \eqref{heat_kernel_bound_4}. We obtain the relation
\begin{align*}
\begin{split}
 |\partial_{a_0} \vi_s(x, a_s^t(y)) - \partial_{a_0} \vi_s(y, a_s^t(y))| \,  | \partial_t \partial_s a_s^t(y)| 
 \lesssim  \| \vi_{int, s} \|_{\alpha,1}([a_1 - a_0]_{\alpha} + [a^{\prime}_1 - a^{\prime}_0]_{\alpha}) d^{2\alpha}(x,y).
\end{split}
\end{align*}

\end{proof}

Here is the argument for Lemma \ref{propogation}:

\begin{proof} Mainly, we combine \eqref{defn_model_intro} with the heat kernel formulation given in \eqref{heat_kernel_representation}. We start with part $i)$: \\

\noindent  $i)$ \textbf{Step 1:} \textit{(Modelling according to $a_{tr}$)} \quad  We begin with an application of part $i)$ of Lemma \ref{post_process}. In particular,  if we define 
\begin{align}
\label{defn_a_trace}
a_{tr} (x) := a(x_1, 0),
\end{align}
for $x \in \R^2_+$, then $\ui$ is modelled after $\vi(\cdot, a_0, U-u)$ according to $a_{tr}$ with a modelling constant $M_{tr}$ bounded above as $M_{tr} \lesssim M + \|U\|_{\alpha} + \|u \|_{\alpha}$.\\

\noindent \textbf{Step 2:} \textit{(Use of the initial modelling)} The crucial step of our proof is showing that
\begin{align} 
\label{modelling_theorem_1}
\begin{split}
& | \vi(x, a_{tr}(y),  u - v(a_{tr}(y))) - \vi(y, a_{tr}(y), u - v(a_{tr}(y)))  - \nu^{\int} (y) (x-y)_1 |\\
& \lesssim (M_{\partial} + \|\nu_{\partial}\|_{2\alpha-1} + \|u \|_{\alpha} + \| v\|_{\alpha}) d^{2\alpha}(x,y),
\end{split}
\end{align}
for any points $x,y \in \mathbb{R}^2_+$, where $v(a_{tr}(y))$ is used as shorthand for $v(\cdot, a_{tr}(y))$  and 
\begin{align}
\label{defn_nu_int}
\nu^{\int}(y) := e^{-y_2} \int_{\mathbb{R}} \frac{1}{(4 \pi a_{tr}(y) y_2 )^{\frac{1}{2}}}\nu_{\partial}(s, 0) e^{\frac{-|y_1 - s|^2}{4 y_2 a_{tr}(y)}}\, \textrm{d}s.
\end{align}
 Once we have shown \eqref{modelling_theorem_1}, an easy application of the triangle inequality and Step 1 shows that $\ui$ is modelled after  $\vi(\cdot, a_0, U-v(a_0))$ according to $a_{tr}$ with modelling constant $M_{intermediate}$ bounded as 
\begin{align}
M_{intermediate} \lesssim M + M_{\partial} + \|\nu_{\partial}\|_{2\alpha-1} + \|U\|_{\alpha} + \|u \|_{\alpha} +  \| v\|_{\alpha}.
\end{align}
We remark that $\vi(\cdot, \cdot, \cdot)$ is linear in the third argument, since this is the initial condition.

To show \eqref{modelling_theorem_1} we first use the heat kernel representation \eqref{heat_kernel_representation} and \eqref{defn_nu_int} to write
\begin{align}
\label{set_up_post_process_modelling}
\begin{split}
& | \vi(x, a_{tr}(y),  u - v(a_{tr}(y)) )- \vi(y, a_{tr}(y), u - v(a_{tr}(y)))  - \nu_{\partial}^{\int} (y) (x-y)_1 |\\
& \lesssim  e^{-y_2}\Big| \int_{\mathbb{R}}\Big( u(x_1 - z (4x_2 a_{tr}(y))^{\frac{1}{2}},0) - u(y_1 - z (4y_2 a_{tr}(y))^{\frac{1}{2}},0) \\
& \quad \quad \quad \quad \quad - \big( v ((x_1 - z (4x_2 a_{tr}(y))^{\frac{1}{2}}, 0), a(y_1 - z (4y_2 a_{tr}(y))^{\frac{1}{2}},0) ) \\
&\quad \quad \quad  \quad \quad \quad \quad   - v((y_1 - z (4y_2 a_{tr}(y))^{\frac{1}{2}}, 0), a(y_1 - z (4y_2 a_{tr}(y))^{\frac{1}{2}}, 0)) \big)\\
&\quad \quad \quad  \quad \quad \quad \quad  \quad \quad  -  \nu_{\partial}(y_1 - z (4y_2 a_{tr}(y))^{\frac{1}{2}},0)(x-y)_1  \Big)e^{-z^2}\, \textrm{d}z\Big|\\
& \quad  + |e^{- x_2} - e^{-y_2}| ( \| u \| + \|v\|).
\end{split}
\end{align}
Notice that when $d(x,y) \leq 1$, since $\alpha \in (0,1)$, we may bound the second term using
\begin{align}
\label{lemma10_a}
|e^{- x_2} - e^{-y_2}|  \lesssim  |x_2 - y_2| \lesssim d^{2\alpha}(x,y).
\end{align}
The relation \eqref{lemma10_a} is trivial when $d(x,y) \geq 1$ since then  $|e^{- |x_2|} - e^{-|y_2|}| \leq 2$ and $d^{2\alpha} (x,y) \geq 1$. 

For the first term of \eqref{set_up_post_process_modelling} we first let $d(x,y) \geq 1$. Then the term can be bounded by: 
\begin{align}
\label{lemma10_b}
([u ]_{\alpha} + [v ]_{\alpha} + \| \nu_{\partial}\|) d^{\alpha}(x,y) \leq ([u ]_{\alpha} + [v]_{\alpha} + \| \nu_{\partial}\|) d^{2\alpha}(x,y)
\end{align}
The situation that $d(x,y) \leq 1$ is more involved and requires the modelling of $u$. We remark that the modelling and the triangle inequality allow us to bound the first term of \eqref{set_up_post_process_modelling} by 
\begin{align}
\label{redo_modelling}
\begin{split}
\hspace{-.4cm}M_{\partial} d^{2\alpha}(x,y)  \hspace{-.05cm} + \hspace{-.05cm}
 \Big|  \Big( (4x_2 a_{tr}(y))^{\frac{1}{2}} - (4y_2 a_{tr}(y))^{\frac{1}{2}} \Big) \hspace{-.1cm} \int_{\mathbb{R}} \nu_{\partial}(y_1 - z (4y_2 a_{tr}(y))^{\frac{1}{2}},0) z e^{-z^2}  \textrm{d}z \Big|.
 \end{split}
\end{align}
It remains to treat the second term of \eqref{redo_modelling}, which we do in four cases:\\

\noindent \textit{Case 1--} We assume that $y_2^{\frac{1}{2}} \geq d(x,y)$ and $y_2 \leq x_2$. Since $\sqrt{\cdot}$ is Lipschitz on $[ y_2, \infty)$ with constant $\frac{1}{2}y_2^{-\frac{1}{2}}$, we may write
 \begin{align}
\label{new_sqrtroot}
(4x_2 a_{tr}(y))^{\frac{1}{2}} - (4y_2 a_{tr}(y))^{\frac{1}{2}} \lesssim  (x_2 - y_2) y_2^{-\frac{1}{2}} .
\end{align}
Additionally, using that $\nu_{\partial} \in C^{2 \alpha -1}(\mathbb{R}^2)$ yields that
\begin{align*}
&\Big| \big( (4x_2 a_{tr}(y))^{\frac{1}{2}} - (4y_2 a_{tr}(y))^{\frac{1}{2}} \big)  \int_{\mathbb{R}} \nu_{\partial}(y_1 - z (4y_2 a_{tr}(y))^{\frac{1}{2}},0) z e^{-z^2} \, \textrm{d}z \Big|\\
& \lesssim    y_2^{-\frac{1}{2}}  |x_2 - y_2| \, \Big|  \int_{\mathbb{R}}    (\nu_{\partial}(y_1 - z (4y_2 a_{tr}(y))^{\frac{1}{2}},0)  - \nu_{\partial}(y_1, 0)) z e^{-z^2} \, \textrm{d}z \Big|\\
& \lesssim  y_2^{\frac{2\alpha -2}{2}} |x_2 - y_2|  [\nu_{\partial}]_{2\alpha -1} \int_{\mathbb{R}}|z|^{2\alpha} e^{-z^2} \, \textrm{d}z  \lesssim  [\nu_{\partial}]_{2\alpha -1} d^{2\alpha} (x,y).
\end{align*}

\noindent \textit{Case 2--} We assume that $x_2^{\frac{1}{2}} \geq d(x,y)$ and $x_2 \leq y_2$. Following the same recipe as in the previous case and adding in a couple of uses of the triangle inequality, we obtain
\begin{align*}
& \Big|\Big( (4x_2 a_{tr}(y))^{\frac{1}{2}} - (4y_2 a_{tr}(y))^{\frac{1}{2}}\Big) \int_{\mathbb{R}} \nu_{\partial}(y_1 - z (4y_2 a_{tr}(y))^{\frac{1}{2}},0) z e^{-z^2} \, \textrm{d}z \Big|\\
& \lesssim  y_2^{\frac{2\alpha -1}{2}} x_2^{-\frac{1}{2}} |y_2 - x_2|  [\nu_{\partial}]_{2\alpha -1}  \int_{\mathbb{R}}|z|^{2\alpha} e^{-z^2} \, \textrm{d}z \\
& \lesssim \Big( |y_2 - x_2|^{\frac{2\alpha -1}{2} }  \hspace{-.1cm} +  \hspace{-.05cm}x_2^{\frac{2\alpha -1}{2}}\Big) x_2^{-\frac{1}{2}} |y_2 - x_2|  [\nu_{\partial}]_{2\alpha -1}  \hspace{-.05cm}  \int_{\mathbb{R}}  \hspace{-.05cm} |z|^{2\alpha}e^{-z^2} \textrm{d}z  \lesssim   [\nu_{\partial}]_{2\alpha -1}  d^{2\alpha} (x,y).
\end{align*}

\noindent \textit{Case 3-- } We assume that $x_2^{\frac{1}{2}}  \leq d(x,y)$. Now we use the bound
\begin{align}
\label{new_sqrtroot_2}
| (4 x_2 a_{tr}(y))^{\frac{1}{2}} - (4y_2 a_{tr}(y))^{\frac{1}{2}}| \lesssim |x_2 - y_2|^{\frac{1}{2}}.
\end{align}
Using the same methods as in the previous cases, we find that 
\begin{align*}
\begin{split}
& \Big| \Big( (4x_2 a_{tr}(y))^{\frac{1}{2}} - (4y_2 a_{tr}(y))^{\frac{1}{2}}\Big) \int_{\mathbb{R}} \nu_{\partial}(y_1 - z (4 y_2 a_{tr}(y))^{\frac{1}{2}},0) z  e^{-z^2} \, \textrm{d}z \Big|\\
& \lesssim  \Big(|y_2 - x_2|^{\frac{2\alpha -1}{2}} + x_2^{\frac{2\alpha -1}{2}}\Big)  |x_2 - y_2|^{\frac{1}{2}}  [\nu_{\partial}]_{2\alpha -1} \int_{\mathbb{R}}|z|^{2\alpha}e^{-z^2} \, \textrm{d}z \lesssim    [\nu_{\partial}]_{2\alpha -1}   d^{2\alpha} (x,y).
\end{split}
\end{align*}

\noindent \textit{Case 4--} We assume that $y_2^{\frac{1}{2}} \leq d(x,y)$. Reusing \eqref{new_sqrtroot_2}, we obtain 
\begin{align*}
&\Big|  \Big( (4 x_2 a_{tr}(y))^{\frac{1}{2}} - (4 y_2 a_{tr}(y))^{\frac{1}{2}} \Big) \int_{\mathbb{R}} \nu_{\partial}(y_1 - z (4 y_2 a_{tr}(y))^{\frac{1}{2}},0) z e^{-z^2} \, \textrm{d}z \Big|\\
& \lesssim   y_2^{\frac{2\alpha -1}{2}} |x_2 - y_2|^{\frac{1}{2}} [\nu_{\partial}]_{2\alpha -1}  \int_{\mathbb{R}}|z|^{2\alpha}  e^{-z^2} \, \textrm{d}z \lesssim    [\nu_{\partial}]_{2\alpha -1}    d^{2\alpha} (x,y).
\end{align*}

\noindent \textbf{Step 3:} \textit{(Conclusion)} \quad We again apply part \textit{i)} of Lemma \ref{post_process}, but now to the modelling proven in the previous step to swap out $a_{tr}$ for $a$. We finally obtain that $\ui$ is modelled after $\vi(\cdot, a_0, U - v(\cdot, a_0))$ according to $a$ with modelling constant bounded as specified in \eqref{new_modelling_constant}.\\

\noindent $ii)$  Analogously to part \textit{i)}, we first notice that by part $ii)$ of Lemma \ref{post_process}, $\ui$ is modelled after $(\vi_1(\cdot, a_0, U_1 - u_1) , -\vi_0(\cdot, a_0, U_0 - u_0))$ according to $(a_{1,tr}, a_{0,tr})$. Here, we use the notation from \eqref{defn_a_trace}. The corresponding modelling constant is bounded as 
\begin{align}
\label{final_edit_modelling_1}
\delta M_{tr} \lesssim \delta M + \| U_1 - U_0\|_{\alpha} + \| u_1 - u_0\|_{\alpha} + \max_{i = 0,1} (\| U_i \|_{\alpha} + \| u_i \|_{\alpha} ) \|a_1 - a_0 \|_{\alpha}.
\end{align}

We then show that $\ui$ is modelled after $(\vi_1(\cdot, a_0, U_1 - v_1(a_0)) , -\vi_0(\cdot, a_0, U_0 - v_0(a_0)))$ according to $(a_{1,tr}, a_{0,tr})$, which, by the same strategy as in part $i)$, reduces to showing that 
\begin{align}
\label{modelling_theorem_1_2}
\begin{split}
& \big|  ( \vi_0(x, a_{0,tr}(y), u_0 - v_0(a_0))  - \vi_0(y, a_{0,tr}(y), u_0 - v_0(a_0))) \\
&  \quad  -(\vi_1(x, a_{1,tr}(y), u_1 - v_1(a_1) ) - \vi_1(y, a_{1,tr}(y),u_1 - v_1(a_1) )  - \delta \nu^{\int}(y)(x - y)_1 \big|  \\
 & \lesssim  \left( \delta M_{\partial} + \|\delta \nu_{\partial}\|_{2\alpha -1}  + \|u_1 - u_0 \|_{\alpha} + \|v_1 - v_0\|_{\alpha} \right) d^{2\alpha}(x,y)
\end{split}
\end{align}
for $x,y \in \mathbb{R}^2_+$. Of course, this is the analogue of \eqref{modelling_theorem_1} from part $i)$ and $\delta \nu^{\int}$ is defined as in \eqref{defn_nu_int}, but in terms of $\delta \nu_{\partial}$. The argument for \eqref{modelling_theorem_1_2} is completely analogous to that for \eqref{modelling_theorem_1} --we do not repeat the calculation.

Combining \eqref{final_edit_modelling_1} and \eqref{modelling_theorem_1_2} with the triangle inequality yields that the modelling constant corresponding to the intermediate modelling, \textit{i.e.} the modelling according to $(a_{1,tr},a_{0,tr})$ proven above, is bounded as
 \begin{align}
 \label{final_edit_modelling_2}
 \begin{split}
 \delta M_{intermediate} \lesssim & \delta M + \delta M_{\partial} + \|\delta \nu_{\partial}\|_{2\alpha -1}  + \| U_1 - U_0\|_{\alpha} + \| u_1 - u_0\|_{\alpha}\\
 & + \max_{i = 0,1} (\| U_i \|_{\alpha} + \| u_i \|_{\alpha} ) \|a_1 - a_0 \|_{\alpha} +  \|v_1 - v_0\|_{\alpha}.
\end{split}
 \end{align}
Applying part $ii)$ of Lemma \ref{post_process} we obtain the desired modelling.

\end{proof}

\subsection{Argument for Proposition \ref{linear_IVP}: Analysis of the linear problem with trivial forcing}
\label{section:linear_IVP}


In this section we correct the ansatz $q$ in order to solve \eqref{IC_2}. We first collect two technical lemmas:


\begin{lemma}
\label{lemma:norm}
Let $\alpha \in (0,1)$. If a regular distribution $f$ on $\mathbb{R}^2$ satisfies the relation 
\begin{align}
\label{growth_condition_ivp_1}
 |f(x)| \leq C_f |x_2|^{\frac{2\alpha -2}{2}},
\end{align}
for any $x \in \mathbb{R}^2$ and some $C_f \in \mathbb{R}$, then for $T>0$ and $j,l \geq 0$ we have that
\begin{align}
\label{equiv_seminorm_set_0}
 \| \partial_1^j\partial_2^l f_{T} \| \lesssim C_f (T^{\frac{1}{4}})^{2\alpha - 2 - j - 2l}
\end{align}
and
\begin{align}
\label{equiv_seminorm_set_3}
[f_T]_{\alpha} \lesssim C_f (T^{\frac{1}{4}})^{\alpha -2}.
\end{align}
If, additionally, we know that $f \equiv 0$ on $\mathbb{R}^2_-$, then we have that
\begin{align}\label{j002}
 \| \partial_1^j\partial_2^l f_{T} \|_{\mathbb{R}^2_L} \lesssim C_f L^{-\delta}(T^{\frac{1}{4}})^{2\alpha - 2 - j - 2l + 2\delta}
\end{align}
and
\begin{align}
\label{equiv_seminorm_set_2}
[f_T]_{\alpha; \mathbb{R}^2_L} \lesssim C_f   L^{-\delta}  (T^{\frac{1}{4}})^{\alpha-2+2\delta},
\end{align}
for any $\delta,L>0$ --where the uniform constants may depend additionally on $\delta$.
\end{lemma}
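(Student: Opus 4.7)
\noindent \textbf{Proof plan for Lemma \ref{lemma:norm}.}

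\noindent The strategy is to prove the bounds in the order they appear: first the two estimates \eqref{equiv_seminorm_set_0} and \eqref{equiv_seminorm_set_3} that hold without the support condition, then the improvements \eqref{j002} and \eqref{equiv_seminorm_set_2} on $\mathbb{R}^2_L$. Throughout, the key tools are the rescaling identity \eqref{scaling}, the moment bound \eqref{moment_bound}, and the Schwartz property of $\psi_1$.

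\noindent For \eqref{equiv_seminorm_set_0}: I would write
\begin{align*}
|\partial_1^j \partial_2^l f_T(x)| \le C \int_{\mathbb{R}^2} |y_2|^{\frac{2\alpha-2}{2}} \, |\partial_1^j \partial_2^l \psi_T(x-y)|\,\mathrm{d}y,
\end{align*}
change variables to $\hat{y}=((x_1-y_1)/T^{1/4},(x_2-y_2)/T^{1/2})$, and use \eqref{scaling} together with $\partial_1^j \partial_2^l \psi_T = (T^{1/4})^{-3-j-2l} (\partial_1^j \partial_2^l \psi_1)$ (in rescaled variables) to pull out the factor $(T^{1/4})^{2\alpha-2-j-2l}$. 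The remaining integral is $\int |\hat{x}_2-\hat{y}_2|^{\alpha-1} |\partial_1^j \partial_2^l \psi_1(\hat{y})|\,\mathrm{d}\hat{y}$, which is bounded uniformly in $\hat{x}_2 = x_2/T^{1/2}$: split according to $|\hat{y}_2-\hat{x}_2| \leq 1$ or $>1$, use local integrability of $t^{\alpha-1}$ (since $\alpha>0$) on the former piece, and Schwartz decay of $\psi_1$ on the latter.

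\noindent For \eqref{equiv_seminorm_set_3}: I would split according to $d(x,y)\le T^{1/4}$ or $d(x,y)\ge T^{1/4}$. In the second case, $2\|f_T\| \le C (T^{1/4})^{2\alpha-2} = C(T^{1/4})^{\alpha-2}(T^{1/4})^\alpha \le C(T^{1/4})^{\alpha-2} d^\alpha(x,y)$. In the first case, interpolate between the two coordinates: use $|f_T(x)-f_T(y)| \le |x_1-y_1|\|\partial_1 f_T\| + |x_2-y_2|\|\partial_2 f_T\|$ together with the $j=1,l=0$ and $j=0,l=1$ cases of \eqref{equiv_seminorm_set_0}, and absorb the factor $(d(x,y)/T^{1/4})^{1-\alpha}$ or $(|x_2-y_2|^{1/2}/T^{1/4})^{2-\alpha}$.

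\noindent For \eqref{j002} and \eqref{equiv_seminorm_set_2}: The new input is that $f \equiv 0$ on $\mathbb{R}^2_-$, so the convolution integral is over $y_2 \ge 0$; for $x \in \mathbb{R}^2_L$ this forces $|x_2-y_2| \ge L$. I would distinguish the regimes $L \le T^{1/2}$ and $L \ge T^{1/2}$. In the former, $L^{-\delta}(T^{1/4})^{2\delta} \ge 1$ so \eqref{j002} reduces to \eqref{equiv_seminorm_set_0} already proved. In the latter, insert a factor $(1+|\hat{y}_2|)^{-2\delta} \le (L/T^{1/2})^{-2\delta} = L^{-2\delta}(T^{1/4})^{4\delta}$ using Schwartz decay of $\partial_1^j\partial_2^l \psi_1$, leaving a Schwartz factor whose integral against $|y_2|^{\alpha-1}$ is controlled as in step 1. (A slight rebalancing --- e.g. extracting only $\delta$ powers of the $\hat{y}_2$ decay --- yields the desired exponent $L^{-\delta}(T^{1/4})^{2\delta}$.) Then \eqref{equiv_seminorm_set_2} follows from \eqref{j002} by repeating the split-into-cases argument of step 2; the only point to check is that for $x,y \in \mathbb{R}^2_L$ with $d(x,y) \le T^{1/4}$, the derivative bound \eqref{j002} applies along the path from $x$ to $y$ since both intermediate coordinates stay in $\mathbb{R}^2_L$.

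\noindent The only delicate point I anticipate is bookkeeping of the exponents in the Schwartz decay step so that the factor comes out exactly as $L^{-\delta}(T^{1/4})^{2\delta}$ rather than something like $L^{-2\delta}(T^{1/4})^{4\delta}$; the rest is routine rescaling.
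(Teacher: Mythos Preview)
Your proposal is correct and follows essentially the same route as the paper's proof: rescale to reduce to a Schwartz-weighted integral of $|\hat{x}_2-\hat{y}_2|^{\alpha-1}$ for \eqref{equiv_seminorm_set_0}, split on $d(x,y)\lessgtr T^{1/4}$ for \eqref{equiv_seminorm_set_3}, and exploit the support gap to gain $L^{-\delta}(T^{1/4})^{2\delta}$ for \eqref{j002}. The only stylistic difference is that for \eqref{j002} the paper avoids your case split $L\lessgtr T^{1/2}$ by directly inserting $1\le (|y_2|/L)^\delta$ in the convolution $\int f(x-y)\,\partial_1^j\partial_2^l\psi_T(y)\,\mathrm{d}y$ (since $f(x-y)\ne 0$ forces $|y_2|\ge L$), which immediately produces the right exponent without any rebalancing.
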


\noindent We also need a lemma that combines the methods of Section \ref{section:modelling_q} with those of Lemma \ref{semigroup_bounds}. In particular, we show the following bound:
\begin{lemma}
\label{semigroup_bound_modelling}
Let $\alpha \in (\frac{2}{3},1)$. Assume that $u \in C^{\alpha}(\mathbb{R}^2)$ is modelled after $v$ according to $a \in C^{\alpha}(\mathbb{R}^2)$ on $\left\{x_2 = 0 \right\}$,  satisfying $\| a \|_{\alpha} \leq 1$ and $a \in [\lambda, 1]$ for $\lambda > 0$, with modelling constant $M_{\partial}$. Then, for any $x \in \mathbb{R}^2_+$, we find that 
\begin{align}
| E_{tr} \partial_1^2 \vi (x, a_0, u-v(a_0) ) |  \lesssim M_{\partial} |x_2|^{\frac{2\alpha -2 }{2}},
\end{align}
where $E_{tr}$ denotes evaluation of the parameter $a_0$ at $a_{tr}(x)$, which we have defined in \eqref{defn_a_trace}.

\end{lemma}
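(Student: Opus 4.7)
The plan is to evaluate the quantity directly via the heat kernel representation \eqref{heat_kernel_representation}, exploit cancellation from vanishing moments of $\partial_1^2 G$ in its first variable, and then feed in the boundary modelling of $u$ after $v$. To begin, observe that for $x\in\mathbb{R}^2_+$ with $a_{tr}(x)=a(x_1,0)$, the evaluation $E\partial_1^2 \vi(x,a_0,u-v(a_0))$ is, by \eqref{heat_kernel_representation},
\begin{align*}
 \int_{\mathbb{R}} \bigl(u(y_1,0) - v((y_1,0),a_{tr}(x))\bigr)\,\partial_1^2 G(x_1-y_1,x_2,a_{tr}(x))\,\textrm{d}y_1.
\end{align*}

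Next, since for fixed $x_2>0$ the function $G(\cdot,x_2,a_0)$ is Schwartz in the first slot, the identities
\begin{align*}
 \int_{\mathbb{R}} \partial_1^2 G(x_1-y_1,x_2,a_0)\,\textrm{d}y_1 = 0 \qquad \text{and} \qquad \int_{\mathbb{R}} (y_1-x_1)\,\partial_1^2 G(x_1-y_1,x_2,a_0)\,\textrm{d}y_1 = 0
\end{align*}
hold (the latter by one integration by parts, since $\partial_1 G$ has zero integral over $\mathbb{R}$). Consequently, without changing the value of the integral, we may subtract from the integrand the affine function
\begin{align*}
 l(y_1) := \bigl(u(x_1,0) - v((x_1,0),a_{tr}(x))\bigr) + \nu_\partial(x_1)(y_1-x_1),
\end{align*}
where $\nu_\partial$ is the Gubinelli derivative associated with the modelling of $u$ after $v$ on $\{x_2=0\}$.

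Now I invoke the modelling hypothesis. On the line $\{x_2=0\}$ the parabolic distance reduces to $|y_1-x_1|$, and the coefficient $a$ agrees with $a_{tr}$; thus Definition \ref{model} applied at the boundary points $(x_1,0)$ and $(y_1,0)$ gives the pointwise bound
\begin{align*}
 \bigl| u(y_1,0) - v((y_1,0),a_{tr}(x)) - l(y_1)\bigr| \leq M_\partial |y_1-x_1|^{2\alpha}.
\end{align*}
Combining this with the rewriting above,
\begin{align*}
 |E\partial_1^2 \vi(x,a_0,u-v(a_0))| \leq M_\partial \int_{\mathbb{R}} |y_1-x_1|^{2\alpha}\, |\partial_1^2 G(x_1-y_1,x_2,a_{tr}(x))|\,\textrm{d}y_1.
\end{align*}
Rescaling by $w = (y_1-x_1)/\sqrt{x_2\, a_{tr}(x)}$ and using $a_{tr}(x)\in[\lambda,1]$, the integral produces the factor $x_2^{(2\alpha-1)/2}\cdot x_2^{-3/2} \cdot \sqrt{x_2} = x_2^{(2\alpha-2)/2}$ up to a universal constant (the remaining Gaussian moment is finite and the mass factor $e^{-x_2}\leq 1$ is harmless). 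This yields the desired bound.

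There is no serious obstacle here: the proof is essentially an exercise in Schauder-type estimates via the heat semigroup, analogous to part \textit{i)} of Lemma \ref{semigroup_bounds} but with the initial data replaced by the modelling increment. The only subtlety worth flagging is that the cancellations of the moments of $\partial_1^2 G$ up to first order are exactly what upgrade the naive $\alpha$-regularity of $u-v(\cdot,a_{tr}(x))$ to the $2\alpha$-regularity of the modelling remainder, and this in turn is what delivers the sharp exponent $(2\alpha-2)/2$ rather than the weaker $(\alpha-2)/2$.
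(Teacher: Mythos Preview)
Your proof is correct and follows essentially the same approach as the paper: write $E\partial_1^2\vi$ via the heat kernel, subtract the affine correction $l(y_1)$ using the vanishing zeroth and first moments of $\partial_1^2 G$, apply the boundary modelling of $u$ after $v$ to gain $|y_1-x_1|^{2\alpha}$, and conclude by the scaling of the heat kernel as in Lemma~\ref{semigroup_bounds}. The only cosmetic difference is that the paper justifies $\int (y_1-x_1)\,\partial_1^2 G(x_1-y_1,x_2,a_0)\,\textrm{d}y_1=0$ via the evenness of $G(\cdot,x_2,a_0)$ in its first argument, whereas you obtain it by integration by parts; both are equally valid.
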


\noindent Both Lemmas \ref{lemma:norm} and \ref{semigroup_bound_modelling} are proven in Appendix  \ref{technical_lemmas_2}.

Using these technical tools, we now give the main argument of this section:\\

\begin{proof}[Proposition \ref{linear_IVP}] The idea of this proof is to correct the ansatz $q$ defined in Definition \ref{q}. This proof has eight steps, of which the first four correspond to $i)$.\\

\noindent  $i)$ \textbf{Step 1:} \textit{(Regularity for the forcing of the equation solved by $w$)} \quad In this step we show that, for any point $x \in \mathbb{R}^2$, the bound 
\begin{align}
\label{Step_2_conclusion}
| (\partial_2 q - a  \partial_1^2 q +q)^E(x)| \lesssim N(N_0^{int} + N_0)  |x_2|^{\frac{2\alpha -2}{2}}
\end{align}
holds, where we use Definition \ref{extensions}. By the bounds in Lemma \ref{lemma:norm} and the equivalence in Lemma \ref{equivnorm}, we interpret this as information on the $C^{2\alpha-2}$-norm. To obtain \eqref{Step_2_conclusion}, we first notice that on $\mathbb{R}^2_+$ the expression $\partial_2 q - a  \partial_1^2 q +q$ is classical since $q$ is smooth for positive times. Applying Leibniz' rule we find that 
\begin{align}
\label{step1_prop1_1}
\begin{split}
& ( \partial_2 q - a  \partial_1^2q +q) (x) \\
& = \partial_2 \vi^{\prime}( x ,  \bar{a}(x)) +  \partial_{a_0} \vi^{\prime}(x,  \bar{a}(x)) \partial_2 \bar{a}(x)   - a \partial_1^2\vi^{\prime}(x, \bar{a}(x))  -  2 a  \partial_1 \partial_{a_0}  \vi^{\prime}(x,  \bar{a}(x))  \partial_1 \bar{a}(x)\\ 
& \quad - a\partial_{a_0} \vi^{\prime}(x, \bar{a}(x))\partial_1^2\bar{a}(x) - a\partial_{a_0}^2\vi^{\prime}(x,  \bar{a}(x))  (  \partial_1 \bar{a}(x))^2  + \vi^{\prime}(x, \bar{a}(x)).
\end{split}
\end{align}
Notice that we have $\partial_2 \vi^{\prime}( x ,  \bar{a}(x))  = \bar{a}(x)   \partial_1^2 \vi^{\prime}( x ,  \bar{a}(x)) - \vi^{\prime}(x, \bar{a}(x))$ due to \eqref{constant_coeff_ivp_body} and $ \partial_2\bar{a}= \partial_1^2 \bar{a}$ from \eqref{a_equn}. Plugging in these identities, we obtain
\begin{align}
\label{defn_g}
\begin{split}
 & (\partial_2 q - a  \partial_1^2 q +q)(x)\\
 & = (  \bar{a} - a)(x)  \partial_1^2 \vi^{\prime}(x, \bar{a}(x)) + \partial_{a_0} \vi^{\prime}(x, \bar{a}(x)) (1 - a(x))\partial_1^2 \bar{a}(x)  \\
& \quad - 2 a(x)  \partial_{1} \partial_{a_0}\vi^{\prime}(x, \bar{a}(x)) \partial_1 \bar{a}(x) - a(x)\partial_{a_0}^2\vi^{\prime}(x,  \bar{a}(x)) (  \partial_1 \bar{a}(x))^2.
\end{split}
\end{align}

To complete this step we apply Lemma \ref{semigroup_bounds} to $\bar{a}$ to find that 
\begin{align}
\label{bound_bar_a}
\| \bar{a} \| \lesssim \| a \| \leq 1 \quad \textrm{ and } \quad [\bar{a}]_{\alpha} \lesssim [a]_{\alpha},
\end{align}
the second of which can be post-processed to give
\begin{align}
\label{a_difference}
|a(x)  - \bar{a}(x)| \lesssim [a]_{\alpha}|x_2|^{\frac{\alpha}{2}}.
\end{align}
Combining the above bounds with further applications of Lemma \ref{semigroup_bounds} to either $\bar{a}$ or $\vi^{\prime}(\cdot, a_0)$, we find that
\begin{align}
|( a  - \bar{a})(x)  \partial_1^2 \vi^{\prime}(x, \bar{a}(x)) | & \lesssim  [a]_{\alpha}([\uf_{int}]_{\alpha} + [\upa]_{\alpha} ) |x_2|^{\frac{2\alpha -2}{2}},\\
|  \partial_{a_0} \vi^{\prime}(x, \bar{a}(x)) (1 - a(x)) \partial_1^2 \bar{a}(x) | & \lesssim  [a]_{\alpha} ([\uf_{int}]_{\alpha} + [\upa]_{\alpha} )   |x_2|^{\frac{2\alpha -2}{2}},\\
| a(x)  \partial_{a_0} \partial_1 \vi^{\prime}(x, \bar{a}(x)) \partial_1\bar{a}(x)| & \lesssim [a]_{\alpha} ([\uf_{int}]_{\alpha} + [\upa]_{\alpha} ) |x_2|^{\frac{2 \alpha -2}{2}},\\
\textrm{and} \quad  | a(x) \partial_{a_0}^2 \vi^{\prime}(x, \bar{a}(x)) (  \partial_1 \bar{a}(x))^2 |
 & \lesssim  [a]_{\alpha} (\|\uf_{int}\| + \|\upa\| )  |x_2|^{\frac{2\alpha -2}{2}}.
\end{align}
We remark that for the second estimate above it is important that the initial condition of $\vi^{\prime}(\cdot, a_0)$ does not depend on $a_0$. These estimates, \eqref{edit_edit_3}, and the assumptions \hyperlink{B2}{(B2)}, \hyperlink{B3}{(B3)}, and $[a]_{\alpha} \leq N$ give \eqref{Step_2_conclusion}.\\

\noindent \textbf{Step 2:} \textit{(Construction of the correction $w$)} \quad 
 We now show that there exists $w \in C^{2 \alpha} (\mathbb{R}^2_+)$ solving 
\begin{align}
\begin{split}
\label{w_equation}
(\partial_2 - a \diamond \partial_1^2 +1 ) w  & = -( \partial_2q - a \partial_1^2q + q )  \hspace{2.5cm} \textrm{in} \quad  \mathbb{R}^2_+,\\
w & = 0 \hspace{5cm}  \textrm{on}  \quad \partial \mathbb{R}^2_+. 
\end{split}
\end{align}
In fact, we construct the solution $w$ of \eqref{w_equation} as a $C^{2\alpha}$-solution of 
\begin{align}
\label{correction_equn_ws}
(\partial_2 - a \diamond \partial_1^2 +1 ) w  & = - ( \partial_2q - a \partial_1^2q + q )^E  \quad \quad \quad \textrm{in} \quad  \mathbb{R}^2
\end{align}
and then show that $w|_{\mathbb{R}^2_-} =0$. The construction of the correction $w$ follows a similar procedure as part $i)$ of Proposition \ref{linear_forcing}.\\

\noindent \textit{Step 2.1:(A specific form of the singular product)} \quad Let $u \in C^{2\alpha}(\mathbb{R}^2)$ and satisfy $\partial_1^2 u \in C^{\alpha}(\Omega)$ for $\Omega \subseteq \mathbb{R}^2$. Using a trivial version of the argument from Step 1 of Proposition \ref{linear_forcing}, we find that the singular product $a \diamond \partial_1^2 u$ obtained using the trivial modelling of $u$ via Lemma \ref{lemma:reconstruct_2} coincides with the classical product on $\Omega$. In particular, this follows from the uniqueness in Lemma \ref{lemma:reconstruct_2}.\\

\noindent \textit{Step 2.2: (H\"{o}lder bounds for the right-hand side of \eqref{correction_equn_ws})} \quad Let $g = -(\partial_2q - a \partial_1^2q +q)^E$. We now estimate $\| g_{\tau} \|_{\alpha; \mathbb{R}^2_{L} }$ and $\| g_{\tau} \|_{\alpha; \mathbb{R}^2}$ for any $L\in(0,1)$ and $\tau >0$.  

We first bound $[ g_{\tau} ]_{\alpha; \mathbb{R}^2_{L} }$ for which we use \eqref{equiv_seminorm_set_2} of Lemma \ref{lemma:norm} with $\delta = \frac{\alpha +2}{2}$ and \eqref{Step_2_conclusion} to obtain
\begin{align*}
[g_{\tau}]_{\alpha; \mathbb{R}^2_{L}} \lesssim N(N_0 + N_0^{int}) (\tau^{\frac{1}{4}})^{2\alpha}
 L^{-\frac{\alpha +2}{2}} .
\end{align*}
To bound the corresponding $L^{\infty}$-norm, we use \eqref{j002} (again with $\delta =\frac{\alpha +2}{2}$), which gives that
\begin{align*}
\| g_{\tau} \|_{\mathbb{R}^2_L} \lesssim  N(N_0 + N_0^{int})  (\tau^{\frac{1}{4}})^{3\alpha} L^{-\frac{\alpha +2}{2}} .
\end{align*} 
For our estimate on $[g_{\tau}]_{\alpha; \mathbb{R}^2}$, we again use  \eqref{Step_2_conclusion}, but now in combination with \eqref{equiv_seminorm_set_3}; we find that  
\begin{align*}
[g_{\tau}]_{\alpha; \mathbb{R}^2} \lesssim N(N_0 + N_0^{int}) (\tau^{\frac{1}{4}})^{\alpha -2 }.
\end{align*}
For the $L^{\infty}$-norm $\| g_{\tau}\|$, we use \eqref{equiv_seminorm_set_0} to write 
\begin{align*}
\| g_{\tau} \|_{\mathbb{R}^2} \lesssim N(N_0 + N_0^{int}) (\tau^{\frac{1}{4}})^{2\alpha -2}.
\end{align*}

\vspace{.3cm}

\noindent \textit{Step 2.3: (Analysis of the regularized problem)}  \quad Let $\tau \in (0,1)$. From the last step we know that $g_{\tau} \in C^{\alpha}(\mathbb{R}^2)$, which means that there exists $w^{\tau} \in C^{\alpha+2}(\mathbb{R}^2)$ solving
\begin{align}
\label{approx_equation_prop_1}
(\partial_2 - a  \partial_1^2 +1 ) w^{\tau}  & = g_{\tau} && \textrm{in} \quad  \mathbb{R}^2.
\end{align}

Similar to Proposition \ref{linear_forcing}, we would now like to pass to the limit $\tau \rightarrow 0$ with an application of Lemma \ref{KrylovSafonov}. For this application we set $I =1$, $f_1(\cdot, a_0) =0 $, $\sigma_1 =0$, and $a = a$. We first check the condition \eqref{Lemma5_3}. Convolving \eqref{approx_equation_prop_1} with $\psi_T$, we obtain that $w^{\tau}$ solves 
\begin{align}
\label{approx_equation_prop_1_convolved}
(\partial_2 - a  \partial_1^2 +1 ) w^{\tau}_T  = g_{\tau +T} + (a \partial_1^2 w^{\tau})_T - a \partial_1^2 w^{\tau}_T \quad \quad \quad    \textrm{in} \quad  \mathbb{R}^2.
\end{align}
A calculation similar to  \eqref{corollary_3_calc_1}, taking \eqref{Step_2_conclusion} as input, yields that
\begin{align}
\label{approx_soln_1}
\|  g_{\tau  } \|_{2\alpha -2} \lesssim N (N_0 + N_0^{int}).
\end{align}
Furthermore, since $w^{\tau} \in C^{\alpha+2} (\mathbb{R}^2)$, we may apply Step 2.1, which is then combined with \eqref{Lemma4_4} of Lemma \ref{lemma:reconstruct_2}  to give that
\begin{align}
\label{approx_soln_2}
\sup_{T\leq1} (T^{\frac{1}{4}})^{2 - 2\alpha} \|(a \partial_1^2 w^{\tau})_T - a \partial_1^2 w^{\tau}_T \|  = \| [a, (\cdot) ] \diamond \partial_1^2 w^{\tau} \|_{2\alpha -2} \lesssim [a]_{\alpha} [w^{\tau}]_{2\alpha} .
\end{align}
Together  \eqref{approx_equation_prop_1_convolved}, \eqref{approx_soln_1}, and \eqref{approx_soln_2} yield that \eqref{Lemma5_3} is satisfied with $K = C([w^{\tau}]_{2\alpha}  [a]_{\alpha} + N(N_0 + N_0^{int} ))$ for some large enough constant $C \in \R$.

Applying Lemma \ref{KrylovSafonov} we find that 
\begin{align}
\label{modelling_correction}
[w^{\tau}]_{2\alpha} + \|w^{\tau}\|_{\alpha} \lesssim [a]_{\alpha} [w^{\tau}]_{2\alpha} + N(N_0 + N_0^{int} ),
\end{align}
which, after we use that $[a]_{\alpha}\ll 1$, gives
\begin{align}
\label{whole_space_holder_prop_2}
[w^{\tau}]_{2\alpha}  + \|w^{\tau}\|_{\alpha}  \lesssim N(N_0 + N_0^{int} ).
\end{align}

\noindent \textit{Step 2.4: (Passing to the limit in the regularization)} \quad We now pass to the limit $\tau \rightarrow 0$ in the sequence of approximate solutions $w^{\tau}$. Using the convention \eqref{definition_beta_12.2}, in which we define the $C^{2\alpha}$-seminorm, we see that \eqref{whole_space_holder_prop_2} allows us to apply the Arzel\`{a}-Ascoli theorem in $C^{2\alpha}(\mathbb{R}^2)$, which implies that up to a subsequence $w^{\tau} \rightarrow w$ uniformly. In order to pass to the limit in \eqref{approx_equation_prop_1}, just like in Step 3 of Proposition \ref{linear_forcing}, we first notice that $ g_{\tau} \rightharpoonup  g$ and $ \partial_2 w_{\tau} \rightharpoonup  \partial_2 w$ distributionally.  It is still necessary to show that $a \partial_1^2 w^{\tau} \rightharpoonup a \diamond \partial_1^2 w$, which follows from \eqref{Lemma4_3} of Lemma \ref{lemma:reconstruct_2}. To avoid repetition we again reference \cite[Proof of Proposition 3.8, Steps 9 and 10]{OW}. Notice lastly that since the bound \eqref{whole_space_holder_prop_2} is preserved under taking the limit $\tau \rightarrow 0$, we obtain \eqref{modelling_estimate_ivp_w}.

In order to see that $w$ satisfies the initial condition of \eqref{w_equation} we use the estimates from Step 2.2. In particular, the classical Schauder estimate for \eqref{approx_equation_prop_1} (see \textit{e.g.} \cite[Theorem 8.10.1]{Kr}) yields
\begin{align*} 
\| w_{\tau} \|_{\alpha+2 ;  \mathbb{R}^2_{L}} \lesssim  \| g_{\tau}\|_{\alpha; \mathbb{R}^2_L} \lesssim  N(N_0 + N_0^{int})L^{-\frac{\alpha +2}{2}} (\tau^{\frac{1}{4}})^{2\alpha};
\end{align*} 
and passing to the limit $\tau \rightarrow 0$ implies that $w \equiv 0$ on $\mathbb{R}^2_{L}$ for every $L>0$.\\

\noindent \textbf{Step 3:} \textit{(Uniqueness)}  \quad In this step we show that the correction $w$ solving \eqref{w_equation} such that $w \equiv 0$ on $\mathbb{R}^2_-$ is unique. To see this, we assume that we have two such solutions $w$ and $w^{\prime}$ and subtract them. We then use the same argument as in Step 4 of the proof of Proposition \ref{linear_forcing}, to obtain that 
\begin{align*}
(\partial_2 - a \diamond \partial_1^2 + 1)  (w - w^{\prime}) & = 0 &&  \textrm{in} \quad \mathbb{R}^2_+\\
w - w^{\prime}& = 0 && \textrm{on} \quad \partial \mathbb{R}^2_+.
\end{align*}
By Step 2.1 we have that $a \diamond \partial_1^2 (w- w^{\prime}) = a \partial_1^2 (w- w^{\prime}) \equiv 0$ on $\mathbb{R}^2_-$. In particular, we find that $w - w^{\prime}$ solves 
\begin{align*}
(\partial_2 - a \diamond \partial_1^2 + 1)  (w - w^{\prime}) & = 0 && \textrm{in} \quad \mathbb{R}^2,
\end{align*}
which we can then take as input into Lemma \ref{KrylovSafonov}. The proof of our claim then proceeds exactly as in Step 4 of Proposition \ref{linear_forcing} by showing that $\|w - w^{\prime}\|_{\alpha} =0$. \\

\noindent \textbf{Step 4:} \textit{(Conclusion)} \quad To conclude, we check that $\ui = q + w$ solves \eqref{IC_2}.  In this step it is important to keep in mind the notations from Definition \ref{constant_soln} and Definition \ref{q} and, additionally, the shorthand given in \eqref{vi_short} and \eqref{notation_inter}. Notice that because $q = \uf_{int} - \upa$  and $w =0$ on $\partial \mathbb{R}^2_+$, the desired boundary condition holds. Furthermore, by \eqref{correction_equn_ws} we have that
\begin{align}
\begin{split}
& \partial_2(\tilde{q} + w) - a \diamond \partial_1^2 \tilde{q} - a \diamond \partial_1^2 w + (\tilde{q} + w) \\
& = (\partial_2\tilde{q} - a \diamond \partial_1^2 \tilde{q} + \tilde{q}) - (\partial_2 q - a  \partial_1^2 q + q)^E  \quad \qquad \textrm{in} \quad \mathbb{R}^2.
\end{split}
\end{align}
To finish we show that 
\begin{align}
\label{products_linear_prop_2}
a \diamond \partial_1^2 \tilde{q} + a \diamond \partial_1^2 w  = a \diamond \partial_1^2(\tilde{q} + w)
\end{align}
and
\begin{align}
\label{classical_product_rhs}
(\partial_2\tilde{q} - a\diamond \partial_1^2 \tilde{q} + \tilde{q}) - (\partial_2 q - a \partial_1^2 q + q)^E \equiv 0 \quad \qquad \textrm{in} \quad \mathbb{R}^2_+.
\end{align}

For \eqref{products_linear_prop_2} we first notice that, since $w \in C^{2\alpha}(\mathbb{R}^2)$, it follows from Proposition \ref{ansatz_IVP} that $\tilde{q} + w$ is modelled after $\tilde{\vi}(\cdot, a_0)$ according to $a$. So, the product on the right-hand side of \eqref{products_linear_prop_2} is defined via Lemma \ref{lemma:reconstruct_2} with this modelling. The first product on the left-hand side is defined using the same modelling and the second product on the left-hand side is defined via the trivial modelling. Just like in Step 4 of Proposition \ref{linear_forcing}, we find that the triangle inequality and \eqref{Lemma4_3} of Lemma \ref{lemma:reconstruct_2} may be combined to give 
\begin{align}
\lim_{T \rightarrow 0} \| (a \diamond \partial_1^2 (\tilde{q} + w))_T - (a \diamond \partial_1^2 \tilde{q})_T  - (a \diamond \partial_1^2 w)_T \| =0,
\end{align}
which implies \eqref{products_linear_prop_2}.

To show \eqref{classical_product_rhs}, we prove that $a \diamond \partial_1^2 \tilde{q}$ is the classical product on $\mathbb{R}^2_+$. We first notice that $\partial_1^2 \tilde{q}$ satisfies \eqref{assumption_G_1}, which implies that the product $a \partial_1^2 \tilde{q}$ is well-defined in a distributional sense. For this calculation fix a point $x \in \mathbb{R}^2$; then, we may write
\begin{align}
\partial_1^2 \tilde{q}(x) = & \partial_1^2 \tilde{\vi}^{\prime}(x, \tilde{\bar{a}}(x)) + 2 \partial_1  \partial_{a_0} \tilde{\vi}^{\prime}(x, \tilde{\bar{a}}(x)) \partial_1 \tilde{\bar{a}}(x) \\
&+ \partial_{a_0}^2 \tilde{\vi}^{\prime}(x, \tilde{\bar{a}}(x))  (\partial_1 \tilde{\bar{a}} (x))^2 + \partial_{a_0} \tilde{\vi}^{\prime}(x, \tilde{\bar{a}}(x))  \partial_1^2 \tilde{\bar{a}}(x).
\end{align}
Applying Lemma \ref{semigroup_bounds} and using \eqref{edit_edit_3} along with the assumptions \hyperlink{B2}{(B2)}, \hyperlink{B3}{(B3)}, and $[a]_{\alpha} \leq N \leq 1$ yields that 
\begin{align}
 |\partial_1^2 \tilde{\vi}^{\prime}(x, \tilde{\bar{a}}(x)) | \lesssim (N_0 + N_0^{int}) |x_2|^{\frac{\alpha -2}{2}}
 \end{align}
 and 
\begin{align}
\label{better_decay}
\begin{split}
 & | \partial_1  \partial_{a_0} \tilde{\vi}^{\prime}(x, \tilde{\bar{a}}(x)) \partial_1 \tilde{\bar{a}}(x) | + |\partial_{a_0}^2 \tilde{\vi}^{\prime}(x, \tilde{\bar{a}}(x))  (\partial_1 \tilde{\bar{a}}(x) )^2| + | \partial_{a_0} \tilde{\vi}^{\prime}(x, \tilde{\bar{a}}(x))  \partial_1^2 \tilde{\bar{a}}(x)| \quad \\
 & \lesssim N (N_0 + N_0^{int}) |x_2|^{\frac{2\alpha -2}{2}}.
\end{split}
\end{align}
So, indeed $\partial_1^2 \tilde{q}$ satisfies \eqref{assumption_G_1}.

As now  $a \partial_1^2 \tilde{q}$ has a well-defined classical meaning, it makes sense to write
\begin{align}
\label{prop2_last_edit}
\begin{split}
 \lim_{T\rightarrow 0} \| (a \diamond \partial_1^2 \tilde{q})_T -  (a \partial_1^2 \tilde{q})_T\|
\lesssim &  \lim_{T\rightarrow 0} \| [ a , (\cdot)_T] \diamond \partial_1^2 \tilde{q} - E [a , (\cdot)_T] \partial_1^2 \tilde{\vi}(\cdot, a_0) \|  \\
& +   \lim_{T\rightarrow 0} \| [ a, (\cdot)_T] \partial_1^2 \tilde{q} - E [a, (\cdot)_T] \partial_1^2 \tilde{\vi}(\cdot, a_0) \|,
\end{split}
\end{align} 
where $E$ denotes evaluation of a function of $(x, a_0)$ at $(x,a(x))$. Notice that by Lemma \ref{lemma:reconstruct_2}, the first term on the right-hand side of \eqref{prop2_last_edit} vanishes. We will now show that the second term also vanishes, which finishes our argument for \eqref{classical_product_rhs}. To this end, notice that by \eqref{better_decay} we have that 
\begin{align}
\begin{split}
& \int_{\mathbb{R}^2} | a(x) - a(x-y) | \Big( |\partial_1  \partial_{a_0} \tilde{\vi}^{\prime}(x-y, \tilde{\bar{a}}(x-y)) \partial_1 \tilde{\bar{a}}(x-y) |  \\
& \hspace{3.5cm}  + |\partial_{a_0}^2 \tilde{\vi}^{\prime}(x-y, \tilde{\bar{a}}(x-y))  (\partial_1 \tilde{\bar{a}}(x-y) )^2 | \\
&  \hspace{4cm} + | \partial_{a_0} \tilde{\vi}^{\prime}(x-y, \tilde{\bar{a}}(x-y))  \partial_1^2 \tilde{\bar{a}}(x-y) | \Big) |\psi_T(y)| \, \textrm{d}y  \lesssim (T^{\frac{1}{4}})^{3\alpha -2},
\end{split}
\end{align}
for any $x \in \mathbb{R}^2$. To finish, we use that $[a]_{\alpha}\leq 1$ and the triangle inequality to write 
\begin{align}
\label{new_calc}
\begin{split}
& \hspace{-.3cm} \int_{\mathbb{R}^2} |a(x) - a(x-y)| | \partial_1^2 \tilde{\vi}^{\prime}(x-y, \tilde{\bar{a}}(x-y)) - E \partial_1^2 \tilde{\vi}(x-y, a_0) | |\psi_T(y)| \, \textrm{d} y\\
\lesssim &   \int_{\mathbb{R}^2} d^{\alpha}(0,y) | \partial_1^2 \tilde{\vi}(x-y, a(x))  - \partial_1^2 \tilde{\vi}(x-y, a(x-y)) | | \psi_T(y)| \, \textrm{d} y  \\
&+  \int_{\mathbb{R}^2} d^{\alpha}(0,y) | \partial_1^2 \tilde{\vi}(x-y, a(x-y))  - \partial_1^2 \tilde{\vi}(x-y, a_{tr}(x-y)) | | \psi_T(y)| \, \textrm{d} y \\
& +  \int_{\mathbb{R}^2} d^{\alpha}(0,y)| \partial_1^2 \tilde{\vi}^{\prime}(x-y, \tilde{\bar{a}}(x-y))  - \partial_1^2 \tilde{\vi}^{\prime}(x-y, a(x-y)) |  \, \textrm{d} y \\
& +  \int_{\mathbb{R}^2} d^{\alpha}(0,y)| \partial_1^2 \tilde{\vi}^{\prime}(x-y, a(x-y))  - \partial_1^2 \tilde{\vi}^{\prime}(x-y, a_{tr}(x-y)) |  \, \textrm{d} y \\
&+   \int_{\mathbb{R}^2} d^{\alpha}(0,y) | \partial_1^2 \tilde{\vi}(x-y, a_{tr}(x-y), u - v(a_{tr}(x-y))) | | \psi_T(y)| \,  \textrm{d} y.\\
\end{split}
\end{align}
Notice that by Lemma \ref{semigroup_bounds} and \eqref{a_bound_match_up}, the first four terms of \eqref{new_calc} are uniformly bounded (in $x$) by $(N_0 + N_0^{int}) (T^{\frac{1}{4}})^{3\alpha -2}$. For the last term we use Lemma \ref{semigroup_bound_modelling}, which gives a uniform bound of $M (T^{\frac{1}{4}})^{3\alpha -2}$, where $M$ is associated to the modelling of $u$ after $v$ according to $a$ that follows from Proposition \ref{linear_forcing}. Combining all of these observations we find that the second term of \eqref{prop2_last_edit} also vanishes as $T\rightarrow 0$.\\

\noindent $ii)$ \textbf{Step 5:} \textit{(Interpolation of the data)} \quad We linearly interpolate the data as in Proposition \ref{linear_forcing}. Notice that $a_s$ and $f_s$ have already been defined in Step 5 of Proposition \ref{linear_forcing} and that $v_s(\cdot, a_0)$ and $u_s$ corresponding to $f_s$ and $a_s$ have also been introduced. We now additionally let  
\begin{align}
\label{interpolate_Prop_2}
\quad \uf_{int,s} := s \uf_{int,1}  + (1-s)  \uf_{int,0}
\end{align}
for $s \in [0,1]$.  By Definition \ref{q}, these conventions induce the notation
\begin{align}
\label{interpolate_q_prop2}
q_s := \vi^{\prime}_s(\cdot, \bar{a}_s(\cdot)),
\end{align}
where $\vi^{\prime}_s(\cdot,a_0) = \vi(\cdot, a_0, \uf_{int,s} - u_s)$ and $\bar{a}_s$ solves \eqref{a_equn} with initial condition $a_s$.\\

\noindent \textbf{Step 6:} \textit{(A continuous curve of corrections $w_s^{\tau}$ and an equation for $\partial_s w_{s}^{\tau})$} \quad In analogue to \eqref{Step_2_conclusion}, the bounds from Lemma \ref{semigroup_bounds} yield
\begin{align}
\label{bound_holder_seminorm_interpolate}
|(\partial_2 q_s - a_s \partial_1^2 q_s + q_s )^E (x)| \lesssim N(N_0^{int} + N_0) |x_2|^{\frac{2 \alpha -2}{2}}
\end{align}
for  any point $x \in \mathbb{R}^2$. Feeding \eqref{bound_holder_seminorm_interpolate} into the machinery that we have developed in part $i)$, we find that there exists $w_s \in C^{2\alpha}(\mathbb{R}^2)$ solving \eqref{w_equation} with right-hand side $-(\partial_2 q_s - a_s \partial_1^2 q_s +q_s)$ and coefficient $a_s$ and that  $w_s$ actually solves
\begin{align}
\label{approx_equation_stab_1}
(\partial_2 - a_s \diamond \partial_1^2  + 1) w_s & =  g_s && \textrm{in} \quad \mathbb{R}^2,
\end{align}
where $g_s := -(\partial_2 q_s - a_s \partial_1^2 q_s +q_s)^E$. This solution $w_s$ is obtained by taking the limit in $C^{2\alpha}(\mathbb{R}^2)$ of the sequence of regularized solutions $w_s^{\tau}$ of 
\begin{align}
\label{approx_equation_stab_2}
(\partial_2 - a_s \diamond \partial_1^2  + 1) w^{\tau}_s & =  g_{s \tau} && \textrm{in} \quad \mathbb{R}^2.
\end{align}
By the same arguments as in Step 4, we find that $\ui_s = q_s + w_s$ solves \eqref{IC_2} with coefficients $a_s$ and initial condition $\uf_{int,s} - \upa_s$.

Since Step 2.1 implies that when $\tau>0$ the singular product in \eqref{approx_equation_stab_2} is the classical product, we may differentiate \eqref{approx_equation_stab_2} with respect to $s$  and find that $\partial_s w_{s}^{\tau}$ solves 
\begin{align}
\label{approx_equation_stab_3}
(\partial_2 - a_s \partial_1^2  + 1) \partial_s w^{\tau}_s & =   (\partial_s g_s)_{\tau} + \partial_s a_s \partial_1^2 w^{\tau}_s && \textrm{in} \quad \mathbb{R}^2.
\end{align}
By similar arguments as in Step 2, the right-hand side of \eqref{approx_equation_stab_3} is of class $C^{\alpha}$, which implies that $\partial_s w_{s}^{\tau} \in C^{\alpha +2}(\mathbb{R}^2)$. In particular, $\partial_s w_{s}^{\tau}$ is trivially modelled.\\

\noindent \textbf{Step 7:} \textit{(Estimates for $\partial_s w^{\tau}_s$)} \quad  Let $\tau \in (0,1)$. We apply Lemma \ref{KrylovSafonov} to $\partial_s w_{s}^{\tau}$ with the inputs $I = 2$, $f_1 (\cdot, a_0) = \partial_s g_s$, $f_2(\cdot, a_0) = \partial_s a_s \partial_1^2  w_{s}^{\tau}(\cdot, a_0)$, and  $\sigma_1 = \sigma_2 = 0$.  First, we check that $\partial_s w^{\tau}_{s}$ is an approximate solution in the sense of \eqref{Lemma5_3}. To begin, we convolve \eqref{approx_equation_stab_3} with $\psi_T$, which gives
\begin{align*}
( \partial_2 - a_s  \partial_1^2 +1 )\partial_s w^{\tau}_{sT} =  \partial_s ( g_{s \tau})_T + (\partial_s a_s \partial_1^2 w^{\tau}_{s} )_T  -  [ a_s , (\cdot)_T ] \partial_1^2 \partial_s  w^{\tau}_{s}  \quad \quad \quad \textrm{in} \quad  \mathbb{R}^2.
\end{align*}
The crux of the proof of part $ii)$ of Proposition \ref{linear_IVP} is showing that
\begin{align}
\label{interpolate_approx_soln}
\begin{split}
& \sup_{T\leq1}(T^{\frac{1}{4}})^{2-2\alpha} \|   \partial_s (g_{s \tau})_T + (\partial_s a_s \partial_1^2 w_{s}^{\tau} )_T  -  [a_s , (\cdot)_T ] \partial_1^2 \partial_s w^{\tau}_{s} \|\\
 & \lesssim  [a_s]_{\alpha}  [\partial_s  w_{s}^{\tau} ]_{2\alpha} +   \delta N (N_0 + N_0^{int})  + \delta N_0 + \delta N_0^{int},\\
\end{split}
\end{align}
which we split into three steps. The eventual application of Lemma \ref{KrylovSafonov} then comes in Step 7.4. \\

\noindent\textit{Step 7.1:} \quad We start by showing that  
 \begin{align}
\label{prop_2_stability_6}
 \| \partial_s g_{s\tau} \|_{2\alpha -2} \lesssim \delta N (N_0 + N_0^{int}) + \delta N_0 + \delta N_0^{int}.
\end{align}
In particular, for any $x \in \mathbb{R}^2$ the identity 
\begin{align}
\label{prop_2_stablility_6.1}
\begin{split}
\partial_s g_s(x) = & - (( \partial_2 - a_s \partial_1^2 + 1) ( \vi^{\prime}_1(x, \bar{a}_s(x)) - \vi^{\prime}_0(x, \bar{a}_s(x))) )^E\\
& - (( \partial_2 - a_s \partial_1^2 + 1)  (\overline{a_1 - a_0}) \partial_{a_0}\vi^{\prime}_s(x,\bar{a}_s(x)))^E\\
& + ((a_1 - a_0) \partial_1^2 \vi^{\prime}_s(x,\bar{a}_s(x)))^E
\end{split}
\end{align}
holds. The first term may be treated like \eqref{corollary_3_calc_1} and \eqref{approx_equation_prop_1_convolved}, using the linearity of the equations \eqref{constant_coeff_ivp_body} and \eqref{a_equn} along with the assumption \hyperlink{C3}{(C3)} and \eqref{modelling_difference_prop_1}. In conjunction with \eqref{monotonicity} and $N \leq 1$, we obtain
\begin{align}
\label{prop_2_stablility_8}
\begin{split}
 & \| ((\partial_2 - a_s \partial_1^2 +1)( \vi^{\prime}_1(\cdot, \bar{a}_s(\cdot)) - \vi^{\prime}_0(\cdot, \bar{a}_s(\cdot)))^E)_{\tau }\|_{2\alpha -2} \\
 & \lesssim N_0 \delta N + \delta N_0 + \delta N_0^{int}. \quad
 \end{split}
\end{align}
Treating the second and third terms of \eqref{prop_2_stablility_6.1} is more involved. Applying Leibniz' rule, for any $x \in \mathbb{R}^2_+$, we have that 
 \begin{align}
 \begin{split}
&( \partial_2 - a_s \partial_1^2 + 1)  (\overline{a_1 - a_0}) \partial_{a_0}\vi^{\prime}_s(x,\bar{a}_s(x)) - (a_1 - a_0) \partial_1^2 \vi^{\prime}_s(x,\bar{a}_s(x))\\
&=  \partial_{a_0}\vi^{\prime}_s(x,\bar{a}_s(x)) \partial_2  (\overline{a_1 - a_0}) +  (\overline{a_1 - a_0})  \partial_2  \partial_{a_0}\vi^{\prime}_s(x,\bar{a}_s(x))\\
& +  (\overline{a_1 - a_0})  \partial^2_{a_0}\vi^{\prime}_s(x,\bar{a}_s(x))  \partial_2 \bar{a}_s  - a_s  \partial_{a_0}\vi^{\prime}_s(x,\bar{a}_s(x)) \partial_1^2  (\overline{a_1 - a_0}) \\
& - 2 a_s  \partial_1 (\overline{a_1 - a_0}) (\partial_1 \partial_{a_0} \vi^{\prime}_s(x, \bar{a}_s(x)) + \partial_{a_0}^2 \vi^{\prime}_s(x, \bar{a}_s(x)) \partial_1 \bar{a}_s)\\
&- a_s (\overline{a_1 - a_0}) \Big(   \partial_1^2 \partial_{a_0} \vi^{\prime}_s(x, \bar{a}_s(x)) + 2  \partial_1 \partial_{a_0}^2 \vi^{\prime}_s(x, a_s(x)) \partial_1 \bar{a}_s(x) \\
& \quad \quad \quad \quad  \quad \quad \quad  \quad \quad \quad  +  \partial_{a_0}^3 \vi^{\prime}_s(x, \bar{a}_s(x)) (\partial_1 \bar{a}_s)^2 + \partial_{a_0}^2 \vi^{\prime}_s(x, \bar{a}_s(x)) \partial_1^2 \bar{a}_s \Big)\\
&+  (\overline{a_1 - a_0}) \partial_{a_0}\vi^{\prime}_s(x,\bar{a}_s(x)) \\
& -  (a_1 - a_0)\Big( -\partial_1^2 \vi^{\prime}_s(x,\bar{a}_s(x)) + 2   \partial_1 \partial_{a_0}  \vi^{\prime}_s(x,\bar{a}_s(x)) \partial_1 \bar{a}_s\\
&  \quad \quad \quad  \quad \quad \quad \quad \quad \quad \quad + \partial_{a_0}^2 \vi^{\prime}_s(x,\bar{a}_s(x)) (\partial_1 \bar{a}_s)^2 + \partial_{a_0} \vi^{\prime}_s(x,\bar{a}_s(x)) \partial_1^2 \bar{a}_s   \Big).
\end{split}
\end{align}
We re-work this identity by using the following relations:
\begin{align*}
\partial_2 (\overline{a_1 - a_0})  & =  \partial_1^2 (\overline{a_1 - a_0}) - (\overline{a_1 - a_0}),\\
 \quad  \partial_2 \bar{a}_s  & =   \partial_1^2 \bar{a}_s -\bar{a}_s,  \\
  \textrm{and} \, \,\partial_2 \partial_{a_0} \vi^{\prime}_s(x, \bar{a}_s(x)) 
 & =  \bar{a}_s(x) \partial_1^2 \partial_{a_0} \vi^{\prime}_s(x, \bar{a}_s(x)) - \partial_{a_0} \vi^{\prime}_s(x, \bar{a}_s(x))+ \partial_1^2 \vi^{\prime}_s(x, \bar{a}_s(x)).
\end{align*} 
These are plugged-in to obtain that
\begin{align*}
\begin{split}
& ( \partial_2 - a_s \partial_1^2 + 1)  (\overline{a_1 - a_0}) \partial_{a_0}\vi^{\prime}_s(x,\bar{a}_s(x)) - (a_1 - a_0) \partial_1^2 \vi^{\prime}_s(x,\bar{a}_s(x))\\
&=  (1 - a_s )  \partial_{a_0}\vi^{\prime}_s(x,\bar{a}_s(x))  \partial_1^2 (\overline{a_1 - a_0}) +  (\overline{a_1 - a_0}) (\bar{a}_s(x) - a_s(x))\partial_1^2 \partial_{a_0} \vi^{\prime}_s(x, \bar{a}_s(x)) \\
&+  (a_1 - a_0 - \overline{a_1 - a_0}  ) \partial_1^2 \vi^{\prime}_s(x, \bar{a}_s(x)) +  (\overline{a_1 - a_0})  \partial^2_{a_0}\vi^{\prime}_s(x,\bar{a}_s(x))  ( \partial_1^2 \bar{a}_s -\bar{a}_s)\\
& - 2 a_s  \partial_1 (\overline{a_1 - a_0}) ( \partial_1 \partial_{a_0} \vi^{\prime}_s(x, \bar{a}_s(x)) + \partial_{a_0}^2 \vi^{\prime}_s(x, \bar{a}_s(x)) \partial_1 \bar{a}_s)\\
& - a_s (\overline{a_1 - a_0}) \Big( 2  \partial_1 \partial_{a_0}^2 \vi^{\prime}_s(x, a_s(x)) \partial_1 \bar{a}_s(x) +  \partial_{a_0}^3 \vi^{\prime}_s(x, a_s(x)) (\partial_1 \bar{a}_s)^2 + \partial_{a_0}^2 \vi^{\prime}_s(x, a_s(x)) \partial_1^2 \bar{a}_s \Big)\\
& -  (a_1 - a_0) \Big(   2  \partial_1 \partial_{a_0}  \vi^{\prime}_s(x,\bar{a}_s(x)) \partial_1 \bar{a}_s    + \partial_{a_0}^2\vi^{\prime}_s(x,\bar{a}_s(x)) (\partial_1 \bar{a}_s)^2+ \partial_{a_0} \vi^{\prime}_s(x,\bar{a}_s(x)) \partial_1^2 \bar{a}_s  \Big).
\end{split}
\end{align*}

Each term on the right-hand side of the above expression is now treated separately. In particular, using the bounds from Lemma \ref{semigroup_bounds}, that the initial condition of $\vi^{\prime}_s(\cdot, a_0)$ does not depend on $a_0$, the relation \eqref{a_difference}, and the linearity of the equations \eqref{constant_coeff_ivp_body} and \eqref{a_equn}, we obtain the following estimates:
\begin{align}
& | (1 - a_s(x) )  \partial_{a_0}\vi^{\prime}_s(x,\bar{a}_s(x))  \partial_1^2 (\overline{a_1 - a_0})(x)|\lesssim (1 + \|a_s\|) [U_{int,s} - u_s]_{\alpha} [a_1 - a_0]_{\alpha}  x_2^{\frac{2\alpha -2}{2}},\\
& |  (\overline{a_1 - a_0})(x) (\bar{a}_s- a_s)(x) \partial_1^2 \partial_{a_0} \vi^{\prime}_s(x, \bar{a}_s(x)) 
|  \lesssim \|a_1 - a_0\| [a_s]_{\alpha}[U_{int,s} - u_s]_{\alpha}  x_2^{\frac{2\alpha -2}{2}}, \\
& | (\overline{a_1 - a_0} -  (a_1 - a_0) )(x) \partial_1^2 \vi^{\prime}_s(x, \bar{a}_s(x))|
\lesssim  [a_1 - a_0]_{\alpha} [U_{int,s} - u_s]_{\alpha} x_2^{\frac{2\alpha -2}{2}},\\
& | \overline{a_1 - a_0}(x) \partial_{a_0}^2  \vi^{\prime}_s(x, \bar{a}_s(x)) ( \partial_1^2 \bar{a}_s -\bar{a}_s)(x)|
\lesssim    \| a_1 - a_0\| [U_{int,s} - u_s]_{\alpha}  x_2^{\frac{\alpha}{2}} ( [a_s]_{\alpha} x_2^{\frac{\alpha -2}{2}} \hspace{-.1cm}+ \hspace{-.1cm} \|a_s\|),\\
& | a_s(x)  \partial_1 \overline{a_1 - a_0}(x) \partial_1 \partial_{a_0}  \vi^{\prime}_s(x, \bar{a}_s(x))|  \lesssim \|a_s\| [a_1 - a_0]_{\alpha} [U_{int,s} - u_s]_{\alpha} x_2^{\frac{2\alpha -2}{2}}\\
& |  \partial_1 \overline{a_1 - a_0}(x) \partial_{a_0}^2  \vi^{\prime}_s(x, \bar{a}_s(x)) \partial_1 \bar{a}_s(x))|  \lesssim    [a_s]_{\alpha}  [a_1 - a_0]_{\alpha} \|U_{int,s} - u_s\| x_2^{\frac{2\alpha -2}{2}},\\
& | a_s(x) \overline{a_1 - a_0}(x)  \partial_1 \partial_{a_0}^2 \vi^{\prime}_s(x, a_s(x)) \partial_1 \bar{a}_s(x)|\lesssim \|a_s\| [a_s]_{\alpha}\|a_1 - a_0\| [U_{int,s} - u_s]_{\alpha} x_2^{\frac{2\alpha -2}{2}},\\
& |  a_s(x) \overline{a_1 - a_0}(x)  \partial_{a_0}^3 \vi^{\prime}_s(x, a_s(x)) (\partial_1 \bar{a}_s(x))^2|   \lesssim  \|a_s\| \|a_1 - a_0\| \|U_{int,s} - u_s\|  [a_s]_{\alpha}^2  x_2^{\frac{2\alpha -2}{2}},\\
&  |  a_s(x) \overline{a_1 - a_0}(x) \partial_{a_0}^2 \vi^{\prime}_s(x, a_s(x)) \partial_1^2 \bar{a}_s(x)| \lesssim  \|a_s\| \|a_1 - a_0\| [U_{int,s} - u_s] _{\alpha} [a_s]_{\alpha} x_2^{\frac{2\alpha -2}{2}}, \\
& |(a_1 - a_0)(x)  \partial_1 \partial_{a_0}  \vi^{\prime}_s(x,\bar{a}_s(x)) \partial_1 \bar{a}_s(x)| \lesssim \|a_1 -a_0\|  [U_{int,s} - u_s] _{\alpha}  [a_s]_{\alpha} x_2^{\frac{2\alpha -2}{2}},\\
& |(a_1 - a_0)(x) \partial_{a_0}^2 \vi^{\prime}_s(x,\bar{a}_s(x)) (\partial_1 \bar{a}_s(x))^2 | \lesssim \|a_1 - a_0\|  [a_s]_{\alpha}^2  \|U_{int,s} - u_s\| x_2^{\frac{2\alpha -2}{2}}, \\
\textrm{and} & \quad  | (a_1 - a_0)(x)  \partial_{a_0} \vi^{\prime}_s(x,\bar{a}_s(x)) \partial_1^2 \bar{a}_s(x) |   \lesssim \|a_1 -a_0\|  [U_{int,s} - u_s] _{\alpha}  [a_s]_{\alpha} x_2^{\frac{2\alpha -2}{2}}.
\end{align}
Combining these estimates with the assumptions \hyperlink{C2}{(C2)},  \hyperlink{C3}{(C3)}, and \hyperlink{C4}{(C4)}, along with the previous estimate \eqref{edit_edit_3}, we find that the second and third terms of \eqref{prop_2_stablility_6.1} are bounded as $\delta N (N_0 + N_0^{int})  (x_2^{\frac{2\alpha -2}{2}} + x_2^{\frac{\alpha}{2}})$. Applying Lemma \ref{lemma:norm} and using \eqref{prop_2_stablility_8}, we then obtain \eqref{prop_2_stability_6}. In our application of Lemma \ref{lemma:norm}, we remark that the term $|x_2|^{\frac{\alpha}{2}}$ is not disturbing as $(T^{\frac{1}{4}})^{2 \alpha -2 } \leq (T^{\frac{1}{4}})^{\alpha}$ when $T \in (0,1)$. \\

\noindent \textit{Step 7.2:} \quad To continue checking \eqref{interpolate_approx_soln} we use the triangle inequality to write
\begin{align}
\label{prop_2_stability_19}
\| \partial_s a_s \partial_1^2 w_{s}^{\tau} \|_{2\alpha -2} 
\leq   \| [\partial_s a_s, (\cdot) ] \partial_1^2 w_{s}^{\tau}\|_{2\alpha -2}  +   \sup_{T\leq 1 }(T^{\frac{1}{4}})^{2-2\alpha} \| \partial_s a_s \partial_1^2 (w_{s}^{\tau})_T\|. \quad 
\end{align}
The first term is treated with \eqref{Lemma4_4} of Lemma \ref{lemma:reconstruct_2}, the analogue of \eqref{whole_space_holder_prop_2} for $w_s^{\tau}$, and assumption  \hyperlink{C2}{(C2)}, which yield that 
\begin{align}
\label{prop_2_stability_20}
\| [\partial_s a_s, (\cdot)] \partial_1^2 w_{s}^{\tau}\|_{2\alpha -2}  \lesssim \delta N [ w_{s}^{\tau}]_{2\alpha}
 \lesssim \delta N  (N_0 + N_0^{int}).
\end{align}
The second term of \eqref{prop_2_stability_19} is also handled using \eqref{whole_space_holder_prop_2}. In particular, for any $x \in \mathbb{R}^2$ we can use \eqref{moment_bound}, that $\psi_T$ is an even Schwartz function, that $N\leq 1$, and assumption \hyperlink{C4}{(C4)} to obtain
\begin{align*} 
 & |\partial_s a_s (\partial_1^2 w_{s}^{\tau})_T (x)| \lesssim   \| a_0 - a_1 \|  \Big| \int_{\mathbb{R}^2} (w_{s}^{\tau}(y) - w_{s}^{\tau}(x) - \partial_1 w_{s}^{\tau}(x) (y-x)_1) \partial_1^2 \psi_T(y-x) \, \textrm{d}y \Big|\\
& \lesssim  \| a_0 - a_1 \| [w_{s}^{\tau}]_{2\alpha} (T^{\frac{1}{4}})^{2\alpha -2} \lesssim  \delta N  (N_0 + N_0^{int}) (T^{\frac{1}{4}})^{2\alpha -2}.
\end{align*}

\noindent \textit{Step 7.3:} \quad To finish checking \eqref{interpolate_approx_soln}, we again use \eqref{Lemma4_4} of Lemma \ref{lemma:reconstruct_2} for
\begin{align}
\label{prop_2_stability_23}
\begin{split}
\| [a_s, (\cdot)] \partial_1^2 \partial_s w_{s}^{\tau}\|_{2\alpha -2} \lesssim  [a_s]_{\alpha}[ \partial_s w_{s}^{\tau}]_{2\alpha}.
\end{split}
\end{align}
This completes the argument for \eqref{interpolate_approx_soln}.\\

\noindent \textit{Step 7.4:}\quad  Having shown \eqref{interpolate_approx_soln} and using  $[a_s]_{\alpha} \ll 1$, we can then apply Lemma \ref{KrylovSafonov} to find that
\begin{align}
\label{Prop_2_ stability_KS_result}
 \| \partial_s w^{\tau}_{s} \|_{\alpha}  + [ \partial_s w^{ \tau}_{s} ]_{2\alpha} \lesssim  \delta N (N_0 + N_0^{int})+ \delta N_0 + \delta N_0^{int}.
\end{align}

\vspace{.4cm}

\noindent \textbf{Step 8:} \textit{(Conclusion)} \quad Just as in Step 8 of Proposition \ref{linear_forcing}, the bound \eqref{Prop_2_ stability_KS_result} may be integrated-up to give:
\begin{align}
\label{prop_2_stability_24}
  \| w^{\tau}_{0} - w^{\tau}_{1} \|_{\alpha} + [w^{\tau}_{0} - w^{\tau}_{1} ]_{2\alpha}
  \lesssim  \delta N (N_0 + N_0^{int})+ \delta N_0 + \delta N_0^{int}.
\end{align}
Passing to the limit $\tau \rightarrow 0$, we find that the bound \eqref{prop_2_stability_24} holds also for $w_0 - w_1$. 
\end{proof}

\subsection{Proof of Theorem \ref{linear_theorem}}
\label{section:proof_theorem_1}

\begin{proof} As already advertised, the proof of Theorem \ref{linear_theorem} consists of combining Propositions \ref{linear_forcing}, \ref{ansatz_IVP}, and \ref{linear_IVP}.\\

\noindent \textit{i)} From Proposition \ref{linear_forcing} we have a unique solution $\upa \in C^{\alpha}(\mathbb{R}^2)$ of \eqref{forcing_theorem_1}  that is modelled after $v$ according to $a$. In Proposition \ref{ansatz_IVP} we take $u$ to be this solution of \eqref{forcing_theorem_1}. By \eqref{edit_edit_3}, \eqref{modelling_q_constant}, \eqref{holder_q_constant}, and \eqref{nu_bound_1} from the proof of Lemma \ref{lemma:reconstruct_1} --where $M_{\partial}$ and $\nu_{\partial}$ correspond to the modelling of $u$-- we find that 
\begin{align*}
M_{q} + \|q\|_{\alpha} \lesssim N_0 + N_0^{int}.
\end{align*}
An application of Proposition \ref{linear_IVP} then gives a unique $w \in C^{2\alpha}(\mathbb{R}^2)$ such that $w \equiv 0$ on $\mathbb{R}^2_-$ and $\ui = q + w$ solves \eqref{IVP_theorem_1}. The desired solution $\uf \in C^{\alpha}(\mathbb{R}^2_+)$ of \eqref{theorem_1_main_ivp} is then given by the restriction of $\uf = \upa + \ui$. 
 
To check that $\upa + \ui$ in fact satisfies \eqref{theorem_1_main_ivp}, we show that
 \begin{align}
  \label{theorem_1_3}
a \diamond \partial_1^2 \uf =  a \diamond \partial_1^2 \upa + a \diamond \partial_1^2 \ui.
 \end{align}
The argument for \eqref{theorem_1_3} has already been used in Step 1 of Proposition \ref{linear_forcing} and Step 2.1 of Proposition \ref{linear_IVP}. In particular, Lemma \ref{lemma:reconstruct_2} and the triangle inequality yield
\begin{align*}
& \lim_{T \rightarrow 0 } \|  (a \diamond \partial_1^2  \uf )_T - (a \diamond \partial_1^2 \upa)_T  +  (a \diamond \partial_1^2 \ui)_T \| = 0.
\end{align*}
The relations \eqref{theorem_1_i_holder} and \eqref{theorem_1_i_modelling} are a consequence of \eqref{edit_edit_3}, \eqref{modelling_q_constant}, and \eqref{modelling_estimate_ivp_w}.\\

\noindent \textit{ii)} We now use the results of part \textit{ii)} of Propositions \ref{linear_forcing}, \ref{ansatz_IVP}, and \ref{linear_IVP}. In particular, for $u_0$ and $u_1$ in part $ii)$ of Proposition \ref{ansatz_IVP} we take the solutions from part $ii)$ of Proposition \ref{linear_forcing}. Using \eqref{modelling_difference_prop_1}, \eqref{modelling_q_constant_difference}, \eqref{holder_q_constant_difference}, and \eqref{nu_bound_1} --where $\delta M_{\partial}$ and $\delta \nu_{\partial}$ correspond to the modelling of $u_1 - u_0$-- we find that 
\begin{align*}
M_{q_1 - q_0} + \|q_1 - q_0\|_{\alpha} \lesssim \delta N( N_0 + N_0^{int}) +  \delta N_0 + \delta N_0^{int}.
\end{align*}
Then relations \eqref{theorem_1_ii_holder} and  \eqref{theorem_1_ii_modelling} are immediate from  the above bound, \eqref{modelling_difference_prop_1}, and \eqref{modelling_prop_2_ii}.  
\end{proof}

\section{Proof of Theorem \ref{theorem_nonlinear}: Analysis of the quasilinear problem}
\label{section:treat_nonlinear}

\begin{proof}
We work under the assumptions of part $ii)$. The main idea of the proof is to do a contraction mapping argument for
\begin{align}
\label{map}
\left( u_i^*,  w_i^*, a_i^* \right) \mapsto ( q_i^*,    a_i:= a(u_i^* + w_i^* + \tilde{q}_i^*),\{  a_i \diamond \partial_1^2 v_i (\cdot, a_0)  \}) \stackrel{Thm. \ref{linear_theorem}}{\longmapsto}(u_i, w_i, a_i ), \qquad
\end{align}
\noindent where $u_i^* \in C^{\alpha}(\mathbb{R}^2)$ is modelled after $v_i$ according to $a_i^*\in C^{\alpha}(\mathbb{R}^2)$ and $w_i^* \in C^{2\alpha}(\mathbb{R}^2)$ such that $w_i^* \equiv 0$ on $\mathbb{R}^2_-$. We make the additional assumption that $a_i^* = a(U_{int,i})$ on $\left\{ x_2 = 0\right\}$ and $a_i^*, u_i^*,$ and $w_i^*$ are $x_1$-periodic. We, furthermore, use the convention
\begin{align}
\label{q_star}
q_i^*:= \vi(\cdot, \overline{a}_i, \uf_{int,i} -u_i^*),
\end{align}
where we have made use of Definition \ref{constant_soln} and $\overline{a}_i$ solves \eqref{a_equn} with the initial condition $a(U_{int,i})$. We also use the notation given in \eqref{vi_short} and that in Definition \ref{extensions} to denote even-reflection.\\

\noindent \textbf{Step 1:} \textit{(Application of Lemma 3.2 of \cite{OW}; see Section \ref{section:reconstruct})} \quad Let $i =0,1$. We introduce the notation
\begin{align}
\begin{split}
\label{M_star_definition}
M^*  := & \displaystyle\max_{i=0,1} \left( M_{u^*_i} +  [w_i^*]_{2\alpha} + \| u^*_i\|_{\alpha} + \| w_i^*\|_{\alpha}  \right) + N_0 + N_0^{int}\\
\textrm{and} \quad  \delta M^*  := & M_{u_1^* - u^*_0} +  [w_1^* - w_0^*]_{2\alpha} + \| u^*_1 - u^*_0 \|_{\alpha}   + \| w^*_1 - w^*_0\|_{\alpha} \\
 & + (\displaystyle\max_{i=0,1} \|u_i^*\|_{\alpha} + N_0 + N_0^{int}) \| a^*_1 - a^*_0 \|_{\alpha} + \delta N_0 + \delta N_0^{int},
 \end{split}
 \end{align}
where $M_{u^*_i} $ corresponds to the modelling of $u^*_i$ after $v_i$ according to $a_i^*$ and $M_{u_1^* - u^*_0}$ is associated to the modelling of $u^*_1 - u^*_0$ after $( v_1, -v_0 )$ according to $(a^*_1, a^*_0)$.

We also define:
\begin{align}
U_i^* := & u_i^* + w_i^* + \tilde{q}_i^*, \label{U_star_definition}\\
\tilde{M}  := & \displaystyle\max_{i=0,1} \left( M_{a_i} + [a_i]_{\alpha} \right) + N_0 + N_0^{int} ,  \quad  \textrm{and}\label{M_tilde_definition}\\
\delta \tilde{M}  := &M_{a_1 - a_0} + \| a_1- a_0\|_{\alpha} \\
& + (N_0 + N_0^{int})( \| a^{\prime}(\uf^*_1) -  a^{\prime}(\uf^*_0) \|_{\alpha} + \|a^*_1 - a^*_0\|_{\alpha} )  + \delta N_0 + \delta N_0^{int} , \label{M_tilde_diff_definition}
\end{align}
where $M_{a_i}$ corresponds to the modelling of $a_i$ after $\tilde{\vi}_i + v_i$ according to $a^*_i$ and $\mu_i = a^{\prime}(\uf^*_i)$ and $M_{a_1 - a_0}$ is associated to the modelling of $a_1 - a_0$ after  $(\tilde{\vi}_1 + v_1, \tilde{\vi}_0 + v_0 )$ according to $(a^*_1,a^*_0)$ and $(\mu_1, -\mu_0)$.

Using the bounds from \cite[Lemma 3.2]{OW} and the assumptions on the nonlinearity $a$, we then find that 
\begin{align}
& a_i \in [\lambda, 1] \textrm{ and } [a_i]_{\alpha} \ll1 \quad  \textrm{ if }  \quad  \displaystyle\max_{i=0,1}(\|u^*_i\|_{\alpha} + \|w^*_i\|_{\alpha}) \ll 1 \textrm{ and } N_0, N_0^{int} \ll 1,  \label{smooth_transform_1}\\
& \tilde{M} \lesssim M^* \hspace{2.6cm} \textrm{if } \quad \displaystyle\max_{i=0,1}(\|u^*_i\|_{\alpha} + \|w^*_i\|_{\alpha}) \ll 1 \textrm{ and } N_0, N_0^{int} \ll 1, \label{smooth_transform_2} \\
\textrm{and} \quad & \delta \tilde{M} \lesssim \delta M^* \hspace{2.3cm} \textrm{if } \quad M^* \leq 1. \label{smooth_transform_3}
\end{align}

For \eqref{smooth_transform_1} we notice that
\begin{align}
\label{holder_a_fixed}
[a_i]_{\alpha} \lesssim \|a^{\prime}\| ([u_i^*]_{\alpha} + [w_i^*]_{\alpha} + [\tilde{q}_i]_{\alpha})
 \end{align}
and $\|a^{\prime}\| \leq 1$. So, since $a_i \in [\lambda, 1]$ is clear as $a \in [\lambda,1]$,  \eqref{smooth_transform_1} holds if $[u_i^*]_{\alpha}+ [w_i^*]_{\alpha} + [\tilde{q}^*_i]_{\alpha} \ll 1$. By \eqref{holder_q_constant} and the notation \eqref{q_star} we know that $[\tilde{q}^*_i]_{\alpha}  \ll 1$ if $N_0^{int}+\|u_i^*\|_{\alpha}\ll1$.

For \eqref{smooth_transform_2} and \eqref{smooth_transform_3} we first observe that $U_i^*$ (defined in \eqref{U_star_definition}) is modelled after $\tilde{\vi}_i + v_i$ according to $a_i^*$. This follows from the assumed modelling of $u_i^*$ and noticing that $q_i^*$ is modelled after $\tilde{\vi}_i$ according to $a_i^*$ with modelling constant $M_{q^*_i}$ bounded as 
\begin{align}
\label{q_modelling_bound_theorem_2}
M_{q^*_i} \lesssim M_{u_i^*} + \|u_i^*\|_{\alpha} + N_0 + N_0^{int},
\end{align}
which we must still show. In particular, the modelling of $q_i^*$ and the bound \eqref{q_modelling_bound_theorem_2} follow from taking $a = a_i$ in part $i)$ of Proposition \ref{ansatz_IVP} --this yields that $q_i^*$ is modelled after $\tilde{\vi}_i$ according to $a_i$ with corresponding modelling constant $M_{intermediate}$ bounded as 
\begin{align}
M_{intermediate} \lesssim M_{u_i^*} + \|u^*_i\|_{\alpha} + N_0+N_0^{int}, 
\end{align} 
where we have additionally used \eqref{nu_bound_1} and \hyperlink{B1}{(B1)}. Using Lemma \ref{post_process}, since $a(U_{int,i}) = a_i^*$ on $\partial \R^2_+$, we obtain \eqref{q_modelling_bound_theorem_2}. From \eqref{holder_q_constant} we also obtain that
\begin{align}
\label{q_holder_theorem_2}
\quad \| q^*_i\|_{\alpha}   \lesssim \|u^*_i\|_{\alpha} + N_0^{int}.
\end{align}
We can then combine \eqref{q_modelling_bound_theorem_2} and \eqref{q_holder_theorem_2} with the bound \eqref{new_modelling}; $\|a^{\prime}\|,\|a^{\prime \prime}\| \leq 1$; and the assumptions of \eqref{smooth_transform_2} to write
\begin{align}
\label{Ma_modelling}
\begin{split}
M_{a_i} & \lesssim  \| a^{\prime}\| M_{U_i^*}  + \| a^{\prime \prime}\|[U_i^*]^2\\
& \lesssim \| a^{\prime}\| (M_{u_i^*}  + [w_i^*]_{2\alpha}+ M_{q^*_i}) + \| a^{\prime \prime}\|([u^*_i]_{\alpha} + [w_i^*]_{\alpha} + [q^*_i]_{\alpha})^2\\
& \lesssim  M_{u_i^*}  + [w_i^*]_{2\alpha} + \|u_i^*\|_{\alpha} + [w_i^*]_{\alpha} + N_0 + N_0^{int}.
\end{split}
\end{align}
Another application of \eqref{holder_a_fixed} and \eqref{q_holder_theorem_2} yields \eqref{smooth_transform_2}.

The bound \eqref{smooth_transform_3} requires the use of both \eqref{new_modelling_2} and \eqref{Calpha_norm_new}. First, however, we collect the bounds stemming from \eqref{modelling_q_constant_difference} and \eqref{holder_q_constant_difference}. In particular, we first notice that by \eqref{modelling_q_constant_difference}  we have that
\begin{align}
\label{q_modelling_difference_theorem_2_2}
\begin{split}
&M_{q_1^* - q_0^*} 
 \lesssim M_{u_1^* - u_0^*} + \|u_1^* - u_0^*\|_{\alpha} + \| a^*_1 - a^*_0\|_{\alpha} (\displaystyle\max_{i=0,1} \|u_i^*\|_{\alpha} + N_0^{int}) + \delta N_0^{int} + \delta N_0,
\end{split}
\end{align}
where this corresponds to the modelling of $q_1^* - q_0^*$ after $(\tilde{\vi}_1, -\tilde{\vi}_0)$ according to $(a_1^*, a_0^*)$. Applying \eqref{holder_q_constant_difference}, we obtain
\begin{align}
\label{q_holder_difference_theorem_2}
\|q_1^* - q_0^*\|_{\alpha} & \lesssim  \|u_1^* - u_0^*\|_{\alpha} + \| a^*_1 - a^*_0\|_{\alpha} (\displaystyle\max_{i=0,1} \|u_i^*\|_{\alpha} + N_0^{int}) + \delta N_0^{int}.
\end{align}
Combining \eqref{q_modelling_bound_theorem_2}, \eqref{q_holder_theorem_2}, \eqref{q_modelling_difference_theorem_2_2}, \eqref{q_holder_difference_theorem_2}, \eqref{new_modelling_2}, the assumption $M^* \leq 1$, and that $\|a^{\prime}\|$, $\|a^{\prime \prime}\|$, $\|a^{\prime \prime \prime}\|\leq 1$, we obtain
\begin{align}
\begin{split}
  M_{a_1 -a_0}
& \lesssim \|a^{\prime}\| M_{U_1^* - U_0^*} + \|U_1^* - U_0^* \|_{\alpha}(\|a^{\prime \prime} \| \displaystyle\max_{i=0,1} [U_i^*]_{\alpha} +  \frac{1}{2} \|a^{\prime \prime \prime} \| \displaystyle\max_{i=0,1} [U_i^*]^2_{\alpha} + \|a^{\prime \prime} \|  \displaystyle\max_{i=0,1} M_{U_i^*} )\\
& \lesssim  M_{u_1^* - u^*_0} + [w_1^* - w_0^*]_{2\alpha} + \|u_1^* - u_0^*\|_{\alpha} + \|w_1^* - w_0^*\|_{\alpha} + \delta N_0^{int} + \delta N_0\\
&   \quad  + \|a^*_1 - a^*_0\|_{\alpha}(  \displaystyle\max_{i=0,1} \|u_i^*\|_{\alpha}+ N_0^{int}). 
\end{split}
\end{align}
Using \eqref{Calpha_norm_new}, \eqref{q_holder_difference_theorem_2}, and $M^* \leq 1$ we find that 
\begin{align}
\label{coefficients_diff_thm_2}
\begin{split}
&  \| a_1-  a_0 \|_{\alpha} + \| a^{\prime}(\uf^*_1) -  a^{\prime}(\uf^*_0) \|_{\alpha} \\
 & \lesssim  (\|a^{\prime}\| + \|a^{\prime\prime}\| +( \|a^{\prime \prime} \|+  \|a^{\prime \prime\prime} \|  )\| \max_{i=0,1} [ \uf^*_i ]_{\alpha}) \| \uf^*_1- \uf^*_0\|_{\alpha}\\
 & \lesssim   \| u_1^* - u_0^*\|_{\alpha}+ \| w_1^* - w_0^*\|_{\alpha} + \delta N_0^{int} +\|a^*_1 - a^*_1\|_{\alpha} (\displaystyle\max_{i=0,1} \|u_i^*\|_{\alpha} + N_0^{int}).
 \end{split}
\end{align}
Combining the last two computations, we obtain \eqref{smooth_transform_3}.\\

\noindent \textbf{Step 2:} \textit{(Application of Corollary \ref{post_process_reconstruct_1})} \quad  In this step we apply Corollary \ref{post_process_reconstruct_1}. For $i,j = 0,1$, we obtain families of distributions $\left\{ a_i \diamond \partial_1^2  \vp_j (\cdot, a_0)\right\}_{a_0 \in [\lambda,1]}$, satisfying 
\begin{align}
& \hspace{-.2cm}  \| [a_i, (\cdot)] \diamond \partial_1^2 v_j(\cdot, a_0) \|_{2\alpha -2,2} \lesssim N_0 (N_0^{int} + N_0 + M_{a_i})  \lesssim N_0 \tilde{M}, \label{cor_3_use_1}\\
& \hspace{-.2cm} \| [a_i, (\cdot)] \diamond \partial_1^2 v_1(\cdot, a_0) -  [a_i, (\cdot)] \diamond \partial_1^2 v_0(\cdot, a_0)\|_{2\alpha -2,1} \lesssim \delta N_0 ( N_0 + N_0^{int}+ M_{a_i} )  \lesssim \delta N_0 \tilde{M},  \label{cor_3_use_2}\\
& \hspace{-.2cm}  \textrm{and }   \| [a_1, (\cdot)] \diamond \partial_1^2 v_i(\cdot, a_0) - [a_0, (\cdot)] \diamond \partial_1^2  v_i(\cdot, a_0)\|_{2\alpha -2,1}\\
& \quad \quad \lesssim N_0 \Big(M_{a_1 - a_0}  + (N_0 + N_0^{int})(\|a^{\prime}(\uf^*_1) -a^{\prime}(\uf^*_0)\|_{\alpha} + \| a^*_1 - a^*_0\|_{\alpha} )+\delta N_0 + \delta N_0^{int} \Big) \\
& \quad \quad \lesssim N_0 \delta \tilde{M}.\label{cor_3_use_3}
\end{align}
Notice that \eqref{cor_3_use_1} follows from \eqref{conclusion_lemma_2_1}, \eqref{cor_3_use_2} follows from \eqref{Lemma_3_3_1_1}, and \eqref{cor_3_use_3} follows from \eqref{conclusion_lemma_2_ii} via the additional ingredient of either the definition \eqref{M_tilde_definition} or \eqref{M_tilde_diff_definition}.\\

\noindent \textbf{Step 3:} \textit{(Application of Theorem \ref{linear_theorem})} \quad  As indicated in \eqref{map},  for $i =0,1$, we now apply Theorem \ref{linear_theorem} with $a_i := a(\uf^*_i)$, initial condition $U_{int,i}$, and forcing $f_i$. We use the convention that $U_i = u_i + q_i + w_i$ and the notation
\begin{align}
\begin{split}
M & := \displaystyle\max_{i=0,1} (M_{u_i} + [w_i]_{2\alpha} + \|u_i\|_{\alpha} + \| w_i \|_{\alpha}) + N_0 + N_0^{int}\\
\delta M & := M_{u_1 - u_0} + [w_1 - w_0]_{2\alpha} + \| u_1 - u_0\|_{\alpha} + \| w_1 - w_0\|_{\alpha}\\
& \quad \quad \quad   + (N_0 + N_0^{int}) \|a_1 - a_0\|_{\alpha} + \delta N_0 + \delta N_0^{int}.
\end{split}
\end{align}

To apply the first part of Theorem \ref{linear_theorem} we work under the assumption that $M^* \ll 1$: Assumptions \hyperlink{B1}{(B1)} and \hyperlink{B3}{(B3)} are verified as they are adopted into the assumptions on the inputs $f_i$ and $U_{int,i}$; the assumption \hyperlink{B2}{(B2)} is verified by $a_i$ using \eqref{smooth_transform_1} and the assumption $M^* \ll 1$; and the existence of the appropriate offline products in \hyperlink{B4}{(B4)} is guaranteed by \eqref{cor_3_use_1} of the previous step with $N = \tilde{M}$, which via \eqref{smooth_transform_2} satisfies $\tilde{M} \lesssim M^* \ll1$ and, therefore, $\tilde{M} \leq1$. The relations \eqref{theorem_1_i_holder} and \eqref{theorem_1_i_modelling} then give that 
\begin{align}
 M & \lesssim  N_0 + N_0^{int} \quad \quad \quad  \textrm{if} \quad M^* \ll1 \label{thm_2_3_1}.
\end{align}

To apply the second part of Theorem \ref{linear_theorem} we again work under the assumption that $M^*\ll 1$. The conditions \hyperlink{C1}{(C1)} and \hyperlink{C3}{(C3)} are again automatically verified since they have been adopted into the assumptions of Theorem \ref{theorem_nonlinear}. For the assumptions \hyperlink{C2}{(C2)} and \hyperlink{C4}{(C4)} we set $\delta N = \delta \tilde{M}$, which is a valid choice for \hyperlink{C4}{(C4)} by \eqref{cor_3_use_3}, and notice that by \eqref{coefficients_diff_thm_2} we have that 
\begin{align}
\| a_1 - a_0\|_{\alpha} \lesssim \delta \tilde{M}.
\end{align} 
By \eqref{theorem_1_ii_holder} and \eqref{theorem_1_ii_modelling} we obtain
\begin{align}
 \label{thm_2_3_2}
\begin{split}
 \delta M & \lesssim (N_0 + N_0^{int}) \delta \tilde{M} + \delta N_0 + \delta N_0^{int}\\
 & \lesssim   (N_0 + N_0^{int}) \delta M^* + \delta N_0 + \delta N_0^{int}  \quad \quad \quad \textrm{ if } \quad  M^* \ll1,
\end{split}
\end{align}
where we have additionally used \eqref{smooth_transform_3}.\\

\noindent \textbf{Step 4:} \textit{(Fixed-point argument)}\quad  We now let $U_{int,1} = U_{int,0}$ and $f_0 = f_1$, which implies that $\delta N_0 = \delta N_0^{int} = 0$.  We will perform a fixed-point argument for the map given in \eqref{map} in the space of triples $(u_i^*, w_i^*, a_i^*)$ as described following \eqref{map} and, furthermore, satisfying 
\begin{align}
\label{condition_set}
M^* \leq \epsilon
\end{align}
for some $\epsilon>0$. By \eqref{thm_2_3_1} we see that the set defined through \eqref{condition_set} is mapped to itself under \eqref{map} for $\epsilon \ll1$. Using the same argument as in \cite{OW}, we find that 
\begin{align}
&d((u_1, w_1 , a_1), (u_0, w_0, a_0))\\
&:= M_{u^*_1 - u^*_0} + [w_1 - w_0]_{2\alpha} + \| u^*_1 - u^*_0\|_{\alpha}  + \|w_1 - w_0 \|_{\alpha} + (N_0 + N_0^{int}) \| a_1 - a_0\|_{\alpha}
\end{align}
defines a distance function under which the set defined by \eqref{condition_set} is complete and closed. By \eqref{thm_2_3_2} with $\delta N_0 = \delta N_0^{int} = 0$, we obtain that $\delta M \lesssim (N_0 + N_0^{int}) \delta M^*$, which translates into:
\begin{align}
\label{contraction}
d((u_1, w_1 , a_1), (u_0, w_0, a_0))\lesssim (N_0 + N_0^{int}) d((u_1^*, w_1^* , a^*_1), (u^*_0, w^*_0, a^*_0)).
\end{align}
In other words, the map given by \eqref{map} is a contraction on the space defined by \eqref{condition_set}.\\

\noindent \textbf{Step 5:} \textit{(Conclusion)} \quad We first conclude part $i)$. Notice that the fixed point  $(u,w,a)$ of the map \eqref{map} found in the previous step satisfies the claim in part $i)$ of this theorem. For the uniqueness part of our claim, assume that the triplet $(u,w,a)$ satisfies part $i)$ of Theorem \ref{theorem_nonlinear} and notice that then it is clearly a fixed-point of \eqref{map}. To finish we must check that this triplet is in the set defined by \eqref{condition_set}. Notice that thanks to \eqref{smallness_assumption}, we know that \eqref{smooth_transform_1} and \eqref{smooth_transform_2} hold, and we may use \eqref{prop1_5} and \eqref{modelling_estimate_ivp_w} of Propositions \ref{linear_forcing} and \ref{linear_IVP} respectively to obtain that 
\begin{align}
\label{new_new_new}
M_u + [w]_{2\alpha} + \|u\|_{\alpha} + \|w\|_{\alpha} \lesssim N_0 + N_0^{int}.
\end{align} 
So, since $N_0, N^{int}_0 \ll 1$ and $M = M^*$ for a fixed point, we find that indeed $(u,w,a)$ satisfies \eqref{condition_set}. Furthermore, the a priori bounds contained in \eqref{thm_2_conclusion} follows from \eqref{thm_2_3_1}.

Moving on part $ii)$, assume that we have two triplets $(u_i,w_i,a_i)$ corresponding to two solutions in part $i)$. Each $(u_i,w_i,a_i)$ is a fixed point of its own map \eqref{map} corresponding to $f_i$ and $U_{int,i}$. Since we are dealing with fixed points we have that $M^* = M$ and $\delta M^* = \delta M$. By \eqref{new_new_new} we know that $M^* \ll 1$ when $N_0, N_0^{int}\ll1$, which means that we may apply \eqref{thm_2_3_2} to obtain \eqref{thm_2_conclusion_2}. 

\end{proof}

\section{Construction of the new ``offline'' products}
\label{products_proof}

We now give the proofs of Lemma \ref{lemma:new_reference_products_1} and Corollary \ref{new_family_1}; as well as of Lemma \ref{lemma:new_reference_products_2} and Corollary \ref{new_family_2}.

\vspace{-.2cm}

\subsection{Proofs of Lemma \ref{lemma:new_reference_products_1} and Corollary \ref{new_family_1}: First type of new ``offline'' products}

\label{products_proof_1}

We begin with the proofs of Lemma \ref{lemma:new_reference_products_1} and Corollary \ref{new_family_1}:

\begin{proof}[Lemma \ref{lemma:new_reference_products_1}]  Since $\frac{2-\alpha}{2} < 1$, the bound \eqref{assumption_G_1} ensures that $\partial_1^2 G \in L^1_{\rm{loc}}(\R^d)$ --whereby, for $F \in  L^{\infty}(\mathbb{R}^2)$, the product $F \partial_1^2 G$ is well-defined defined as a regular distribution. In particular, for any test function $\varphi \in C^{\infty}_0(\mathbb{R}^2)$, $F \partial_1^2 G(\varphi) = \langle \varphi, F\partial_1^2 G \rangle$ --where we recall that $\langle \cdot, \cdot \rangle$ denotes the $L^2(\R^2)$ inner-product.  In order to obtain \eqref{assumption_reconstruction_2}, we fix $x \in \mathbb{R}^2$ and use \eqref{defn_convolution_w_dist} and \eqref{assumption_G_1} as
\begin{align}
\label{edit_thesis_1}
\begin{split}
   &| [F, (\cdot)_T] \partial_1^2 G  (x) | = |F (\partial_1^2 G)_T (x) - \langle F\partial_1^2 G, \psi_T(x - \cdot) \rangle|\\
& =  \Big| \int_{\mathbb{R}^2} (F(x) - F(y)) \psi_T(x-y)  \partial_1^2 G(y) \, \textrm{d} y \Big| \\
& \lesssim  C(G) [ F ]_{\alpha} \Big(  \int_{\mathbb{R}} |\psi_T(x-y)| \,  d^{\alpha}(x,y)\, ( |y_2|^{\frac{\alpha - 2 }{2}} +|y_2|^{\frac{2\alpha - 2 }{2}} ) \, \textrm{d} y \Big)\\
 & \lesssim C(G)  \left[ F \right]_{\alpha} (T^{\frac{1}{4}})^{2\alpha -2} \\
 &\quad  \times \Big(\int_{-1}^1   \int_{\mathbb{R}}  |\psi_1(\hat{x}-\hat{y})| \,  d^{\alpha}(\hat{x},\hat{y}) \,  (|\hat{y}_2|^{\frac{\alpha - 2 }{2}}  + |\hat{y}_2|^{\frac{2\alpha - 2 }{2}}  )\, \textrm{d}\hat{y}_1 \,  \textrm{d}\hat{y}_2+ \int_{\mathbb{R}^2}  |\psi_1(\hat{x}-\hat{y})|  d^{\alpha}(\hat{x},\hat{y}) \,  \textrm{d}\hat{y} \Big).
\end{split}
\end{align} 
Here we have used the change of variables \eqref{standard_rescaling} (\textit{i.e.} $(\hat{x}_1, \hat{x}_2) = ( x_1  T^{-\frac{1}{4}}, x_2 T^{-\frac{1}{2}})$) and that $T \leq 1$. To handle the first term on the right-hand side of \eqref{edit_thesis_1} we use that 
\begin{align*}
p( \cdot ) & =   \int_{\mathbb{R}} |\psi_1(x_1, \cdot )| (|x_1|^{\alpha} + | \cdot |^{\frac{\alpha}{2}}) \, \textrm{d}x_1 \in L^{\infty}(\mathbb{R}),
\end{align*}
which follows from $\psi_1$ being a Schwartz function. Using this, we then have that 
\begin{align}
\begin{split}
 & \int_{-1}^1   \int_{\mathbb{R}}  |\psi_1(x-y)|  d^{\alpha}(x,y) ( |y_2|^{\frac{\alpha - 2 }{2}} +  |y_2|^{\frac{2\alpha - 2 }{2}}) \, \textrm{d}y_1 \,  \textrm{d}y_2\\
&\lesssim  \|p\| \int_{-1}^1  ( |y_2|^{\frac{\alpha - 2 }{2}} +  |y_2|^{\frac{2\alpha - 2 }{2}}) \,  \textrm{d}y_2 < \infty.
\end{split}
\end{align}
For the second term on the right-hand side of \eqref{edit_thesis_1} using that $\psi_1$ is a Schwartz function, we obtain the desired \eqref{assumption_reconstruction_2}. 
\end{proof}

We now apply Lemma \ref{lemma:new_reference_products_1} to obtain the first type of new ``offline'' products:

\begin{proof}[Corollary \ref{new_family_1}]  \quad\\

\noindent $i)$ For $i = 0,1,2$, let $G_i = \partial^i_{a_0}\tilde{\vi}(\cdot, a_0)$ in Lemma \ref{lemma:new_reference_products_1}. By Lemma \ref{semigroup_bounds}, each $C(G_i)$ is bounded by $[\uf_{int} -v(\cdot, a_0)]_{\alpha,2}$. Applying \eqref{assumption_reconstruction_2} and Lemma \ref{small_v}, yields \eqref{assumption_reconstruction_2_new_2}.\\

\noindent $ii)$ For $i = 0,1,2$, let $F = \partial_{a_0}^i v(\cdot, a_0)$ in part $i)$.  The result of Lemma \ref{small_v} yields \eqref{assumption_reconstruction_2_new_2.5}. 

\end{proof}

\subsection{Proofs of Lemma \ref{lemma:new_reference_products_2} and Corollary \ref{new_family_2}: Second type of new ``offline'' products}

\label{products_proof_2}

We now prove Lemma \ref{lemma:new_reference_products_2} and Corollary \ref{new_family_2}.

\begin{proof}[Lemma \ref{lemma:new_reference_products_2}] We begin by symbolically applying Leibniz' rule: 
\begin{align}
\label{definition_reference_1}
\begin{split}
 G \partial_1^2 F
``="&  \partial_1^2 (F G)  - 2 \partial_1 F \partial_1G - F \partial_1^2 G\\
``="& - 2 (\partial_1 (F \partial_1 G )  - F\partial_1^2 G   )+ \partial_1^2 (FG)  - F \partial_1^2G\\
`` = " & \partial_1^2 (FG) - 2 \partial_1 ( F \partial_1 G ) + F \partial_1^2G.
\end{split}
\end{align} 
This heuristic calculation motivates the definition 
\begin{align}
\label{definition_Step_1}
 G  \diamond \partial_1^2 F :=  \partial_1^2 ( F G) - 2 \partial_1 (  F \partial_1G ) +  F  \partial_1^2 G.
 \end{align}
Thanks to \eqref{assumption_G_2}, $F  \partial_1^2 G, F \partial_1 G,$ and $FG \in L^1_{\rm{loc}} (\R^2)$ --for $\varphi \in C^{\infty}_0(\R^2)$, we have that 
\begin{align*}
G  \diamond \partial_1^2 F(\varphi) = \langle FG, \partial_1^2 \varphi \rangle  + 2 \langle F \partial_1 G, \partial_1 \varphi \rangle + \langle F \partial_1^2 G, \varphi \rangle. 
\end{align*}
Notice also that the operation $\diamond$ as defined in \eqref{definition_Step_1} is clearly bilinear.

We now check \eqref{reconstruction_assumption_2_2}. Let $x \in \mathbb{R}^2$ and use \eqref{defn_convolution_w_dist} and \eqref{definition_Step_1} to write
\begin{align}
&| [G , ( \cdot )_T  ]  \diamond \partial_1^2 F (x) | \\
& = G(x) ( \partial_1^2 F)_T \hspace{-.05cm} - \hspace{-.05cm} \langle FG,  \partial_1^2 \psi_T(x - \cdot)\rangle\hspace{-.05cm} - \hspace{-.05cm} 2  \langle F \partial_1 G, \partial_1 \psi_T(x - \cdot) \rangle \hspace{-.05cm} - \hspace{-.05cm} \langle F\partial_1^2 G, \psi_T(x - \cdot)\rangle  \\
&=  \Big| \int_{\mathbb{R}^2} (G(x) -G(y)) (F( y) -  F(x)) \partial_1^2 \psi_T(x-y) \textrm{d}y  \nonumber \\
& \hspace{1.5cm} - 2 \int_{\mathbb{R}^2}  (F(y) -  F(x))\partial_1G(y)  \partial_1 \psi_T(x-y) \, \textrm{d}y \nonumber\\
& \hspace{2.5cm}  - \int_{\mathbb{R}^2}  (F(y) -  F(x)) \partial_1^2 G(y) \psi_T(x_1-y) \, \textrm{d}y\label{rhs_ref} \\
& \hspace{3.5cm} + \int_{\mathbb{R}^2} (G(x) - G(y))   F(x) \partial_1^2 \psi_T(x-y)\, \textrm{d}y \nonumber \\
& \hspace{4.5cm}  - 2 \int_{\mathbb{R}^2} F(x) \partial_1 G(y)  \partial_1 \psi_T(x-y) \, \textrm{d}y \nonumber\\
&  \hspace{5.5cm} - \int_{\mathbb{R}^2} F(x) \partial_1^2 G(y) \psi_T(x-y) \, \textrm{d}y\Big|.\nonumber
\end{align}
The terms on the right-hand side of \eqref{rhs_ref} are then treated separately. The first term is easily handled using \eqref{moment_bound} as
\begin{align*}
\begin{split}
&\Big| \int_{\mathbb{R}^2} (G(x) - G(y)) (F(y) -  F(x)) \partial_1^2 \psi_T(x-y)\, \textrm{d}y  \Big|\\
&\leq  \left[ G \right]_{\alpha} \left[ F\right]_{\alpha} \int_{\mathbb{R}^2} |\partial_1^2 \psi_T(x-y)| \, d^{2\alpha}(x,y) \, \textrm{d} y \lesssim [G]_{\alpha}  \left[ F \right]_{\alpha}  (T^{\frac{1}{4}})^{2\alpha -2}.
\end{split}
\end{align*}
For the second term, we additionally use \eqref{assumption_G_2} to write
\begin{align}
 \label{diamond1_prime_3}
\begin{split}
&\Big| \int_{\mathbb{R}^2}  (F(y) -  F(x))\partial_1G(y)  \partial_1 \psi_T(x-y) \, \textrm{d}y \Big| \\ 
& \lesssim  C(G) \left[ F\right]_{\alpha} \int_{\mathbb{R}^2} |x_2 |^{\frac{\alpha -1}{2}} d^{\alpha}(x,y) |\partial_1\psi_T(x - y) | \, \textrm{d} y   \lesssim C(G)  \left[ F\right]_{\alpha}  (T^{\frac{1}{4}})^{2 \alpha -2}.
\end{split}
\end{align}
The third term is treated as
\begin{align}
\label{diamond1_prime_4}
\begin{split}
&\Big| \int_{\mathbb{R}^2}  (F(y) -  F(x))\partial_1^2 G(y)  \psi_T(x-y) \, \textrm{d}y \Big|\\
&\lesssim C(G) [F]_{\alpha}  \int_{\mathbb{R}^2} |x_2 |^{\frac{\alpha -2}{2}} d^{\alpha}(x,y) | \psi_T(x - y)| \, \textrm{d} y \lesssim  C(G) \left[ F \right]_{\alpha} (T^{\frac{1}{4}})^{2\alpha-2}.
\end{split}
\end{align}
We lastly notice that the last three terms of \eqref{rhs_ref} cancel each other. 

\end{proof}



We now apply Lemma \ref{lemma:new_reference_products_2} with Lemmas \ref{semigroup_bounds} and \ref{small_v}  to obtain the second type of new ``offline'' products:

\begin{proof}[Proof of Corollary \ref{new_family_2}]

\noindent \textit{i)} In part \textit{i)} of Lemma \ref{lemma:new_reference_products_2} we set $G_l =\partial_{a_0}^l \tilde{\vi}_i(\cdot, a_0)$ and $F= \partial^k_{a_0^{\prime}}v_j(\cdot, a^{\prime}_0)$ for $l,k  = 0,1,2$. Notice that, for $l = 0,1,2$, we may apply Lemma \ref{semigroup_bounds} to obtain that \eqref{assumption_G_2} is satisfied and each of the corresponding $C(G_l)$ is bounded by $[ U_{int,i}-v_i(\cdot, a_0)]_{\alpha,2}$. Combining \eqref{reconstruction_assumption_2_2} and Lemma \ref{small_v}, we obtain the desired \eqref{assumption_reconstruction_2_new_1}. Of course, here we have also used the relation 
\begin{align}
\partial^k_{a_0^{\prime}} \partial_{a_0}^l ( \tilde{\vi}_i(\cdot, a_0) \diamond \partial_1^2 v_j(\cdot, a^{\prime}_0)) =  \partial_{a_0}^l \tilde{\vi}_i(\cdot, a_0) \diamond \partial_1^2  \partial^k_{a_0^{\prime}} v_j(\cdot, a^{\prime}_0), 
\end{align}
which follows from the definition \eqref{definition_Step_1}.\\

\noindent \textit{ii)} This is an immediate consequences of the triangle inequality, part i), and the assumption \eqref{assumption_offline_products_appendix_1}.\\

\noindent \textit{iii)} Let $k,l =0,1$ and $i,j = 0,1$. We start by showing \eqref{reference_cross_lemma_new}. Notice that by the definitions \eqref{reference_now_in_lemma} and \eqref{definition_Step_1}, we have that 
\newpage
\begin{align*}
 &\partial_{a_0}^l \partial_{a_0^{\prime}}^k ( [  (\tilde{\vi}_0 +v_0)(\cdot, a_0), \, (\, \cdot \,)_T ] \diamond  \partial_1^2 v_j(\cdot, a_0^{\prime}) -  [ (\tilde{\vi}_1 +v_1)(\cdot, a_0) , \, (\, \cdot \,)_T ] \diamond  \partial_1^2 v_j(\cdot, a^{\prime}_0) )\\
 & =   [  \partial_{a_0}^l( \tilde{\vi}_0 - \tilde{\vi}_1) (\cdot, a_0), \, (\, \cdot \,)_T ] \diamond  \partial_1^2 \partial_{a_0^{\prime}}^k v_j(\cdot, a^{\prime}_0)\\
 & \quad +  \partial_{a_0}^l \partial_{a_0^{\prime}}^k (\left[  v_0(\cdot, a_0), \, (\, \cdot \,)_T \right] \diamond  \partial_1^2 v_j(\cdot, a^{\prime}_0) -  \left[ v_1(\cdot, a_0) , \, (\, \cdot \,)_T \right] \diamond  \partial_1^2 v_j(\cdot, a^{\prime}_0) ).
\end{align*}
The relation \eqref{reference_cross_lemma_new} then follows from the triangle inequality, the assumption \eqref{assumption_offline_products_appendix_2}, and the bilinearity of the singular product in Lemma \ref{lemma:new_reference_products_2}. In particular, in Lemma \ref{lemma:new_reference_products_2}, we take $G = \tilde{\vi}_0(\cdot, a_0) -  \tilde{\vi}_1(\cdot, a_0)$, for which Lemma \ref{semigroup_bounds} gives that $C(\tilde{\vi}_0(\cdot, a_0) -  \tilde{\vi}_1(\cdot, a_0)) \lesssim [ U_{int,0} - U_{int,1} + v_0(\cdot, a_0) - v_1(\cdot, a_0) ]_{\alpha,1}$, and $F(\cdot, a_0) = v_j(\cdot, a_0)$, to which we apply Lemma \ref{small_v}. 

Obtaining \eqref{reference_cross_lemma_2_new} is essentially the same. Again, by \eqref{reference_now_in_lemma} and \eqref{definition_Step_1} we can write
\begin{align*}
 & \partial_{a_0}^l \partial_{a_0^{\prime}}^k  ([  (\tilde{\vi}_i +v_i)(\cdot, a_0), \, (\, \cdot \,)_T ] \diamond  \partial_1^2 v_1(\cdot, a^{\prime}_0) -  [ (\tilde{\vi}_i +v_i)(\cdot, a_0) , \, (\, \cdot \,)_T ] \diamond  \partial_1^2 v_0(\cdot, a^{\prime}_0) )\\
& =  [ \partial_{a_0}^l \tilde{\vi}_i (\cdot, a_0) , \, (\, \cdot \,)_T ] \diamond  \partial_1^2 \partial_{a_0^{\prime}}^k (v_1 - v_0)(\cdot, a^{\prime}_0)  \\
 & \quad + \partial_{a_0^{\prime}}^k \partial_{a_0}^l (\left[  v_i(\cdot, a_0), \, (\, \cdot \,)_T \right] \diamond  \partial_1^2  v_1(\cdot, a^{\prime}_0) -  \left[ v_i(\cdot, a_0) , \, (\, \cdot \,)_T \right] \diamond  \partial_1^2 v_0(\cdot, a^{\prime}_0) ).
\end{align*}
The relation \eqref{reference_cross_lemma_2_new} is then obtained via the triangle inequality, the assumption \eqref{assumption_offline_products_appendix_3}, and the bilinearity of the singular product from Lemma \ref{lemma:new_reference_products_2} --along with Lemmas \ref{semigroup_bounds} and \ref{small_v}. \\ 

\noindent $iv)$ Let $i,j=0,1$. Then, this follows from the triangle inequality, Lemma 2, and part $i)$ of Corollary \ref{new_family_1} with $F = \partial_{a_0}^l(\tilde{\vi}_i + v_i)(\cdot, a_0)$ for $l = 0,1,2$. 
\end{proof} 

\subsection{Proof of the reconstruction lemmas}
\label{section:reconstruct_proof}
We give abbreviated proofs for Lemmas \ref{lemma:reconstruct_1} and \ref{lemma:reconstruct_2} as well as Corollary \ref{post_process_reconstruct_1}. Since many of the arguments in \cite{OW} see no change on their passage to our setting, we only address issues that see variations. For more details, see \cite[Lemmas 3.3 and 3.5]{OW}.

\begin{proof}[Lemma \ref{lemma:reconstruct_1}] The proof has four steps:\\

\noindent \textbf{Step 1:} \textit{(Bound for $\nu$)} \quad Using the same argument as in Step 1 of the proof of Lemma \ref{KrylovSafonov} one obtains
\begin{align}
\| \nu \|_{2\alpha -1}  & \lesssim M + N \label{nu_bound_1_old}.
\end{align}


\noindent  \textbf{Step 2:} \textit{(Dyadic decomposition)} \quad For $T \geq \tau >0$ such that $T = 2^n \tau$ for some $n \in \mathbb{N}$, one can show that
\begin{align}
\label{dyadic_decomp_1}
\begin{split}
& (  U h_T -   E_{diag} \left[   w, \left(\cdot\right)_T \right] \diamond h  - \nu \left[x_1, (\cdot)_T \right] h) - (  U h_{\tau} -   E_{diag} \left[   w , \left(\cdot \right)_{\tau} \right] \diamond h - \nu \left[x_1, (\cdot)_{\tau} \right] h)_{T - \tau}\\
& =  \displaystyle\sum_{ \footnotesize{t = \tau 2^i \textrm{ for } 
 0 \leq i \leq n}} \Big(   \left(\left[  U  , \left(\cdot\right)_t \right] -  E_{diag}\left[  w  , \left( \cdot \right)_t \right] - \nu\left[ x_1, (\cdot)_{t} \right] \right)h_t  \\
& \quad \quad \quad \quad \quad \quad \quad \quad \quad \quad \quad \quad   - \left[ \nu, \left(\cdot \right)_t\right] \left[  x_1  , \left( \cdot \right)_t \right] h - \left[E_{diag}, (\cdot)_t \right] \left[w, (\cdot)_t \right] \di h   \Big)_{T-2t}.
\end{split}
\end{align}
This dyadic decomposition follows from the semigroup property \eqref{semigroup}.\\

\noindent \textbf{Step 3:} \textit{(Use of the modelling)} \quad Using the dyadic decomposition from the previous step, one finds that 
\begin{align}
\label{Lemma3_9}
\begin{split}
& \big\|   U h_T -   E_{diag} \left[   w , \left(\cdot\right)_T \right] \diamond h - \nu \left[x_1, (\cdot)_T \right] h \\
& \qquad -  (  U h_{\tau} -   E_{diag} \left[   w , \left(\cdot\right)_{\tau} \right] \diamond h - \nu \left[x_1, (\cdot)_{\tau} \right] h)_{T - \tau} \big\|\\
& \lesssim  (M +  N) N_0 (T^{\frac{1}{4}})^{3 \alpha -2}
\end{split}
\end{align}
for $\tau < T  \leq 1$ such that $T$ is a dyadic multiple of $\tau$. In particular, \eqref{Lemma3_9} is obtained from \eqref{dyadic_decomp_1} after using \eqref{monotonicity} and the three estimates:
\begin{align}
\|  ([  U  , \left(\cdot\right)_t ] -  E_{diag}[ w  , \left( \cdot \right)_t ] - \nu[ x_1, (\cdot)_{t} ] )h_t  \| & \lesssim M N_0 (t^{\frac{1}{4}})^{3 \alpha -2}, \label{three_estimates_1}\\
\| [ \nu, \left(\cdot \right)_t] [  x_1  , \left( \cdot \right)_t ] h \| & \lesssim (M + N) N_0(t^{\frac{1}{4}})^{3 \alpha -2}, \label{three_estimates_2}\\
 \textrm{ and } \| [E_{diag}, (\cdot)_t ] [w, (\cdot)_t ] \di h \| & \lesssim N N_0 (t^{\frac{1}{4}})^{3 \alpha -2}.\label{three_estimates_3}
\end{align}
Notice that in this step, in order to make the geometric series on the right-hand side of \eqref{dyadic_decomp_1} converge, it is necessary that $\alpha \in (\frac{2}{3},1)$. The three estimates are proven using the assumptions  \eqref{Lemma2_0} - \eqref{Lemma2_3}. We remark that the proof of the first estimate requires the use of \cite[Lemma A.2]{OW}, which says that 
\begin{align}
\|[x_1, (\cdot)]h \|_{\alpha -2} \lesssim \| h\|_{\alpha -2}
\end{align}
and the bounds on $\nu$ from Step 1.\\

\noindent  \textbf{Step 4:} \textit{(Conclusion)} \quad To conclude, we introduce the notation $\mathcal{F}^{\tau} = U h_{\tau} - E_{diag}\left[w, (\cdot)_{\tau} \right]\di h - \nu \left[x_1, (\cdot)_{\tau} \right] h$. Now, \eqref{Lemma3_9} becomes
\begin{align}
\label{Lemma3_13.5}
\sup_{T \leq 1} (T^{\frac{1}{4}})^{2 - 3 \alpha} \| \mathcal{F}^T - (\mathcal{F}^{\tau})_{T - \tau} \| \lesssim (M + N ) N_0,
\end{align}
where the supremum is still taken over $T$ that are dyadic multiples of $\tau$. By the assumptions \eqref{Lemma2_2} and \eqref{Lemma2_2.5}, the bound \eqref{nu_bound_1_old}, and \cite[Lemma A.2 ]{OW} we obtain
 \begin{align}
 \label{lemma_2_estimate_edit_1}
 \begin{split}
\sup_{T \leq 1} (T^{\frac{1}{4}})^{2 - \alpha} \|\mathcal{F}^T\|
  = &  \sup_{T \leq 1} (T^{\frac{1}{4}})^{2 - \alpha} \| U h_{T} - E_{diag}\left[w, (\cdot)_{T} \right]\di h - \nu \left[x_1, (\cdot)_{T} \right] h \|\\
 \lesssim & ( \|U \|  + \| \nu\|) \| h_{T} \|_{\alpha-2} + N N_0 \\
 \lesssim & (\|U \|  +  M +N)  N_0 .
\end{split}
\end{align}
Combining this with \eqref{monotonicity} and \eqref{Lemma3_13.5}, the triangle inequality yields that
\begin{align}
\label{Lemma3_14}
\| \mathcal{F}^{\tau} \|_{\alpha-2} \lesssim  (\|U \|  + M +N) N_0.
\end{align}
By Lemma \ref{equivnorm}, we may (up to a subsequence) pass to the limit $\tau \rightarrow 0$ using the statement of Arzel\`{a}-Ascoli. In particular, we define $U \diamond h$ such that $ \mathcal{F}^{\tau} \rightharpoonup U \diamond h$. The bound \eqref{Dec_2019_1} follows from taking the limit $\tau \rightarrow 0$ in \eqref{Lemma3_9} and using the lower semicontinuity of the $L^{\infty}$-norm with respect to weak-$*$ convergence. \hfill  

\end{proof}
\vspace{.2cm}

As we have seen, in order to apply Lemma \ref{lemma:reconstruct_1} in the proof of Theorem \ref{theorem_nonlinear}, we use Corollary \ref{post_process_reconstruct_1}.  The proof of Corollary \ref{post_process_reconstruct_1} is essentially the same as that for \cite[Corollary 3.4]{OW}, but relies on modelling information in terms of $\tilde{\vi} + v$ as opposed to $v_{\textrm{OW}}$, where the subscript is included because of the massive term in \eqref{periodic_mean_free}. The different modelling information, however, does not change the character of the calculations as the equation solved by $\vi(\cdot, a_0)$ is  linear and we have access to Lemma  \ref{semigroup_bounds} and Corollary \ref{new_family_2}. While the proof is straightforward, various choices for the distribution $h$ and the family $\left\{ w(\cdot, x) \right\}_x$ in Lemma \ref{lemma:reconstruct_1} are made, it is computationally intensive. To avoid excessive repetition we, therefore, only give an abbreviated proof below.\\

\begin{proof}[Corollary \ref{post_process_reconstruct_1}] This is a corollary of Lemma \ref{lemma:reconstruct_1} and comes down to choosing appropriate families $\left\{w(\cdot, x) \right\}$, indexed by $x \in \R^2$, and distributions $h$ to which to apply the lemma. Here, we will use the full barrage of Lemmas \ref{equivnorm}, \ref{semigroup_bounds}, and \ref{small_v} and Corollary \ref{new_family_2} without further notice.\\

\noindent \textit{i)} First, for $a_0^{\prime} \in [\lambda,1]$, we set 
\begin{align}
w(\cdot,x) & = \sigma_i(x) (\tilde{\vi}_i + v_i)(\cdot, a_i(x)),\\
h & = \partial_1^2 v_j(\cdot, a^{\prime}_0),\\
\textrm{and} \quad w(\cdot, x) \di h & =  \sigma_i(x) (v_i+ \tilde{\vi}_i)(\cdot, a_i(x)) \diamond \partial_1^2 v_j(\cdot, a^{\prime}_0).
\end{align}
Using similar calculations to those in \cite{OW}, one finds that the assumptions \eqref{Lemma2_0}-\eqref{Lemma2_3} hold for $N = [U_{int,i}]_{\alpha} + [f_i]_{\alpha -2}$ and $N_0 = [f_j]_{\alpha -2}$.  We may then apply Lemma \ref{lemma:reconstruct_1}, which yields a distribution $\uf \diamond \partial_1^2 v_j(\cdot, a^{\prime}_0) \in C^{\alpha -2}(\mathbb{R}^2)$ satisfying \eqref{conclusion_lemma_2_1}, but without the parameter derivatives included in the norm. 

In order to obtain the full bound \eqref{conclusion_lemma_2_1}, we must also control the indicated parameter derivatives. (In this case, we must consider two parameter derivatives.) To do this, for any $a_0^-, a_0^+ \in [\lambda,1]$, we first set 
\begin{align}
\begin{split}
w(\cdot,x)  & = \sigma_i(x) (v_i+ \tilde{\vi}_i) (\cdot, a_i(x)),\\
 h & = \partial_1^2  v_j(\cdot, a_0^+) -  \partial_1^2 v_j(\cdot, a_0^-),\\
 \textrm{and} \quad w(\cdot,x) \diamond h  & =  \sigma_i(x)\big(  (v_i+ \tilde{\vi}_i) (\cdot, a_i(x))\diamond \partial_1^2 v_j(\cdot, a_0^+) -  (v_i+ \tilde{\vi}_i) (\cdot, a_i(x))\diamond \partial_1^2  v_j(\cdot, a_0^-) \big).
\end{split}
\end{align}
Again, the assumptions of Lemma \ref{lemma:reconstruct_1} are checked-- of course, the family of $w(\cdot, x)$ has not changed from the previous scenario. We find that \eqref{Lemma2_0}-\eqref{Lemma2_3} hold for $N_0 = [f_j]_{\alpha -2} |a_0^+ - a_0^-|$ and $N = [U_{int,i}]_{\alpha} + [f_i]_{\alpha -2}$. Lemma \ref{lemma:reconstruct_1} then yields a distribution $\uf \diamond (\partial_1^2 v_j (\cdot, a_0^+) -  \partial_1^2 v_j(\cdot, a_0^-)) \in C^{\alpha -2}(\mathbb{R}^2)$ satisfying
\begin{align}
\label{Cor_1_deriv_1_2}
\| [ \uf , (\cdot)_T]  \diamond (\partial_1^2 v_j(\cdot, a_0^+) -  \partial_1^2 v_j(\cdot, a_0^-)) \|_{2\alpha -2} \lesssim   |a_0^+ - a_0^-| (N_0 + N_0^{int})N_0^{int}.
\end{align}
To finish showing that \eqref{conclusion_lemma_2_1} holds for the norm $\| \cdot \|_{2\alpha -2,1}$, we notice that, due to the built-in linearity of the definition for $w(\cdot,x) \diamond h(\cdot)$ and the uniqueness in Lemma \ref{lemma:reconstruct_1}, the identity
\begin{align*}
U \diamond (\partial_1^2 v_j(\cdot, a_0^+) -  \partial_1^2 v_j(\cdot, a_0^-)) = U \diamond \partial_1^2 v_j(\cdot, a_0^+) -  U \diamond \partial_1^2 v_j(\cdot, a_0^-)
\end{align*}
holds. Plugging this into \eqref{Cor_1_deriv_1_2}, we may deduce \eqref{conclusion_lemma_2_1} for one parameter derivative. 

To obtain \eqref{conclusion_lemma_2_1} for the norm $\| \cdot \|_{2\alpha -2,2}$, we set 
\begin{align}
w(\cdot,x) & = \sigma_i(x)(v_i+ \tilde{\vi}_i)(\cdot, a_i(x)),\\
h & = (\partial_1^2 v_j(\cdot, a_0^{++}) - \partial_1^2 v_j(\cdot, a_0^{+-}) ) -(\partial_1^2 v_j(\cdot, a_0^{-+}) -  \partial_1^2 v_j(\cdot, a_0^{--})),\\
 \textrm{and} \quad w(\cdot, x) \diamond h  & = \sigma_i(x) \Big(  (v_i+ \tilde{\vi}_i) (\cdot, a_i(x)) \diamond \partial_1^2 v_j(\cdot, a_0^{++})  \\
 & \hspace{2cm} - (v_i+ \tilde{\vi}_i) (\cdot, a_i(x))  \diamond \partial_1^2 v_j(\cdot, a_0^{+-}) \\
 & \hspace{2.5cm} -  \big( (v_i+ \tilde{\vi}_i) (\cdot, a_i(x))  \diamond \partial_1^2 v_j(\cdot, a_0^{-+}) \\
 & \hspace{3cm} - (v_i+ \tilde{\vi}_i) (\cdot, a_i(x)) \diamond \partial_1^2 v_j(\cdot, a_0^{--}) \big) \Big)
  \end{align}
for any $a_0^{++}, a_0^{+-}, a_0^{-+}, a_0^{--} \in [\lambda, 1]$ such that $|a_0^{++}- a_0^{+-}| =  |a_0^{-+} - a_0^{--}| $. To finish, we again check the assumptions of Lemma \ref{lemma:reconstruct_1}; we conclude that \eqref{Lemma2_0}-\eqref{Lemma2_3}  are satisfied for $N_0 = [f_j]_{\alpha -2}|a_0^{++} - a_0^{+-}| \, |a_0^{-+} - a_0^{--}| $ and $N =  [U_{int,i}]_{\alpha} + [f_i]_{\alpha -2}$. One then completes the argument as for one parameter derivative above. 

To obtain \eqref{Lemma_3_3_1_1}, one first sets 
\begin{align}
w(\cdot,x) & = \sigma_i(x) (v_i + \tilde{\vi}_i)(\cdot, a_i(x)),\\
 h & = (\partial_1^2 v_1 -  \partial_1^2 v_0)(\cdot, a^{\prime}_0),\\
 \textrm{and} \quad w(\cdot,x) \diamond h  & =  \sigma_i(x) \Big((v_i + \tilde{\vi}_i)(\cdot, a_i(x)) \diamond \partial_1^2 v_1(\cdot, a^{\prime}_0)- (v_i+ \tilde{\vi}_i)(\cdot, a_i(x))  \diamond \partial_1^2 v_0(\cdot, a^{\prime}_0) \Big),
 \end{align}
for $a_0^{\prime} \in [\lambda, 1]$. We notice that \eqref{Lemma2_0}-\eqref{Lemma2_3} hold for $N_0 = [f_1 - f_0]_{\alpha}$ and $N = [U_{int,i}]_{\alpha} + [f_i]_{\alpha -2}$, which yields the relation \eqref{Lemma_3_3_1_1}, but without the control of the indicated parameter derivatives. Notice that we have used the uniqueness claim of Lemma \ref{lemma:reconstruct_1} in order to make the identification
\begin{center}
$\uf \diamond (\partial_1^2 v_1- \partial_1^2 v_0)(\cdot, a^{\prime}_0)  = \uf \diamond \partial_1^2 v_1  (\cdot, a^{\prime}_0)   -  \uf \diamond \partial_1^2 v_0 (\cdot, a^{\prime}_0). $
\end{center}

Following the recipe that we have already introduced above, in order to upgrade our previous result to the full \eqref{Lemma_3_3_1_1}, we, for $a_0^+, a_0^- \in [\lambda, 1]$, set 
\begin{align}
w(\cdot,x)  & = \sigma_i(x) (v_i + \tilde{\vi}_i)(\cdot, a_i(x)),\\
h & = (\partial_1^2 v_1 -  \partial_1^2 v_0)(\cdot, a_0^+) - (\partial_1^2 v_1 -  \partial_1^2 v_0)(\cdot, a_0^-),\\
\textrm{and} \quad 
 w(\cdot,x) \diamond h  & =  \sigma_i(x) \Big(  (v_i + \tilde{\vi}_i)(\cdot, a_i(x)) \diamond \partial_1^2 v_1(\cdot, a_0^+)\\
 & \hspace{2cm} - (v_i+ \tilde{\vi}_i)(\cdot, a_i(x))  \diamond \partial_1^2 v_0(\cdot, a_0^+)  \\
& \hspace{2.5cm}-  \big((v_i + \tilde{\vi}_i)(\cdot, a_i(x)) \diamond \partial_1^2 v_1(\cdot, a_0^-) \\
& \hspace{3cm} - (v_i+ \tilde{\vi}_i)(\cdot, a_i(x))  \diamond \partial_1^2 v_0(\cdot, a_0^-)\big) \Big).
 \end{align}
Noticing that \eqref{Lemma2_0}-\eqref{Lemma2_3} hold for $N_0 = [f_1 - f_0]_{\alpha -2 } |a_0^+ - a_0^-|$ and $N = [U_{int,i}]_{\alpha} + [f_i]_{\alpha -2}$, one finishes the argument as already indicated above.\\

\noindent \textit{ii)} For $a^{\prime}_0 \in [\lambda, 1]$, we now set 
\begin{align}
w(\cdot,x) & = \sigma_1(x) (v_1 + \tilde{\vi}_1)(\cdot, a_1(x)) - \sigma_0(x)(v_0 + \tilde{\vi}_0) (\cdot, a_0(x)),\\
 h & =\partial_1^2 v_j(\cdot, a^{\prime}_0),\\
 \textrm{ and } \quad  w(\cdot, x) \diamond  h  & =  \sigma_1(x) (v_1 + \tilde{\vi}_1)(\cdot, a_0(x))\diamond \partial_1^2 v_j(\cdot, a^{\prime}_0)  -  \sigma_0(x) (v_0 + \tilde{\vi}_0) (\cdot, a_0(x))\diamond \partial_1^2 v_j(\cdot, a^{\prime}_0).
 \end{align}
Again, one checks \eqref{Lemma2_0}-\eqref{Lemma2_3}, which are seen to hold for $N_0 = [f_j]_{\alpha -2}$ and $N = (\|a_1 - a_0 \|_{\alpha} + \| \sigma_1 - \sigma_0 \|_{\alpha} )(\max_{i=0,1}[f_i]_{\alpha -2} + \max_{i=0,1} [U_{int,i}]_{\alpha}) + [f_1 - f_0]_{\alpha -2} + [U_{int,1} - U_{int,0}]_{\alpha}$ --applying Lemma \ref{lemma:reconstruct_1} and additionally using the uniqueness to make the identification 
\begin{align}
(U_1 - U_0) \diamond \partial_1^2 v_j(\cdot, a^{\prime}_0) = U_1 \diamond \partial_1^2 v_j(\cdot, a^{\prime}_0) - U_0 \diamond \partial_1^2 v_j(\cdot, a^{\prime}_0),
\end{align}
yields  \eqref{conclusion_lemma_2_ii} without a parameter derivative. To obtain the statement for the parameter derivative, one takes $h = \partial_1^2 v_i (\cdot, a_0^+) - \partial_1^2 v_i (\cdot, a_0^-)$ and proceeds as in the previous part. \hfill  
\end{proof}

To finish this section we give the argument for the second reconstruction lemma, which we only summarize and for more details point the reader to the proof of \cite[Lemma 3.5]{OW}.\\

\begin{proof}[Lemma \ref{lemma:reconstruct_2}]  The proof has three steps:\\

\noindent \textbf{Step 1:} \textit{(Dyadic decomposition)} \quad Just as in Step 2 of the proof of Lemma \ref{lemma:reconstruct_1}, we have the dyadic decomposition:
\begin{align*}
\begin{split}
&(  F  \partial_1^2 \uf_T -  \sigma_i  E \left[   F  , \left(\cdot\right)_T \right] \diamond \partial_1^2 w_i) - (  F \partial_1^2 \uf_{\tau} - \sigma_i E \left[  F  , \left(\cdot\right)_{\tau} \right] \diamond \partial_1^2 w_i)_{T - \tau} \\
& = \displaystyle\sum_{ \footnotesize{t = \tau 2^i \textrm{ for } 
 0 \leq i \leq n}
} \Big( \left[  F  , \left(\cdot\right)_t \right] \partial_1^2 \uf_t -  \sigma_iE\left[  F  , \left( \cdot \right)_t \right] \partial_1^2  w_{it} \\
&  \hspace{3.2cm}   -\sigma_i [ E, \left(\cdot \right)_t] \left[  F  , \left( \cdot \right)_t \right] \diamond \partial_1^2  w_i   - [\sigma_i, (\cdot)_t] E[F, (\cdot)_t] \diamond \partial_1^2  w_i  \Big)_{T-2t}, 
\end{split}
\end{align*}
for $T, \tau >0$ such that $T = 2^n \tau$ for some $n \in \mathbb{N}$. Again, the proof of this identity only relies on the semigroup property \eqref{semigroup}.\\

\noindent \textbf{Step 2:} \textit{(Use of the modelling)} \quad We upgrade Step 1 to the following estimate:
\begin{align}
\label{combine_step3}
\begin{split}
&\|  F   \partial_1^2 \uf_T - \sigma_i E \left[   F  , \left(\cdot\right)_T \right] \diamond \partial_1^2 w_i - (  F   \partial_1^2 \uf_{\tau} - \sigma_i E \left[   F  , \left(\cdot\right)_{\tau} \right] \diamond \partial_1^2 w_i)_{T - \tau})\|\\
& \lesssim \left( \left[ F \right]_{\alpha} M + \| \sigma_i \|_{\alpha}(1 + [a]_{\alpha})N N_i  \right)(T^{\frac{1}{4}})^{3 \alpha -2},\\
\end{split}
\end{align}
which holds for $T = 2^n \tau$ for $\tau>0$ and $n \in \mathbb{N}$. The argument for \eqref{combine_step3} relies on the following three relations:
\begin{align}
\| \left[ F, (\cdot)_t \right] \partial_1^2 \uf_t - \sigma_i E\left[ F, (\cdot )_t \right] \partial_1^2 w_{it} \| & \lesssim \left[ F \right]_{\alpha} M(t^{\frac{1}{4}})^{3 \alpha -2}\label{lemma4_2_1},\\
 \|\sigma_i \left[E , (\cdot)_t \right] \left[ F, (\cdot)_t \right] \di \partial_1^2 w_i  \| & \lesssim \| \sigma_i \| \left[ a \right]_{\alpha} N N_i (t^{\frac{1}{4}})^{3 \alpha -2}, \label{lemma4_2_2}\\
\textrm{ and } \quad  \| [\sigma_i, (\cdot)_t] E[F, (\cdot)_t] \diamond \partial_1^2  w_i \| & \lesssim \left[ \sigma_i \right]_{\alpha} N N_i (t^{\frac{1}{4}})^{3 \alpha -2} \label{lemma4_2_3_new},
\end{align}
which hold for any $t>0$, and that $\alpha \in (\frac{2}{3},1)$.\\

\noindent \textbf{Step 3:} \textit{(Conclusion)} \quad To conclude we use the notation $\mathcal{F}^{\tau} =   F   \partial_1^2 \uf_{\tau}  - \sigma_i E \left[  F  , (\, \cdot \, )_{\tau} \right] \diamond \partial_1^2 w_i$. Notice that 
\begin{align*}
 \| \mathcal{F}^{T} \|  \leq & \| F \| \| \partial_1^2 \uf_{T} \| + \| \sigma_i \| \|  \left[  F  , (\, \cdot \, )_{\tau} \right] \diamond \partial_1^2 w_i \| \lesssim (\| F\|   [\uf]^{loc}_{\alpha} + \| \sigma_i\| N N_i) (T^{\frac{1}{4}})^{\alpha -2},
\end{align*}
where we have bounded $\| \partial_1^2 \uf_T \|$ as in \eqref{Lemma5_14.1} of Lemma \ref{KrylovSafonov} and have used the assumption \eqref{Lemma4_2}. Combining this with \eqref{combine_step3} and Step 2 of Lemma \ref{KrylovSafonov} we obtain
\begin{align}
\label{compactness_lemma_4}
\| \mathcal{F}^{\tau}\|_{\alpha -2} \lesssim   \| F \|_{\alpha} (M  + \| \sigma_i \| N_i)+ N N_i (1 + [a]_{\alpha}) \| \sigma_i\|_{\alpha}.
\end{align}
As in the proof of Lemma \ref{lemma:reconstruct_2}, we can then use compactness in $C^{\alpha -2}(\mathbb{R}^2)$ along with Lemma \ref{equivnorm} in order to define $F \diamond \partial_1^2 U$ as the weak limit (along a subsequence) of the $\left\{ \mathcal{F}^{\tau} \right\}_{\tau}$ as $\tau \rightarrow 0$.
\hfill  
\end{proof}

\section{Proof of Lemma \ref{equivnorm}}
\label{section:equivnorm}

In this section we give an argument for Lemma \ref{equivnorm} that is motivated by \cite[Lemma 5]{IO}.  

\begin{proof}

Observe that it suffices to show
\begin{align}
\label{equiv_step2_1}
\displaystyle \left[ f \right]_{\alpha -2} \lesssim  \| f \|_{\alpha -2}.
\end{align}
Indeed, for the converse inequality, we decompose $f = \partial_1^2 f^1 + \partial_2 f^2 + f^3$ in a way that is near optimal in the sense of Definition \ref{negative_norm}. For such a triple $(f^1, f^2, f^3)$ the use of \eqref{alpha_kernel_bound} then yields for $T\le 1$
\begin{align*}
\| f_T \|   & =  \| (\partial_1^2 f^1 + \partial_2 f^2 + f^3)_T \|\\
& \lesssim (T^{\frac{1}{4}})^{\alpha-2}([f^1]_{\alpha} + [f^2]_\alpha + \|f^3\| ) \lesssim (T^{\frac{1}{4}})^{\alpha-2} [f]_{\alpha-2},
\end{align*}
as desired. Hence, we may concentrate on \eqref{equiv_step2_1}.

As a technical tool, we make use of the convolution kernel $e^{-T} \psi_T$ that is associated to the semigroup of the operator $\mathcal{A}:= \partial_1^4 - \partial_2^2 + 1$. We use the notational convention that $f \ast e^{-T} \psi_T = f_T^m$ and, as always, $f \ast \psi_T = f_T$. \\

\noindent \textbf{Step 1:} \textit{(Bound for the $C^{\alpha}$- seminorm)} \quad We first show that, for $\alpha \in  (0,1)$, it holds that
\begin{align}
\label{equiv_step1}
\left[ f \right]_{\alpha} \lesssim  \displaystyle\sup_{T \leq 1} (T^{\frac{1}{4}})^{-\alpha} \| T \mathcal{A} f_T \|.
\end{align}
 For this, we first notice that due to homogeneity, we may assume that 
\begin{align*}
\displaystyle\sup_{T \leq 1}(T^{\frac{1}{4}})^{- \alpha } \| T \mathcal{A} f_T \| = 1.
\end{align*}
Notice that, due to the semigroup property and \eqref{monotonicity}, the above normalization ensures that
\begin{align*}
 \| T \mathcal{A} f^m_T \| = T e^{-T} \| (\mathcal{A} f_{1})_{T-1} \| \lesssim T e^{-T} \|\mathcal{A} f_1 \|  \lesssim 1,
 \end{align*}
for $T > 1$. Combining the two estimates, we find that 
\begin{align}
\label{equiv_step1_0.1}
\displaystyle\sup_{T >0}(T^{\frac{1}{4}})^{- \alpha } \| T \mathcal{A} f^m_T \| \lesssim 1.
\end{align}
Together with the semigroup property of $e^{-T}\psi_T$ and the moment bound \eqref{moment_bound}, this yields, for $j,l \geq 0$ and $T>0$, that 
\begin{align}
\label{equiv_step1_1}
\|\partial_1^j \partial_2^l \mathcal{A} f_T^m \| & = e^{-\frac{T}{2}}\|\partial_1^j \partial_2^l \mathcal{A} (f^m_{\frac{T}{2}})_{\frac{T}{2}} \|\\
&  \lesssim e^{-\frac{T}{2}} (T^{\frac{1}{4}})^{-j - 2 l } \| \mathcal{A} f^m_{\frac{T}{2}} \| \lesssim e^{-\frac{T}{2}} (T^{\frac{1}{4}})^{-j - 2 l + \alpha -4}.
\end{align}

By definition $e^{-T} \psi_T$ is a smooth solution of $(\partial_T + \mathcal{A})e^{-T} \psi_T = 0$ and the moment bounds for $\psi_T$, furthermore, imply that $f^m_T$ is a smooth solution of $(\partial_T + \mathcal{A}) f^m_T = 0$. Fixing $j,l \geq 0$ and using \eqref{equiv_step1_1} allows us to write
\begin{align*}
 \begin{split}
\| \partial_1^j \partial_2^l (f^m_t - f^m_T) \|& = \Big\| \int^T_t \partial_1^j \partial_2^l \mathcal{A} f^m_s  \, \textrm{d}s \Big\| \\
& \lesssim  \int^{T}_t  e^{-\frac{s}{2}}  (s^{\frac{1}{4}})^{-j - 2l +\alpha -4} \, \textrm{d} s  \lesssim (T^{\frac{1}{4}})^{-j - 2l + \alpha} + (t^{\frac{1}{4}})^{-j - 2l + \alpha}
 \end{split}
\end{align*}
for all $0 < t <T$. In the case that $j = l = 0$ this yields that
\begin{align}
\label{equiv_step1_3}
\| f^m_t - f^m_T \| \lesssim  (T^{\frac{1}{4}})^{\alpha},
\end{align}
which implies that \eqref{equiv_step1_3} holds also for $t=0$.

Fixing a point $x\in\R^2$, $0 < t <T$  and $j, l \geq 0$ such that $j + l \geq1$, we then use the triangle inequality to write 
\begin{align*}
 |\partial_1^j \partial_2^l f^m_t(x) | & \leq | \partial_1^j \partial_2^l ( f^m_t - f^m_T)(x) |+ | \partial_1^j \partial_2^l f^m_T (x)|\\
& \lesssim (  (t^{\frac{1}{4}})^{-j - 2l + \alpha}  +  (T^{\frac{1}{4}})^{-j - 2l + \alpha} ) + e^{-T}|\partial_1^j \partial_2^l f_T(x)|,
\end{align*}
which after letting $T \rightarrow \infty$ gives
\begin{align}
\label{equiv_step1_5.1}
\| \partial_1^j \partial_2^l f^m_t \|\lesssim   (t^{\frac{1}{4}})^{-j - 2l + \alpha}. 
\end{align}
 
To finish the argument for \eqref{equiv_step1}, we fix $T>0$ and two distinct points $x,y \in \R^2$. We then write
\begin{align*}
|f^m_T(y) - f^m_T(x)| \leq \|\partial_1 f^m_T \|d(y,x) + \|\partial_2 f^m_T \| d^2(y,x),
\end{align*}
which we combine with \eqref{equiv_step1_3} for $t = 0$ and \eqref{equiv_step1_5.1} to obtain
\begin{align}
\label{equiv_step1_7}
\begin{split}
|f(y) - f(x) | &\lesssim \| f- f^m_T \|+ \| \partial_1 f^m_T \|d(y,x) + \| \partial_2 f^m_T\| d^2(y,x)  \\
&\lesssim  (T^{\frac{1}{4}})^{\alpha} + (T^{\frac{1}{4}})^{\alpha-1} d(y,x) + (T^{\frac{1}{4}})^{\alpha -2} d^2(y,x).
\end{split}
\end{align}
This we may further process by setting $T^{\frac{1}{4}} = d(y,x)$, which yields $|f(y) - f(x)| \lesssim d^{\alpha}(y,x)$.\\

\noindent \textbf{Step 2:} \textit{(A specific decomposition of $f$)} \quad  Assume that $\|f \|_{\alpha -2} =1$. Using this and the properties \eqref{semigroup} and \eqref{monotonicity}, we obtain the relation
\begin{align}
\label{equiv_step5_1.7}
\| f^m_T\| = e^{-T} \| (f_{1})_{T-1} \| \lesssim e^{-T} \| f_1\|\lesssim e^{-T}
\end{align}
for $T>1$. In this step we find that these observations are enough to show that 
\begin{align}
\label{definition_solution_u}
u = \int_0^{\infty} f^m_T \, \textrm{d}T
\end{align}
is a distributional solution of
\begin{align}
\label{equation_for_u}
\mathcal{A}(u) & = f \qquad \textrm{in} \quad \mathbb{R}^2.
\end{align}

We first show that, for any $t\in(0,1)$, the function 
\begin{align}
\label{ut_defn}
u^t :=  \int_0^{\infty}  f^m_{t+T} \, \textrm{d}T
\end{align}
satisfies $\mathcal{A} u^t = f^m_t$. To see this, we recall from Step 1 that $f^m_{t+T}$ solves $(\partial_T + \mathcal{A})f^m_{t+T} = 0$ on $\mathbb{R}^2$, which allows us to write 
\begin{align}
\label{combine_1_1}
\int_{0}^{\infty} \partial_T f^m_{t+T} \, \textrm{d}T = -  \int_0^{\infty}  \mathcal{A} f^m_{t+T} \, \textrm{d}T.
 \end{align}
Using that $t>0$, we process the left-hand side of \eqref{combine_1_1} as
\begin{align}
\label{combine_1_3}
\int_{0}^{\infty} \partial_T f^m_{t+T} dT  = - f_t^m,
\end{align}
where we have  used that $\| f^m_{T}\|\to 0$ as $T\to\infty$ by \eqref{equiv_step5_1.7}. For the term on the right-hand side of \eqref{combine_1_1}, we use that $\| f\|_{\alpha -2}=1$ and \eqref{equiv_step5_1.7} to obtain
\begin{align}
\label{uniform_bound_norms}
\begin{split}
 \int_{0}^{\infty} | \partial_1^j \partial_2^l f^m_{t+T}| \, \textrm{d}T 
& \lesssim (t^{\frac{1}{4}})^{-j - 2l} \int_{0}^{\infty} \| f^m_T \| \textrm{d}T\\ & \lesssim (t^{\frac{1}{4}})^{-j - 2l} \Big(\int_{0}^{1}(T^{\frac{1}{4}})^{\alpha -2} \, \textrm{d}T + \int_1^{\infty} e^{-T} \, \textrm{d}T \Big)< \infty,
\end{split}
\end{align}
which means that
\begin{align}
\label{combine_1_2}
 \int_0^{\infty}  \mathcal{A} f^m_{t+T} \, \textrm{d}T =   \mathcal{A}\Big(  \int_0^{\infty}  f^m_{t+T} \, \textrm{d}T\Big).
\end{align}
In particular, combining \eqref{combine_1_1} and \eqref{combine_1_2} we end up with 
\begin{align*}
\mathcal{A} \Big(  \int_0^{\infty}  f^m_{t+T} \, \textrm{d}T\Big) = f^m_t.
\end{align*}
To show that $u^t \rightarrow u$ uniformly as $t\rightarrow 0$, we can directly estimate the difference as 
\begin{align}
\| u^t - u \| = \Big\| \int_0^t f_T^m dT \Big\| \lesssim t^{\frac{\alpha +2}{4}},
\end{align}
where we have again used that $\| f \|_{\alpha -2} =1$.\\

\noindent \textbf{Step 3:} \textit{(Argument for \eqref{equiv_step2_1})} \quad By homogeneity, we may assume that $\| f \|_{\alpha -2} =1$. Using the decomposition 
\begin{align*}
f = \mathcal{A}(u) = \partial_1^2 (\partial_1^2 u) + \partial_2 (-\partial_2 u) + u
\end{align*}
with $u$ given as \eqref{definition_solution_u}, we can apply Definition \ref{negative_norm} to find that 
\begin{align}
\label{equiv_step5_3}
\left[ f \right]_{\alpha -2} \leq [ \partial_1^2 u ]_{\alpha} + [ \partial_2 u ]_{\alpha} + \left[ u \right]_{\alpha} + \|u\|.
\end{align}
Noticing that since $\| f \|_{\alpha -2} =1$ we have that 
\begin{align*}
\displaystyle\sup_{T\leq 1} (T^{\frac{1}{4}})^{2 - \alpha} \| (\mathcal{A} u)_T \| = \displaystyle\sup_{T\leq 1} (T^{\frac{1}{4}})^{2 - \alpha} \| f_T \|  \leq 1,
\end{align*}
which we process with \eqref{moment_bound} to, for $j, l \geq 0$, obtain
\begin{align}
\label{equiv_step5_0}
\displaystyle\sup_{T\leq 1} (T^{\frac{1}{4}})^{j + 2l + (2 - \alpha) -4} \| T (\mathcal{A} \partial_1^j \partial_2^l u)^m_T \| \lesssim 1.
\end{align}
We estimate the first three terms on the right-hand side of  \eqref{equiv_step5_3} by first applying \eqref{equiv_step1} from Step 1 and then using \eqref{equiv_step5_0}. We find that 
\begin{align*}
 & [ \partial_1^2 u ]_{\alpha}  + \left[ \partial_2 u \right]_{\alpha} + \left[ u \right]_{\alpha} \lesssim \displaystyle\sup_{T\leq 1}  (T^{\frac{1}{4}})^{- \alpha} (\|T ( \mathcal{A} \partial_1^2 u)_T \| + \|T ( \mathcal{A} \partial_2 u)_T \| + \|T ( \mathcal{A} u)_T \| )\lesssim 1.
\end{align*}
The bound $\|u \| \lesssim 1$ follows from \eqref{uniform_bound_norms} with $j,l=0$. \hfill  

\end{proof}

\section*{Acknowledgments}

Both authors would like to thank Felix Otto for his guidance throughout this project, which was completed during the first author's PhD work at the MPI in Leipzig. We would also like to thank Scott A. Smith for lots of discussions. Furthermore, we would like to thank the reviewers of this manuscript, whose comments and suggestions were very helpful for revisions.

\bigskip

\bibliographystyle{plain}  
\bibliography{Thesis.bib}

\appendix

\section{Proofs of Lemmas \ref{semigroup_bounds} and \ref{small_v}: Bounds for frozen-coefficient linear solutions}

\label{semigroup_bounds_appendix}

The bounds in Lemma \ref{semigroup_bounds} follow from \eqref{heat_kernel_representation}:
 
\begin{proof} We will use the change of variables $z = \frac{x_1 - y}{(4 x_2 a_0)^{\frac{1}{2}}}$ for which 
\begin{align}
\label{change_relations}
\frac{\partial z}{ \partial y}  = \frac{-1}{(4 x_2 a_0)^{\frac{1}{2}}}  \quad \textrm{and}  \quad  \frac{\partial z}{\partial a_0}  =  -\frac{1}{2} z a_0^{-1}.
\end{align} 
For $k \in \mathbb{N}$ we use the convention that $P_k$ represents a generic degree $k$ polynomial; additionally, $P_k(\cdot, a_0^{-\frac{1}{2}})$ indicates a polynomial of order $k$ with coefficients that are polynomials in $a_0^{-\frac{1}{2}}$. \\

\noindent \textit{i)} Fix $1 \leq k \leq 2$ and $ j \geq 0$ and let $0 \leq m \leq j$. Using the above change of variables, we obtain
\begin{align}
\label{heat_kernel_derivs}
\begin{split}
 \partial_{a_0}^m \partial_1^k G(x_1 - y, x_2, a_0)
 &=  \partial_{a_0}^m \partial_1^k \Big(  \frac{e^{-x_2}}{( 4 \pi a_0 x_2)^{\frac{1}{2}}} e^{-z^2} \Big)\\
& =   e^{-x_2} \partial_{a_0}^m ( (a_0 x_2)^{-\frac{1 +k}{ 2} } P_k(z) e^{-z^2} )\\
& =  e^{-x_2} P_{k + 2m}(z, a^{-\frac{1}{2}}_0)  e^{-z^2} x_2^{-\frac{1 +k}{2}} .
\end{split}
\end{align}
We then notice that
\begin{align}
\label{smuggle_in_point}
\begin{split}
&\partial_{a_0}^j  \int_{\mathbb{R}} \vi_{int}(y, a_0) \partial_1^k G(x_1 - y, x_2, a_0) \, \textrm{d} y \\
& = \int_{\mathbb{R}} \displaystyle\sum_{m = 0}^j  {j \choose m} (\partial_{a_0}^m\vi_{int}(y, a_0) - \partial_{a_0}^m \vi_{int}(x_1, a_0))\partial_{a_0}^{j-m} \partial_1^k G( x_1 - y,  x_2, a_0) \, \textrm{d} y,
\end{split}
\end{align}
where we have used that $k \geq 1$. To finish showing \eqref{heat_kernel_bound_1}, we use \eqref{heat_kernel_derivs} to calculate
\begin{align*}
\begin{split}
& \Big|  \int_{\mathbb{R}} \displaystyle\sum_{m = 0}^j {j\choose m} (\partial_{a_0}^m \vi_{int}(y, a_0) - \partial_{a_0}^m \vi_{int}(x_1,a_0))\partial_{a_0}^{j-m} \partial_1^k G(x_1 - y,  x_2, a_0) \, \textrm{d} y\Big|\\
& \lesssim    [\vi_{int}(\cdot, a_0)]_{\alpha,j}  x_2^{\frac{\alpha}{2}} \int_{\mathbb{R}} |z|^{\alpha}  \displaystyle\sum_{m = 0}^j  {j \choose m}  | \partial_{a_0}^{j-m} \partial_1^k G(x_1 - y , x_2, a_0)| \, \textrm{d} y\\
& \lesssim  [\vi_{int}(\cdot, a_0)]_{\alpha,j} e^{-x_2}  x_2^{\frac{\alpha-k}{2}}   \int_{\mathbb{R}} |z|^{\alpha} \displaystyle\sum_{m = 0}^j {j\choose m} P_{k + 2m}(z, a_0^{-\frac{1}{2}})  e^{-z^2}  \, \textrm{d} z \\
& \lesssim [\vi_{int}(\cdot, a_0)]_{\alpha,j}  e^{-x_2} x_2^{\frac{\alpha - k}{2}}.
\end{split}
\end{align*}

When the initial condition does not depend on $a_0$, then we also obtain \eqref{heat_kernel_bound_1} for $k =0$. This is clear once we make the observation that, since $\partial_{a_0} \int _{\mathbb{R}} G(x_1-y, x_2, a_0) \, \textrm{d}y = 0$,
the relation \eqref{smuggle_in_point} still holds.\\

\noindent \textit{ii)} Fix $j\geq0$. The relation  \eqref{heat_kernel_bound_2} then easily follows from \eqref{heat_kernel_representation} and \eqref{heat_kernel_derivs}. In particular, for $x \in \mathbb{R}^2_+$, we can write
 \begin{align*}
 \begin{split}
  &  \left| \partial_{a_0}^j \vi(x_1, x_2, a_0, \vi_{int}(a_0)) \right|\\
 & \lesssim  e^{-x_2}  \int_{\mathbb{R}} \displaystyle\sum_{m = 0}^j {j \choose m} |\partial_{a_0}^m \vi_{int}(y, a_0)|x_2^{-\frac{1}{2}}   |P_{2(j-m)}(z, a_0^{-\frac{1}{2}})| e^{-z^2}   \, \textrm{d} y \lesssim   e^{-x_2}  \| \vi_{int}\|_j. 
 \end{split}
\end{align*}

\noindent \textit{iii)} Differentiating \eqref{constant_coeff_ivp_body} in terms of $a_0$ gives:
\begin{align}
(\partial_2 - a_0 \partial_1^2 +1) \partial_{a_0} \vi(\cdot, a_0,  \vi_{int} (a_0)) & = \partial_1^2 \vi(\cdot, a_0,  \vi_{int} ( a_0)) && \textrm{in} \quad \mathbb{R}^2_+, \label{equation_one_deriv_V} \\
\partial_{a_0} \vi (\cdot, a_0,  \vi_{int} ( a_0)) & = \partial_{a_0} \vi_{int} (\cdot, a_0) && \textrm{on} 
\quad \partial \mathbb{R}^2_+. \nonumber
\end{align}
Taking one more derivative in $a_0$, we find that $\partial_{a_0}^2 \vi(\cdot, a_0,  \vi_{int} (\cdot, a_0))$ solves 
\begin{align}
(\partial_2 - a_0 \partial_1^2 +1) \partial^2_{a_0} \vi(\cdot, a_0,  \vi_{int} (a_0)) & = 2\partial_1^2  \partial_{a_0} \vi(\cdot, a_0, \vi_{int} (a_0)) && \textrm{in} \quad \mathbb{R}^2_+, \label{equation_two_deriv_V} \\
\partial_{a_0}^2 \vi (\cdot, a_0, \vi_{int} (a_0)) & = \partial_{a_0}^2 \vi_{int} (\cdot, a_0)&&  \textrm{on}\quad \partial \mathbb{R}^2_+. \nonumber
\end{align}
Differentiating a third time gives that $\partial_{a_0}^3 \vi(\cdot, a_0, \vi_{int} (a_0))$ solves 
\begin{align}
\label{equation_three_deriv_V} 
\begin{split}
(\partial_2 - a_0 \partial_1^2 +1) \partial^3_{a_0} \vi(\cdot, a_0, \vi_{int} ( a_0)) & = 3 \partial_1^2  \partial_{a_0}^2 \vi(\cdot, a_0, \vi_{int} (a_0)) \quad \, \, \, \textrm{in} \quad \mathbb{R}^2_+, \\
\partial_{a_0}^3 \vi (\cdot, a_0, \vi_{int} ( a_0)) & = \partial_{a_0}^3 \vi_{int} (\cdot, a_0) \qquad \qquad \qquad  \textrm{on}\quad \partial \mathbb{R}^2_+.
\end{split}
\end{align}
From these equations we can read-off \eqref{heat_kernel_bound_3.5} by using the Schauder estimate $[u]_{\alpha} \lesssim [f]_{\alpha -2} + \|g\|_{\alpha}$ for $u\in C^{\alpha}(\R^2_+)$ solving
\begin{align*}
(\partial_2 - a_0 \partial_1^2 +1) u & = f && \textrm{in} \quad \mathbb{R}^2_+,\\
u & = g &&  \textrm{on}\quad \partial \mathbb{R}^2_+. \nonumber
\end{align*}
This estimate follows from decomposing $f = \partial_2 f^2 + \partial_1^2 f^1 + f^3$ for a triplet $(f^1, f^2, f^3)$ of $C^{\alpha}$-functions that is near optimal in the sense of Definition \ref{negative_norm} and applying the classical Schauder estimate \cite[Lemma 9.2.1]{Kr} to the solutions of 
\begin{align*}
(\partial_2 - a_0 \partial_1^2 +1) u_i & = \partial_{i}^{3-i} f^i && \textrm{in} \quad \mathbb{R}^2_+,\\
u_i & = 0 &&  \textrm{on}\quad \partial \mathbb{R}^2_+. \nonumber
\end{align*}
and 
\begin{align*}
(\partial_2 - a_0 \partial_1^2 +1) u_{\partial} & = 0&& \textrm{in} \quad \mathbb{R}^2_+,\\
u_{\partial} & = g &&  \textrm{on}\quad \partial \mathbb{R}^2_+. 
\end{align*}
In particular, we can then use the linearity of the equation and the uniqueness of the solution $u$ to obtain the desired Schauder estimate. 

Using the Schauder estimate we find that
\begin{align*}
\begin{split}
[\partial_{a_0} \vi(\cdot, a_0,  \vi_{int} (a_0)) ]_{\alpha} & \lesssim [\partial_1^2 \vi(\cdot, a_0,  \vi_{int} (a_0))]_{\alpha-2} +  \|\vi_{int}(\cdot, a_0)\|_{\alpha,1} \lesssim \|\vi_{int}(\cdot, a_0)\|_{\alpha,1}, \\
 [\partial_{a_0}^2 \vi(\cdot, a_0,  \vi_{int} ( a_0)) ]_{\alpha} & \lesssim [\partial_1^2 \partial_{a_0} \vi(\cdot, a_0,  \vi_{int} (a_0))]_{\alpha-2} + \|\vi_{int}(\cdot, a_0)\|_{\alpha,2}\\
 & \lesssim [\partial_{a_0} \vi(\cdot, a_0,  \vi_{int} (a_0))]_{\alpha} +  \|\vi_{int}(\cdot, a_0)\|_{\alpha,2}
  \lesssim \|\vi_{int}(\cdot, a_0)\|_{\alpha,2}, \\
 \textrm{and } \,  [\partial_{a_0}^3 \vi(\cdot, a_0,\vi_{int} ( a_0)) ]_{\alpha} & \lesssim [\partial_1^2 \partial^2_{a_0} \vi(\cdot, a_0, \vi_{int} ( a_0))]_{\alpha-2} + \| \vi_{int}(\cdot, a_0)\|_{\alpha,3} \\
 & \lesssim [\partial_{a_0}^2 \vi(\cdot, a_0, \vi_{int}( a_0))]_{\alpha} +   \| \vi_{int}(\cdot, a_0)\|_{\alpha,3} \lesssim  \| \vi_{int}(\cdot, a_0)\|_{\alpha,3}.\\
\end{split}
\end{align*}

\noindent \textit{iv)} Notice that \eqref{heat_kernel_bound_4} follows directly from \eqref{heat_kernel_bound_3.5} if either $y_2^{\frac{1}{2}} < d(x,y)$ or $x_2^{\frac{1}{2}} < \frac{1}{2}d(x,y)$. 

We consider the case that $y_2^{\frac12} \geq d(x,y)$ and $x_2^{\frac12} \geq \frac12 d(x,y)$. Fix two points $x,y \in \mathbb{R}^2_+$ and $0 \leq j \leq 1$. We first apply the triangle inequality as
\begin{align}
\begin{split}
& |\partial^j_{a_0} \vi(x_1,x_2, a_0, \vi_{int}( a_0)) - \partial_{a_0}^j \vi(y_1, y_2, a_0,  \vi_{int}( a_0))|\\
&\leq  |\partial_{a_0}^j \vi(x_1, x_2, a_0,  \vi_{int}(a_0)) -\partial_{a_0}^j \vi(x_1, y_2, a_0,  \vi_{int}(a_0))| \\
 & \quad + |\partial_{a_0}^j \vi(x_1, y_2, a_0,  \vi_{int}( a_0)) - \partial_{a_0}^j \vi(y_1, y_2, a_0,  \vi_{int}( a_0))|  \label{hkb_4_1}
\end{split}
\end{align}

The second term of \eqref{hkb_4_1} is estimated by $\| \vi_{int}\|_{\alpha,j}  (y_2^{-\frac{\alpha}{2} } +x_2^{-\frac{\alpha}{2} }  ) d^{2\alpha}(x,y)$. For this, we first use \eqref{heat_kernel_bound_1}, \eqref{heat_kernel_bound_3.5}, and Taylor's theorem:
\begin{align*}
\begin{split}
& |\partial_{a_0}^j \vi(x_1, y_2, a_0, \vi_{int}(a_0)) -\partial_{a_0}^j \vi(y_1,  y_2, a_0,  \vi_{int}( a_0))|\\
 &=  |\partial_{a_0}^j \vi(x_1, y_2, a_0, \vi_{int}( a_0)) - \partial_{a_0}^j \vi(y_1, y_2, a_0, \vi_{int}(a_0))|^{\frac{2 - 2\alpha}{2-\alpha}} \\
 & \quad  \times |\partial_{a_0}^j \vi(x_1, y_2, a_0, \vi_{int}(a_0)) - \partial_{a_0}^j \vi(y_1, y_2, a_0,  \vi_{int}(a_0))|^{\frac{\alpha}{2 - \alpha}}  \\
 & \lesssim   \Big(\| \vi_{int}(a_0) \|_{\alpha,j} d^{\alpha}(x,y)\Big)^{\frac{2 - 2\alpha}{2-\alpha}} \Big(d^2(x,y) [\vi_{int}]_{\alpha} y_2^{\frac{\alpha-2}{2}} + d(x,y) [\vi_{int}]_{\alpha} y_2^{\frac{\alpha -1}{2}} \Big)^{\frac{\alpha}{2 - \alpha}} .\\
 \end{split}
\end{align*}
The problem term is now the first-order piece of the Taylor expansion, to handle this term we consider the cases $y_2 \geq x_2$ and $x_2 > y_2$. Starting with the first case, notice that since $y_2^{\frac12} \geq d(x,y)$ and $x_2^{\frac12} \geq \frac12 d(x,y)$ and $y_2 \geq x_2$, we have that $y_2^{\frac12} - x_2^{\frac12} \geq \frac12 d(x,y)$. Furthermore using that the Lipschitz constant of $\sqrt{\cdot}$ on $[x_2, \infty)$ is $x_2^{-1/2}$, we may bound
\begin{align}
 d(x,y) y_2^{\frac{\alpha -1}{2}} \lesssim (y_2^{\frac12} - x_2^{\frac12}) y_2^{\frac{\alpha -1}{2}} \lesssim |y_2 - x_2|  y_2^{\frac{\alpha -1}{2}} x_2^{-\frac12} \lesssim  |y_2 - x_2|  \Big(y_2^{\frac{\alpha -2}{2}} + x_2^{\frac{\alpha -2}{2}}\Big),
\end{align}
where the last bound is an application of Young's inequality. The case that $x_2 > y_2$ is the same, expect that it does not require an application of Young's inequality at the end.

The first term of \eqref{hkb_4_1} is treated using the equations \eqref{constant_coeff_ivp_body} and \eqref{equation_one_deriv_V}. In particular, after applying \eqref{heat_kernel_bound_1} and \eqref{heat_kernel_bound_2} we have that
\begin{align}
\label{hkb_4_3}
\begin{split}
\| \partial_2 \vi(\cdot, x_2, a_0, \vi_{int}(a_0)) \|  \leq & \| \partial_1^2 \vi(\cdot, x_2, a_0, \vi_{int}(a_0)) \| + \|\vi(\cdot, x_2, a_0, \vi_{int}(a_0)) \| \quad\\
 \lesssim &   \|\vi_{int}(\cdot, a_0) \|_{\alpha} ( x_2^{ \frac{\alpha -2}{2}} + e^{-x_2})
 \lesssim  \|\vi_{int} (\cdot, a_0) \|_{\alpha} x_2^{ \frac{\alpha -2}{2}} \quad
\end{split}
\end{align}
and, similarly, 
\begin{align}
\label{hkb_4_4}
\begin{split}
& \| \partial_2 \partial_{a_0} \vi(\cdot, x_2, a_0, \vi_{int}(a_0)) \|\\
 &\leq   \| \partial_1^2 \vi(\cdot, x_2, a_0,\vi_{int}(a_0)) \| +  \| \partial_1^2 \partial_{a_0} \vi(\cdot, x_2, a_0,\vi_{int}(a_0)) \| \\
 & \quad + \| \partial_{a_0} \vi(\cdot, x_2, a_0,\vi_{int}(a_0))\|\\
 & \lesssim \|\vi_{int}(\cdot, a_0)\|_{\alpha,1} ( x_2^{ \frac{\alpha -2}{2}} +  e^{-x_2})
 \lesssim   \|\vi_{int}(\cdot, a_0)\|_{\alpha,1}  x_2^{ \frac{\alpha -2}{2}}.
 \end{split}
\end{align}
Using \eqref{hkb_4_3} (when $j = 0$) or \eqref{hkb_4_4} (when $j = 1$), we then obtain
\begin{align}
\label{hkb_4_5}
\begin{split}
& |\partial_{a_0}^j\vi(x_1, x_2, a_0, \vi_{int}(a_0)) - \partial_{a_0}^j\vi(x_1,  y_2, a_0, \vi_{int}(a_0))|\\
 &\leq  |\partial_{a_0}^j\vi(x_1, x_2, a_0, \vi_{int}(a_0)) - \partial_{a_0}^j\vi(x_1,  y_2, a_0, \vi_{int}(a_0))|^{\frac{2-2\alpha}{2-\alpha}} \\
 & \quad \quad \quad \times |\partial_{a_0}^j\vi(x_1, x_2, a_0, \vi_{int}(a_0)) - \partial_{a_0}^j\vi(x_1,  y_2, a_0, \vi_{int}(a_0))|^{\frac{\alpha}{2-\alpha}}\\
  & \leq  (|x_2 - y_2|^{\frac{\alpha}{2}} [\vi_{int}(a_0)]_{\alpha,j})^{\frac{2-2\alpha}{2-\alpha}}  ( |x_2 - y_2| \| \partial_2 \vi(\cdot, x_2, a_0) \|_j)^{ \frac{\alpha}{2-\alpha}} \\
& \lesssim \|\vi_{int}(\cdot, a_0)\|_{\alpha,1} x_2^{-\frac{\alpha}{2}} d^{2\alpha} (x,y).
 \end{split}
\end{align}

\vspace{.2cm}

\noindent \textit{v)} Our claim immediately follows from the above arguments using \eqref{heat_kernel_representation}, but with an extra factor of $e^{-x_2}$ in the definition of the heat kernel \eqref{heat_kernel}.

\end{proof}

The argument for Lemma \ref{small_v} depends on classical Schauder theory and Definition \ref{negative_norm}: 

\begin{proof} Let $f = \partial_1^2 f^1 + \partial_2 f^2 + f^3$ be a near optimal decomposition of $f$ in the sense of Definition \ref{negative_norm}. Furthermore, let $v^i(\cdot, a_0)$ be the $C^{\alpha}$- solution of
\begin{align*}
(\partial_2 -  a_0 \partial_1^2+1)v^i(\cdot, a_0) & = f^i && \textrm{in} \quad \mathbb{R}^2
\end{align*}
for $i = 1,2,3$. Notice that, for $i = 1,2$, we may assume that $f^i$ has vanishing average.  By classical Schauder theory we have that  $\|v^i(\cdot, a_0 )\|_{\alpha +2} \lesssim \|f^i \|_{\alpha}$ for each $i = 1,2,3$. Using this and the convention \eqref{definition_beta_12.2} we obtain that 
\begin{align*}
\displaystyle\sum_{i=1}^3 ( [ \partial_1^2 v^{i}(\cdot, a_0) ]_{\alpha} + [ \partial_2 v^{i}(\cdot, a_0) ]_{\alpha} + \|v^i(\cdot, a_0) \| ) \lesssim  \sum_{i = 1}^3 \|f^i\|_{\alpha} \lesssim [ f ]_{\alpha -2},
\end{align*}
where the last bound follows from the vanishing average condition for $f^{i}$ when $i = 1,2$. To conclude our argument, we notice that by the uniqueness of $C^{\alpha}$-solutions to \eqref{periodic_mean_free} we know that $v(\cdot, a_0) = \partial_1^2 v^1(\cdot, a_0) + \partial_2 v^2(\cdot, a_0) +v^3(\cdot, a_0)$. 

For the bounds on the higher-order parameter derivatives, we emulate the argument from part \textit{iii)} of Lemma \ref{semigroup_bounds}. In particular, differentiating \eqref{periodic_mean_free} in terms of $a_0$ gives that
\begin{align}
\label{diff_once_v}
(\partial_2  - a_0 \partial_1^2 +1) \partial_{a_0 }v(\cdot, a_0) & = \partial_1^2 v(\cdot, a_0) && \textrm{in} \quad \mathbb{R}^2,
\end{align}
which by the above gives that $\|\partial_{a_0} v(\cdot, a_0) \|_{\alpha} \lesssim [\partial_1^2 v(\cdot, a_0) ]_{\alpha -2} \lesssim [f]_{\alpha -2}$. Differentiating in terms of $a_0$ again we find that $\partial_{a_0}^2v(\cdot, a_0)$ solves 
\begin{align}
\label{diff_twice_v}
(\partial_2  - a_0 \partial_1^2 +1) \partial_{a_0}^2v(\cdot, a_0) & = 2 \partial_1^2 \partial_{a_0} v(\cdot, a_0)  && \textrm{in} \quad \mathbb{R}^2,
\end{align}
which again yields that $\|  \partial_{a_0}^2 v(\cdot, a_0)\|_{\alpha} \lesssim  [  \partial_1^2 \partial_{a_0} v(\cdot, a_0) ]_{\alpha -2} \lesssim [\partial_{a_0} v(\cdot, a_0) ]_{\alpha} \lesssim [f]_{\alpha -2}$.
\end{proof}

\section{Proofs of auxiliary lemmas for Proposition \ref{linear_IVP}}
\label{technical_lemmas_2}

We start with the argument for Lemma \ref{lemma:norm}:

\begin{proof} We start by showing \eqref{equiv_seminorm_set_0}. For this we fix $x \in \mathbb{R}^2$ and use the growth condition \eqref{growth_condition_ivp_1} and the standard rescaling \eqref{standard_rescaling} to write
\begin{align}
\label{corollary_3_calc_1}
 \begin{split}
& |\partial_1^j\partial_2^l f_T (x) | \leq  C_f  (T^{\frac{1}{4}})^{-j-2l+2\alpha -2} \int_{\mathbb{R}^2} |\hat{x}_2 - \hat{y}_2|^{\frac{2\alpha -2}{2}} |\partial_1^j\partial_2^l\psi_1(\hat{y})|\, \textrm{d} \hat{y} \lesssim  C_f (T^{\frac{1}{4}})^{-j-2l + 2\alpha -2}. \quad 
 \end{split}
\end{align}
For the last inequality we have relied on $\psi_1$ being a Schwartz function and that $-1< \frac{2\alpha- 2}{2}  <0$. 

For \eqref{equiv_seminorm_set_3}, we use \eqref{equiv_seminorm_set_0} to obtain
\begin{align*}
\begin{split}
 |f_T(x)-f_T(y)|&\le \|\partial_1 f_T\| d(y,x) + \|\partial_2 f_T\| d^2(y,x)\\
 & \lesssim C_f ((T^{\frac{1}{4}})^{2\alpha - 3} d(y,x) + (T^{\frac{1}{4}})^{2\alpha - 4} d^2(y,x) )\\
 &\lesssim C_f \begin{cases}
       (T^{\frac{1}{4}})^{\alpha - 2} d^\alpha(y,x), & \text{if } d(y,x)\le T^{\frac14}, \\
       (T^{\frac{1}{4}})^{2\alpha - 4} d^2(y,x), &  \text{if } d(y,x)> T^{\frac14}.
      \end{cases}
      \end{split}
\end{align*}
The estimate for $d(y,x)\le T^{\frac14}$ is already in the desired form. The estimate for $d(y,x)> T^{\frac14}$ can be interpolated with
\begin{align*}
 |f_T(x)-f_T(y)| \le 2\|f_T\|\lesssim C_f (T^{\frac{1}{4}})^{2\alpha - 2},
\end{align*}
in order to yield
\begin{align*}
 |f_T(x)-f_T(y)|=|f_T(x)-f_T(y)|^{1-\frac{\alpha}{2}}|f_T(x)-f_T(y)|^{\frac{\alpha}{2}}\lesssim C_f (T^{\frac{1}{4}})^{\alpha - 2} d^\alpha(y,x).
\end{align*}
This proves \eqref{equiv_seminorm_set_3}.

If $f$ is additionally only supported for positive times, then for any $x\in\R^2_L$ we may write
\begin{align}
\begin{split}
 |\partial_1^j\partial_2^l f_T (x) | &\leq  \int_{\mathbb{R}^2} |f(x - y)| |\partial_1^j\partial_2^l\psi_T(y)|\, \textrm{d} y\\
 & \leq L^{-\delta} \int_{\mathbb{R}^2} |f(x - y)| |y_2|^\delta |\partial_1^j\partial_2^l\psi_T(y)|\, \textrm{d} y \\
 &\lesssim C_f  L^{-\delta}(T^{\frac{1}{4}})^{-j-2l+2\alpha -2 + 2\delta} \int_{\mathbb{R}^2} |\hat{x}_2 - \hat{y}_2|^{\frac{2\alpha -2}{2}} |\hat{y}_2|^\delta|\partial_1^j\partial_2^l\psi_1(\hat{y})|\, \textrm{d} \hat{y} \\
 &\lesssim  C_f  L^{-\delta}(T^{\frac{1}{4}})^{-j-2l+2\alpha -2 + 2\delta},
 \end{split}
\end{align}
which is \eqref{j002}. Here, as above, we have relied on $-1< \frac{2\alpha- 2}{2}  <0$. Now \eqref{equiv_seminorm_set_2} follows from \eqref{j002}.

\end{proof}

Here is the proof of Lemma \ref{semigroup_bound_modelling}:

\begin{proof}

This proof is essentially a corollary of the argument for part $i)$ of Lemma \ref{semigroup_bounds}. In particular, we use the heat kernel representation \eqref{heat_kernel_representation}, the modelling of $u$, that the heat kernel $G( x_1, x_2, a_0)$ given in \eqref{heat_kernel} is even in $x_1$, and the identity \eqref{heat_kernel_derivs} to write
\begin{align}
\begin{split}
& | E_{tr} \partial_1^2 \vi(x, a_0, u- v(a_0)) |\\
&=   \Big| \int_{\mathbb{R}} ( u(y,0) - v(y,0, a_{tr}(x)) ) \partial_1^2 G (x_1 - y, x_2 , a_{tr}(x)) \, \textrm{d} y\Big|\\
&\leq    \Big | \int_{\mathbb{R}} ( u(y,0) - u(x_1,0) - ( v(y, 0, a_{tr}(x)) - v(x_1,0, a_{tr}(x))  ) - \nu(x_1) (y - x_1) )  \\
& \hspace{6.5cm} \times \partial_1^2 G ( x_1 - y, x_2, a_{tr}(x) )\textrm{d} y\Big|\\
&\lesssim  M_{\partial} \int_{\mathbb{R}} |y - x_1|^{2\alpha} | \partial_1^2 G (x_1 - y, x_2, a_{tr}(x) )| \,\textrm{d} y\\
&\lesssim  M_{\partial} |x_2|^{\frac{2\alpha -2}{2}}.
\end{split}
\end{align}

\end{proof}

\end{document}